\def\blfootnote{\xdef\@thefnmark{}\@footnotetext}
\newtheorem{theorem}{Theorem}[section]
\newtheorem{corollary}[theorem]{Corollary}
\newtheorem{lemma}[theorem]{Lemma}
\newtheorem{proposition}[theorem]{Proposition}
\theoremstyle{definition}
\newtheorem{definition}[theorem]{Definition}
\newtheorem{example}[theorem]{Example}
\theoremstyle{remark}
\newtheorem{remark}[theorem]{Remark}
\newcommand{\ol}{\overline}
\DeclarePairedDelimiter{\parens}{\lparen}{\rparen}
\DeclarePairedDelimiterX{\pres}[2]{\langle}{\rangle}{#1\,\delimsize\vert\,\mathopen{}#2}
\newcommand*{\cont}[1]{\mathrm{cont}\parens{#1}}
\newcommand*{\supp}[1]{\mathrm{supp}\parens{#1}}
\newcommand*{\std}[1]{\mathrm{Std}\parens{#1}}
\newcommand*{\rstd}[1]{\mathrm{rStd}\parens{#1}}
\newcommand*{\reading}[1]{\mathrm{read}\parens{#1}}
\newcommand{\N}{\mathbb{N}}
\newcommand{\cR}{\mathcal{R}}
\newcommand{\cL}{\mathcal{L}}
\newcommand{\cB}{\mathcal{B}}
\newcommand{\X}{\mathcal{X}}
\newcommand{\uord}{\mathbf{u}}
\newcommand{\vord}{\mathbf{v}}
\newcommand{\word}{\mathbf{w}}
\newcommand{\rord}{\mathbf{r}}
\newcommand{\sord}{\mathbf{s}}
\newcommand*{\plac}{{\mathsf{plac}}}
\newcommand*{\hypo}{{\mathsf{hypo}}}
\newcommand*{\sylv}{{\mathsf{sylv}}}
\newcommand*{\sylvh}{{\mathsf{sylv}^{\#}}}
\newcommand*{\baxt}{{\mathsf{baxt}}}
\newcommand*{\rst}{{\mathsf{rSt}}}
\newcommand*{\lst}{{\mathsf{lSt}}}
\newcommand*{\mst}{{\mathsf{mSt}}}
\newcommand*{\jst}{{\mathsf{jSt}}}
\newcommand*{\rtg}{{\mathsf{rTg}}}
\newcommand*{\ltg}{{\mathsf{lTg}}}
\newcommand*{\mtg}{{\mathsf{mTg}}}
\newcommand*{\jtg}{{\mathsf{jTg}}}
\newcommand*{\plr}{\mathrm{P}^\rightarrow}
\newcommand*{\prl}{\mathrm{P}^\leftarrow}
\newcommand*{\psymb}{\mathrm{P}}
\newcommand*{\qlr}{\mathrm{Q}^\rightarrow}
\newcommand*{\qrl}{\mathrm{Q}^\leftarrow}
\newcommand*{\qsymb}{\mathrm{Q}}
\newcommand*{\prst}[1]{\prl_{\rst}\parens{#1}}
\newcommand*{\plst}[1]{\plr_{\lst}\parens{#1}}
\newcommand*{\pmst}[1]{\psymb_{\mst}\parens{#1}}
\newcommand*{\pltg}[1]{\plr_{\ltg}\parens{#1}}
\newcommand*{\prtg}[1]{\prl_{\rtg}\parens{#1}}
\newcommand*{\pmtg}[1]{\psymb_{\mtg}\parens{#1}}
\newcommand*{\psylv}[1]{\prl_{\sylv}\parens{#1}}
\newcommand*{\qrst}[1]{\qrl_{\rst}\parens{#1}}
\newcommand*{\qlst}[1]{\qlr_{\lst}\parens{#1}}
\newcommand*{\qmst}[1]{\qsymb_{\mst}\parens{#1}}
\newcommand*{\qltg}[1]{\qlr_{\ltg}\parens{#1}}
\newcommand*{\qrtg}[1]{\qrl_{\rtg}\parens{#1}}
\newcommand*{\qmtg}[1]{\qsymb_{\mtg}\parens{#1}}
\newcommand*{\shrst}[1]{\mathrm{Sh}_{\rst}\parens{#1}}
\newcommand*{\shlst}[1]{\mathrm{Sh}_{\lst}\parens{#1}}
\newcommand*{\shmst}[1]{\mathrm{Sh}_{\mst}\parens{#1}}
\newcommand*{\shrtg}[1]{\mathrm{Sh}_{\rtg}\parens{#1}}
\newcommand*{\shsylv}[1]{\mathrm{Sh}_{\sylv}\parens{#1}}
\newcommand*{\shbaxt}[1]{\mathrm{Sh}_{\baxt}\parens{#1}}
\newcommand*{\vhypo}{\mathbb{V}(\hypo)}
\newcommand*{\vbaxt}{\mathbb{V}(\baxt)}
\newcommand*{\vrst}{\mathbb{V}(\rst)}
\newcommand*{\vlst}{\mathbb{V}(\lst)}
\newcommand*{\vmst}{\mathbb{V}(\mst)}
\newcommand*{\vjst}{\mathbb{V}(\jst)}
\tikzset{
	pretableaumatrix/.style={
		ampersand replacement=\&,
		matrix of math nodes,
		outer sep=1mm,
		inner sep=0mm,
		anchor=center,
		row sep={between borders,-\pgflinewidth},
		column sep={between borders,-\pgflinewidth},
		dottedentry/.style={densely dotted},
		spaceentry/.style={draw=none,execute at begin node=\null},
	},
	pretableaunode/.style={
		font=\small,
		draw=gray,
		sharp corners,
		rectangle,
		anchor=base,
		text height=3.75mm,
		text depth=1.25mm,
		minimum height=5mm,
		minimum width=5mm,
		inner sep=0mm,
		outer sep=0mm,
	},
	tableaumatrix/.style={
		pretableaumatrix,
		every node/.append style={
			pretableaunode,
		},
	},
	medtableaumatrix/.style={
		pretableaumatrix,
		every node/.append style={
			pretableaunode,
			font=\footnotesize,
			text height=2.75mm,
			text depth=.75mm,
			minimum height=3.5mm,
			minimum width=3.5mm
		},
	},
	smalltableaumatrix/.style={
		pretableaumatrix,
		every node/.append style={
			pretableaunode,
			font=\scriptsize,
			text height=1.85mm,
			text depth=.15mm,
			minimum height=2.5mm,
			minimum width=2.5mm,
		},
	},
	tinytableaumatrix/.style={
		pretableaumatrix,
		every node/.append style={
			pretableaunode,
			font=\tiny,
			text height=1.25mm,
			text depth=.15mm,
			minimum height=1.75mm,
			minimum width=1.75mm
		},
	},
	tableau/.style={
		baseline=-1.25mm,
		every matrix/.style={tableaumatrix},
	},
	medtableau/.style={
		baseline=-1.25mm,
		every matrix/.style={medtableaumatrix},
	},
	smalltableau/.style={
		baseline=-1.25mm,
		every matrix/.style={smalltableaumatrix},
	},
	preshapetableaumatrix/.style={
		pretableaumatrix,
		execute at end cell={\strut},
		every node/.append style={
			draw=black,
			anchor=base,
			inner sep=0mm,
			outer sep=0mm,
		},
		shadedentry/.style={fill=gray},
		darkshadedentry/.style={fill=darkgray},
	},
	medshapetableaumatrix/.style={
		preshapetableaumatrix,
		every node/.append style={
			text height=2.75mm,
			text depth=.75mm,
			minimum height=3.5mm,
			minimum width=3.5mm
		},
	},
	shapetableaumatrix/.style={
		ampersand replacement=\&,
		matrix of math nodes,
		outer sep=0mm,
		inner sep=0mm,
		anchor=base,
		row sep={between borders,-\pgflinewidth},
		column sep={between borders,-\pgflinewidth},
		execute at begin cell={\strut},
		every node/.append style={draw,anchor=base,text height=1mm,text depth=.5mm,minimum size=1.5mm,inner sep=0mm,outer sep=0mm},
	},
	shapetableau/.style={
		every matrix/.style={shapetableaumatrix},
	},
	topalign/.style={
		every matrix/.append style={name=maintableau,anchor=maintableau-1-1.base},
		baseline,
	},
}
\newcommand*\tableau[2][]{\tikz[tableau,#1]\matrix{#2};}
\tikzset{
	bst/.style={
		standard/.style={
			font=\small,
			draw=gray,
			rounded rectangle,
			minimum width=4.5mm,
			minimum height=4.5mm,
			inner xsep=0mm,
			inner ysep=1mm,
			outer sep=0mm,
			line width=.5pt,
		},
		empty/.style={
			minimum width=3mm,
			minimum height=3mm,
		},
		triangle/.style={
			isosceles triangle,
			isosceles triangle apex angle=60,
			shape border rotate=90,
			rounded corners=2mm,
			minimum width=8mm,
			inner xsep=0mm,
			inner ysep=.5mm
		},
		blank/.style={
			draw=none,
		},
		nodecount/.style={
			blank,
			font=\scriptsize,
		},
		every node/.style={standard},
		every child/.style={draw=black,line width=.6pt},
		level distance=10mm,
		level 1/.style={sibling distance=60mm},
		level 2/.style={sibling distance=30mm},
		level 3/.style={sibling distance=15mm},
	},
	medbst/.style={
		bst,
		level distance=10mm,
		level 1/.style={sibling distance=15mm},
		level 2/.style={sibling distance=15mm},
		level 3/.style={sibling distance=15mm},
	},
	smallbst/.style={
		bst,
		level distance=8mm,
		level 1/.style={sibling distance=10mm},
		level 2/.style={sibling distance=10mm},
		level 3/.style={sibling distance=10mm},
	},
	tinybst/.style={
		bst,
		level distance=5mm,
		level 1/.style={sibling distance=8mm},
		level 2/.style={sibling distance=8mm},
		level 3/.style={sibling distance=8mm},
		every node/.append style={
			font=\footnotesize,
		},
		triangle/.append style={
			rounded corners=1mm,
			minimum width=7mm,
			inner xsep=-.5mm,
		},
	},
	microbst/.style={
		bst,
		standard/.append style={
			font=\scriptsize,
			minimum width=3mm,
			minimum height=3mm,
			inner ysep=1.5mm,
		},
		level distance=5mm,
		level 1/.style={sibling distance=12mm},
		level 2/.style={sibling distance=12mm},
		level 3/.style={sibling distance=12mm},
	},
	nanobst/.style={
		bst,
		standard/.append style={
			font=\tiny,
			minimum width=2mm,
			minimum height=2mm,
			inner ysep=.25mm,
		},
		level distance=2mm,
		level 1/.style={sibling distance=4mm},
		level 2/.style={sibling distance=4mm},
		level 3/.style={sibling distance=4mm},
	},
}
\title[Meet and join plactic-like monoids]{Plactic-like monoids arising from meets and joins of stalactic and taiga congruences}
\date{September 18, 2023}
\author[Thomas Aird]{Thomas Aird}
\address[Thomas Aird]{Department of Mathematics, The University of Manchester, Alan Turing Building, Oxford Rd, Manchester, M13 9PL, UK.}
\email{thomas.aird@manchester.ac.uk}
\thanks{The first author's work was supported by the London Mathematical Society, the Heilbronn Institute for Mathematical Research, and NOVA University Lisbon.}
\author[Duarte Ribeiro]{Duarte Ribeiro}
\address[Duarte Ribeiro]{Center for Mathematics and Applications (NOVA Math), FCT NOVA, 2829-516 Caparica, Portugal.}
\email{dc.ribeiro@campus.fct.unl.pt}
\thanks{The second author's work was supported by National Funds through the FCT -- Funda\c{c}\~{a}o para a Ci\^{e}ncia e a Tecnologia, I.P., under the scope of the projects PTDC/MAT-PUR/31174/2017, UIDB/04621/2020 and UIDP/04621/2020 (Center for Computational and Stochastic Mathematics), and UIDB/00297/2020 and UIDP/00297/2020 (Center for Mathematics and Applications).}
\begin{document}
	
\begin{abstract}
	We study the four plactic-like monoids that arise by taking the meets and joins of stalactic and taiga congruences. We obtain the combinatorial objects associated with the meet monoids, establishing Robinson--Schensted-like correspondences and giving extraction and iterative insertion algorithms for these objects. We then obtain results on the sizes of classes of words equal in plactic-like monoids, show that some of these monoids are syntactic, and characterise their equational theories.
\end{abstract}

\keywords{Stalactic monoid, taiga monoid, Robinson-Schensted correspondence, hook-length formula, syntacticity, equational theory}
\subjclass{20M05, 05E16, 16T30, 20M35, 20M07}

\maketitle


\section{Introduction} \label{section:introduction}

Plactic-like monoids are quotients of the free monoid sharing the interesting property that their elements are uniquely identified with combinatorial objects, thus giving these monoids a fundamental role in algebraic combinatorics. The namesake of this family is the plactic monoid $\plac$, also known as the monoid of Young tableaux \cite{lothaire_2002}. Defined by Lascoux and Schützenberger \cite{LS1978}, it has been widely applied in several areas of mathematics such as representation theory \cite{green2006polynomial}, symmetric functions \cite{macdonald_symmetric} and crystal bases \cite{bump_crystalbases}. In the last decade, the equational theories of the plactic monoids have received great attention \cite{aird2023semigroup,ckkmo_placticidentity,izhakian_cloaktic,kubat_identities}. In particular, Johnson and Kambites \cite{johnson_kambites_tropical_plactic} gave faithful tropical representations of finite-rank plactic monoids, thus showing that they all satisfy non-trivial identities. 

Other plactic-like monoids arise in the context of combinatorial Hopf algebras whose bases are indexed by combinatorial objects. In particular, the hypoplactic monoid $\hypo$ \cite{novelli_hypoplactic}, the sylvester monoid $\sylv$ \cite{hivert_sylvester}, the Baxter monoid $\baxt$ \cite{giraudo_baxter}, the (left) stalactic monoid $\lst$ \cite{hnt_stalactic} and the (right) taiga monoid $\rtg$ \cite{priez_binary_trees} have recently been studied with regards to their connections with crystal graphs \cite{cm_sylv_crystal,cm_hypo_crystal,cain_gray_malheiro_rewriting}, and their equational theories \cite{cain_johnson_kambites_malheiro_representations_2022,cm_identities,cain_malheiro_ribeiro_sylvester_baxter_2023,cain_malheiro_ribeiro_hypoplactic_2022,han2021preprint}.

The Baxter monoid was introduced by Giraudo \cite{giraudo_baxter} as the analogue of the plactic monoid for Hopf algebras indexed by Baxter permutations. Its corresponding congruence was shown to be the meet of the sylvester congruence and its dual. Thus, its combinatorial objects are pairs of twin binary search trees, that is, pairs of combinatorial objects of the sylvester monoid and its dual that satisfy certain properties. From this connection, a Robinson--Schensted-like correspondence is established, along with several combinatorial properties.

On the other hand, the (right) taiga monoid was defined by Priez \cite{priez_binary_trees} as the quotient of the free monoid by the join of the sylvester and (right) stalactic congruences. Its associated combinatorial objects are binary search trees with multiplicities, and a Robinson--Schensted-like correspondence and `hook-length'-like formula are given.

In this paper, we introduce four plactic-like monoids by factoring the free monoid over an ordered alphabet by the meets and joins of stalactic and taiga congruences, in the vein of Giraudo and Priez. It is organised as follows: In Section~\ref{section:preliminaries}, we first give the necessary background on words. Then, for completeness, we present a detailed description of the combinatorial objects of the stalactic and taiga monoids, giving formal proofs of Robinson--Schensted-like correspondences. We end this section with left and right-oriented definitions of the stalactic and taiga monoids. We define the meet and join plactic-like monoids in Section~\ref{section:meets_and_joins_of_stalactic_and_taiga_congruences}, for which we give presentations, characterise when words are equal in them, and study their compatibility properties.

Section~\ref{section:Robinson_Schensted_like_correspondences} focuses on the combinatorial objects associated with the meet-stalactic and meet-taiga monoids. We define $\psymb$-symbols by taking pairs of stalactic tableaux and binary search trees with multiplicities, respectively, which satisfy certain properties. We similarly define $\qsymb$-symbols and establish Robinson--Schensted-like correspondences. These symbols are analogues of semistandard Young tableaux and recording tableaux, respectively, in the case of the plactic monoid. We then give algorithms to extract words from $\psymb$-symbols. Finally, we give iterative versions of the insertion algorithms, allowing us to easily compute the concatenation of words under these congruences.

In Section~\ref{section:Counting_in_the_stalactic_and_taiga_monoids}, we provide explicit formulas for the sizes of congruence classes in stalactic and taiga monoids, with the exception of the meet-stalactic and meet-taiga monoids. For these, we show that computing the size of congruence classes is equivalent to counting the number of linear extensions in specific posets, for which we give a recursive formula. Then, we provide a `hook-length'-like formula to compute the number of stalactic tableaux which form a pair of twin stalactic tableaux with a given one, and give bounds for the corresponding number for binary search trees with multiplicities.

We study the syntacticity of plactic-like monoids in Section~\ref{section:Syntacticity_of_plactic_like_monoids}, with respect to the shape functions of combinatorial objects. Finally, in Section~\ref{section:Equational_theories_of_meet_and_join_stalactic_and_taiga_monoids}, we give a characterisation of the identities satisfied by the meet and join plactic-like monoids, which can be verified in polynomial time. We then produce a finite basis for each of these monoids and compute their axiomatic ranks.


\section{Preliminaries} \label{section:preliminaries}

We first recall the necessary definitions and notations. For the necessary background on semigroups and monoids, see \cite{howie1995fundamentals}; for presentations, see \cite{higgins1992techniques}. Let $\N = \{1,2,\dots\}$ denote the set of natural numbers, without zero. For $a,b \in \N$, we denote by $[a]$ the set $\{1 < \cdots < a\}$, and by $[[a,b]]$ the set $\{ k \in \N \colon \min{(a,b)} \leq k \leq \max{(a,b)} \}$, or simply $[a,b]$ when $a<b$. Permutations are written in one-line notation, so they can be viewed as words of length $n$ over $[n]$, with no repetitions. We denote the inverse of a permutation $\sigma$ of $[n]$ by $\sigma^{-1}$.


\subsection{Words} \label{subsection:words}

For any non-empty set $\X$, we denote by $\X^*$ the free monoid generated by $\X$, that is, the set of all words over $\X$ under concatenation. We denote the empty word by $\varepsilon$, and refer to $\X$ as an \emph{alphabet}, and to its elements as \emph{letters}. For a word $\word \in \X^*$, we denote its length by $|\word|$ and, for each $x \in \X$, we denote the number of occurrences of $x$ in $\word$ by $|\word|_x$. If $|\word|_x = 1$, we say $x$ is a \emph{simple letter} of $\word$, and we denote the restriction of $\word$ to its simple letters by $\ol{\word}$. For words $\uord,\vord \in \X^*$ we say that $\uord$ is a \emph{factor} of $\vord$ if there exist $\vord_1,\vord_{2} \in \X^*$ such that $\vord = \vord_1 \uord \vord_2$. 

The subset of $\X$ of letters that occur in $\word$ is called the \emph{support} of $\word$, denoted by $\supp{\word}$, and the function from $\X$ to $\N_0$ given by $x \mapsto |\word|_x$ is called the \emph{content} of $\word$, denoted by $\cont{\word}$. Clearly, two words that share the same content also share the same support.

We denote by $\overrightarrow{\rho_\word}$ (resp. $\overleftarrow{\rho_\word}$) the bijection from $\supp{\word}$ to $[|\supp{\word}|]$ mapping each letter $x \in \supp{\word}$ to its position in the ordering of $\supp{\word}$ according to its first (resp. last) occurrence, when reading $\word$ from left-to-right. One can extend this bijection to an isomorphism from $\supp{\word}^*$ to $[|\supp{\word}|]^*$.

\begin{remark}
    The definition of $\overleftarrow{\rho_\word}$ might be slightly counterintuitive since we order by last occurrences from left-to-right instead of first occurrences from right-to-left, but this definition greatly simplifies the notation and arguments used throughout this paper. 
\end{remark}

For a word $\word$ over a totally ordered alphabet $\X$, the \emph{standardised word} of $\word$, denoted by $\std{\word}$, is the unique permutation of size $|\word|$ given by reading $\word$ from left-to-right, attaching the subscript $i$ to the $i$-th occurrence of a letter, then replacing each letter by its rank with regards to the lexicographic order
\[
    a_i < b_j \Leftrightarrow (a < b) \vee ((a = b) \wedge (i < j)).
\]
For $\supp{\word} = \{ a_1 < \cdots < a_k\}$, the \emph{packed} word of $\word$ is the unique word obtained by replacing each $a_i$ in $\word$ with $i$. The \emph{restriction} of $\word$ to a subset $\mathcal{S}$ of $\X$, denoted by $\word[S]$, is the word obtained from $\word$ by removing any letter not in $\mathcal{S}$. The \emph{reverse} of $\word$ is the word whose $i$-th letter is the $(|\word|+1-i)$-th letter of $\word$. For an alphabet $\X$ with $n$ letters, the \emph{Schützenberger involution} is the unique antiautomorphism of $\X^*$ induced by the map sending the $i$-th letter of $\X$ to its $(n+1-i)$-th letter.

We now define a variation of the standardised word: For a word $\word$ over a totally ordered alphabet $\X$, the \emph{reverse-standardised word} of $\word$, denoted by $\rstd{\word}$, is the unique permutation of size $|\word|$ given by reading $\word$ from left-to-right, attaching the subscript $i$ to the $i$-th occurrence of a letter, then replacing each letter by its rank with regards to the lexicographic order
\[
    a_i < b_j \Leftrightarrow (a < b) \vee ((a = b) \wedge (i > j)).
\]
Notice that, if two letters in a word are different, they are ordered in the same way as when computing the standardised word, but if they are the same, the order is reversed. This will be useful to distinguish strictly decreasing sequences.

Let $\X$ be an alphabet and let $\equiv$ be a congruence on $\X^*$. We say the factor monoid $X^*/{\equiv}$ is compatible with:
\begin{itemize}
    \item \emph{standardisation} when $\uord$ and $\vord$ are congruent if and only if they share the same content and their standardised words are congruent, for any $\uord, \vord \in \X^*$;
    \item \emph{reverse-standardisation} when $\uord$ and $\vord$ are congruent if and only if they share the same content and their reverse-standardised words are congruent, for any $\uord, \vord \in \X^*$;
    \item \emph{packing} when $\uord$ and $\vord$ are congruent if and only if they share the same content and their packed words are congruent, for any $\uord, \vord \in \X^*$;
    \item \emph{restriction to alphabet subsets} when $\uord$ and $\vord$ are congruent only if their restrictions to $\mathcal{S}$ are congruent, for any $\uord, \vord \in \X^*$ and any subset $\mathcal{S}$ of $\X$;
    \item \emph{restriction to alphabet intervals} when $\uord$ and $\vord$ are congruent only if their restrictions to $\mathcal{I}$ are congruent, for any $\uord, \vord \in \X^*$ and any interval $\mathcal{I}$ of $\X$;
    \item \emph{word reversals} when $\uord$ and $\vord$ are congruent if and only if their reverses are congruent, for any $\uord, \vord \in \X^*$;
    \item the \emph{Schützenberger involution} when $|\X| \in \N$, and $\uord$ and $\vord$ are congruent if and only if their images under the Schützenberger involution are congruent, for any $\uord, \vord \in \X^*$.
\end{itemize}


\subsection{Combinatorial objects and insertion algorithms} \label{subsection:combinatorial_objects_and_insertion_algorithms}

We now give the necessary background on the combinatorial objects needed to define the stalactic and taiga monoids. For completeness, we give a full proof of the Robinson--Schensted-like correspondences for these objects, which we will require for the following sections. For background on right strict, left strict and pairs of twin binary search trees, see \cite{giraudo_baxter}. We denote any empty combinatorial object by $\perp$. We introduce unlabelled and labelled combinatorial objects, and we refer to the underlying unlabelled objects of labelled objects as \emph{shapes}.


\subsubsection{Composition diagrams, stalactic tableaux and patience-sorting tableaux} \label{subsubsection:composition_diagrams_stalactic_tableaux_and_patience_sorting_tableaux}

A \emph{composition diagram} is a top-aligned finite collection of cells, with no order imposed on the length of the columns. A \emph{stalactic tableau} is an $\N$-labelled composition diagram where all cells in a column share the same label, and no two columns share the same label. An example of a stalactic tableau is the following:
\begin{equation*}
    \label{example:stalactic_tableau}
    \tikz[tableau]\matrix{
		2 \& 1 \& 3 \& 5 \& 4\\
		2 \& 1 \&   \& 5 \&  \\
          \& 1 \&   \&   \&  \\ 
	};
\end{equation*}
We say that a column in a composition diagram or stalactic tableau is \emph{simple} if it only has one cell. A remark about an abuse of language: we refer to a column whose cells are labelled by some letter $a$ by `the column labelled $a$', if the context is clear.
The subset of $\N$ of letters that label columns of a stalactic tableau $T$ is called the \emph{support} of $T$, denoted by $\supp{T}$. We define $\rho_T \colon \supp{T} \to [|\supp{T}|]$ as the function which gives the position of a letter in the top row of $T$, when reading it from left-to-right. If the context is clear, we denote by $\rho_T$ the top row of $T$. 

Algorithm \hyperref[alg:RightStalacticLeftInsertion]{\textsf{rStLI}} allows one to insert a letter into a stalactic tableau, either at the bottom of a previously existing column, or by creating a new column to the left of the tableau:

\begin{algorithm}[H] \label{alg:RightStalacticLeftInsertion}
        \DontPrintSemicolon
        \KwIn{A stalactic tableau $T$, a letter $a \in \N$.}
        \KwOut{A stalactic tableau $a \rightarrow T$.}
        \BlankLine
        \eIf{$a$ does not label any cell in $T$}{add a new cell labelled by $a$ to the left of the top row of $T$;}{add a new cell labelled by $a$ to the bottom of the unique column with cells labelled by $a$;}
        \Return the resulting stalactic tableau $a \rightarrow T$.
        \caption{\textit{Right-Stalactic Left-Insertion} (\textsf{rStLI}).}
\end{algorithm}
We can define a \textit{Left-Stalactic Right-Insertion} (\hyperref[alg:RightStalacticLeftInsertion]{\textsf{lStRI}}) algorithm by replacing ``right'' with ``left'' and ``left'' with ``right'', respectively, in the previous algorithm. 

Using algorithm \hyperref[alg:RightStalacticLeftInsertion]{\textsf{rStLI}} (resp. \hyperref[alg:RightStalacticLeftInsertion]{\textsf{lStRI}}), one can compute a unique stalactic tableau from a word $\word \in \N^*$: Starting from the empty tableau, read $\word$ from right-to-left (resp. left-to-right) and insert its letters one-by-one into the tableau. The resulting tableau is denoted by $\prst{\word}$ (resp. $\plst{\word}$).

\begin{example}
	\label{example:insertion_alg_prst}
    Computing $\prst{212511354}$:
    \begin{gather*}  
		\tableau{
			4 \\
		}
		\quad\xleftarrow{5}\quad 
		\tableau{
			5 \& 4 \\
		}
		\quad\xleftarrow{3}\quad 
		\tableau{
			3 \& 5 \& 4 \\
		}
		\quad\xleftarrow{1}\quad 
		\tableau{
			1 \& 3 \& 5 \& 4 \\
		}
		\quad\xleftarrow{1}\\[10pt]
		\xleftarrow{1}\quad
		\tableau{
			1 \& 3 \& 5 \& 4 \\
		      1 \\
		}
		\quad\xleftarrow{5}\quad
		\tableau{
			1 \& 3 \& 5 \& 4 \\
		      1 \&   \& 5 \\
		}
		\quad\xleftarrow{2}\quad
		\tableau{
			2 \& 1 \& 3 \& 5 \& 4 \\
		       \& 1 \&   \& 5 \\
		}
		\quad\xleftarrow{1}\\[10pt]
		\xleftarrow{1}\quad
		\tableau{
            2 \& 1 \& 3 \& 5 \& 4 \\
		       \& 1 \&   \& 5 \\
              \& 1 \\
		}
        \quad\xleftarrow{2}\quad
		\tableau{
            2 \& 1 \& 3 \& 5 \& 4 \\
		      2 \& 1 \&   \& 5 \\
              \& 1 \\
		}
	\end{gather*}
\end{example}

The \emph{column reading} of a stalactic tableau $T$ is the word $\reading{T}$ given by reading the labels of each column from top to bottom, and from leftmost column to rightmost column. As such, the column reading of a stalactic tableau is a product of words which are powers of letters, where the $i$-th word corresponds to the $i$-th column of the tableau. Notice that reading columns from bottom to top gives the same output. It is also clear that applying algorithm \hyperref[alg:RightStalacticLeftInsertion]{\textsf{rStLI}} or \hyperref[alg:RightStalacticLeftInsertion]{\textsf{lStRI}} to a column reading of a tableau gives back the same tableau. Thus, we have the following:

\begin{lemma} \label{lemma:column_reading_stalactic_tableau}
    For $\word \in \N^*$, the column reading of $\plst{\word}$ (resp. $\prst{\word}$) is the product of powers of the letters of $\word$, where the exponents are the number of occurrences of the letters and the order of the letters is given by $\overrightarrow{\rho_\word}$ (resp. $\overleftarrow{\rho_\word}$).
\end{lemma}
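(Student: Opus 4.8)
The plan is to induct on $|\word|$. I would prove the statement for $\plst{\word}$ in full and then obtain the statement for $\prst{\word}$ by the mirror-image argument. The observation driving the whole proof is that, since all cells in a column of a stalactic tableau carry the same label, that column contributes to the column reading exactly a power of one letter, with exponent equal to its number of cells; so it suffices to track which letter labels which column, how tall each column is, and the left-to-right order of the columns, and then compare this with how the support, the content, and the first-occurrence (resp.\ last-occurrence) ordering of the word behave.

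For the base case $\word = \varepsilon$ one has $\plst{\varepsilon} = \perp$, whose column reading is the empty word, matching the (empty) claimed product. For the inductive step I would write $\word = \vord a$ with $a \in \N$; since Algorithm~\hyperref[alg:RightStalacticLeftInsertion]{\textsf{lStRI}} reads $\word$ from left to right, $\plst{\word} = a \to \plst{\vord}$, and by the induction hypothesis $\reading{\plst{\vord}} = b_1^{|\vord|_{b_1}} \cdots b_m^{|\vord|_{b_m}}$, where $b_1, \dots, b_m$ lists $\supp{\vord}$ in increasing order of $\overrightarrow{\rho_\vord}$. I would then split along the two branches of the insertion rule. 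If $a = b_j \in \supp{\vord}$, the algorithm adds one cell to the bottom of the column labelled $b_j$, so the reading becomes $b_1^{|\vord|_{b_1}} \cdots b_j^{|\vord|_{b_j}+1} \cdots b_m^{|\vord|_{b_m}}$; appending an already-present letter leaves the support and the first-occurrence order unchanged and only increments one entry of the content, so this is precisely the asserted form for $\word$. If $a \notin \supp{\vord}$, the algorithm creates a new one-cell column labelled $a$ at the right end of the top row, so the reading becomes $b_1^{|\vord|_{b_1}} \cdots b_m^{|\vord|_{b_m}} a$; here the first occurrence of $a$ in $\vord a$ is its last position, hence later than that of every $b_i$, so $a$ is last in the $\overrightarrow{\rho_\word}$-order, and again the reading has the asserted form. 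This closes the induction.

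For $\prst{\word}$ I would instead peel off the first letter, writing $\word = a \vord$ and using $\prst{\word} = a \to \prst{\vord}$ (Algorithm~\hyperref[alg:RightStalacticLeftInsertion]{\textsf{rStLI}} reads right to left), then run the same two-branch analysis, the only structural difference being that a new column is now inserted at the left end of the top row. The point I expect to require the most care is the behaviour of $\overleftarrow{\rho}$: prepending $a$ shifts every last-occurrence position of a letter of $\vord$ by one but preserves their relative order, so when $a$ already occurs in $\vord$ its last occurrence remains inside $\vord$ and the $\overleftarrow{\rho}$-order is unchanged, whereas when $a$ is new its unique occurrence is the leftmost letter of $\word$ and so $a$ becomes first in the $\overleftarrow{\rho_\word}$-order --- exactly where the new column is placed. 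Everything else is a direct unwinding of the two insertion rules; as an alternative to this second induction one could instead deduce the $\prst{\word}$ statement from the $\plst{\word}$ one via a left--right reflection of tableaux combined with word reversal, but the direct induction seems cleaner to present.
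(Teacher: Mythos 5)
Your proposal is correct. The paper does not give a formal proof of this lemma at all: it states it as an immediate consequence of the observations that each column of a stalactic tableau contributes a power of its label to the column reading and that the insertion rules create or grow columns exactly according to first (resp.\ last) occurrences. Your induction on $|\word|$, splitting on the two branches of the insertion rule, is a careful formalisation of precisely that observation, so it takes essentially the same route; the only cosmetic issue is that for \textsf{lStRI} the inserted letter goes on the right, so the notation should be $\plst{\vord} \leftarrow a$ rather than $a \to \plst{\vord}$.
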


A \emph{(standard) increasing} (resp. \emph{decreasing}) \emph{patience-sorting tableau} is a composition diagram, labelled by a permutation of $[n]$, where $n$ is the number of cells, such that the top row is strictly increasing from left-to-right and the columns are strictly increasing (resp. strictly decreasing) from top-to-bottom. Examples of increasing and decreasing patience-sorting tableaux, respectively, are the following:
\begin{equation*}
    \label{example:patience_sorting_tableaux}
    \tikz[tableau]\matrix{
		1 \& 2 \& 4 \& 7 \& 9\\
		3 \& 5 \& 8 \&   \&  \\
          \& 6 \&   \&   \&  \\ 
	};
    \quad \text{and} \quad
    \tikz[tableau]\matrix{
		3 \& 6 \& 7 \& 8 \& 9\\
		1 \& 5 \&   \& 4 \&  \\
          \& 2 \&   \&   \&  \\ 
	};
\end{equation*}

\begin{remark}
    The given definitions are variations of the usual definition of patience-sorting tableaux (or piles). Notice that the tableaux defined here are top-aligned. The following insertion algorithm will also differ from the usual patience-sorting algorithm. 
\end{remark}

To obtain an increasing patience-sorting tableau from a permutation, we use Algorithm \hyperref[alg:IncreasingPatienceSortingRightInsertion]{\textsf{iPsRI}} iteratively:

\begin{algorithm}[H] \label{alg:IncreasingPatienceSortingRightInsertion}
        \DontPrintSemicolon
        \KwIn{An increasing patience-sorting tableau $T$, a letter $a \in \N$.}
        \KwOut{An increasing patience-sorting tableau $T \leftarrow a$.}
        \BlankLine
        \eIf{$a$ is greater than the label of every cell in the top row of $T$}{add a new cell labelled by $a$ to the right of the top row of $T$;}{add a new cell to the bottom of the leftmost column whose label is greater than $a$, move each label in the column to the cell strictly below it and label the topmost cell with $a$;}
        \Return the resulting increasing patience-sorting tableau $T \leftarrow a$.
        \caption{\textit{Increasing Patience-Sorting Right-Insertion} (\textsf{iPsRI}).}
\end{algorithm}

We can define a \textit{Decreasing Patience-Sorting Left-Insertion} (\hyperref[alg:IncreasingPatienceSortingRightInsertion]{\textsf{dPsLI}}) algorithm by replacing ``right'' with ``left'', ``left'' with ``right'', and ``greater'' with ``less'', respectively, in the previous algorithm. 

Using algorithm \hyperref[alg:IncreasingPatienceSortingRightInsertion]{\textsf{iPsRI}} (resp. \hyperref[alg:IncreasingPatienceSortingRightInsertion]{\textsf{dPsLI}}), one can compute a unique increasing (resp. decreasing) patience-sorting tableau from a word $\word \in \N^*$: Starting from the empty tableau, read $\rstd{\overrightarrow{\rho_{\word}}(\word)}^{-1}$ (resp. $\rstd{\overleftarrow{\rho_{\word}}(\word)}^{-1}$) from left-to-right (resp. right-to-left) and insert its letters one-by-one into the tableau. The resulting tableau is denoted by $\qlst{\word}$ (resp. $\qrst{\word}$). In other words, to compute $\qlst{\word}$ (resp. $\qrst{\word}$), we first replace each letter in $\word$ by its image under $\overrightarrow{\rho_{\word}}$ (resp. $\overleftarrow{\rho_{\word}}$), reverse-standardise it, and then compute the inverse of the resulting permutation. 

\begin{lemma} \label{lemma:rstd_rho_inverse_positions_of_word}
    Let $\word \in \N^*$. Then, $\rstd{\overrightarrow{\rho_{\word}}(\word)}^{-1}$ (resp. $\rstd{\overleftarrow{\rho_{\word}}(\word)}^{-1}$) is a product of strictly decreasing words, with the $i$-th word giving the positions of the letter $\overrightarrow{\rho_{\word}}^{-1}(i)$ (resp. $\overleftarrow{\rho_{\word}}^{-1}(i)$) occurring in $\word$, in decreasing order. 
\end{lemma}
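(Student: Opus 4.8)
The plan is to compute $\rstd{\overrightarrow{\rho_\word}(\word)}$ explicitly from the definition of reverse-standardisation, then invert the resulting permutation and read off its consecutive blocks. First I would fix notation: set $n = |\word|$, $k = |\supp{\word}|$, and $\vord = \overrightarrow{\rho_\word}(\word)$, a word over $[k]$. Since $\overrightarrow{\rho_\word}$ extends to an isomorphism $\supp{\word}^* \to [k]^*$, it acts letter-by-letter, so it does not move letters: the $p$-th letter of $\vord$ equals $i$ exactly when the $p$-th letter of $\word$ equals $\overrightarrow{\rho_\word}^{-1}(i)$. Hence, for each $i \in [k]$, the positions carrying the letter $i$ in $\vord$ are precisely the positions carrying $\overrightarrow{\rho_\word}^{-1}(i)$ in $\word$, and it suffices to prove that $\rstd{\vord}^{-1}$ is the concatenation $B_1 \cdots B_k$, where $B_i$ lists the positions of $i$ in $\vord$ in strictly decreasing order.

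Next I would unwind $\rstd{\vord}$. For $i \in [k]$, write $c_i = |\vord|_i$, $C_i = c_1 + \cdots + c_i$ (with $C_0 = 0$), and let $p^i_1 < \cdots < p^i_{c_i}$ be the positions of $i$ in $\vord$. In the lexicographic order used for reverse-standardisation, occurrences of distinct letters are ordered by those letters, so the $c_i$ occurrences of $i$ receive exactly the ranks $C_{i-1}+1, \dots, C_i$; and since that order reverses the subscripts (which are assigned $1, \dots, c_i$ from left to right), the leftmost occurrence of $i$, at position $p^i_1$, gets the largest rank $C_i$, while the occurrence at $p^i_j$ gets rank $C_{i-1} + (c_i - j + 1)$. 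This yields the closed form $\rstd{\vord}(p^i_j) = C_{i-1} + c_i - j + 1$.

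Inverting is then immediate: for a rank $r$, write $r = C_{i-1} + s$ with $1 \le s \le c_i$; then $\rstd{\vord}^{-1}(r) = p^i_{c_i - s + 1}$. Letting $s$ range over $1, \dots, c_i$ shows that the entries of $\rstd{\vord}^{-1}$ in positions $C_{i-1}+1, \dots, C_i$ are $p^i_{c_i}, p^i_{c_i - 1}, \dots, p^i_1$, that is, the positions of $i$ in $\vord$ listed in strictly decreasing order. Together with the first paragraph this is the assertion for $\overrightarrow{\rho_\word}$, and the argument for $\overleftarrow{\rho_\word}$ is identical word-for-word, only the bijection $\supp{\word} \to [k]$ being different.

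I do not expect a serious obstacle; the one point requiring care is the bookkeeping of reverse-standardisation — namely that equal letters have their order reversed, so that within each rank-block the \emph{leftmost} occurrence of a letter receives the top rank, which is exactly what forces each block of $\rstd{\vord}^{-1}$ to be decreasing rather than increasing. (It is worth noting, although not needed for the statement, that because $\overrightarrow{\rho_\word}$ orders the support by first occurrence — and $\overleftarrow{\rho_\word}$ by last occurrence — one has $p^i_1 < p^{i+1}_{c_{i+1}}$ for every $i$, so these blocks are in fact the maximal strictly decreasing runs of $\rstd{\vord}^{-1}$.)
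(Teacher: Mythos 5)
Your proof is correct and follows the same idea as the paper's, which simply observes that reverse-standardisation orders letters first by their natural order and then by the reverse of the order of their occurrences; you have merely written out the rank bookkeeping and the inversion explicitly. The closed form $\rstd{\vord}(p^i_j) = C_{i-1} + c_i - j + 1$ and its inverse are exactly right, and your closing parenthetical matches the remark the paper makes immediately after the lemma.
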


\begin{proof}
    The result follows from the fact that reverse-standardisation orders the letters of a word by their natural order first, and then by the reverse of the order of their occurrences. 
\end{proof}

Notice that the last letter of each strictly decreasing word is less than the first letters of words to its right, since the former corresponds to the first occurrence of a letter in $\word$ and the latter corresponds to the last occurrences of letters that only appear after it in $\word$.

\begin{example}
	\label{example:insertion_alg_qlst}
    Computing $\qrst{212511354}$: Let $\word = 212511354$. Then, 
    \[
        \rstd{\overleftarrow{\rho_{\word}}(\word)}^{-1} = \rstd{121422345}^{-1} = 251843679^{-1} = 316527849.
    \]
    \begin{gather*}  
		\tableau{
			9 \\
		}
		\quad\xleftarrow{4}\quad 
		\tableau{
			4 \& 9 \\
		}
		\quad\xleftarrow{8}\quad 
		\tableau{
			8 \& 9 \\
            4   \\
		}
		\quad\xleftarrow{7}\quad 
		\tableau{
			7 \& 8 \& 9 \\
              \& 4   \\
		}
		\quad\xleftarrow{2}\\[10pt]
		\xleftarrow{2}\quad
		\tableau{
			2 \& 7 \& 8 \& 9 \\
              \&   \& 4   \\
		}
		\quad\xleftarrow{5}\quad
		\tableau{
			5 \& 7 \& 8 \& 9 \\
            2 \&   \& 4   \\
		}
		\quad\xleftarrow{6}\quad
		\tableau{
			6 \& 7 \& 8 \& 9 \\
            5 \&   \& 4   \\
            2 \\
		}
		\quad\xleftarrow{1}\\[10pt]
		\xleftarrow{1}\quad
		\tableau{
            1 \& 6 \& 7 \& 8 \& 9 \\
              \& 5 \&   \& 4   \\
              \& 2 \\
		}
        \quad\xleftarrow{3}\quad
		\tableau{
            3 \& 6 \& 7 \& 8 \& 9 \\
            1 \& 5 \&   \& 4   \\
              \& 2 \\
		}
	\end{gather*}
\end{example}

The \emph{column reading} of an increasing (resp. decreasing) patience-sorting tableau $T$ is the word $\reading{T}$ given by reading the labels of each column from bottom to top (resp. from top to bottom), and from leftmost column to rightmost column. As such, the column reading of a patience-sorting tableau will be a product of strictly decreasing words, with the $i$-th word corresponding to the $i$-th column of the tableau. Furthermore, applying algorithm \hyperref[alg:IncreasingPatienceSortingRightInsertion]{\textsf{iPsRI}} and \hyperref[alg:IncreasingPatienceSortingRightInsertion]{\textsf{dPsLI}} to a column reading of a tableau gives back the same tableau. Thus, we can conclude that:

\begin{lemma} \label{lemma:column_reading_patience_sorting_tableau}
For any $\word \in \N^*$, we have 
\[
    \reading{\qlst{\word}} = \rstd{\overrightarrow{\rho_{\word}}(\word)}^{-1} \quad \text{and} \quad \reading{\qrst{\word}} = \rstd{\overleftarrow{\rho_{\word}}(\word)}^{-1}.
\]
\end{lemma}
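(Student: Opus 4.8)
\emph{Plan.} The idea is to recognise $\rstd{\overrightarrow{\rho_{\word}}(\word)}^{-1}$ (resp. $\rstd{\overleftarrow{\rho_{\word}}(\word)}^{-1}$) as the column reading of an explicitly built increasing (resp. decreasing) patience-sorting tableau, and then to invoke the fact recorded just before the statement: applying \textsf{iPsRI} (resp. \textsf{dPsLI}) to the column reading of such a tableau returns that tableau. I would prove the first identity in full and then deduce the second by the same argument, substituting $\overleftarrow{\rho_{\word}}$, decreasing patience-sorting tableaux, and \textsf{dPsLI} (now read right-to-left) for $\overrightarrow{\rho_{\word}}$, increasing patience-sorting tableaux, and \textsf{iPsRI}, keeping in mind that for a decreasing patience-sorting tableau the column reading goes top-to-bottom and the relevant order on $\supp{\word}$ is that of last occurrences.

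For the first identity, write $w' = \rstd{\overrightarrow{\rho_{\word}}(\word)}^{-1}$ and $n = |\word|$. By Lemma~\ref{lemma:rstd_rho_inverse_positions_of_word}, $w' = v_1 \cdots v_k$ where $v_i$ lists, in strictly decreasing order, the positions in $\word$ at which the letter $\ell_i := \overrightarrow{\rho_{\word}}^{-1}(i)$ occurs; since each of the $n$ positions of $\word$ carries exactly one letter, the words $v_1,\dots,v_k$ together use each element of $[n]$ exactly once. I would then form the top-aligned column diagram $T$ whose $i$-th column, read from top to bottom, is the reverse of $v_i$: each column is strictly increasing downwards, and the top cell of the $i$-th column is the minimum of $v_i$, that is, the position of the first occurrence of $\ell_i$ in $\word$. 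The one point requiring care is to check that these top cells are strictly increasing in $i$, so that $T$ is a genuine increasing patience-sorting tableau: this is precisely the defining property that $\overrightarrow{\rho_{\word}}$ orders $\supp{\word}$ by position of first occurrence (note this is slightly stronger than the observation following Lemma~\ref{lemma:rstd_rho_inverse_positions_of_word}, which only compares the minimum of one block with the maxima of later blocks, so I would argue it directly). By construction $\reading{T} = v_1 \cdots v_k = w'$.

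Finally, $\qlst{\word}$ is obtained by inserting the letters of $w'$, from left to right, into the empty tableau via \textsf{iPsRI}; since $w' = \reading{T}$ and \textsf{iPsRI} applied to the column reading of a tableau recovers that tableau, $\qlst{\word} = T$, whence $\reading{\qlst{\word}} = \reading{T} = \rstd{\overrightarrow{\rho_{\word}}(\word)}^{-1}$, and likewise $\reading{\qrst{\word}} = \rstd{\overleftarrow{\rho_{\word}}(\word)}^{-1}$. I do not expect a genuine obstacle: all the real content is in translating Lemma~\ref{lemma:rstd_rho_inverse_positions_of_word} into the tableau picture, and the only things to watch are the strict increase of the top row noted above and keeping the reading-direction conventions straight in the $\qrst$ case.
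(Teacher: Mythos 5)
Your proof is correct and follows essentially the same route as the paper, which derives the lemma from the observations immediately preceding it: $\rstd{\overrightarrow{\rho_{\word}}(\word)}^{-1}$ is, by Lemma~\ref{lemma:rstd_rho_inverse_positions_of_word}, the column reading of an explicit increasing patience-sorting tableau, and \textsf{iPsRI} applied to a column reading recovers the tableau. Your added check that the top row is strictly increasing (via the definition of $\overrightarrow{\rho_{\word}}$) is a worthwhile explicit detail that the paper leaves implicit.
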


As such, we have that the stalactic $\psymb$ and $\qsymb$-symbols have the same structure, in the following sense:

\begin{lemma} \label{lemma:pxst_qxst_same_shape}
    Let $\word \in \N^*$. Then, $\plst{\word}$ (resp. $\prst{\word}$) and $\qlst{\word}$ (resp. $\qrst{\word}$) have the same shape. Furthermore, the letters of $\word$ given by the $i$-th column of $\plst{\word}$ (resp. $\prst{\word}$) have their position given by the letters of the $i$-th column of $\qlst{\word}$ (resp. $\qrst{\word}$).
\end{lemma}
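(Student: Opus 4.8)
The plan is to deduce the whole statement from the column-reading descriptions of the two kinds of tableaux that have already been established, so that no new insertion-by-insertion analysis is needed. First I would record the one structural observation that makes the comparison meaningful: both $\plst{\word}$ and $\qlst{\word}$ (resp. $\prst{\word}$ and $\qrst{\word}$) are top-aligned composition diagrams, so each is determined, up to shape, by the left-to-right sequence of its column heights. Hence to prove the shapes agree it is enough to show that, for every $i$, the $i$-th columns of the two tableaux have the same number of cells; and the ``furthermore'' will drop out of the same bookkeeping.

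Next I would fix $\word$, set $k = \lvert\supp{\word}\rvert$, and for $i \in [k]$ put $x_i = \overrightarrow{\rho_\word}^{-1}(i)$ and $m_i = |\word|_{x_i}$. On the $\psymb$-side, Lemma~\ref{lemma:column_reading_stalactic_tableau} gives that the column reading of $\plst{\word}$ is $x_1^{m_1}x_2^{m_2}\cdots x_k^{m_k}$, whose $i$-th factor is, by definition of the column reading, the top-to-bottom reading of the $i$-th column of $\plst{\word}$; so that column has exactly $m_i$ cells, all labelled $x_i$, and they correspond to the $m_i$ occurrences of $x_i$ in $\word$. On the $\qsymb$-side, Lemmas~\ref{lemma:rstd_rho_inverse_positions_of_word} and~\ref{lemma:column_reading_patience_sorting_tableau} give that the column reading of $\qlst{\word}$ is $\rstd{\overrightarrow{\rho_\word}(\word)}^{-1}$, a product of $k$ strictly decreasing words whose $i$-th factor is the bottom-to-top reading of the $i$-th column of $\qlst{\word}$ and lists exactly the positions of the occurrences of $\overrightarrow{\rho_\word}^{-1}(i) = x_i$ in $\word$; so that column also has $m_i$ cells, with entries the positions of those occurrences.

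Comparing the two computations term by term, the $i$-th columns have the same length $m_i$ for every $i \in [k]$, giving equality of shapes; and since the $i$-th column of $\plst{\word}$ holds the occurrences of $x_i$ in $\word$ while the $i$-th column of $\qlst{\word}$ holds their positions, the second assertion follows at once. The case of $\prst{\word}$ and $\qrst{\word}$ is handled identically, replacing $\overrightarrow{\rho_\word}$ by $\overleftarrow{\rho_\word}$ and left-to-right readings by right-to-left readings throughout.

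I do not expect a genuine obstacle here: the substance is entirely carried by the three column-reading lemmas, and what remains is a matching argument. The only place that needs a little care is the clash of conventions — column readings of stalactic tableaux go top-to-bottom, while those of increasing patience-sorting tableaux go bottom-to-top — so I would be careful to confirm in each case that the ``$i$-th factor of the column reading'' coincides with the ``$i$-th column'', which is what licenses reading off the column-height sequence from the factorisation.
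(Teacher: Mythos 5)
Your proposal is correct and follows essentially the same route as the paper, which simply cites Lemmas~\ref{lemma:column_reading_stalactic_tableau}, \ref{lemma:rstd_rho_inverse_positions_of_word} and \ref{lemma:column_reading_patience_sorting_tableau}; you have just written out the column-by-column matching that those citations leave implicit. The point you flag about identifying the $i$-th factor of the column reading with the $i$-th column is the right one to be careful about, and it is handled by the paper's observation following Lemma~\ref{lemma:rstd_rho_inverse_positions_of_word} together with the fact that re-inserting a column reading recovers the tableau.
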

\begin{proof}
    The result follows from Lemmas~\ref{lemma:column_reading_stalactic_tableau}, \ref{lemma:rstd_rho_inverse_positions_of_word}, and \ref{lemma:column_reading_patience_sorting_tableau}.
\end{proof}

Now, we show the analogue of the Robinson--Schensted correspondence for pairs of stalactic and patience-sorting tableaux. We show the result for the left-insertion algorithm case:

\begin{theorem} \label{theorem:robinson_left_stalactic}
    The map $\word \mapsto (\plst{\word}, \qlst{\word})$ is a bijection between the elements of $\N^*$ and the set formed by the pairs $(T,S)$ where
    \begin{enumerate}[(i)]
        \item $T$ is a stalactic tableau.
        \item $S$ is an increasing patience-sorting tableau.
        \item $T$ and $S$ have the same shape.
    \end{enumerate}
\end{theorem}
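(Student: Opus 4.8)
The plan is to write down an explicit two‑sided inverse. That the map is well defined is immediate: $\plst{\word}$ is a stalactic tableau and $\qlst{\word}$ an increasing patience‑sorting tableau by construction of Algorithms \textsf{lStRI} and \textsf{iPsRI}, and condition~(iii) is exactly Lemma~\ref{lemma:pxst_qxst_same_shape}. So the work is injectivity and surjectivity. Given a pair $(T,S)$ satisfying (i)--(iii), let $a_1,\dots,a_k$ be the column labels of $T$ read left‑to‑right, with respective column heights $m_1,\dots,m_k$, and put $n=\sum_i m_i$. Since $T$ and $S$ have the same shape, the $i$‑th column of $S$ also has $m_i$ cells; and since $S$ is labelled by a permutation of $[n]$, the sets of entries of the columns of $S$ partition $[n]$. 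Define $\word_{(T,S)}\in\N^*$ to be the word of length $n$ whose letter in each position belonging to the $i$‑th column of $S$ is $a_i$. I claim $(T,S)\mapsto\word_{(T,S)}$ is inverse to $\word\mapsto(\plst{\word},\qlst{\word})$.

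\textbf{One direction.} That $\word_{(\plst{\word},\qlst{\word})}=\word$ is essentially a restatement of Lemma~\ref{lemma:pxst_qxst_same_shape}: by Lemma~\ref{lemma:column_reading_stalactic_tableau} the $i$‑th column of $\plst{\word}$ is labelled by $\overrightarrow{\rho_\word}^{-1}(i)$, and by Lemmas~\ref{lemma:rstd_rho_inverse_positions_of_word} and~\ref{lemma:column_reading_patience_sorting_tableau} the $i$‑th column of $\qlst{\word}$ records exactly the positions of $\overrightarrow{\rho_\word}^{-1}(i)$ in $\word$; filling those positions with that letter returns $\word$. This already gives injectivity.

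\textbf{The other direction.} For $(\plst{\word_{(T,S)}},\qlst{\word_{(T,S)}})=(T,S)$, set $\word:=\word_{(T,S)}$. The one point needing care is that the left‑to‑right order of the columns of $S$ coincides with the order $\overrightarrow{\rho_\word}$ of first occurrences in $\word$. This is forced by the shape of an increasing patience‑sorting tableau: each column of $S$ increases from top to bottom, so its least entry is its top cell, and the top row of $S$ increases from left to right, whence the column minima of $S$ increase from left to right. As the positions occupied by $a_i$ in $\word$ are precisely the entries of the $i$‑th column of $S$, the first occurrence of $a_i$ precedes that of $a_{i+1}$, so $\overrightarrow{\rho_\word}(a_i)=i$. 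Now Lemma~\ref{lemma:column_reading_stalactic_tableau} shows $\plst{\word}$ has columns labelled $a_1,\dots,a_k$ of heights $|\word|_{a_i}=m_i$, i.e. $\plst{\word}=T$; and Lemmas~\ref{lemma:rstd_rho_inverse_positions_of_word} and~\ref{lemma:column_reading_patience_sorting_tableau} show $\reading{\qlst{\word}}$ is the product of the strictly decreasing words listing the positions of $a_1,\dots,a_k$ in $\word$, i.e. $\reading{\qlst{\word}}=\reading{S}$; since an increasing patience‑sorting tableau is recovered from its column reading by Algorithm~\textsf{iPsRI}, we get $\qlst{\word}=S$. This completes the bijection.

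\textbf{Main obstacle.} Everything reduces to the lemmas already in hand; the only genuinely delicate step is the bookkeeping in the last paragraph, namely verifying that the column order of $S$ matches the first‑occurrence order $\overrightarrow{\rho_\word}$ and then matching column readings letter‑for‑letter. (One could also prove the left‑oriented statement and deduce the right‑oriented analogue by the symmetry swapping left/right insertion and $\overrightarrow{\rho_\word}$ for $\overleftarrow{\rho_\word}$.)
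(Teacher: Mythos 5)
Your proposal is correct and follows essentially the same route as the paper: the explicit inverse $\word_{(T,S)}$ you construct (placing $a_i$ in the positions recorded by the $i$-th column of $S$) is exactly the paper's reconstruction via $\reading{T}$ and $\reading{S}$, and both arguments hinge on the same lemmas together with the observation that the column minima of an increasing patience-sorting tableau increase left-to-right, forcing $\overrightarrow{\rho_\word}=\rho_T$.
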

\begin{proof}
    Let $\word \in \N^*$. We can see that the map is well-defined by the definition of $\plst{\word}$ and $\qlst{\word}$ and Lemma~\ref{lemma:pxst_qxst_same_shape}. Furthermore, it is possible to reconstruct $\word$ from $(\plst{\word}, \qlst{\word})$ by taking the column readings of $\plst{\word}$, which gives $\cont{\word}$ and $\overrightarrow{\rho_{\word}}$ by Lemma~\ref{lemma:column_reading_stalactic_tableau}, and $\qlst{\word}$, which, by Lemmas~\ref{lemma:rstd_rho_inverse_positions_of_word} and \ref{lemma:column_reading_patience_sorting_tableau}, allows us to reconstruct $\word$ from the information obtained from $\plst{\word}$. In other words, for $\uord = \reading{\plst{\word}}$ and $\sigma = \reading{\qlst{\word}}$, we have that the $\sigma(i)$-th letter of $\word$ is given by the $i$-th letter of $\uord$, for each $1 \leq i \leq |\word|$. As such, the map is injective.
    
    For surjectivity, let $(T,S)$ satisfy conditions \textit{(i)--(iii)}. Let $\uord = \reading{T}$ and $\sigma = \reading{S}$, then define $\word \in \N^*$ such that the $\sigma(i)$-th letter of $\word$ is given by the $i$-th letter of $\uord$, for each $1 \leq i \leq |\uord|$. 

    It is clear that $\cont{\word} = \cont{\uord}$. On the other hand, since $T$ and $S$ have the same shape, it follows from the observations made on column readings and the definition of $\word$ that the letters of $\word$ given by the $i$-th column of $T$ have their position given by the letters of the $i$-th column of $S$. In particular, the position of the first occurrence of a letter is given by the topmost letter in its column. Since $S$ is an increasing patience-sorting tableau, the topmost letter in a column is less than every letter in any column to the right. As such, we have $\overrightarrow{\rho_\word} = \overrightarrow{\rho_\uord} = \rho_T$ and therefore $\plst{\word} = T$.
    
    On the other hand, $\rstd{\overrightarrow{\rho_{\word}}(\word)}^{-1}$ is the unique product of strictly decreasing words, where the $i$-th word gives the positions of the letter $\overrightarrow{\rho_{\word}}^{-1}(i)$ in $\word$, which, by the previous paragraph, corresponds to the $i$-th column of $T$. As such, by Lemma~\ref{lemma:column_reading_patience_sorting_tableau}, we have that $\reading{\qlst{\word}} = \sigma$, and hence, since $\plst{\word}$ and $S$ have the same shape, $\qlst{\word} = S$. Thus, the map is surjective.
\end{proof}

By symmetrical reasoning, we obtain the result for the right-insertion algorithm case:

\begin{theorem} \label{theorem:robinson_right_stalactic}
    The map $\word \mapsto (\prst{\word}, \qrst{\word})$ is a bijection between the elements of $\N^*$ and the set formed by the pairs $(T,S)$ where
    \begin{enumerate}[(i)]
        \item $T$ is a stalactic tableau.
        \item $S$ is a decreasing patience-sorting tableau.
        \item $T$ and $S$ have the same shape.
    \end{enumerate}
\end{theorem}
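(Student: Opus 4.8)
The plan is to run the proof of Theorem~\ref{theorem:robinson_left_stalactic} again, this time under the left--right ``mirror'' substitution that turns every left-insertion ingredient into its right-insertion counterpart: $\textsf{lStRI}\leftrightarrow\textsf{rStLI}$, $\textsf{iPsRI}\leftrightarrow\textsf{dPsLI}$, $\plst{\cdot}\leftrightarrow\prst{\cdot}$, $\qlst{\cdot}\leftrightarrow\qrst{\cdot}$, $\overrightarrow{\rho}\leftrightarrow\overleftarrow{\rho}$, ``first occurrence''$\leftrightarrow$``last occurrence'', ``increasing''$\leftrightarrow$``decreasing'' patience-sorting tableau, and ``read columns bottom-to-top''$\leftrightarrow$``read columns top-to-bottom''. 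The point is that the four lemmas invoked in the proof of Theorem~\ref{theorem:robinson_left_stalactic}, namely Lemmas~\ref{lemma:column_reading_stalactic_tableau}, \ref{lemma:rstd_rho_inverse_positions_of_word}, \ref{lemma:column_reading_patience_sorting_tableau} and \ref{lemma:pxst_qxst_same_shape}, are already stated in both the left and the right form (the ``(resp.)'' clauses), so exactly the same three inputs are available on the right-hand side, and the argument goes through after the substitution.

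Spelled out: \emph{well-definedness and injectivity} are as before. The map is well defined by the resp.\ form of Lemma~\ref{lemma:pxst_qxst_same_shape}; and from $\uord := \reading{\prst{\word}}$ one recovers $\cont{\word}$ and $\overleftarrow{\rho_\word}$ via Lemma~\ref{lemma:column_reading_stalactic_tableau}, while $\sigma := \reading{\qrst{\word}}$, via Lemmas~\ref{lemma:rstd_rho_inverse_positions_of_word} and \ref{lemma:column_reading_patience_sorting_tableau}, encodes the positions, so $\word$ is reconstructed by the rule ``the $\sigma(i)$-th letter of $\word$ is the $i$-th letter of $\uord$''. For \emph{surjectivity}, given $(T,S)$ satisfying conditions \textit{(i)}--\textit{(iii)}, put $\uord = \reading{T}$ (a product of powers of letters, by Lemma~\ref{lemma:column_reading_stalactic_tableau}) and $\sigma = \reading{S}$ (a product of strictly decreasing words, one per column read top-to-bottom), and define $\word$ by the same rule. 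Then $\cont{\word} = \cont{\uord}$, and since $T$ and $S$ share a shape, the positions in $\word$ of the occurrences of the letter labelling column $i$ of $T$ are precisely the entries of column $i$ of $S$. The one step that superficially looks non-symmetric is the passage to $\overleftarrow{\rho_\word} = \overleftarrow{\rho_\uord} = \rho_T$: here one uses that the last occurrence of that letter sits in the cell of \emph{largest} label of the corresponding column of $S$, which for a \emph{decreasing} patience-sorting tableau is the \emph{topmost} cell, and that the top row of $S$ is strictly increasing --- a property shared by increasing and decreasing patience-sorting tableaux. Hence $\prst{\word} = T$; and then $\rstd{\overleftarrow{\rho_{\word}}(\word)}^{-1}$ is forced to equal $\sigma$ by Lemmas~\ref{lemma:rstd_rho_inverse_positions_of_word} and \ref{lemma:column_reading_patience_sorting_tableau}, so $\qrst{\word} = S$ by the shape condition.

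The main obstacle is purely bookkeeping: checking that the mirror substitution is applied consistently throughout --- in particular that the single geometric step above invokes only the strictly increasing top row (valid for both flavours of patience-sorting tableau) and never the strictly increasing \emph{columns} of the left-hand case, and that the reading conventions (bottom-to-top versus top-to-bottom) are swapped in tandem with increasing versus decreasing. An alternative, genuinely ``symmetrical'' route is to transport Theorem~\ref{theorem:robinson_left_stalactic} through the reversal antiautomorphism $\word\mapsto\word^{\,r}$ of $\N^*$, which interchanges $\overrightarrow{\rho}$ and $\overleftarrow{\rho}$ and, up to reflecting every tableau left-to-right, interchanges $\plst{\cdot}$ with $\prst{\cdot}$, $\qlst{\cdot}$ with $\qrst{\cdot}$, and increasing with decreasing patience-sorting tableaux; there the corresponding obstacle is verifying that reverse-standardisation commutes with word reversal in the required way.
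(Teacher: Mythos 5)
Your proposal is correct and matches the paper exactly: the paper proves the left-insertion case in detail and then obtains this theorem ``by symmetrical reasoning,'' which is precisely the mirror substitution you carry out (and your check that the key geometric step only needs the strictly increasing top row, valid for both flavours of patience-sorting tableau, is the right thing to verify). Nothing further is needed.
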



\subsubsection{Binary trees with multiplicities and binary search trees with multiplicities} \label{subsubsection:binary_trees_with_multiplicities_and_binary_search_trees_with_multiplicities}

A \emph{binary tree with multiplicities} (BTM) is a rooted planar binary tree where each node is labelled by a positive integer, called a \emph{multiplicity}. A \emph{binary search tree with multiplicities} (BSTM) is a BTM where each node is further labelled by a letter of $\N$, such that the letter of the left (resp. right) child of any node is strictly less than (resp. strictly greater than) the letter of said node. In other words, if we remove the multiplicities of the nodes in a BSTM, we obtain a binary search tree. For ease of notation, we write multiplicities as superscripts. An example of a BSTM is the following:

\begin{equation}
    \label{example:bstm}
    \begin{tikzpicture}[tinybst,baseline=-4mm]
        \node {$2^2$}
        child { node {$1^2$} }
        child { node {$4^3$}
            child { node {$3^1$} }
            child { node {$5^1$} }
        };
    \end{tikzpicture}
\end{equation}

To be clear on terminology, by \emph{label} of a node of a BSTM, we mean the pair consisting of the letter and multiplicity that labels the node. If we are just referring to the letter, we simply say the \emph{letter} of a node. We say a node in a BSTM is \emph{simple} if it has multiplicity $1$. 

Algorithm \hyperref[alg:TaigaLeafInsertion]{\textsf{TgLI}} allows one to insert a letter into a BSTM, either by a new leaf labelled by it or by increasing the multiplicity of a node labelled by it, and still obtain a BSTM:

\begin{algorithm}[H] \label{alg:TaigaLeafInsertion}
        \DontPrintSemicolon
        \KwIn{A BSTM $T$, a letter $a \in \N$.}
        \KwOut{A BSTM $T \uparrow a$.}
        \BlankLine
        \eIf{$T$ is empty}{add a node labelled $a$ with multiplicity $1$.}{
            let $b$ be the letter of the root node of $T$;\\
            \If{$a < b$}{recursively insert $a$ into the left subtree of the root node;}
            \ElseIf{$a > b$}{recursively insert $a$ into the right subtree of the root node;}
            \Else{increment by 1 the multiplicity of the root node;}
        }
        \Return the resulting tree $T \uparrow a$.
        \caption{\textit{Taiga Leaf Insertion} (\textsf{TgLI}).}
\end{algorithm}

Using Algorithm \hyperref[alg:TaigaLeafInsertion]{\textsf{TgLI}}, one can compute a unique BSTM from a word $\word \in \N^*$: Starting from the empty tree, read $\word$ from right-to-left (resp. left-to-right) and insert its letters one-by-one into the tree. The resulting tree is denoted by $\prtg{\word}$ (resp. $\pltg{\word}$).

\begin{example}
	\label{example:insertion_alg_prtg}
	Computing $\prtg{451423412}$:
	\begin{gather*}  
		\begin{tikzpicture}[tinybst, baseline=-1mm]
			\node {$2^1$};
		\end{tikzpicture}
		\quad\xleftarrow{1}\quad
		\begin{tikzpicture}[tinybst, baseline=-4mm]
			\node {$2^1$}
			child { node {$1^1$} }
            child[missing];
		\end{tikzpicture}
		\quad\xleftarrow{4}\quad
		\begin{tikzpicture}[tinybst, baseline=-4mm]
		    \node {$2^1$}
            child { node {$1^1$} }
            child { node {$4^1$} };
		\end{tikzpicture}
		\quad\xleftarrow{3}\quad
		\begin{tikzpicture}[tinybst, baseline=-6mm]
		    \node {$2^1$}
            child { node {$1^1$} }
            child { node {$4^1$}
                child { node {$3^1$} }
                child [missing]
            };
		\end{tikzpicture}
		\quad\xleftarrow{2}\quad		
		\begin{tikzpicture}[tinybst, baseline=-6mm]
		    \node {$2^2$}
            child { node {$1^1$} }
            child { node {$4^1$}
                child { node {$3^1$} }
                child [missing]
            };
		\end{tikzpicture}
		\quad\xleftarrow{4}\\[10pt]
		\xleftarrow{4}\quad
		\begin{tikzpicture}[tinybst, baseline=-6mm]
		    \node {$2^2$}
            child { node {$1^1$} }
            child { node {$4^2$}
                child { node {$3^1$} }
                child [missing]
            };
		\end{tikzpicture}
		\quad\xleftarrow{1}\quad
		\begin{tikzpicture}[tinybst, baseline=-6mm]
		    \node {$2^2$}
            child { node {$1^2$} }
            child { node {$4^2$}
                child { node {$3^1$} }
                child [missing]
            };
        \end{tikzpicture}
		\quad\xleftarrow{5}\quad
		\begin{tikzpicture}[tinybst, baseline=-6mm]
		    \node {$2^2$}
            child { node {$1^2$} }
            child { node {$4^2$}
                child { node {$3^1$} }
                child { node {$5^1$} }
            };
        \end{tikzpicture}
		\quad\xleftarrow{4}\quad
        \begin{tikzpicture}[tinybst, baseline=-6mm]
		    \node {$2^2$}
            child { node {$1^2$} }
            child { node {$4^3$}
                child { node {$3^1$} }
                child { node {$5^1$} }
            };
        \end{tikzpicture}
	\end{gather*}
\end{example}

The \emph{inorder traversal} of a labelled rooted binary tree is the sequence of nodes obtained by recursively computing the inorder traversal of the left subtree of the root node, then adding the root node to the sequence and then recursively computing the inorder traversal of the right subtree of the root node. We say a node of a tree is its $i$-th node if it is the $i$-th node in the inorder traversal of the tree. 
The \emph{inorder reading} of a BSTM $T$ is the word $\reading{T}$ given by the product of powers of letters labelling the nodes of the inorder traversal, where the exponents are given by the multiplicities. 
Notice that the inorder reading of a BSTM is a weakly increasing sequence. 

Notice that a BSTM cannot have two nodes with the same letter. As such, given a BTM and a subset of $\N$ with size equal to the number of nodes of the tree, there is only one way to label the tree with the letters of the subset and obtain a BSTM (see Definition 8 and the following remark in \cite{priez_binary_trees}).

An \emph{increasing} (resp. \emph{decreasing}) \emph{binary tree} is an $\N$-labelled rooted planar binary tree, such that the label of each node is less than (resp. greater than) the label of its children, and no two nodes have the same label. For $\word \in \N^*$ with no repeated letters, algorithm \hyperref[alg:RecursiveIncreasingBinaryTreeAlgorithm]{\textsf{RiBTA}} allows one to compute an increasing binary tree $\mathrm{incr}\parens{\word}$ obtained from $\word$ in the following way:

\begin{algorithm}[H] \label{alg:RecursiveIncreasingBinaryTreeAlgorithm}
        \DontPrintSemicolon
        \KwIn{A word $\word \in \N^*$ with no repeated letters.}
        \KwOut{An increasing binary tree $\mathrm{incr}\parens{\word}$.}
        \BlankLine
        let $\mathrm{incr}\parens{\word} = \perp$; \\
        \If{$|\word| \geq 1$}{
            let $\word = \uord a \vord$, where $a$ is the least letter of $\word$; \\
            label the root node of $\mathrm{incr}\parens{\word}$ by $a$ and recursively compute its left subtree $\mathrm{incr}\parens{\uord}$ and right subtree $\mathrm{incr}\parens{\vord}$;
        }
        \Return the resulting tree $\mathrm{incr}\parens{\word}$.
        \caption{\textit{Recursive Increasing Binary Tree Algorithm} (\textsf{RiBTA}).}
\end{algorithm}

We can define a \emph{Recursive Decreasing Binary Tree Algorithm} (\hyperref[alg:RecursiveIncreasingBinaryTreeAlgorithm]{\textsf{RdBTA}}) algorithm by replacing ``\textrm{incr}'' with ``\textrm{decr}'' and ``least'' with ``greatest'' in the previous algorithm.


We say that a binary tree whose labels are non-intersecting subsets of $\N$ is an \emph{increasing} (resp. \emph{decreasing}) \emph{binary tree over sets} (BTS) if the binary tree obtained by replacing each set with its minimum (resp. maximum) element is an increasing (resp. decreasing) binary tree. For ease of notation, we label the nodes of these trees with the elements of the subsets only. Examples of increasing and decreasing binary trees over sets are the following:
\begin{equation}
    \label{example:increasing_decreasing_binary_trees_over_sets}
    \begin{tikzpicture}[microbst,baseline=-8mm]
        \node {$1,4,7$}
        child { node {$3,8$} 
            child[missing]
            child { node {$5,9$}
                child[missing]
                child { node {$6$} }
            }
        }
        child[missing]
        child { node {$2$} };
    \end{tikzpicture}
    \quad \text{and} \quad
    \begin{tikzpicture}[microbst, baseline=-6mm]
        \node {$5,9$}
        child { node {$3,8$} }
        child { node {$1,4,7$}
            child { node {$6$} }
            child { node {$2$} }
        };
    \end{tikzpicture}
\end{equation}

Using Algorithm \hyperref[alg:RecursiveIncreasingBinaryTreeAlgorithm]{\textsf{RiBTA}}, (resp. \hyperref[alg:RecursiveIncreasingBinaryTreeAlgorithm]{\textsf{RdBTA}}) one can compute a unique increasing (resp. decreasing) BTS from a word $\word \in \N^*$:
Let $\supp{\word} = \{x_1 < \dots < x_k\}$. For each $i \in [k]$, consider the set $W_i$ of all $j \in [|\word|]$ such that $x_i$ is the $j$-th letter of $\word$, when reading $\word$ from left-to-right. Let $\vord = v_1\dots v_k \in \N^*$ be the word where $v_i = \min(W_i)$ (resp. $v_i = \max(W_i)$). Now, apply \hyperref[alg:RecursiveIncreasingBinaryTreeAlgorithm]{\textsf{RiBTA}}, (resp. \hyperref[alg:RecursiveIncreasingBinaryTreeAlgorithm]{\textsf{RdBTA}}) to $\vord$ to obtain an increasing (resp. decreasing) binary tree. Finally, for each $i \in [k]$ replace the label $v_i$ in the tree with the set $W_i$. The resulting tree is denoted $\qltg{\word}$ (resp. $\qrtg{\word}$).

\begin{example}
	\label{example:insertion_alg_qrtg}
	Computing $\qrtg{451423412}$: Let $\word = 451423412$. Then, $W_1 = \{3,8\}$, $W_2 = \{5,9\}$, $W_3 = \{6\}$, $W_4 = \{1,4,7\}$ and $W_5 = \{2\}$ and, as such, $\vord = 89672$.
	
	\begin{gather*}  
		\begin{tikzpicture}[microbst, baseline=-1mm]
			\node {$9$};
		\end{tikzpicture}
		\quad\xleftarrow{8,7}\quad
		\begin{tikzpicture}[microbst, baseline=-4mm]
		    \node {$9$}
            child { node {$8$} }
            child { node {$7$} };
		\end{tikzpicture}
		\quad\xleftarrow{6,2}\quad
		\begin{tikzpicture}[microbst, baseline=-6mm]
		    \node {$9$}
            child { node {$8$} }
            child { node {$7$}
                child { node {$6$} }
                child { node {$2$} }
            };
        \end{tikzpicture}
		\quad\leftarrow\quad
        \begin{tikzpicture}[microbst, baseline=-6mm]
		    \node {$5,9$}
            child { node {$3,8$} }
            child { node {$1,4,7$}
                child { node {$6$} }
                child { node {$2$} }
            };
        \end{tikzpicture}
	\end{gather*}
\end{example}

\begin{lemma} \label{lemma:pxtg_qxtg_same_shape}
    Let $\word \in \N^*$ and $\supp{\word} = \{x_1 < \dots < x_k\}$. Then, $\pltg{\word}$ (resp. $\prtg{\word}$) and $\qltg{\word}$ (resp. $\qrtg{\word}$) have the same underlying binary tree shape. Furthermore, for each $i \in [k]$, the letters $x_i$ have their positions in $\word$ given by the label of the $i$-th node of $\qltg{\word}$ (resp. $\qrtg{\word}$).
\end{lemma}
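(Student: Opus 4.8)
The plan is to establish the statements concerning $\pltg{\word}$ and $\qltg{\word}$; the ones concerning $\prtg{\word}$ and $\qrtg{\word}$ then follow by the symmetric argument obtained by reversing words --- reading a word from right to left turns first occurrences into last occurrences, and the definition of $\qrtg{\word}$ is the mirror of that of $\qltg{\cdot}$, with \textsf{RdBTA} in place of \textsf{RiBTA}.

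I would deal with the ``furthermore'' clause first, as it is short and independent of the main induction. The fact needed is that, for any word $\uord \in \N^*$ with pairwise distinct letters, the inorder traversal of $\mathrm{incr}(\uord)$ reads off $\uord$; this is immediate by induction on $|\uord|$ from the recursive definition of \textsf{RiBTA}, which puts the least letter $a$ at the root and forms the left and right subtrees from the prefix before $a$ and the suffix after $a$. Applying this to $\vord = v_1 \cdots v_k$ --- whose letters are distinct, being positions in $\word$ --- the $i$-th node of $\mathrm{incr}(\vord)$ in the inorder traversal carries the label $v_i$, so after the relabelling $v_i \mapsto W_i$ performed in the construction of $\qltg{\word}$, the $i$-th node of $\qltg{\word}$ carries the set $W_i$, which is exactly the set of positions of $x_i$ in $\word$.

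For the shape equality I would induct on $k = |\supp{\word}|$, the case $k = 0$ being trivial. For $k \geq 1$, let $x_m$ be the first letter of $\word$ and put $\word_L = \word[\{x_1, \dots, x_{m-1}\}]$ and $\word_R = \word[\{x_{m+1}, \dots, x_k\}]$, each having strictly fewer than $k$ distinct letters. When $\word$ is read from left to right and inserted via \textsf{TgLI}, the first letter creates the root, labelled $x_m$; every later occurrence of $x_m$ only increments the root's multiplicity, and every smaller (resp.\ larger) letter is inserted recursively into the left (resp.\ right) subtree of the root --- so the left and right subtrees of $\pltg{\word}$ are exactly $\pltg{\word_L}$ and $\pltg{\word_R}$. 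On the other hand, since $x_m$ occurs first, $v_m = \min W_m = 1$ is the least letter of $\vord$ and occurs at index $m$, so \textsf{RiBTA} splits $\vord$ there and the left and right subtrees of $\mathrm{incr}(\vord)$ are $\mathrm{incr}(v_1 \cdots v_{m-1})$ and $\mathrm{incr}(v_{m+1} \cdots v_k)$. Since $v_1, \dots, v_{m-1}$ record the first occurrences in $\word$ of $x_1, \dots, x_{m-1}$, and deleting from $\word$ every letter outside $\{x_1, \dots, x_{m-1}\}$ preserves the relative order of those occurrences, the word $v_1 \cdots v_{m-1}$ is order-isomorphic to the word that the construction of $\qltg{\word_L}$ builds from $\word_L$; as \textsf{RiBTA} compares only letters, these produce trees of the same shape, so the left subtree of $\qltg{\word}$ has the same shape as $\qltg{\word_L}$, and similarly on the right. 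The inductive hypothesis now identifies the shapes of $\pltg{\word_L}$ and $\qltg{\word_L}$ and of $\pltg{\word_R}$ and $\qltg{\word_R}$; since $\pltg{\word}$ and $\qltg{\word}$ each consist of a root carrying these two subtrees, they have the same underlying binary tree.

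The one point that needs care --- and the reason the induction must be carried out at the level of shapes rather than of labelled trees --- is that the position sets decorating the subtrees of $\qltg{\word}$ are not the same as those decorating $\qltg{\word_L}$ and $\qltg{\word_R}$, since restricting $\word$ renumbers its letters; the remedy, already used above, is that \textsf{RiBTA} depends only on the relative order of the entries of its input, so the passage to restricted words changes no shapes. The remainder is bookkeeping.
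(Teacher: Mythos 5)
Your proof is correct, but it takes a genuinely different route from the paper's. The paper treats the right-hand case and reduces the shape statement to the sylvester Robinson--Schensted correspondence of Hivert, Novelli and Thibon: it passes from the word $\vord$ of last-occurrence positions to its standardisation, invokes the cited fact that $\prtg{\std{\vord}^{-1}}$ has the same shape as $\mathrm{decr}(\std{\vord})$ (viewing a BSTM with only simple nodes as a right strict binary search tree), and then transports the shape equality back to $\prtg{\word}$ by observing that the shape of $\prtg{\word}$ depends only on the last occurrences of its letters. You instead give a self-contained induction on $|\supp{\word}|$: both $\pltg{\word}$ and $\qltg{\word}$ decompose at a root determined by the first letter $x_m$ of $\word$, with subtrees governed by the restrictions $\word[\{x_1,\dots,x_{m-1}\}]$ and $\word[\{x_{m+1},\dots,x_k\}]$, and the order-isomorphism between $v_1\cdots v_{m-1}$ and the position word built from the restricted word lets the induction close at the level of shapes. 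Your treatment of the ``furthermore'' clause --- the inorder traversal of $\mathrm{incr}(\vord)$ reads off $\vord$, so the $i$-th node of $\qltg{\word}$ carries $W_i$ --- is also more direct than the paper's, which extracts it from the node correspondence at the end. Both arguments are sound; the paper's leans on an external result and is correspondingly brief, while yours is elementary and makes the parallel recursive structure of \textsf{TgLI} and \textsf{RiBTA} explicit. You also correctly isolate the one delicate point, namely that restriction renumbers positions, so the induction must be carried at the level of shapes rather than of labelled trees.
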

\begin{proof}
    We prove the case for $\qrst{\word}$ and $\prst{\word}$. The proof for the remaining case is symmetrical.

    By the definition of $\qrst{\word}$, its underlying binary tree shape is given by $\mathrm{decr}(\vord)$, where $\vord = v_1 \cdots v_k$ is such that $v_i$ is the position of the last occurrence of $x_i$ in $\word$. Furthermore, the $i$-th node of $\qrst{\word}$ is the unique node whose label is a set containing the label of the $i$-th node of $\mathrm{decr}(\vord)$.
    
    Notice that $\vord$ is a word where no letter is repeated. Thus, $\mathrm{decr}(\vord)$ has the same shape as $\mathrm{decr}(\std{\vord})$ and there is an order-preserving bijection taking the label of the $i$-th of $\mathrm{decr}(\vord)$ to that of the $i$-th node of $\mathrm{decr}(\std{\vord})$. 
    
    By \cite[Theorem~13]{hivert_sylvester}, which gives the sylvester Robinson--Schensted correspondence, we have that $\prtg{\std{\vord}^{-1}}$ has the same shape as $\mathrm{decr}(\std{\vord})$, as Algorithm \hyperref[alg:TaigaLeafInsertion]{\textsf{TgLI}} outputs the same result as the algorithm given in \cite[Definition~7]{hivert_sylvester} for right strict binary search trees, if the input is a permutation and one views a BSTM with only simple nodes as a right strict binary search tree.
    Moreover, the label of $i$-th node of $\mathrm{decr}(\std{\vord})$ gives the position of $i$ in $\std{\vord}^{-1}$. 
    
    Notice that the binary tree shape of $\prtg{\word}$ is determined by the last occurrences of letters in $\word$, and that $\std{\vord}^{-1}$ is the word obtained from $\word$ by only keeping the last occurrences of each letter and replacing each $x_i$ with $i$. As the map $x_i \mapsto i$ is an order-preserving bijection, we conclude that $\prtg{\std{\vord}^{-1}}$ has the same shape as $\prtg{\word}$.
    
    In conclusion, the $i$-th node of $\prtg{\word}$ corresponds to the $i$-th node of $\qrtg{\word}$, thus, by the definition of $\qrtg{\word}$, the positions of $x_i$ in $\word$ are given by the label of the $i$-th node of $\qrtg{\word}$.
\end{proof}

Now, we show the analogue of the Robinson--Schensted correspondence for pairs of BSTMs and BTSs. We show the result for the right-insertion algorithm case:

\begin{theorem} \label{theorem:robinson_right_taiga}
    The map $\word \mapsto (\prtg{\word}, \qrtg{\word})$ is a bijection between the elements of $\N^*$ and the set formed by the pairs $(T,S)$ where
    \begin{enumerate}[(i)]
        \item $T$ is a BSTM.
        \item $S$ is a decreasing BTS such that the union of the sets labelling $S$ is the interval $[m]$, where $m$ is the sum of the multiplicities of $T$.
        \item $T$ and $S$ have the same underlying binary tree shape.
        \item the multiplicity of the $i$-th node of $T$ is the cardinality of the set labelling the $i$-th node of $S$.
    \end{enumerate}
\end{theorem}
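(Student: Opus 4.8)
The plan is to mirror, almost verbatim, the three-step argument used for Theorem~\ref{theorem:robinson_left_stalactic}, with BSTMs and BTSs playing the roles of stalactic and patience-sorting tableaux, and Lemma~\ref{lemma:pxtg_qxtg_same_shape} playing the role of Lemma~\ref{lemma:pxst_qxst_same_shape}. First I would check that the map is well-defined, i.e.\ that $(\prtg{\word},\qrtg{\word})$ satisfies \textit{(i)--(iv)} for every $\word\in\N^*$. Condition \textit{(i)} is immediate from Algorithm~\hyperref[alg:TaigaLeafInsertion]{\textsf{TgLI}}. For \textit{(ii)}, the sets $W_i$ occurring in the construction of $\qrtg{\word}$ partition $[|\word|]$, so their union is $[|\word|]$, and $|\word|$ is exactly the number of letters inserted by \textsf{TgLI}, hence the sum of the multiplicities of $\prtg{\word}$. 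Conditions \textit{(iii)} and \textit{(iv)} are precisely the two conclusions of Lemma~\ref{lemma:pxtg_qxtg_same_shape}, the multiplicity of the $i$-th node of $\prtg{\word}$ being $|\word|_{x_i}=|W_i|$.

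For injectivity, I would reconstruct $\word$ from $(\prtg{\word},\qrtg{\word})$: the inorder reading of $\prtg{\word}$ yields $\supp{\word}=\{x_1<\cdots<x_k\}$ (the node letters increase along the inorder traversal of a BSTM) together with the multiplicities, while by Lemma~\ref{lemma:pxtg_qxtg_same_shape} the set labelling the $i$-th node of $\qrtg{\word}$ is exactly the set of positions of $x_i$ in $\word$. Since \textit{(ii)} and the non-intersecting property of the labels of a BTS make these sets a partition of $[|\word|]$, defining the $j$-th letter of $\word$ to be $x_i$ for the unique $i$ with $j$ in the $i$-th label of $\qrtg{\word}$ recovers $\word$; hence the map is injective.

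Surjectivity is the step that needs the most care. Given $(T,S)$ satisfying \textit{(i)--(iv)}, let $x_1<\cdots<x_k$ be the letters of the nodes of $T$ read in inorder, and let $W_i$ be the set labelling the $i$-th node of $S$ (nodes of $S$ and $T$ being identified via the common shape from \textit{(iii)}); by \textit{(ii)}, \textit{(iv)} and non-intersection, $\{W_i\}$ partitions $[m]$, where $m=\sum_i m_i$, with $|W_i|=m_i\geq 1$. I would then let $\word$ be the length-$m$ word whose $j$-th letter is $x_i$ whenever $j\in W_i$, so that $\supp{\word}=\{x_1,\dots,x_k\}$ and the positions of $x_i$ in $\word$ are exactly $W_i$. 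Running the construction of $\qrtg{\word}$ on this $\word$ reproduces these sets $W_i$, hence the word $\vord=v_1\cdots v_k$ with $v_i=\max W_i$, hence $\mathrm{decr}(\vord)$ with each $v_i$ relabelled by $W_i$; the only point that is not purely formal is the observation that a decreasing binary tree is determined by the inorder sequence of its labels (a one-line induction: the maximum label is the root, and it splits the sequence into the inorder readings of the two subtrees), which identifies $\mathrm{decr}(\vord)$ with the common underlying shape of $T$ and $S$ and hence gives $\qrtg{\word}=S$. Finally, $\prtg{\word}$ has that same underlying shape by Lemma~\ref{lemma:pxtg_qxtg_same_shape}, support $\{x_1,\dots,x_k\}$, and $i$-th multiplicity $|\word|_{x_i}=|W_i|=m_i$; since a BSTM is determined by its underlying shape together with its multiplicities and support (the binary-search labelling of the nodes being then forced, as noted after the definition of BSTM), this gives $\prtg{\word}=T$, so the map is surjective. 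The main obstacle is therefore the careful bookkeeping in this last step rather than any genuinely new idea.
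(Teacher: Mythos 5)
Your proposal is correct and follows essentially the same route as the paper: the same well-defined/injective/surjective structure, the same reliance on Lemma~\ref{lemma:pxtg_qxtg_same_shape}, and the same candidate preimage word in the surjectivity step. The only (harmless) difference is that you establish $\qrtg{\word}=S$ directly, via the uniqueness of a decreasing binary tree with a prescribed inorder label sequence, and then deduce $\prtg{\word}=T$ from the shared shape and the rigidity of BST labellings, whereas the paper argues $\prtg{\word}=T$ directly from the ancestor relations forced by Algorithm \textsf{TgLI} and obtains $\qrtg{\word}=S$ from the lemma.
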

\begin{proof}
    Let $\word \in \N^*$ and $\supp{\word} = \{x_1,\dots,x_k\}$. We can see that the map is well-defined by the definition of $\prtg{\word}$ and $\qrtg{\word}$ and Lemma~\ref{lemma:pxtg_qxtg_same_shape}. Furthermore, it is possible to reconstruct $\word$ from $(\prtg{\word}, \qrtg{\word})$ as by Lemma~\ref{lemma:pxtg_qxtg_same_shape} the $i$-th node of $\qrtg{\word}$ gives the positions of $x_i$ in $\word$. As such, the map is injective.
    
    For surjectivity, let $(T,S)$ satisfy conditions \textit{(i)--(iv)}. Let $\supp{\reading{T}} = \{x_1,\dots,x_k\}$. Now, define $\word \in \N^*$ as the rearrangement of $\reading{T}$ where the positions of the letter $x_i$ are given by the set labelling the $i$-th node of $S$. By conditions \textit{(ii)} and \textit{(iv)}, $\word$ is well-defined.
    
    We aim to show $\prtg{\word} = T$ and $\qrtg{\word} = S$. By definition, we have $\cont{\word} = \cont{\reading{T}}$. Since $S$ is a decreasing BTS and has the same underlying binary tree shape as $T$, then, by Algorithm \hyperref[alg:TaigaLeafInsertion]{\textsf{TgLI}}, the node labelled $x_i$ is an ancestor of the node labelled $x_j$ in $\prtg{\word}$ if and only if the same occurs in $T$. Thus, $\prtg{\word} = T$. On the other hand, by the definition of $\word$ and Lemma~\ref{lemma:pxtg_qxtg_same_shape}, we have that $\qrtg{\word} = S$. Hence, the map is surjective.
\end{proof}

By symmetrical reasoning, we obtain the result for the left-insertion algorithm case:
\begin{theorem} \label{theorem:robinson_left_taiga}
    The map $\word \mapsto (\pltg{\word}, \qltg{\word})$ is a bijection between the elements of $\N^*$ and the set formed by the pairs $(T,S)$ where
    \begin{enumerate}[(i)]
        \item $T$ is a BSTM.
        \item $S$ is an increasing BTS such that the union of the sets labelling $S$ is the interval $[m]$, where $m$ is the sum of the multiplicities of $T$.
        \item $T$ and $S$ have the same underlying binary tree shape.
        \item the multiplicity of the $i$-th node of $T$ is the cardinality of the set labelling the $i$-th node of $S$.
    \end{enumerate}
\end{theorem}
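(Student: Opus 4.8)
The plan is to mirror the proof of Theorem~\ref{theorem:robinson_right_taiga}, exchanging throughout the roles of right-insertion and left-insertion, of last occurrences and first occurrences, and of decreasing and increasing binary trees over sets. First I would check that the map is well-defined: by the definitions of $\pltg{\word}$ and $\qltg{\word}$ together with Lemma~\ref{lemma:pxtg_qxtg_same_shape} (which is already stated for both orientations), $\pltg{\word}$ is a BSTM, $\qltg{\word}$ is an increasing BTS, the two trees share the same underlying binary tree shape, and the multiplicity of the $i$-th node of $\pltg{\word}$ equals the cardinality of the set labelling the $i$-th node of $\qltg{\word}$. Since the union of the labelling sets of $\qltg{\word}$ is exactly $[|\word|]$ and $|\word|$ equals the sum of the multiplicities of $\pltg{\word}$, conditions \textit{(i)--(iv)} hold, so the pair lies in the target set.

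For injectivity I would invoke Lemma~\ref{lemma:pxtg_qxtg_same_shape} directly: writing $\supp{\word} = \{x_1 < \dots < x_k\}$, the label of the $i$-th node of $\qltg{\word}$ records precisely the positions at which $x_i$ occurs in $\word$, so $\word$ is recovered from $(\pltg{\word}, \qltg{\word})$ by reading off, for each position, which $x_i$ it belongs to. For surjectivity, given a pair $(T,S)$ satisfying \textit{(i)--(iv)}, set $\supp{\reading{T}} = \{x_1 < \dots < x_k\}$ and define $\word$ to be the rearrangement of the weakly increasing word $\reading{T}$ in which $x_i$ is placed at the positions forming the set labelling the $i$-th node of $S$; conditions \textit{(ii)} and \textit{(iv)} guarantee this is a well-defined word of length $m$, and by construction $\cont{\word} = \cont{\reading{T}}$. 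I would then argue $\pltg{\word} = T$: since $S$ is an \emph{increasing} BTS with the same underlying shape as $T$, the node of $S$ labelled by the set containing $\min W_i$ is an ancestor of the one containing $\min W_j$ exactly when the analogous ancestor relation holds in $T$; because reading $\word$ from left-to-right inserts the first occurrence of $x_i$ before that of $x_j$ precisely in that case, Algorithm~\hyperref[alg:TaigaLeafInsertion]{\textsf{TgLI}} reconstructs the same tree structure, and the multiplicities agree since $\cont{\word}$ is already determined. Finally $\qltg{\word} = S$ follows from the definition of $\word$ and Lemma~\ref{lemma:pxtg_qxtg_same_shape}, since the $i$-th node of $\qltg{\word}$ records the positions of $x_i$ in $\word$, which are exactly the set labelling the $i$-th node of $S$.

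The step I expect to need the most care is the verification that $\pltg{\word} = T$ in the surjectivity argument: one must confirm that the left-to-right insertion order of first occurrences of distinct letters, combined with the increasing-BTS hypothesis on $S$, forces Algorithm~\hyperref[alg:TaigaLeafInsertion]{\textsf{TgLI}} to attach each new letter as a child of exactly the node prescribed by the shape of $T$. Everything else is a routine transcription of the right-handed proof, since Lemma~\ref{lemma:pxtg_qxtg_same_shape} already packages the symmetric bookkeeping for both orientations.
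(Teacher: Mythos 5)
Your proposal is correct and takes essentially the same route as the paper: the paper proves the right-insertion case (Theorem~\ref{theorem:robinson_right_taiga}) in detail and obtains this statement ``by symmetrical reasoning,'' and your argument is precisely that symmetric transcription, with well-definedness and injectivity via Lemma~\ref{lemma:pxtg_qxtg_same_shape} and surjectivity via the same reconstruction of $\word$ from $(T,S)$.
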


\subsection{The stalactic and taiga monoids} \label{subsection:the_stalactic_and_taiga_monoids}

We now give the definitions and required properties of the (right and left) stalactic and taiga monoids. For background on the sylvester monoid, see \cite{hivert_sylvester}; on the Baxter monoid, see \cite{giraudo_baxter}; on the identities satisfied by these monoids, see \cite{cain_malheiro_ribeiro_sylvester_baxter_2023}. 

We define the \emph{right stalactic congruence} $\equiv_\rst$ on $\N^*$ in the following way: for $\uord,\vord \in \N^*$,
\[ \uord \equiv_\rst \vord \Leftrightarrow \prst{\uord} = \prst{\vord} \]
and is generated by the relations
\[ \cR_\rst = \{(ba \uord b,ab \uord b)\colon a,b \in \N, \uord \in \N^*\}. \]

The factor monoid $\N^*/{\equiv_\rst}$ is the infinite-rank \emph{right stalactic monoid}, denoted by $\rst$. The congruence naturally restricts to a congruence on $[n]^*$, for each $n \in \N$, and the corresponding factor monoid $[n]^*/{\equiv_\rst}$ is the right stalactic monoid of rank $n$. Notice that any right stalactic monoid of rank $n$ is (isomorphic to) a submonoid of all right stalactic monoids of rank greater than $n$.

It follows from the definition of $\rst$ that any element $[\uord]_\rst$ of $\rst$ can be uniquely identified with the stalactic tableau $\prst{\uord}$. All words in each $\equiv_\rst$-class share the same content (and therefore the same support), hence, we extend the definition of content and support to $\equiv_\rst$-classes and stalactic tableaux in a natural way. On the other hand, it follows from the defining relations of the right stalactic congruence that two words $\uord, \vord \in \N^*$ are $\equiv_\rst$-congruent if and only if $\cont{\uord} = \cont{\vord}$ and $\overleftarrow{\rho_\uord} = \overleftarrow{\rho_\vord}$. From this, it is immediate that $\rst$ is left-cancellative.

The left stalactic monoids $\lst$, right taiga monoids $\rtg$ and left taiga monoids $\ltg$ are defined in a similar manner. Recall that the sylvester and \#-sylvester congruences are generated, respectively, by the relations
\begin{align*}
    \cR_\sylv = \{(ca \uord b,ac \uord b) \colon a \leq b < c, \uord \in \N^*\} &\quad \text{and} \\ 
    \cR_\sylvh = \{(b \uord ac,b \uord ca) \colon a < b \leq c, \uord \in \N^*\}&.
\end{align*}
The left-stalactic congruence is generated by the relations
\[ \cR_\lst = \{(b \uord ab,b \uord ba)\colon a,b \in \N, \uord \in \N^*\}.\]
The right-taiga and left-taiga congruences are generated, respectively, by the relations
\[
    \cR_\rtg = \cR_\sylv \cup \cR_\rst \quad \text{and} \quad \cR_\ltg = \cR_\sylvh \cup \cR_\lst.
\]

It follows from the defining relations of the left stalactic congruence that two words $\uord, \vord \in \N^*$ are $\equiv_\lst$-congruent if and only if $\cont{\uord} = \cont{\vord}$ and $\overrightarrow{\rho_\uord} = \overrightarrow{\rho_\vord}$. From this, it is immediate that $\lst$ is right-cancellative.

The proof of \cite[Lemma~17]{cm_identities} can be easily adapted to show that $\rtg$ is left-cancellative and $\ltg$ is right-cancellative.



\section{Meets and joins of stalactic and taiga congruences} \label{section:meets_and_joins_of_stalactic_and_taiga_congruences}

In this section, we define four plactic-like monoids arising from the meets and joins of the stalactic and taiga congruences, and study their compatibility properties.


\subsection{Monoids arising from meets and joins of stalactic and taiga congruences} \label{subsection:monoids_arising_from_meets_and_joins_of_stalactic_and_taiga_congruences}

We first define the meets and joins of the stalactic and taiga congruences, respectively, and then their corresponding monoids. We then give presentations for the meet monoids and characterise words that are equal in the join monoids.


\subsubsection{The meet-stalactic and join-stalactic monoids} \label{subsubsection:the_meet_stalactic_and_join_stalactic_monoids}

We define the \emph{meet-stalactic} congruence $\equiv_{\mst}$ and the \emph{join-stalactic} congruence $\equiv_{\jst}$ as, respectively, the meet and join of the right-stalactic and left-stalactic congruences, and define the infinite-rank \emph{meet-stalactic} monoid $\mst$ and \emph{join-stalactic} monoid $\jst$ as quotients of $\N^*$ by their respective congruences. These congruences naturally restrict to congruences on $[n]^*$, for each $n \in \N$, and the corresponding factor monoids $[n]^*/{\equiv_\mst}$ and $[n]^*/{\equiv_\jst}$ are the meet-stalactic and join-stalactic monoids of rank $n$.

Notice that two words $\uord,\vord \in \N^*$ are $\equiv_\mst$-congruent if and only if they share the same content, $\overleftarrow{\rho_\uord} = \overleftarrow{\rho_\vord}$ and $\overrightarrow{\rho_\uord} = \overrightarrow{\rho_\vord}$. Furthermore, it is clear that the join-stalactic monoid is presented by $\pres{\N}{\cR_\jst}$, where $\cR_\jst := \cR_\rst \cup \cR_\lst$. On the other hand, we have the following results:

\begin{proposition} \label{prop:MeetStalacticPresentation}
    The meet-stalactic monoid $\mst$ is presented by $\pres{\N}{\cR_\mst}$, where
    \begin{align*}
        \cR_\mst :=& \{ (b \uord ba \vord b,b \uord ab \vord b)\colon a,b \in \N, \uord,\vord \in \N^* \} \\
        & \cup \{ (a \uord ab \vord b,a \uord ba \vord b)\colon a,b \in \N, \uord,\vord \in \N^* \}.
    \end{align*}
\end{proposition}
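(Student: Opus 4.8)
The statement is a presentation result, so the plan is to establish the two inclusions ${\approx}\subseteq{\equiv_\mst}$ and ${\equiv_\mst}\subseteq{\approx}$, where $\approx$ is the congruence on $\N^*$ generated by $\cR_\mst$.

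For ${\approx}\subseteq{\equiv_\mst}$ (soundness) it is enough, by the description of $\equiv_\mst$ recalled just before the statement, to verify that the two words of each defining pair have the same content --- which is immediate --- and the same maps $\overrightarrow{\rho}$ and $\overleftarrow{\rho}$. In a pair $(b\uord ba\vord b,\ b\uord ab\vord b)$ the only letters whose first- or last-occurrence ranks could change are $a$ and $b$; but the leading $b$ gives an occurrence of $b$ strictly to the left of the swapped factor, so the first occurrence of $b$ still precedes that of $a$ in both words, while the trailing $b$ gives an occurrence of $b$ strictly to the right, so the last occurrence of $a$ still precedes that of $b$ in both words. Hence no rank changes and the pair lies in $\equiv_\mst$. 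Pairs of the second family are handled identically, with the leading $a$ and the trailing $b$ as guards.

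For ${\equiv_\mst}\subseteq{\approx}$ (completeness) the key observation is that the one-step rewrites allowed by $\cR_\mst$ are exactly the \emph{admissible transpositions}: interchanges $\gamma\delta\mapsto\delta\gamma$ of two adjacent distinct letters inside a word $P\gamma\delta Q$ for which $\{\gamma,\delta\}\cap\supp{P}\neq\emptyset$ and $\{\gamma,\delta\}\cap\supp{Q}\neq\emptyset$. Each relation of $\cR_\mst$ visibly has this form, its guard letters supplying the occurrences needed in $P$ and in $Q$; conversely, given an admissible transposition, one separates cases according to whether the letter witnessing $\{\gamma,\delta\}\cap\supp{P}\neq\emptyset$ and the letter witnessing $\{\gamma,\delta\}\cap\supp{Q}\neq\emptyset$ are $\gamma$ or $\delta$, and in each of the four cases $P\gamma\delta Q$ factors to exhibit the transposition as an instance of $\cR_\mst$ --- the same witnessing letter on both sides giving an instance of the first family, two distinct witnesses an instance of the second. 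Thus $\approx$ is exactly the relation ``joined by a finite chain of admissible transpositions'', which is a congruence; moreover, by the soundness computation, every admissible transposition already lies in $\equiv_\mst$, so ${\approx}\subseteq{\equiv_\mst}$, and it remains only to show that any two $\equiv_\mst$-congruent words are joined by admissible transpositions.

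To do this I would fix, as the normal form of each $\equiv_\mst$-class, its lexicographically least word $\widehat{\word}$, and show that every $\word$ reduces to $\widehat{\word}$ by admissible transpositions, by induction on $|\word|$ minus the length of the longest common prefix of $\word$ and $\widehat{\word}$. If that prefix has length $i-1<|\word|$, one locates in $\word$ the occurrence of $\widehat{\word}_i$ that is to be brought into position $i$ and slides it leftwards one letter at a time until it reaches that position, obtaining a word that is again $\equiv_\mst$-congruent to $\word$ but agrees with $\widehat{\word}$ on a longer prefix. The main obstacle --- and the real substance of the proof --- is to check that every one of these elementary slides is admissible: this is where one has to exploit the equalities $\overrightarrow{\rho_\word}=\overrightarrow{\rho_{\widehat\word}}$ and $\overleftarrow{\rho_\word}=\overleftarrow{\rho_{\widehat\word}}$ in order to produce, for the pair being interchanged, a witnessing occurrence of one of its two letters strictly to the left and one strictly to the right, with a subdivision into cases according to whether the relevant letters are repeated or simple. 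Everything else is routine bookkeeping.
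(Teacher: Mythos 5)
Your soundness direction and your reformulation of the $\cR_\mst$-rewrites as ``admissible transpositions'' (a swap of two adjacent distinct letters with a witness from the swapped pair occurring somewhere to the left and somewhere to the right) are both correct, and your completeness plan --- slide a designated letter leftwards into place and induct --- is the same mechanism the paper uses. The problem is that you have deferred exactly the step that constitutes the proof. You write that ``the main obstacle --- and the real substance of the proof --- is to check that every one of these elementary slides is admissible,'' and then say only that one must exploit $\overrightarrow{\rho_\word}=\overrightarrow{\rho_{\widehat{\word}}}$ and $\overleftarrow{\rho_\word}=\overleftarrow{\rho_{\widehat{\word}}}$ ``with a subdivision into cases.'' That is a statement that an argument is needed, not the argument. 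The paper supplies it via two support inclusions: writing the congruent words as $\word_1\uord'\word_2$ and $\word_1\vord'\word_2$ with $\uord'=a\uord_1'$ and $\vord'=\vord_1'a\vord_2'$, $a\notin\supp{\vord_1'}$, it shows that if $a\notin\supp{\word_1}$ then $\supp{\vord_1'}\subseteq\supp{\word_1}$ (otherwise the order of first occurrences of $a$ and of some letter of $\vord_1'$ would differ between the two words), and symmetrically that if $a\notin\supp{\vord_2'\word_2}$ then $\supp{\vord_1'}\subseteq\supp{\vord_2'\word_2}$. These two facts are precisely what furnish, for every individual swap of $a$ past a letter $x$ of $\vord_1'$, a witness from $\{a,x\}$ on each side. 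Your proposal contains no analogue of these lemmas, so the completeness direction is not established.

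Two smaller points. First, you do not specify which occurrence of $\widehat{\word}_i$ is to be slid into position $i$; the argument needs the leftmost occurrence at or after position $i$ (the analogue of the paper's requirement $a\notin\supp{\vord_1'}$), since the witness arguments are phrased in terms of the first and last occurrences of the sliding letter. Second, your induction compares $\word$ with a lex-least normal form rather than with a second arbitrary congruent word; this is workable, but you then have no designated common suffix, so the right-hand witness needs an extra counting step: if $c=\widehat{\word}_i$ does not occur to the right of its chosen occurrence in $\word$ and some letter $x$ being passed had its last occurrence to the left of that occurrence, then all occurrences of $x$ in $\widehat{\word}$ would lie in the common prefix, contradicting $\cont{\word}=\cont{\widehat{\word}}$. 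None of this is difficult, but it is the content of the proposition and must be written out.
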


\begin{proof}
    Consider the congruence $\equiv$ generated by the relations $\cR_\mst$, as given in the statement. It is clear that, for any words $\uord,\vord \in \N^*$, if $\uord \equiv \vord$, then $\uord \equiv_\rst \vord$ and $\uord \equiv_\lst \vord$, hence $\uord \equiv_{\mst} \vord$. Suppose now that $\uord \equiv_\rst \vord$ and $\uord \equiv_\lst \vord$. Let
    \[
        \uord = \word_1 \uord' \word_2 \quad \text{and} \quad \vord = \word_1 \vord' \word_2,
    \]
    where $\uord' \neq \vord'$ and $\word_1, \word_2 \in \N^*$ are the common prefix and suffix of maximum length, respectively. Notice that the first (resp. last) letter of $\uord'$ is different from the first (resp. last) letter of $\vord'$. On the other hand, since $\uord$ and $\vord$ have the same content, then $\uord'$ and $\vord'$ have the same content as well. In particular, they have the same length, hence, we will prove by induction on the length of $\uord'$ and $\vord'$ that $\uord \equiv \vord$.
    
    The base case for the induction is when $|\uord'|=|\vord'|=2$. In this case, we have $\uord' = ab$ and $\vord' = ba$, for some $a,b \in \N$. Since $\uord \equiv_\rst \vord$, then $a$ or $b$ must occur in $\word_2$, and since $\uord \equiv_\lst \vord$, then $a$ or $b$ must occur in $\word_1$. Therefore, if the same symbols occur in both $\word_1$ and $\word_2$, we use the first defining relation of $\equiv$, and if different symbols occur, we use the second defining relation to show that $\uord \equiv \vord$.
    
    Suppose now that $|\uord'|=|\vord'|>2$. Write $\uord' = a \uord_1'$ and $\vord' = \vord_1' a \vord_2'$, for some $a \in \N$ and $\vord_1' \in \N^+$ such that $a \notin \supp{\vord_1'}$. Consider the following cases:
    \begin{itemize}
        \item $a \notin \supp{\word_1} \cup \supp{\vord_2' \word_2}$;
        \item $a \in \supp{\word_1}$ and $a \notin \supp{\vord_2' \word_2}$;
        \item $a \in \supp{\vord_2' \word_2}$ and $a \notin \supp{\word_1}$;
        \item $a \in \supp{\word_1} \cap \supp{\vord_2' \word_2}$.
    \end{itemize}

    If $a \notin \supp{\word_1}$, then the first occurrence of $a$ is after the first occurrence of every letter of $\vord_1'$, in the word $\vord$. Therefore, since $\uord \equiv_\lst \vord$, we have that $\supp{\vord_1'} \subseteq \supp{\word_1}$. On the other hand, if $a \notin \supp{\vord_2' \word_2}$, then the last occurrence of $a$ is before the last occurrence of every letter of $\vord_1'$, in the word $\uord$. Therefore, since $\uord \equiv_\rst \vord$, we have that $\supp{\vord_1'} \subseteq \supp{\vord_2' \word_2}$. As such, using the first defining relation of $\equiv$ in the first and fourth cases and the second defining relation of $\equiv$ in the second and third cases, finitely many times, we have that $\vord \equiv \word_1 a \vord'' \word_2$, for some $\vord'' \in \N^*$. Hence, by the induction hypothesis, $\uord \equiv \vord$.
    
    Thus, we have shown that $\uord \equiv_\lst \vord$ and $\uord \equiv_\rst \vord$ imply that $\uord \equiv \vord$, and as such, the congruences $\equiv$ and $\equiv_\mst$ are the same. 
\end{proof}
    
\begin{proposition} \label{prop:jst_characterisation}
    Let $\uord,\vord \in \N^*$. Then, $\uord \equiv_\jst \vord$ if and only if $\cont{\uord} = \cont{\vord}$ and $\ol{\uord} = \ol{\vord}$. 
\end{proposition}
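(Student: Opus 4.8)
The plan is to use the presentation of the join-stalactic monoid by $\pres{\N}{\cR_\jst}$ with $\cR_\jst = \cR_\rst \cup \cR_\lst$, recorded just before the statement, so that $\uord \equiv_\jst \vord$ means exactly that one can pass from $\uord$ to $\vord$ by finitely many rewrites, each applying a relation of $\cR_\rst$ or $\cR_\lst$ (in either direction) inside a context. The two implications are treated separately. For the forward implication, note first that both families of relations in $\cR_\jst$ merely permute the letters of a word, so content is invariant under $\equiv_\jst$; hence every word in the $\equiv_\jst$-class of $\uord$ has the same set $\mathcal{S}$ of simple letters, namely those $x$ with $\cont{\uord}(x) = 1$, and $\ol{\word} = \word[\mathcal{S}]$ for every such $\word$. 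The crucial observation is that in each relation $(ba\uord b, ab\uord b) \in \cR_\rst$ and $(b\uord ab, b\uord ba) \in \cR_\lst$ the letter $b$ occurs at least twice. Thus in any single rewrite $\word \to \word'$ obtained by applying such a relation inside a context, the pivot letter $b$ occurs at least twice in both $\word$ and $\word'$, so $b \notin \mathcal{S}$, and restricting $\word$ and $\word'$ to $\mathcal{S}$ deletes every copy of $b$ and leaves the two sides of the relation equal, giving $\word[\mathcal{S}] = \word'[\mathcal{S}]$. Chaining this along a derivation from $\uord$ to $\vord$ yields $\ol\uord = \uord[\mathcal{S}] = \vord[\mathcal{S}] = \ol\vord$.

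For the converse, the key lemma is: if a letter $t$ occurs at least twice in a word $\word$ and $c \neq t$, then transposing an occurrence of $t$ with an adjacent occurrence of $c$ produces a word $\equiv_\jst$-congruent to $\word$. This follows from a short case analysis on where the other occurrence of $t$ lies relative to the pair $tc$ (or $ct$): when it lies on the far side one reads off a relation of $\cR_\rst$, when it lies on the near side a relation of $\cR_\lst$ (with the roles of the two families exchanged when $t$ is moved leftward rather than rightward), and in every case $t$ plays the role of the doubly occurring letter $b$. Iterating, any non-simple letter $t$ of $\word$, say of multiplicity $m$, can be swept to the front one occurrence at a time, so that $\word \equiv_\jst t^m \word'$ where $\word'$ is $\word$ with all occurrences of $t$ removed.

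With this in hand I would finish the converse by induction on the number of non-simple letters of $\uord$ (equivalently of $\vord$, as $\cont\uord = \cont\vord$). If there are none, then $\uord$ and $\vord$ consist entirely of simple letters, so $\uord = \ol\uord = \ol\vord = \vord$. Otherwise pick a non-simple letter $t$ with $\cont\uord(t) = \cont\vord(t) = m \geq 2$ and write $\uord \equiv_\jst t^m\uord'$ and $\vord \equiv_\jst t^m\vord'$ as above. Deleting all copies of $t$ changes no other letter's multiplicity, so $\cont{\uord'} = \cont{\vord'}$, the non-simple letters of $\uord'$ are exactly those of $\uord$ other than $t$ (one fewer), and the subsequence of simple letters is untouched, so $\ol{\uord'} = \ol\uord = \ol\vord = \ol{\vord'}$. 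By the induction hypothesis $\uord' \equiv_\jst \vord'$, and since $\equiv_\jst$ is a congruence, $\uord \equiv_\jst t^m\uord' \equiv_\jst t^m\vord' \equiv_\jst \vord$.

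The step I expect to need the most care is the transposition lemma together with its iteration: one must check that every adjacent transposition involving a letter with a second occurrence is genuinely realised by some relation of $\cR_\rst$ or $\cR_\lst$ in an appropriate context, and that sweeping a non-simple letter to the front really can be done occurrence by occurrence without getting blocked. The inductive wrap-up and the forward implication are then routine bookkeeping.
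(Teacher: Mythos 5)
Your proof is correct and follows essentially the same route as the paper: both directions reduce to sweeping each non-simple letter to the front to reach a canonical form $x_l^{m_l}\cdots x_1^{m_1}\,\ol{\uord}$, and to observing that every generating relation of $\equiv_\jst$ has its pivot letter occurring at least twice, hence preserves content and the subword of simple letters. Your explicit transposition lemma and induction on the number of non-simple letters just make the paper's "repeatedly use $\equiv_\rst$ then $\equiv_\lst$" step more formal.
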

\begin{proof}
Consider the congruence $\equiv$ given by $\uord \equiv \vord$ if and only if $\cont{\uord} = \cont{\vord}$ and $\ol{\uord} = \ol{\vord}$, for $\uord,\vord \in \N^*$. We first show that $\uord \equiv \vord$ implies $\uord \equiv_\jst \vord$. Let $\ol{\uord} = \ol{\vord} = a_1 \cdots a_k$, where $a_1, \dots, a_k$ denote the simple letters, and let $x_1, \dots, x_l$ denote the non-simple letters, with $k + l = |\supp{\uord}|$. 

Ranging $1 \leq j \leq l$, we can repeatedly use $\equiv_\rst$ to shift the leftmost $x_j$ to the beginning of both $\uord$ and $\vord$; and then repeatedly use $\equiv_\lst$ to obtain a word with the prefix $x_j^{|\uord|_{x_j}}$. Inductively, this allows us to show that
\[ \uord \equiv_\jst x_l^{|\uord|_{x_l}} \cdots x_1^{|\uord|_{x_1}} a_1 \cdots a_k \equiv_\jst \vord. \]

On the other hand, notice that $\uord \equiv_\lst \vord$ implies $\ol{\uord} = \ol{\vord}$ since the order of occurrences of simple letters can be deduced from the order of the first occurrences of all letters. Similarly, $\uord \equiv_\rst \vord$ implies $\ol{\uord} = \ol{\vord}$. Thus, since the join-stalactic congruence is the congruence join of $\equiv_\lst$ and $\equiv_\rst$, $\uord \equiv_\jst \vord$ also implies $\ol{\uord} = \ol{\vord}$. Hence, $\uord \equiv_\jst \vord$ implies $\uord \equiv \vord$.
\end{proof}


\subsubsection{The meet-taiga and join-taiga monoids} \label{subsubsection:the_meet_taiga_and_join_taiga_monoids}

As with the case of the stalactic monoids, we now consider the meet and join of the right-taiga and left-taiga congruences, which we respectively call the \emph{meet-taiga} congruence $\equiv_{\mtg}$ and the \emph{join-taiga} congruence $\equiv_{\jtg}$, and define the \emph{meet-taiga} monoid $\mtg$ and \emph{join-taiga} monoid $\jtg$ as quotients of $\N^*$ by their respective congruences. These congruences naturally restrict to congruences on $[n]^*$, for each $n \in \N$, and the corresponding factor monoids $[n]^*/{\equiv_\mtg}$ and $[n]^*/{\equiv_\jtg}$ are the meet-taiga and join-taiga monoids of rank $n$. Notice that the meet-taiga congruence contains both the Baxter congruence and the meet-stalactic congruence. 

As before with the case of the join-stalactic monoid, it is clear that the join-taiga monoid is presented by $\pres{\N}{\cR_\jtg}$, where $\cR_\jtg := \cR_\rtg \cup \cR_\ltg$. On the other hand, we have the following results:

\begin{proposition} \label{prop:MeetTaigaPresentation}
    The meet-taiga monoid $\mtg$ is presented by the relations $\cR_\mtg$, where
    \begin{align*}
        \cR_\mtg :=& \{ (b \uord ad \vord c,b \uord da \vord c)\colon a \leq d, \; b,c \in [a,d], \; \uord,\vord \in \N^* \}.
    \end{align*}
\end{proposition}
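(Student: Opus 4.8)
The plan is to set $\equiv$ to be the congruence on $\N^*$ generated by $\cR_\mtg$ and to prove $\equiv\, =\, \equiv_\mtg$ by establishing the two inclusions separately.

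For $\equiv\, \subseteq\, \equiv_\mtg$: since $\equiv_\mtg = \equiv_\rtg \cap \equiv_\ltg$, it suffices to check that each defining pair $(b\uord ad\vord c, b\uord da\vord c)$, with $a \le d$ and $b,c \in [a,d]$, belongs to both $\equiv_\rtg$ and $\equiv_\ltg$; the case $a = d$ is trivial, so assume $a < d$. I would obtain membership in $\equiv_\rtg$ using only the right guard $c$: apply a relation from $\cR_\sylv$ when $c < d$ (legitimate since then $a \le c < d$) and a relation from $\cR_\rst$ when $c = d$, so that in either case $ad\vord c \equiv_\rtg da\vord c$, whence $b\uord ad\vord c \equiv_\rtg b\uord da\vord c$. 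Membership in $\equiv_\ltg$ is the mirror statement, obtained using only the left guard $b$: apply a relation from $\cR_\sylvh$ when $b > a$ (legitimate since then $a < b \le d$) and from $\cR_\lst$ when $b = a$. This is a short finite verification.

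For $\equiv_\mtg\, \subseteq\, \equiv$, I would imitate the scheme of the proof of Proposition~\ref{prop:MeetStalacticPresentation}. Given $\uord \equiv_\mtg \vord$, write $\uord = \word_1 \uord' \word_2$ and $\vord = \word_1 \vord' \word_2$ with $\word_1$ and $\word_2$ the common prefix and suffix of maximal length; congruent words have the same content, hence the same length, so $|\uord'| = |\vord'|$, and this common length is either $0$ (giving $\uord = \vord$) or at least $2$. Induct on it. In the inductive step, let $a$ be the first letter of $\uord' = a\uord_1'$ (by maximality of $\word_1$, $a$ is not the first letter of $\vord'$), and write $\vord = \word_1 \vord_1' a \vord_2' \word_2$, where $\vord_1' \ne \varepsilon$ is the prefix of $\vord'$ preceding its first $a$, so that $a \notin \supp{\vord_1'}$. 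The key point is to prove $\vord \equiv \word_1 a \vord'' \word_2$ for some $\vord'' \in \N^*$: the right-hand word is then still $\equiv_\mtg \uord$ (by the easy inclusion) and has a strictly longer common affix with $\uord$, so the induction closes. I would prove this by sliding the displayed $a$ leftwards past $\vord_1'$ one letter at a time, reducing the problem to the following: for a factor $p\,a$ with $p \ne a$ (where $p$ is the last letter of the current block before $a$), rewrite $p\,a$ to $a\,p$ by a relation of $\cR_\mtg$, which is possible provided some letter of $[[a,p]]$ occurs to the left of $p\,a$ and some letter of $[[a,p]]$ occurs to its right.

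Producing these two guard letters is the step I expect to be the main obstacle, and it is where the argument must genuinely differ from the meet-stalactic case: unlike $\equiv_\rst$ and $\equiv_\lst$, the congruences $\equiv_\rtg$ and $\equiv_\ltg$ do not preserve the orders of first and last occurrences of letters, so the needed support information is not directly available. I would instead argue through the taiga insertion algorithms. For the left guard: if $p$ or $a$ already occurs to the left of $p\,a$, we are done (both lie in $[[a,p]]$); otherwise, in the current word the first occurrence of $p$ precedes that of $a$, while in $\uord$ the first occurrence of $a$ precedes that of $p$ (the letter $p$ occurs in $\uord$ by content equality, necessarily after the leading $a$). Hence the node labelled $p$ is created before the node labelled $a$ when building the left-taiga $\psymb$-symbol of the current word, but after it when building that of $\uord$; since the two BSTMs are equal (both words being $\equiv_\ltg$-congruent to $\vord$), the only ancestry relation between these nodes compatible with both creation orders is incomparability, and incomparability forces a common ancestor whose letter $q$ satisfies $\min(a,p) < q < \max(a,p)$; being an ancestor, this node is created before both, hence it occurs to the left of $p\,a$. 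A symmetric argument with last occurrences and the right-taiga $\psymb$-symbol supplies the right guard. The same incomparability argument, applied with $\vord_1' = p$ and $\vord_2' = \varepsilon$, also covers the base case $|\uord'| = 2$; alternatively, that case may be handled directly by comparing $\prtg{\word_1 ad\word_2}$ with $\prtg{\word_1 da\word_2}$, which, using left-cancellativity of $\rtg$, forces a letter of $[a,d]$ into $\word_2$ (and dually into $\word_1$).
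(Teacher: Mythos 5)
Your proposal is correct and, at the structural level, follows the same route as the paper: the easy inclusion is a finite check on the generating relations, and the hard inclusion is an induction on the length of the maximal differing middle block, carried out by sliding the leading $a$ of $\uord'$ leftwards through $\vord$ one adjacent transposition at a time, each transposition requiring a left and a right guard letter from $[[a,p]]$. Where you genuinely diverge is in how the guards are produced. The paper extracts them from a terse case analysis on factorisations of $\uord$, implicitly relying on the fact that the relative order of first (resp.\ last) occurrences of two letters is constrained by the common left-taiga (resp.\ right-taiga) $\psymb$-symbol; you make this explicit via the ancestry trichotomy in a leaf-inserted binary search tree: comparable nodes must be inserted in the same order in every word of the class, while incomparable nodes have a proper common ancestor strictly between them in value, inserted before both, which is exactly the guard. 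This is cleaner and easier to verify than the paper's argument, and your verification of the easy inclusion (splitting into the $\cR_\sylv$/$\cR_\rst$ and $\cR_\sylvh$/$\cR_\lst$ cases according to whether the guard equals the extremal letter) is more careful than the paper's ``it is clear''.

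One caveat: the right-guard argument is not literally symmetric to the left-guard one, because $\uord$ and $\vord$ do not play symmetric roles. For the left guard you use that the first occurrence of $a$ in $\uord$ is the displayed leading $a$ of $\uord'$, which is immediate since $a\notin\supp{\word_1}$ in the hard case. For the right guard you need that the \emph{last} occurrence of $a$ in $\uord$ is that same leading $a$; this holds, but only because in the hard case $a$ does not occur to the right of the factor in the current word, whence by content equality $|\uord'|_a=1$ and $a\notin\supp{\word_2}$, so the leading $a$ is the unique occurrence of $a$ in $\uord'\word_2$. With that observation the last occurrence of $a$ in $\uord$ precedes the last occurrence of $p$ (which lies in $\uord_1'$), the two words insert $a$ and $p$ into the right-taiga tree in opposite orders, incomparability follows, and the common ancestor supplies the guard. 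So the argument does close, but this step deserves to be spelled out rather than dismissed as symmetric.
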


\begin{proof} 
    Consider the congruence $\equiv$ generated by the relations $\cR_\mtg$, as given in the statement. It is clear that, for any words $\uord,\vord \in \N^*$, if $\uord \equiv \vord$, then $\uord \equiv_\rtg \vord$ and $\uord \equiv_\ltg \vord$, hence $\uord \equiv_{\mtg} \vord$. Suppose now that $\uord \equiv_\rtg \vord$ and $\uord \equiv_\ltg \vord$. Let
    \[
        \uord = \word_1 \uord' \word_2 \quad \text{and} \quad \vord = \word_1 \vord' \word_2,
    \]
    where $\uord' \neq \vord'$ and $\word_1, \word_2 \in \N^*$ are of maximum length. Notice that the first (resp. last) letter of $\uord'$ is different from the first (resp. last) letter of $\vord'$. On the other hand, since $\uord$ and $\vord$ have the same content, then $\uord'$ and $\vord'$ have the same content as well. In particular, they have the same length, hence, we will prove by induction on the length of $\uord'$ that $\uord \equiv \vord$.
    
    The base case for the induction is when $|\uord'|=|\vord'|=2$. In this case, we have $\uord' = ad$ and $\vord' = da$, for some $a,d \in \N$. Assume, without loss of generality, that $a \leq d$. Since $\uord \equiv_\ltg \vord$, then there exists $b \in \N$ such that $a \leq b \leq d$ and $b$ occurs in $\word_1$. Similarly, since $\uord \equiv_\rtg \vord$, then there exists $c \in \N$ such that $a \leq c \leq d$ and $c$ occurs in $\word_2$. Therefore, we use the defining relation of $\equiv$ to show that $\uord \equiv \vord$.
    
    Suppose now that $|\uord'|=|\vord'|>2$. Write $\uord' = a \uord_1'$ and $\vord' = \vord_1' b a \vord_2'$, for some $a,b \in \N$ and $\vord_1' \in \N^*$ such that $a \notin \supp{\vord_1'}$. Notice that $b \in \supp{\uord_1'}$.

    If $\uord$ factorises as $\uord = \uord_1 b \uord_2 a \uord_3$, for some $\uord_1,\uord_2,\uord_3 \in \N^*$ such that $\supp{\uord_1} \cap [[a,b]] = \emptyset$, then $b \in \supp{\word_1}$. On the other hand, if $\uord$ does not factorise in such a way, then there exists $c \in \supp{\word_1 \vord_1'} \cap [[a,b]]$, since $\uord \equiv_\ltg \vord$. Similarly, if $\uord$ factorises as $\uord = \uord_1 b \uord_2 a \uord_3$, for some $\uord_1,\uord_2,\uord_3 \in \N^*$ such that $\supp{\uord_3} \cap [[a,b]] = \emptyset$, then there exists $a \in \supp{\uord' \word_2}$ such that $a$ is to the right of the last occurrence of $b$ in $\uord$. As such, we have that $a \in \supp{\vord_2'\word_2}$. On the other hand, if $\uord$ does not factorise in such a way, then there exists $c \in \supp{\vord_2'\word_2} \cap [[a,b]]$, since $\uord \equiv_\rtg \vord$.
    
    Thus, we can use the defining relation of $\equiv$ to show that $\vord \equiv \word_1 \vord_1' a b \vord_2' \word_2$. Notice that this reasoning can be applied finitely many times to show that $\vord \equiv \word_1 a \vord'' \word_2$, for some $\vord'' \in \N^*$. Hence, by the induction hypothesis, $\uord \equiv \vord$.
    
    Thus, we have shown that $\uord \equiv_\ltg \vord$ and $\uord \equiv_\rtg \vord$ implies that $\uord \equiv \vord$, and as such, the congruences $\equiv$ and $\equiv_\mtg$ are the same.    
\end{proof}

\begin{proposition}
\label{prop:jtg_characterisation}
    Let $\uord,\vord \in \N^*$. Let $A_1,\dots A_k$ be all the intervals of $\supp{\uord}$ such that $A_i$ only contains simple letters and $A_i \cup \{a\}$ is not an interval, for any simple letter $a \notin A_i$. Then, $\uord \equiv_\jtg \vord$ if and only if $\cont{\uord} = \cont{\vord}$ and $\uord[A_i] \equiv_\hypo \vord[A_i]$ for all $1 \leq i \leq k$. 
\end{proposition}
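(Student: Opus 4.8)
The plan is to reduce everything to Proposition~\ref{prop:jst_characterisation} together with the identity $\equiv_\hypo = \equiv_\sylv \vee \equiv_\sylvh$. Combining the latter with the definitions of the taiga and stalactic congruences, $\equiv_\jtg = \equiv_\rtg \vee \equiv_\ltg = (\equiv_\sylv \vee \equiv_\rst) \vee (\equiv_\sylvh \vee \equiv_\lst) = (\equiv_\sylv \vee \equiv_\sylvh) \vee (\equiv_\rst \vee \equiv_\lst) = \equiv_\hypo \vee \equiv_\jst$, so that $\cR_\jtg = \cR_\sylv \cup \cR_\sylvh \cup \cR_\rst \cup \cR_\lst$ and $\equiv_\sylv, \equiv_\sylvh \subseteq \equiv_\hypo \subseteq \equiv_\jtg$. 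Every relator in $\cR_\jtg$ preserves content, so $\equiv_\jtg$-equivalent words have the same content, hence the same simple letters and the same family $A_1 < \cdots < A_k$ (which is what makes the statement meaningful). I will also use two elementary observations about this family: each $A_i$ is convex in the support, so that $\mathcal{I}_i := [\min A_i, \max A_i]$ satisfies $w[\mathcal{I}_i] = w[A_i]$ for every $w$ with $\cont{w} = \cont{\uord}$; and for $i < k$ the successor of $\max A_i$ in the support is a non-simple letter $y_i$ with $\max A_i < y_i < \min A_{i+1}$ (it cannot be simple, else $A_i$ would fail to be maximal).

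For the forward implication I would note that every relator in $\cR_\jtg$ transposes two adjacent letters, and check that $w \mapsto w[A_i]$ sends such a relator either to an equality or to a single hypoplactic step; chaining along a derivation $\uord \to \cdots \to \vord$ then gives $\uord[A_i] \equiv_\hypo \vord[A_i]$, while $\cont{\uord} = \cont{\vord}$ is clear. Writing the relator as $p\,r_1\,q \to p\,r_2\,q$, it suffices to treat $r_1 \to r_2$. If one of the two transposed letters lies outside $A_i$ --- which always happens for the stalactic relators, whose repeated letter is non-simple --- then $r_1[A_i] = r_2[A_i]$, since the restriction cannot witness the transposition. Otherwise $(r_1,r_2)$ is a sylvester or \#-sylvester relator, say $(ca\word b, ac\word b)$ with $a \le b < c$ and $a,c \in A_i$; then $b \in A_i$ by convexity, so $r_1[A_i]$ and $r_2[A_i]$ are related by that same sylvester relator, hence $\equiv_\sylv$-equivalent and a fortiori $\equiv_\hypo$-equivalent.

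For the converse, assume $\cont{\uord} = \cont{\vord}$ and $\uord[A_i] \equiv_\hypo \vord[A_i]$ for all $i$; put $u_i = \uord[A_i]$, $v_i = \vord[A_i]$, and let $P$ be the product of the powers $x^{|\uord|_x}$ over the non-simple letters $x$, listed increasingly. I would connect both words to $P\,u_1 u_2 \cdots u_k$ in three steps. (a) By Proposition~\ref{prop:jst_characterisation}, $\uord \equiv_\jst P\,\ol{\uord}$ and $\vord \equiv_\jst P\,\ol{\vord}$, since $P\,\ol{\uord}$ has content $\cont{\uord}$ and its restriction to simple letters is $\ol{\uord}$. (b) Inside $P\,\ol{\uord}$ I un-shuffle the blocks, processing $i = k, k-1, \dots, 2$ in turn and maintaining that, before handling $i$, the word equals $P$ followed by the restriction of $\ol{\uord}$ to $A_1 \cup \cdots \cup A_i$ followed by $u_{i+1}\cdots u_k$: whenever an $A_i$-letter $c$ is immediately followed, inside that middle factor, by an $A_1\cup\cdots\cup A_{i-1}$-letter $a$, I apply the \#-sylvester relator $b\word ca \to b\word ac$ with $b$ an occurrence of $y_{i-1}$ taken from the (untouched) prefix $P$; this is admissible since $a \le \max A_{i-1} < y_{i-1} < \min A_i \le c$, and as these moves never transpose two $A_i$-letters they terminate and leave $A_i$ as a contiguous suffix of the middle factor reading $u_i$. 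After $i=2$ the word is $P\,u_1 \cdots u_k$. (c) Since $u_i \equiv_\hypo v_i$ and $\equiv_\hypo \subseteq \equiv_\jtg$, replacing the blocks one at a time gives $P\,u_1 \cdots u_k \equiv_\jtg P\,v_1 \cdots v_k$. Running (b) and (a) backwards for $\vord$ then yields $\uord \equiv_\jtg \vord$.

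I expect step (b) to be the crux: one must be certain that every transposition in the un-shuffle has a pivot of the right size on the correct side. This is exactly why all non-simple letters are first collected into the prefix $P$, which is never modified, so that the pivots $y_{i-1}$ are guaranteed to sit to the left of the region being sorted; it also forces one to establish beforehand that the support element separating two consecutive blocks is non-simple and strictly between them. The remaining points --- well-definedness of the $A_i$, the convexity remark, and the block-by-block replacements of step (c) --- are routine.
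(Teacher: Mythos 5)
Your proof is correct and follows essentially the same route as the paper's: the easy direction amounts to checking that each generating relator of $\equiv_\jtg$ preserves the content and the $\equiv_\hypo$-class of each restriction $\word[A_i]$, and the hard direction reduces both words to the normal form $P\,\uord[A_1]\cdots\uord[A_k]$ via Proposition~\ref{prop:jst_characterisation} followed by $\sylvh$-sorting, then replaces the blocks using $\equiv_\hypo\subseteq\equiv_\jtg$. Your explicit choice of the non-simple pivot $y_{i-1}$ (guaranteed by maximality of $A_{i-1}$) for the $\sylvh$ moves supplies a detail the paper leaves implicit.
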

\begin{proof} 
    Consider the congruence $\equiv$ given by $\uord \equiv \vord$ if and only if $\cont{\uord} = \cont{\vord}$ and $\uord[A_i] \equiv_\hypo \vord[A_i]$ for all $1 \leq i \leq k$ where $A_1,\dots A_k$ are as given in the statement of the proposition. 
    We first show that $\uord \equiv \vord$ implies $\uord \equiv_\jtg \vord$. 
    Let $x_1, \dots, x_l$ denote the non-simple letters of $\uord$ and $\vord$.
    Ranging $1 \leq j \leq l$, we can repeatedly use $\equiv_\rst$ to shift the leftmost $x_j$ to the beginning of both words, and then repeatedly use $\equiv_\lst$ to obtain words with the prefix $x_j^{|\uord|_{x_j}}$. Inductively, this allows us to show that
\begin{align*}
    \uord \equiv_\jst x_l^{|\uord|_{x_l}} &\cdots x_1^{|\uord|_{x_1}}\ol{\uord}, \quad \text{and} \\
    \vord \equiv_\jst x_l^{|\uord|_{x_l}} &\cdots x_1^{|\uord|_{x_1}}\ol{\vord}.
\end{align*}
Now, by applying the $\sylvh$ relations, we can order the simple variables so that the variables in the interval $A_i$ appear to the left of the simple variables in $A_{i+1}$ for each $1 \leq i < k$. Thus, 
\begin{align*}
    \uord \equiv_\jst x_l^{|\uord|_{x_l}} &\cdots x_1^{|\uord|_{x_1}}\uord[A_1]\cdots \uord[A_k], \quad \text{and} \\
    \vord \equiv_\jst x_l^{|\uord|_{x_l}} &\cdots x_1^{|\uord|_{x_1}}\vord[A_1]\cdots \vord[A_k].
\end{align*}
Now as $\cR_{\hypo} = \cR_{\sylv} \cup \cR_{\sylvh} \subset \cR_{\jtg}$, then, by the definition of $\equiv$, we have that $\uord[A_i] \equiv_\jtg \vord[A_i]$. Thus, $\uord \equiv_{\jtg} \vord$. 

On the other hand, first note that all the relations in $\cR_{\jtg}$ are content preserving, thus, if $\uord \equiv_\jtg \vord$, then $\cont{\uord} = \cont{\vord}$.
Let $\rord,\sord \in \N^*$. If $\rord \equiv_\jst \sord$, then $\ol{\rord} = \ol{\sord}$ by Proposition~\ref{prop:jst_characterisation} and if $\rord \equiv_\hypo \sord$ then, as $\hypo$ is compatible with restriction to alphabet intervals \cite[Theorem 5.6]{novelli_hypoplactic}, $\rord[A_i] \equiv_\hypo \sord[A_i]$ for all $1 \leq i \leq k$.

Thus, since the join-taiga congruence is the congruence join of $\equiv_\jst$ and $\equiv_\hypo$, $\uord \equiv_\jtg \vord$ implies $\uord[A_i] \equiv_\hypo \vord[A_i]$ for all $1 \leq i \leq k$. Hence, $\uord \equiv_\jst \vord$ if and only if $\uord \equiv \vord$.
\end{proof}


\subsection{Compatibility properties} \label{subsection:compatibility_properties}

Recall that the Baxter and hypoplactic congruences are defined, respectively, as the meet and join of the sylvester and \#-sylvester congruences \cite{giraudo_baxter,hnt_stalactic}. Some of the  compatibility properties (see Subsection~\ref{subsection:words}) of the Baxter and hypoplactic monoids have already been studied. We state the known results here and prove the remaining ones:

\begin{proposition} \label{prop:baxt_compatible}
    The Baxter monoid is compatible with standardisation, packing, restriction to alphabet intervals and the Schützenberger involution, but not with reverse-standardisation, restriction to alphabet subsets and word reversal.
\end{proposition}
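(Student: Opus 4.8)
The plan is to exploit that the Baxter congruence is the meet $\equiv_\baxt=\equiv_\sylv\cap\equiv_\sylvh$, so that a pair of words is $\equiv_\baxt$-congruent exactly when it is congruent both in the sylvester and in the \#-sylvester monoid. For the positive properties I will deduce them from the corresponding properties of $\equiv_\sylv$ and $\equiv_\sylvh$, using that all of them pass through meets. For instance, if $\equiv_\sylv$ and $\equiv_\sylvh$ are each compatible with standardisation, then $\uord\equiv_\baxt\vord$ holds iff $\cont{\uord}=\cont{\vord}$ and both $\std{\uord}\equiv_\sylv\std{\vord}$ and $\std{\uord}\equiv_\sylvh\std{\vord}$, which is the same as $\cont{\uord}=\cont{\vord}$ together with $\std{\uord}\equiv_\baxt\std{\vord}$; the identical bookkeeping handles packing, and the ``only if''-type conditions (such as restriction to alphabet intervals) are inherited by the meet with no extra work. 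Since compatibility of $\equiv_\sylv$ and $\equiv_\sylvh$ with standardisation and packing is known, these two cases are immediate.

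Restriction to alphabet intervals and compatibility with the Schützenberger involution I will check on the generators. If a defining relation $(ca\uord b,ac\uord b)$ of $\equiv_\sylv$, with $a\le b<c$, is restricted to an interval $\mathcal{I}$, then either $c\notin\mathcal{I}$, in which case both sides restrict to the same word; or $c\in\mathcal{I}$ but $a\notin\mathcal{I}$, forcing $a<\min\mathcal{I}$, so $a$ is deleted and the two sides again coincide; or $a,c\in\mathcal{I}$, forcing $b\in\mathcal{I}$, so the relation restricts to another relation of $\cR_\sylv$. The same trichotomy holds for $\cR_\sylvh$, whence $\equiv_\sylv$, $\equiv_\sylvh$, and therefore $\equiv_\baxt$, are compatible with restriction to alphabet intervals. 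For the Schützenberger involution on an $n$-letter alphabet, note that it reverses words and replaces each letter $x$ by $n+1-x$; it therefore carries $(ca\uord b,ac\uord b)$ to a pair $(d\,\word'\,e\,f, d\,\word'\,f\,e)$, where $d=n+1-b$, $e=n+1-c$, $f=n+1-a$, and $\word'$ arises from $\uord$ by the same reversal and complementation. The hypothesis $a\le b<c$ becomes $e<d\le f$, so this pair is precisely a defining relation of $\cR_\sylvh$; symmetrically the involution carries relations of $\cR_\sylvh$ to relations of $\cR_\sylv$. Hence it interchanges $\equiv_\sylv$ and $\equiv_\sylvh$, and, being an involution, it fixes their meet $\equiv_\baxt$, which is the asserted compatibility.

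For the three negative assertions I will give explicit counterexamples, the guiding observation being that they must involve repeated letters: on words with no repeated letter the standardised and reverse-standardised words coincide, and word reversal merely swaps the two components of the Baxter symbol of a permutation, so none of the three failures is visible on permutations; moreover, since the Baxter classes of permutations of length at most $3$ are singletons, the restriction example has to come from $S_4$ or from short words carrying a repeated letter. I take $\uord=2211$ and $\vord=2121$: these have the same content, and $\uord\equiv_\baxt\vord$ because $\uord\equiv_\sylv\vord$ (apply the $\cR_\sylv$-relation $211\mapsto121$ to the suffix) and $\uord\equiv_\sylvh\vord$ (apply the $\cR_\sylvh$-relation $221\mapsto212$ to the prefix). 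However $\uord^R=1122$ and $\vord^R=1212$ are not $\equiv_\sylv$-congruent --- no defining relation of $\equiv_\sylv$ applies to $1122$, so its $\equiv_\sylv$-class is a singleton --- hence they are not $\equiv_\baxt$-congruent, as the meet refines $\equiv_\sylv$; and likewise $\rstd{\uord}=4321$ and $\rstd{\vord}=4231$ lie in distinct $\equiv_\sylv$-classes. This refutes compatibility with word reversal and with reverse-standardisation at once. For restriction to alphabet subsets I take $\uord=2132$ and $\vord=2312$, which are $\equiv_\baxt$-congruent (one $\cR_\sylv$-move and one $\cR_\sylvh$-move), while their restrictions to the non-interval subset $\{1,3\}$ are the words $13$ and $31$, plainly not $\equiv_\baxt$-congruent.

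I expect the main difficulty to lie in locating these small counterexamples rather than in verifying them: because the relevant failures disappear on permutations, one is pushed into words with repeated letters, and singling out minimal words that simultaneously carry a genuine Baxter relation and break it under reversal, restriction, or reverse-standardisation takes some experimentation; once the words are fixed, each verification reduces to exhibiting one $\cR_\sylv$-move together with one $\cR_\sylvh$-move, or to a direct computation of the pair of binary search trees $\bigl(\psylv{\cdot},\psylvh{\cdot}\bigr)$. A secondary point needing care is the Schützenberger computation, where one must remember that the map is an antiautomorphism --- reversing the word as well as complementing the letters --- and check that the strict versus non-strict inequalities of $\cR_\sylv$ and $\cR_\sylvh$ correspond correctly under $x\mapsto n+1-x$.
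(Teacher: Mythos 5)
Your proposal is correct. For the three negative assertions you follow essentially the same strategy as the paper (explicit small counterexamples): your pair $2132$, $2312$ for restriction to alphabet subsets and your pair $2121$, $2211$ for reverse-standardisation are exactly the paper's, while for word reversal you reuse $2211\equiv_\baxt 2121$ with reverses $1122$, $1212$ (which indeed lie in distinct $\equiv_\sylv$-classes, since $1122$ is alone in its sylvester class), whereas the paper uses $2131$, $2311$ with reverses $1312$, $1132$ --- both choices work, and yours is slightly more economical in serving two negatives at once. Where you genuinely diverge is on the positive assertions: the paper simply cites Giraudo's Propositions 3.2--3.4, while you re-derive everything from the decomposition $\equiv_\baxt\;=\;\equiv_\sylv\cap\equiv_\sylvh$, observing that both the ``if and only if'' compatibilities (standardisation, packing) and the ``only if'' compatibilities (restriction to intervals) pass through meets, and checking interval restriction and the Schützenberger involution directly on the generating relations $\cR_\sylv$ and $\cR_\sylvh$ (your trichotomy for intervals and your verification that the involution interchanges the two generating sets, hence fixes their meet, are both sound). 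This buys a self-contained argument at the cost of length; the paper's citation is shorter but opaque. Your meet-based derivation is also the exact pattern the paper itself uses later for the meet-stalactic and meet-taiga monoids, so it fits the surrounding development well.
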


\begin{proof}
    Compatibility with standardisation was shown in \cite[Proposition~3.2]{giraudo_baxter}, from which compatibility with packing follows. Compatibility with restriction to alphabet intervals was shown in \cite[Proposition~3.3]{giraudo_baxter}, and compatibility with the Schützenberger involution was shown in \cite[Proposition~3.4]{giraudo_baxter}.

    On the other hand, notice that $2121$ and $2211$ are $\baxt$-congruent, but $4231$ and $4321$ are not, hence $\baxt$ is not compatible with reverse-standardisation. Furthermore, $2132$ and $2312$ are $\baxt$-congruent, but $13$ and $31$ are not, hence $\baxt$ is not compatible with restriction to alphabet subsets. Similarly, notice that $2131$ and $2311$ are $\baxt$-congruent, but $1312$ and $1132$ are not, hence $\baxt$ is not compatible with word reversal.
\end{proof}

\begin{proposition} \label{prop:hypo_compatible}
    The hypoplactic monoid is compatible with standardisation, packing, restriction to alphabet intervals and the Schützenberger involution, but not with reverse-standardisation, restriction to alphabet subsets and word reversal.
\end{proposition}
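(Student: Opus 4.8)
The plan is to mirror the proof of Proposition~\ref{prop:baxt_compatible} almost verbatim, exploiting the fact that the hypoplactic congruence is the join of the sylvester and \#-sylvester congruences, while the Baxter congruence is their meet. For the positive statements, compatibility with standardisation and with restriction to alphabet intervals are already in the literature for $\hypo$ (see \cite[Section~5]{novelli_hypoplactic}), and compatibility with packing is an immediate consequence of compatibility with standardisation (the packed word of $\word$ can be recovered from $\std{\word}$ together with $\cont{\word}$, and conversely). For the Schützenberger involution, I would argue as follows: the Schützenberger involution is an antiautomorphism, and the sylvester and \#-sylvester congruences are swapped by reversal and by order-reversal of the alphabet; since the Schützenberger involution is the composite of these two operations, it preserves each of $\equiv_\sylv$ and $\equiv_\sylvh$ individually, and hence preserves their join $\equiv_\hypo$. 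Alternatively one can cite the explicit description of hypoplactic classes via quasi-ribbon tableaux and check the involution acts on them.

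For the negative statements, the cleanest route is to reuse the same small counterexamples that work for $\baxt$, checking they remain valid for $\hypo$. Since $\equiv_\baxt \subseteq \equiv_\hypo$ (the meet is contained in the join), every pair of words that is $\baxt$-congruent is also $\hypo$-congruent; so the "congruent" halves of the three counterexamples in Proposition~\ref{prop:baxt_compatible} transfer for free. What must be rechecked is that the "not congruent" halves stay not congruent in $\hypo$ — i.e. that $4231 \not\equiv_\hypo 4321$, $13 \not\equiv_\hypo 31$, and $1312 \not\equiv_\hypo 1132$. Each of these is a short direct verification: two permutations are $\hypo$-congruent iff they have the same descent set and the same underlying quasi-ribbon (equivalently, iff $\std{}$ of each coincides after applying the hypoplactic relations), and $4231$ has descent set $\{1,2,3\}$ while $4321$ also has descent set $\{1,2,3\}$ — so here I would instead check via the quasi-ribbon / inversion data that these two are genuinely distinct in $\hypo$; similarly $13$ and $31$ have different descent sets hence are not $\hypo$-congruent, and $1312$ versus $1132$ can be separated by computing their hypoplactic $\psymb$-symbols. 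If any of the $\baxt$-counterexamples happens to collapse in $\hypo$, I would replace it with a nearby pair, e.g. using words with a repeated letter sandwiching the relevant factor.

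The main obstacle I anticipate is purely bookkeeping: confirming that the three failure witnesses for $\baxt$ do not accidentally become congruent in the larger congruence $\hypo$, and, if one of them does, finding a replacement — particularly for word reversal, where one wants a pair $\uord,\vord$ with $\uord \equiv_\hypo \vord$ but whose reverses are separated by descent-set or quasi-ribbon considerations. This is a finite check that can be done by hand or by appealing to the compatibility of $\hypo$ with standardisation to reduce everything to permutations, where the hypoplactic classes are well understood. No genuinely new idea beyond the $\baxt$ case should be required.
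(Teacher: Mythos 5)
Your overall strategy coincides with the paper's: cite Novelli for standardisation, restriction to intervals and the Schützenberger involution (packing following from standardisation), and exhibit small counterexamples for the three failures. For reverse-standardisation the paper uses exactly your transferred pair ($2121 \equiv_\hypo 2211$ but $4231 \not\equiv_\hypo 4321$), and for restriction to subsets your pair $2132 \equiv_\hypo 2312$ works just as well as the paper's shorter $213 \equiv_\hypo 231$. However, the contingency you flagged for word reversal is actually triggered: the Baxter witness does \emph{not} survive in $\hypo$. Indeed $\std{1312} = 1423$ and $\std{1132} = 1243$ have the same recoil set $\{3\}$, so $1312 \equiv_\hypo 1132$ and the pair $2131 \equiv_\hypo 2311$ gives no contradiction. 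You must replace it; the paper uses $221 \equiv_\hypo 212$, whose reverses $122$ and $212$ standardise to $123$ and $213$ with recoil sets $\emptyset$ and $\{1\}$, hence are not $\hypo$-congruent. Since you only promised to find such a replacement rather than producing one, this is the one concrete hole in the proposal.

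Two smaller corrections. First, your justification for the Schützenberger involution is off in its intermediate steps: word reversal alone does \emph{not} swap $\equiv_\sylv$ and $\equiv_\sylvh$ (the defining relations carry the asymmetric conditions $a \leq b < c$ versus $a < b \leq c$, which reversal does not interchange); rather, the full involution (reversal composed with order-complement) maps $\equiv_\sylv$ to $\equiv_\sylvh$ and vice versa, i.e.\ it \emph{swaps} them, which still suffices to preserve their join $\equiv_\hypo$. The clean route is simply to cite \cite[Theorem~5.4]{novelli_hypoplactic}, as the paper does. Second, your descent-set computation for $4231$ is wrong (its descent set is $\{1,3\}$, not $\{1,2,3\}$); the relevant invariant is the recoil set, and since $4231$ and $4321$ are involutions their recoil sets equal their descent sets, $\{1,3\} \neq \{1,2,3\}$, so the conclusion $4231 \not\equiv_\hypo 4321$ does hold.
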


\begin{proof}
    Compatibility with standardisation follows from \cite[Lemma~4.13 and Theorem~4.18]{novelli_hypoplactic}, from which compatibility with packing follows. Compatibility with restriction to alphabet intervals was shown in \cite[Theorem~5.6]{novelli_hypoplactic}, and compatibility with the Schützenberger involution was shown in \cite[Theorem~5.4]{novelli_hypoplactic}.

    On the other hand, as with the Baxter monoid, notice that $2121$ and $2211$ are $\hypo$-congruent, but $4231$ and $4321$ are not, hence $\hypo$ is not compatible with reverse-standardisation. Furthermore, notice that $213$ and $231$ are $\hypo$-congruent, but $13$ and $31$ are not, hence $\hypo$ is not compatible with restriction to alphabet subsets. Similarly, notice that $221$ and $212$ are $\hypo$-congruent, but $122$ and $212$ are not, hence $\hypo$ is not compatible with word reversal.
\end{proof}

We now study the compatibility properties of the monoids introduced in this section, and compare them with the Baxter and hypoplactic monoids:

\begin{proposition} \label{prop:mst_compatible}
    The meet-stalactic monoid is compatible with packing, restriction to alphabet subsets, word reversal and the Schützenberger involution, but not with standardisation and reverse-standardisation.
\end{proposition}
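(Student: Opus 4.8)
The plan is to reduce every assertion to the combinatorial description of $\equiv_\mst$ noted just before Proposition~\ref{prop:MeetStalacticPresentation}: for $\uord,\vord\in\N^*$ one has $\uord\equiv_\mst\vord$ if and only if $\cont{\uord}=\cont{\vord}$, $\overrightarrow{\rho_\uord}=\overrightarrow{\rho_\vord}$ and $\overleftarrow{\rho_\uord}=\overleftarrow{\rho_\vord}$. Since the first equality already forces $\supp{\uord}=\supp{\vord}$, whenever this description is invoked the functions $\overrightarrow{\rho_\uord},\overrightarrow{\rho_\vord}$ (and likewise $\overleftarrow{\rho_\uord},\overleftarrow{\rho_\vord}$) share a common domain. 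The observation driving the positive results is that $\overrightarrow{\rho_\word}$ and $\overleftarrow{\rho_\word}$ record only the left-to-right order of first, respectively last, occurrences of letters, so they are insensitive to any relabelling of the alphabet and transform predictably under the operations in question.

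\emph{Packing and restriction to alphabet subsets.} If $\cont{\uord}=\cont{\vord}$ then $\uord$ and $\vord$ have the same support, hence the same packing map $\pi$, an order isomorphism onto $[|\supp{\uord}|]$; one checks $\cont{\pi(\uord)}=\cont{\uord}\circ\pi^{-1}$, $\overrightarrow{\rho_{\pi(\uord)}}=\overrightarrow{\rho_\uord}\circ\pi^{-1}$ and $\overleftarrow{\rho_{\pi(\uord)}}=\overleftarrow{\rho_\uord}\circ\pi^{-1}$, and the same for $\vord$, so the three invariants of $\uord,\vord$ coincide exactly when those of $\pi(\uord),\pi(\vord)$ do, which is compatibility with packing. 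For restriction only the ``only if'' direction is required: assuming $\uord\equiv_\mst\vord$ and $\mathcal S\subseteq\N$, deleting from $\uord$ the letters not in $\mathcal S$ neither reorders the surviving letters nor discards any occurrence of a surviving letter, so $\cont{\uord[S]}$, $\overrightarrow{\rho_{\uord[S]}}$ and $\overleftarrow{\rho_{\uord[S]}}$ are the restrictions to $\supp{\uord}\cap\mathcal S$ of the corresponding invariants of $\uord$ (reindexing the codomain), and likewise for $\vord$; since $\supp{\uord}=\supp{\vord}$, all three agree for $\uord[S]$ and $\vord[S]$, whence $\uord[S]\equiv_\mst\vord[S]$.

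\emph{Word reversal and the Schützenberger involution.} Write $m=|\supp{\word}|$ and let $\word^{R}$ denote the reverse of $\word$. Reading $\word^{R}$ left-to-right is reading $\word$ right-to-left, so a letter's first occurrence in $\word^{R}$ corresponds to its last occurrence in $\word$; tracking the induced orders gives $\overrightarrow{\rho_{\word^{R}}}=(m+1-(\cdot))\circ\overleftarrow{\rho_\word}$ and $\overleftarrow{\rho_{\word^{R}}}=(m+1-(\cdot))\circ\overrightarrow{\rho_\word}$, while $\cont{\word^{R}}=\cont{\word}$. Hence $\uord\equiv_\mst\vord$ implies $\uord^{R}\equiv_\mst\vord^{R}$, and as reversal is an involution the equivalence follows. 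For the Schützenberger involution on $[n]^*$, write it as $\word\mapsto c(\word)^{R}$ where $c$ applies the complement $i\mapsto n+1-i$ letterwise; as $c$ is a bijection of the alphabet, the relabelling observation gives $\cont{c(\word)}=\cont{\word}\circ c^{-1}$, $\overrightarrow{\rho_{c(\word)}}=\overrightarrow{\rho_\word}\circ c^{-1}$ and $\overleftarrow{\rho_{c(\word)}}=\overleftarrow{\rho_\word}\circ c^{-1}$, so $\uord\equiv_\mst\vord$ if and only if $c(\uord)\equiv_\mst c(\vord)$; composing with the reversal case gives compatibility with the Schützenberger involution. I expect this paragraph to carry essentially all of the (modest) technical weight: the only real care needed is in the $m+1-(\cdot)$ reindexing and in keeping straight how reversal and complementation interact.

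\emph{Failure of (reverse-)standardisation.} Taking $a=2$, $b=1$ and $\uord'=\vord'=\varepsilon$ in the second family of defining relations of Proposition~\ref{prop:MeetStalacticPresentation} gives $2211\equiv_\mst 2121$, and $\cont{2211}=\cont{2121}$. However $\std{2121}=3142$ while $\std{2211}=3412$, and $\rstd{2121}=4231$ while $\rstd{2211}=4321$. Since $\equiv_\mst$ refines $\equiv_\rst$, and for a permutation $\sigma$ one has $\overleftarrow{\rho_\sigma}=\sigma^{-1}$, distinct permutations of $[n]$ are never $\equiv_\rst$-congruent and hence never $\equiv_\mst$-congruent; in particular $3142\not\equiv_\mst 3412$ and $4231\not\equiv_\mst 4321$. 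Thus the equivalence required for compatibility with standardisation, and the one required for compatibility with reverse-standardisation, both fail on the pair $2211,2121$, which completes the proof.
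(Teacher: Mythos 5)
Your proof is correct and follows essentially the same route as the paper: a direct verification of the positive compatibilities from the basic structure of $\equiv_\mst$ (you use the characterisation via content and the orders of first and last occurrences, where the paper appeals to the unconstrained form of the defining relations — two views of the same fact), together with an explicit counterexample pair for the negative claims. Your counterexample $2211\equiv_\mst 2121$ plays the same role as the paper's $1211\equiv_\mst 1121$ and is equally valid.
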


\begin{proof}
    It is immediate that $\mst$ is compatible with packing and restriction to alphabet subsets since there is no restriction placed on the letters and words occurring in its defining relations. Furthermore, $\mst$ is compatible with both word reversal and the Schützenberger involution, since if two words are $\lst$-congruent (resp. $\rst$-congruent), then their images under word reversal or the Schützenberger involution are $\rst$-congruent (resp. $\lst$-congruent), once again due to the lack of restrictions on the defining relations. 

    On the other hand, notice that while the words $1211$ and $1121$ are $\mst$-congruent, their standardised words $1423$ and $1243$ are not, hence $\mst$ is not compatible with standardisation. Similarly, their reverse-standardised words $3421$ and $3241$ are also not $\mst$-congruent, hence $\mst$ is not compatible with reverse-standardisation.
\end{proof}

An immediate consequence is that $\mst$ is compatible with restriction to alphabet intervals. Analogously, one can prove that:

\begin{proposition} \label{prop:jst_compatible}
    The join-stalactic monoid is compatible with packing, restriction to alphabet subsets, word reversal and the Schützenberger involution, but not with standardisation and reverse-standardisation.
\end{proposition}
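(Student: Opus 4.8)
The plan is to mirror the proof of Proposition~\ref{prop:mst_compatible} almost verbatim, since the join-stalactic congruence $\equiv_\jst$ is also generated by relations ($\cR_\jst = \cR_\rst \cup \cR_\lst$) in which no arithmetic restriction is placed on the letters involved. First I would establish compatibility with packing and with restriction to alphabet subsets: any relation of $\cR_\rst$ or $\cR_\lst$ is preserved under applying an order-preserving bijection to the alphabet (giving packing), and deleting all occurrences of a letter $c \notin \{a,b\}$ from a relation $(ba\uord b, ab\uord b)$ or $(b\uord ab, b\uord ba)$ again yields a relation of the same type (or a trivial equality), which gives restriction to alphabet subsets. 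Since any relation is also trivially content-preserving, the ``only if'' directions needed for the subset-restriction property are automatic, and the ``if and only if'' form for packing follows from Proposition~\ref{prop:jst_characterisation} together with the observation that packed words determine the content up to relabelling.

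Next I would handle word reversal and the Schützenberger involution. The key point, exactly as in the $\mst$ case, is that reversing a word sends a right-stalactic relation $(ba\uord b, ab\uord b)$ to $(b \ol{\uord}^{\mathrm{rev}} ab, b\ol{\uord}^{\mathrm{rev}} ba)$, which is a left-stalactic relation, and vice versa; the Schützenberger involution does the same after additionally swapping $a \leftrightarrow b$ via the order-reversal on the alphabet, which is harmless because the defining relations are symmetric in $a$ and $b$. Hence $\uord \equiv_\jst \vord$ if and only if their reverses (resp. their images under the Schützenberger involution) are $\equiv_\jst$-congruent, using that $\cR_\jst$ is the union $\cR_\rst \cup \cR_\lst$ which is stable under both operations. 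Alternatively, one can invoke Proposition~\ref{prop:jst_characterisation} directly: $\cont{\cdot}$ and $\ol{\cdot}$ are both clearly compatible with reversal and with the Schützenberger involution.

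Finally I would dispatch the negative claims with the same counterexamples used for $\mst$. The words $1211$ and $1121$ have the same content, and $\ol{1211} = \ol{1121} = 2$ (the only simple letter, read off in the same position), so they are $\equiv_\jst$-congruent by Proposition~\ref{prop:jst_characterisation}; but their standardised words $1423$ and $1243$ have different contents (both are permutations, but $\std{1211}=1423$, $\std{1121}=1243$, and one checks they are not $\equiv_\jst$-congruent since e.g. their restrictions to $\{2,3\}$ differ), so $\jst$ is not compatible with standardisation, and likewise the reverse-standardised words $3421$ and $3241$ fail, ruling out reverse-standardisation. I would double-check these small examples explicitly by computing $\plst{\cdot}$ and $\prst{\cdot}$ (or equivalently $\overrightarrow{\rho}$ and $\overleftarrow{\rho}$) on each of the four permutations.

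The only mild obstacle is bookkeeping in the negative examples: one must verify both that $1211 \equiv_\jst 1121$ (easy via Proposition~\ref{prop:jst_characterisation}) and that the (reverse-)standardised words are genuinely \emph{not} $\equiv_\jst$-congruent, which requires a short computation rather than an appeal to a structural lemma. Everything else is a direct transcription of the $\mst$ argument, which is why the statement says ``analogously, one can prove that''.
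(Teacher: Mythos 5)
Your proposal is correct and follows exactly the route the paper intends: the paper gives no separate proof for $\jst$, merely noting it is analogous to Proposition~\ref{prop:mst_compatible}, and your transcription of that argument (relations in $\cR_\jst = \cR_\rst \cup \cR_\lst$ place no restrictions on letters, reversal and the Schützenberger involution swap the two families of relations, and the same counterexamples $1211 \equiv_\jst 1121$ with $\std{\cdot}$ and $\rstd{\cdot}$ failing) is the intended proof. One small slip in your justification of the negative claims: the restrictions of $1423$ and $1243$ to $\{2,3\}$ are both equal to $23$, so that particular subset does not witness non-congruence. The cleanest fix is to observe that $1423$ and $1243$ are permutations, so every letter is simple and $\ol{\uord}=\uord$; by Proposition~\ref{prop:jst_characterisation} two permutations are $\equiv_\jst$-congruent if and only if they are equal, and $1423 \neq 1243$ (alternatively, restrict to $\{2,4\}$, which gives $42$ versus $24$). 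The same remark disposes of $3421$ versus $3241$ for reverse-standardisation. Everything else is sound.
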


\begin{proposition} \label{prop:mtg_compatible}
    The meet-taiga monoid is compatible with packing, restriction to alphabet intervals, word reversal and the Schützenberger involution, but not with standardisation, reverse-standardisation, and restriction to alphabet subsets.
\end{proposition}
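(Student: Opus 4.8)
The plan is to obtain the four positive properties from the presentation $\pres{\N}{\cR_\mtg}$ of Proposition~\ref{prop:MeetTaigaPresentation} together with the description $\equiv_\mtg = \equiv_\rtg \wedge \equiv_\ltg$, and the three negative ones from explicit pairs of words, exploiting that $\equiv_\baxt \subseteq \equiv_\mtg$, that $\equiv_\mst \subseteq \equiv_\mtg$, and that on the set of permutations $\equiv_\mtg$ coincides with the Baxter congruence $\equiv_\baxt$ --- since applying $\prtg$ to a permutation produces a BSTM with all multiplicities equal to $1$, i.e. a binary search tree, so that $\equiv_\rtg$ restricts to $\equiv_\sylv$ and $\equiv_\ltg$ to $\equiv_\sylvh$ on permutations.

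\emph{Packing and restriction to alphabet intervals.} Every relation of $\cR_\mtg$ is phrased purely in terms of the relative order of the letters that occur (the conditions $a \leq d$ and $b, c \in [a,d]$ are preserved by order isomorphisms of alphabets). Since the packing map $a_i \mapsto i$ is such an order isomorphism from $\supp{\word}$ onto $[|\supp{\word}|]$, both it and its inverse carry $\cR_\mtg$-rewrites to $\cR_\mtg$-rewrites; as all relations preserve content, this gives compatibility with packing. For restriction to an interval $\mathcal{I}$ it suffices, since $\equiv_\mtg$ is generated by $\cR_\mtg$, to restrict a single rewrite $\word_1\, b\, \uord\, a\, d\, \vord\, c\, \word_2 \to \word_1\, b\, \uord\, d\, a\, \vord\, c\, \word_2$ (with $a \leq d$ and $b, c \in [a,d]$): if $a, d \in \mathcal{I}$ then $[a,d] \subseteq \mathcal{I}$, hence also $b, c \in \mathcal{I}$, and the two restricted words differ by a genuine $\cR_\mtg$-rewrite; if at most one of $a$ and $d$ lies in $\mathcal{I}$, then deleting the letters outside $\mathcal{I}$ makes the two restricted words literally equal. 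It is precisely the fact that $a, d \in \mathcal{I}$ forces $b, c \in \mathcal{I}$ --- true for intervals but false for arbitrary subsets --- that will later yield the failure for general subsets.

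\emph{Word reversal and the Schützenberger involution.} The key point is that $\prtg{\word} = \pltg{\tilde{\word}}$, because inserting the letters of $\word$ from right to left is the same as inserting the letters of the reverse $\tilde{\word}$ from left to right; hence word reversal interchanges $\equiv_\rtg$ and $\equiv_\ltg$ and so preserves their meet $\equiv_\mtg$, which, as word reversal is an involution, gives compatibility with word reversal. The letter-complementing automorphism $i \mapsto n + 1 - i$ of $[n]^*$ sends $\prtg{\word}$ to the mirror image of $\prtg{\word}$, so it preserves each of $\equiv_\rtg$ and $\equiv_\ltg$; as the Schützenberger involution is the composite of this automorphism with word reversal, it too interchanges $\equiv_\rtg$ and $\equiv_\ltg$, preserves $\equiv_\mtg$, and being an involution gives compatibility. (Alternatively, one verifies directly that $\cR_\mtg$ is stable under the antiautomorphism $i \mapsto n + 1 - i$, using that it reverses $\leq$ and maps intervals to intervals.)

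\emph{The negative statements.} Since $\equiv_\mst \subseteq \equiv_\mtg$ and $1211 \equiv_\mst 1121$ (Proposition~\ref{prop:mst_compatible}), we get $1211 \equiv_\mtg 1121$, whereas their standardised words $1423$ and $1243$ are non-congruent permutations --- so $\mtg$ is not compatible with standardisation --- and their reverse-standardised words $3421$ and $3241$ are likewise non-congruent --- so $\mtg$ is not compatible with reverse-standardisation. Since $\equiv_\baxt \subseteq \equiv_\mtg$ and $2132 \equiv_\baxt 2312$ (Proposition~\ref{prop:baxt_compatible}), we get $2132 \equiv_\mtg 2312$, but their restrictions to $\{1,3\}$ are $13$ and $31$, which are not $\mtg$-congruent, so $\mtg$ is not compatible with restriction to alphabet subsets. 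The only steps requiring genuine care are (i) the case analysis for restriction to intervals, in particular checking that when exactly one of $a$ and $d$ lies in $\mathcal{I}$ the restricted words are equal, and (ii) the claimed non-congruences of the permutation pairs $\{1423, 1243\}$ and $\{3421, 3241\}$; for these, the reduction of $\equiv_\mtg$ to $\equiv_\baxt$ on permutations lets one conclude by observing that each pair is separated already by $\equiv_\sylv$ or by $\equiv_\sylvh$ (for instance $3421$ is alone in its $\equiv_\sylv$-class).
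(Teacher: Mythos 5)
Your proposal is correct and follows the same overall strategy as the paper: the four positive compatibilities are read off the presentation $\cR_\mtg$ of Proposition~\ref{prop:MeetTaigaPresentation}, and the negatives come from explicit word pairs. The differences are minor. For word reversal and the Schützenberger involution the paper simply observes that $\cR_\mtg$ is stable under these (anti)automorphisms, whereas you route the argument through the interchange of $\equiv_\rtg$ and $\equiv_\ltg$; both work. For standardisation and reverse-standardisation the paper uses the pairs $1122 \equiv_\mtg 1212$ and $2211 \equiv_\mtg 2121$, while you reuse the single pair $1211 \equiv_\mst 1121$ together with $\equiv_\mst \subseteq \equiv_\mtg$; your reduction of $\equiv_\mtg$ to $\equiv_\baxt$ on permutations (justified, e.g., by noting that $\cR_\rst$- and $\cR_\lst$-rewrites never apply along a chain of content-preserving rewrites between words without repeated letters) is a clean way to certify the required non-congruences, which the paper leaves implicit. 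One caution on your final parenthetical: for the pair $\{1423,1243\}$ it is $\equiv_\sylvh$, not $\equiv_\sylv$, that separates them --- indeed $1423 \equiv_\sylv 1243$ via the $\cR_\sylv$-rewrite applied to the factor $423$ (with $a=2 \leq b=3 < c=4$), whereas $1243$ admits no $\cR_\sylvh$-rewrite and so is alone in its $\equiv_\sylvh$-class; your disjunctive claim is therefore true, but it is that side of the disjunction which must be checked.
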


\begin{proof}
    It is immediate that $\mtg$ is compatible with packing since packing can be viewed as an isomorphism that preserves the order on the support of a word, hence the defining relations of $\mtg$ still hold under it. Similarly, $\mtg$ is compatible with restriction to alphabet intervals, since its defining relations are of the form
    \[
        b \uord ad \vord c \equiv_\mst b \uord da \vord c,
    \]
    where $a \leq d$, $b,c \in [a,d]$ and $\uord,\vord \in \N^*$, hence restricting the alphabet to an interval that includes $a$ and $d$ still allows equality under $\equiv_\mtg$ (excluding $a$ or $d$ trivially gives equality). To show that $\mtg$ is compatible with word reversal and the Schützenberger involution, it suffices to see that the defining relations still hold under these operations, which is clear from their definition.
    
    On the other hand, notice that $1122$ and $1212$ are $\mtg$-congruent, however, their standardised words $1234$ and $1324$ are not, hence $\mtg$ is not compatible with standardisation. Similarly, notice that $2211$ and $2121$ are $\mtg$-congruent, however, their reverse-standardised words $4321$ and $4231$ are not, hence $\mtg$ is not compatible with reverse-standardisation. Furthermore, notice that $2132$ and $2312$ are $\mtg$-congruent, but $13$ and $31$ are not, hence $\mtg$ is not compatible with restriction to alphabet subsets.
\end{proof}
Analogously, one can prove that:

\begin{proposition} \label{prop:jtg_compatible}
    The join-taiga monoid is compatible with packing, restriction to alphabet intervals, word reversal and the Schützenberger involution, but not with standardisation, reverse-standardisation, and restriction to alphabet subsets.
\end{proposition}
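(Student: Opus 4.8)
The plan is to follow the proof of Proposition~\ref{prop:mtg_compatible} almost verbatim, using the presentation $\pres{\N}{\cR_\jtg}$ with $\cR_\jtg = \cR_\rtg\cup\cR_\ltg = \cR_\sylv\cup\cR_\rst\cup\cR_\sylvh\cup\cR_\lst$ for the four positive assertions and Proposition~\ref{prop:jtg_characterisation} for the three negative ones.

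For packing I would argue exactly as for $\mtg$: all words in a $\equiv_\jtg$-class share a single support $\mathcal{S}$, packing is the order-isomorphism $\mathcal{S}^*\to[|\mathcal{S}|]^*$, and whether a defining relation applies depends only on the relative order of the letters it involves, so packing induces an isomorphism between $\mathcal{S}^*/{\equiv_\jtg}$ and $[|\mathcal{S}|]^*/{\equiv_\jtg}$, giving both implications. For restriction to alphabet intervals I would check, relation by relation, that after erasing the letters outside an interval $\mathcal{I}$ each defining relation becomes either a trivial identity or again a defining relation of the same family; for the $\cR_\sylv$-relations (and symmetrically the $\cR_\sylvh$-relations) this uses that, $\mathcal{I}$ being an interval, if the two extreme letters of the relation survive then so does the intermediate one, while the $\cR_\rst$- and $\cR_\lst$-relations carry no order constraints and present nothing to check. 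For word reversal I would use that reversal interchanges $\equiv_\sylv$ with $\equiv_\sylvh$ and $\equiv_\rst$ with $\equiv_\lst$, hence interchanges $\equiv_\rtg$ with $\equiv_\ltg$ and therefore fixes their join $\equiv_\jtg$; equivalently, $\cR_\jtg$ is stable under reversal, a $\cR_\sylv$-relation being carried to a $\cR_\sylvh$-relation, or, in the degenerate case where its two transposed letters coincide, to a $\cR_\lst$-relation. For the Schützenberger involution I would likewise verify that $\cR_\jtg$ is stable under it, the order-reversing relabelling exchanging the $\sylv$- and $\sylvh$-relations and the $\rst$- and $\lst$-relations; as this map is an involutive anti-automorphism, compatibility follows.

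For the three non-compatibilities I would recycle the witnesses from Proposition~\ref{prop:mtg_compatible}: by Proposition~\ref{prop:jtg_characterisation} one checks readily that $1122\equiv_\jtg 1212$, $2211\equiv_\jtg 2121$ and $2132\equiv_\jtg 2312$. Here $\std{1122}=1234$ and $\std{1212}=1324$; $\rstd{2211}=4321$ and $\rstd{2121}=4231$; and the restrictions of $2132$ and $2312$ to $\{1,3\}$ are $13$ and $31$. Since all of $1234,1324,4321,4231,13,31$ consist of simple letters only, Proposition~\ref{prop:jtg_characterisation} shows that on such words $\equiv_\jtg$ coincides with $\equiv_\hypo$; and no defining relation of $\hypo$ applies to $1234$, $4321$, $13$ or $31$, so each of these is alone in its $\equiv_\hypo$-class. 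Hence $\std{1122}\not\equiv_\jtg\std{1212}$, $\rstd{2211}\not\equiv_\jtg\rstd{2121}$, and the restrictions of $2132$ and $2312$ to $\{1,3\}$ are not $\equiv_\jtg$-congruent, giving the failure of compatibility with standardisation, reverse-standardisation and restriction to alphabet subsets, respectively.

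I do not expect a genuine obstacle: the only points requiring a little care are the degenerate cases in which a reversed or Schützenberger-transformed sylvester relation lands among the stalactic relations, and, in the interval-restriction step, the case where exactly one of the two transposed letters of a sylvester relation falls outside $\mathcal{I}$.
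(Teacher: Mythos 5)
Your proof is correct and follows essentially the same route as the paper, which simply asserts that the argument is analogous to that of Proposition~\ref{prop:mtg_compatible}: the positive assertions via stability of $\cR_\jtg$ under the relevant operations, and the negative ones via the same witness words, checked against Proposition~\ref{prop:jtg_characterisation}. One cosmetic slip: in the degenerate reversed sylvester relation it is the trailing witness letter that coincides with the smaller transposed letter (the case $a=b<c$), not the two transposed letters $a$ and $c$, which are always distinct; your conclusion that this case lands in $\cR_\lst$ is nevertheless correct.
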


In Table~\ref{table:compatibilities}, we give a summary of the results on compatibilities of the hypoplactic, Baxter, meet and join-stalactic and meet and join-taiga monoids.
{\renewcommand{\arraystretch}{1.2}\begin{table}[ht]
    \caption{Compatibility of plactic-like monoids, where \textbf{Std} stands for `standardisation'; \textbf{rStd} for `standardisation'; \textbf{Pack} for `packing'; \textbf{RAS} for `restriction to alphabet subsets'; \textbf{RAI} for `restriction to alphabet intervals'; \textbf{WR} for `word reversal'; \textbf{SI} for `Schützenberger involution'.}
    \label{table:compatibilities}
    \begin{tabular}{c|cccccccc}
        \toprule
        \textbf{Monoid} & \textbf{Std} & \textbf{rStd} & \textbf{Pack} & \textbf{RAS} & \textbf{RAI} & \textbf{WR} & \textbf{SI} & \textbf{Result} \\
        \midrule
        $\hypo$ & \ding{51} & \ding{55} & \ding{51} & \ding{55} & \ding{51} & \ding{55} & \ding{51} & Proposition~\ref{prop:hypo_compatible}\\ 
        $\baxt$ & \ding{51} & \ding{55} & \ding{51} & \ding{55} & \ding{51} & \ding{55} & \ding{51} & Proposition~\ref{prop:baxt_compatible}\\ 
        $\mst$ & \ding{55} & \ding{55} & \ding{51} & \ding{51} & \ding{51} & \ding{51} & \ding{51} & Proposition~\ref{prop:mst_compatible}\\ 
        $\jst$ & \ding{55} & \ding{55} & \ding{51} & \ding{51} & \ding{51} & \ding{51} & \ding{51} & Proposition~\ref{prop:jst_compatible}\\ 
        $\mtg$ & \ding{55} & \ding{55} & \ding{51} & \ding{55} & \ding{51} & \ding{51} & \ding{51} & Proposition~\ref{prop:mtg_compatible}\\ 
        $\jtg$ & \ding{55} & \ding{55} & \ding{51} & \ding{55} & \ding{51} & \ding{51} & \ding{51} & Proposition~\ref{prop:jtg_compatible}\\ 
        \bottomrule
    \end{tabular}
    \end{table}}


\section{Robinson--Schensted-like correspondences} \label{section:Robinson_Schensted_like_correspondences}

In this section, we introduce analogues of the Robinson--Schensted correspondence for the meet-stalactic and meet-taiga monoids. We then show how to extract representatives of the congruence classes from their $\psymb$-symbols and give iterative versions of the insertion algorithms.


\subsection{Definitions and correctness of the correspondences} \label{subsection:Definitions_and_correctness_of_the_correspondences}

We begin by defining the $\psymb$ and $\qsymb$-symbols and stating their properties, which then allow us to prove the correctness of the correspondences.


\subsubsection{The meet-stalactic correspondence} \label{subsubsection:The_meet_stalactic_correspondence}

\begin{definition} \label{definition:mstpandqsymbols}
    For a word $\word \in \N^*$, the \emph{meet-stalactic $\psymb$-symbol} of $\word$ is the pair $\pmst{\word}:=(\plst{\word}, \prst{\word})$ of stalactic tableaux, while the \emph{meet-stalactic $\qsymb$-symbol} of $\word$ is the pair $\qmst{\word}:=(\qlst{\word}, \qrst{\word})$ of (respectively) increasing and decreasing patience-sorting tableaux.
\end{definition}

\begin{example} \label{example:meet_stalactic_p_and_q_symbols}
The meet-stalactic $\psymb$-symbol of $212511354$ is
\begin{equation*}
    \left(\tikz[tableau]\matrix{
		2 \& 1 \& 5 \& 3 \& 4\\
		2 \& 1 \& 5 \&   \&  \\
          \& 1 \&   \&   \&  \\ 
	};, 
    \tikz[tableau]\matrix{
		2 \& 1 \& 3 \& 5 \& 4\\
		2 \& 1 \&   \& 5 \&  \\
          \& 1 \&   \&   \&  \\ 
	};\right),
\end{equation*}
and its meet-stalactic $\qsymb$-symbol is
\begin{equation*}
    \left(\tikz[tableau]\matrix{
		1 \& 2 \& 4 \& 7 \& 9\\
		3 \& 5 \& 8 \&   \&  \\
          \& 6 \&   \&   \&  \\ 
	};,
    \tikz[tableau]\matrix{
		3 \& 6 \& 7 \& 8 \& 9\\
		1 \& 5 \&   \& 4 \&  \\
          \& 2 \&   \&   \&  \\ 
	};\right).
\end{equation*}
\end{example}

\begin{proposition} \label{prop:pmst_pair_plst_prst}
    Let $\uord,\vord \in \N^*$. Then $\uord \equiv_\mst \vord$ if and only if $\pmst{\uord} = \pmst{\vord}$.
\end{proposition}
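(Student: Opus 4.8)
The plan is to unwind everything to the definitions, since the statement is essentially a formal consequence of how $\equiv_\mst$ and $\pmst{\cdot}$ were set up. Recall that in the lattice of congruences on $\N^*$ the meet of two congruences is simply their intersection as relations; thus by definition $\uord \equiv_\mst \vord$ holds precisely when both $\uord \equiv_\rst \vord$ and $\uord \equiv_\lst \vord$ hold. So the first step is to record this reformulation explicitly.

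Next I would invoke the defining characterisations of the one-sided stalactic congruences: by definition $\uord \equiv_\rst \vord \iff \prst{\uord} = \prst{\vord}$, and symmetrically (as the left stalactic monoid is defined analogously) $\uord \equiv_\lst \vord \iff \plst{\uord} = \plst{\vord}$. These can also be read off from the Robinson--Schensted-like correspondences in Theorems~\ref{theorem:robinson_left_stalactic} and~\ref{theorem:robinson_right_stalactic}, which guarantee in particular that the maps $\word \mapsto \plst{\word}$ and $\word \mapsto \prst{\word}$ are well-defined on $\N^*$, so that the displayed biconditionals make sense.

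The final step is to chain the equivalences together and use that $\pmst{\word} = (\plst{\word},\prst{\word})$ is defined componentwise, so that $\pmst{\uord} = \pmst{\vord}$ is equivalent to the conjunction $\plst{\uord} = \plst{\vord}$ and $\prst{\uord} = \prst{\vord}$. Combining: $\uord \equiv_\mst \vord \iff (\uord \equiv_\rst \vord \text{ and } \uord \equiv_\lst \vord) \iff (\prst{\uord} = \prst{\vord} \text{ and } \plst{\uord} = \plst{\vord}) \iff \pmst{\uord} = \pmst{\vord}$, which is the claim.

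There is no genuine obstacle here; the only point requiring any care is making the identification $\equiv_\mst \,=\, \equiv_\rst \cap \equiv_\lst$ explicit (i.e.\ that the congruence meet is intersection of relations) and citing the correct one-sided characterisations, after which the proof is a two-line chain of equivalences. I would keep the write-up correspondingly short.
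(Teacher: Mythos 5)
Your proposal is correct and follows essentially the same route as the paper's proof: identify the meet congruence with the intersection of $\equiv_\lst$ and $\equiv_\rst$, use that each one-sided congruence is by definition equality of the corresponding $\psymb$-symbol, and conclude via the componentwise definition of $\pmst{\cdot}$. No gaps.
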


\begin{proof}
    By definition, $\uord \equiv_\mst \vord$ if and only if $\uord \equiv_\lst \vord$ and $\uord \equiv_\rst \vord$. This is equivalent to $\uord$ and $\vord$ having the same left-stalactic and right-stalactic $\psymb$-symbol, that is, $\plst{\uord} = \plst{\vord}$ and $\prst{\uord} = \prst{\vord}$. In turn, by the definition of the \hyperref[definition:mstpandqsymbols]{meet-stalactic $\psymb$-symbol}, this is equivalent to $\pmst{\uord} = \pmst{\vord}$.
\end{proof}

\begin{definition} \label{definition:TwinStalacticTableauxCondition}
    Let $T_L,T_R$ be stalactic tableaux. We say $(T_L,T_R)$ are a \emph{pair of twin stalactic tableaux} if $\cont{T_R} = \cont{T_L}$ and for each simple column labelled $c$ and any other column labelled $d$, if $\rho_{T_L}(c) < \rho_{T_L}(d)$, then $\rho_{T_R}(c) < \rho_{T_R}(d)$.
\end{definition}

The condition on the order of columns is equivalent to the following: if $\rho_{T_R}(d) < \rho_{T_R}(c)$, then $\rho_{T_L}(d) < \rho_{T_L}(c)$. Notice that, in particular, the order of simple columns, from left-to-right, is the same in pairs of twin stalactic tableaux. The $\psymb$-symbol given in Example~\ref{example:meet_stalactic_p_and_q_symbols} is a pair of twin stalactic tableaux. In fact, we have the following:

\begin{proposition} \label{prop:pmst_pair_twin}
For any $\word \in \N^*$, the meet-stalactic $\psymb$-symbol $(\plst{\word},\prst{\word})$ of $\word$ is a pair of twin stalactic tableaux.
\end{proposition}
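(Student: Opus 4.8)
The plan is to translate both requirements of Definition~\ref{definition:TwinStalacticTableauxCondition} directly into statements about occurrences of letters in $\word$, using Lemma~\ref{lemma:column_reading_stalactic_tableau} to identify the relevant orderings. For the content condition, one observes that every letter read during either insertion process contributes exactly one new cell, so $\cont{\plst{\word}} = \cont{\word} = \cont{\prst{\word}}$; in particular $\cont{T_R} = \cont{T_L}$ when we set $T_L = \plst{\word}$ and $T_R = \prst{\word}$.

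For the column-order condition, I would invoke Lemma~\ref{lemma:column_reading_stalactic_tableau}: it shows that the top row of $\plst{\word}$ lists $\supp{\word}$ in the order $\overrightarrow{\rho_\word}$ (by first occurrence, reading left-to-right), so that $\rho_{\plst{\word}} = \overrightarrow{\rho_\word}$, and likewise $\rho_{\prst{\word}} = \overleftarrow{\rho_\word}$, ordering by last occurrence. Now let $c$ label a simple column, so that $c$ occurs exactly once in $\word$ and that occurrence is simultaneously the first and the last occurrence of $c$; let $d$ label any other column. If $\rho_{\plst{\word}}(c) < \rho_{\plst{\word}}(d)$, then the unique occurrence of $c$ lies strictly to the left of the first occurrence of $d$ in $\word$; since the first occurrence of $d$ is weakly to the left of its last occurrence, the occurrence of $c$ is strictly to the left of the last occurrence of $d$, i.e.\ $\rho_{\prst{\word}}(c) < \rho_{\prst{\word}}(d)$. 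This is exactly the implication required by Definition~\ref{definition:TwinStalacticTableauxCondition}, so $(\plst{\word},\prst{\word})$ is a pair of twin stalactic tableaux.

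I do not expect a genuine obstacle here: once Lemma~\ref{lemma:column_reading_stalactic_tableau} has pinned down the two top-row orderings as $\overrightarrow{\rho_\word}$ and $\overleftarrow{\rho_\word}$, the claim follows in one line from the elementary fact that a first occurrence never comes after the corresponding last occurrence. The only point that genuinely needs attention is that the simplicity of $c$ is essential — for a non-simple column the first and last occurrences of its label are distinct, and the implication can fail — which is precisely why the definition restricts the hypothesis to simple columns.
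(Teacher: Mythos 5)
Your proof is correct and follows essentially the same argument as the paper's: both verify the content condition by noting the two tableaux are built from the same word, and both derive the column-order condition from the fact that the unique occurrence of a simple letter precedes the first, hence also the last, occurrence of any other letter. The only cosmetic difference is that you cite Lemma~\ref{lemma:column_reading_stalactic_tableau} to identify the top-row orderings with $\overrightarrow{\rho_\word}$ and $\overleftarrow{\rho_\word}$, whereas the paper appeals directly to the insertion algorithm.
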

\begin{proof}
    Notice that, since $\plst{\word}$ and $\prst{\word}$ are computed from the same word $\word$, then $\cont{\plst{\word}} = \cont{\prst{\word}}$. Let $a,x \in \supp{\word}$ be such that $a$ is simple. Suppose that $\rho_{\plst{\word}}(a) < \rho_{\plst{\word}}(x)$, which implies that the only occurrence of $a$ in $\word$ is to the left of the leftmost and, therefore, all occurrences of $x$ in $\word$. In particular, $a$ only occurs to the left of the rightmost occurrence of $x$ in $\word$, which, by Algorithm~\hyperref[alg:RightStalacticLeftInsertion]{\textsf{rStLI}}, implies $\rho_{\prst{\word}}(a) < \rho_{\prst{\word}}(x)$. Thus, $(\plst{\word},\prst{\word})$ is a pair of twin stalactic tableaux.
\end{proof}

\begin{definition} \label{definition:TwinPatienceSortingTableauxCondition}
    Let $(S_L,S_R)$ be a pair of (respectively) increasing and decreasing patience-sorting tableaux. We say $(S_L,S_R)$ are a \emph{pair of twin patience-sorting tableaux} if
    \begin{enumerate}[(i)]
        \item The columns of $S_L$ are in bijection with the columns of $S_R$, with the content of the columns being preserved by the bijection;
        \item for each pair of corresponding columns $(c_L,c_R)$ and $(d_L,d_R)$ of $(S_L,S_R)$ such that $c_L$ and $c_R$ are simple, if $c_L$ appears to the left of $d_L$ in $S_L$ then $c_R$ appears to the left of $d_R$ in $S_R$.
    \end{enumerate}
\end{definition}

The condition on the order of columns is equivalent to the following: if $c_R$ appears to the right of $d_R$ in $S_R$ then $c_L$ appears to the right of $d_L$ in $S_L$. The $\qsymb$-symbol given in Example~\ref{example:meet_stalactic_p_and_q_symbols} is a pair of twin patience-sorting tableaux. In fact, we have the following:

\begin{proposition} \label{prop:qmst_pair_twin}
For any $\word \in \N^*$, the meet-stalactic $\qsymb$-symbol $(\qlst{\word},\qrst{\word})$ of $\word$ is a pair of twin patience-sorting tableaux.
\end{proposition}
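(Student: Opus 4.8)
The plan is to describe the column structure of both patience-sorting tableaux directly in terms of $\word$, using the lemmas already established, and then to argue exactly as in the proof of Proposition~\ref{prop:pmst_pair_twin}. First I would recall, combining Lemma~\ref{lemma:column_reading_patience_sorting_tableau} with Lemma~\ref{lemma:rstd_rho_inverse_positions_of_word}, that the columns of $\qlst{\word}$, read from left to right, are indexed by $\supp{\word}$ via $\overrightarrow{\rho_\word}$: the set of entries of the $i$-th column is precisely the set of positions in $\word$ of the letter $\overrightarrow{\rho_\word}^{-1}(i)$. Symmetrically, the columns of $\qrst{\word}$, read from left to right, are indexed by $\supp{\word}$ via $\overleftarrow{\rho_\word}$, the set of entries of the $j$-th column being the set of positions of $\overleftarrow{\rho_\word}^{-1}(j)$ in $\word$. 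In particular, for each letter $x \in \supp{\word}$ there is exactly one column of $\qlst{\word}$ and exactly one column of $\qrst{\word}$ whose set of entries is the set of positions of $x$ in $\word$; matching these columns gives a bijection between the columns of $\qlst{\word}$ and those of $\qrst{\word}$ that preserves the content of columns, which is condition~(i) of Definition~\ref{definition:TwinPatienceSortingTableauxCondition}. Moreover, under this bijection a column is simple exactly when $|\word|_x = 1$ for the associated letter $x$, i.e. when $x$ is a simple letter of $\word$, and in that case both corresponding columns are simple.

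For condition~(ii), I would fix a simple letter $a$ of $\word$ and another letter $x \in \supp{\word}$, and write $(c_L,c_R)$ and $(d_L,d_R)$ for the corresponding pairs of columns associated with $a$ and $x$ respectively. Since the left-to-right order of the columns of $\qlst{\word}$ is the order $\overrightarrow{\rho_\word}$, the assumption that $c_L$ lies to the left of $d_L$ means $\overrightarrow{\rho_\word}(a) < \overrightarrow{\rho_\word}(x)$, i.e. the unique occurrence of $a$ in $\word$ precedes the first occurrence of $x$. As $a$ occurs only once, that occurrence therefore precedes every occurrence of $x$, in particular the last one, so $\overleftarrow{\rho_\word}(a) < \overleftarrow{\rho_\word}(x)$; since the left-to-right order of the columns of $\qrst{\word}$ is the order $\overleftarrow{\rho_\word}$, this means $c_R$ lies to the left of $d_R$. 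This establishes condition~(ii), and hence $(\qlst{\word},\qrst{\word})$ is a pair of twin patience-sorting tableaux.

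I do not expect a genuine obstacle here: the argument is the $\qsymb$-side mirror of the proof of Proposition~\ref{prop:pmst_pair_twin}, with Algorithm~\hyperref[alg:RightStalacticLeftInsertion]{\textsf{rStLI}} replaced by Lemmas~\ref{lemma:rstd_rho_inverse_positions_of_word} and~\ref{lemma:column_reading_patience_sorting_tableau}. The only points needing a little care are the bookkeeping of the two different indexings of the columns (by $\overrightarrow{\rho_\word}$ for $\qlst{\word}$ and by $\overleftarrow{\rho_\word}$ for $\qrst{\word}$) and the observation that ``simple column'' on the tableau side corresponds to ``simple letter'' on the word side, so that condition~(ii) is invoked for the correct pair of columns. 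Alternatively, the whole statement can be obtained by transferring Proposition~\ref{prop:pmst_pair_twin} along Lemma~\ref{lemma:pxst_qxst_same_shape}, which identifies the columns of $\qlst{\word}$ (respectively $\qrst{\word}$) with those of $\plst{\word}$ (respectively $\prst{\word}$) in an order-preserving, content-tracking way.
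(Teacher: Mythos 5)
Your proposal is correct and follows essentially the same route as the paper: the paper establishes condition (i) by identifying the columns of $\qlst{\word}$ and $\qrst{\word}$ with the position sets of the letters via Lemma~\ref{lemma:pxst_qxst_same_shape}, and deduces condition (ii) from Proposition~\ref{prop:pmst_pair_twin} together with that lemma, which is exactly the transfer you mention at the end; your main variant merely inlines the simple-letter argument (the unique occurrence of $a$ precedes the first, hence the last, occurrence of $x$) that the paper had already used to prove Proposition~\ref{prop:pmst_pair_twin}.
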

\begin{proof}
    For each $x \in \supp{\word}$, by Lemma~\ref{lemma:pxst_qxst_same_shape}, there exists a column in $\qlst{\word}$ and a column in $\qrst{\word}$, each containing all the positions of where $x$ appears in $\word$, when read left-to-right. These columns must therefore have the same content, and hence the first property holds. The second property then follows from Proposition~\ref{prop:pmst_pair_twin}, Lemma~\ref{lemma:pxst_qxst_same_shape}.
\end{proof}

\begin{theorem}
    The map $\word \mapsto (\pmst{\word}, \qmst{\word})$ is a bijection between the elements of $\N^*$ and the set formed by the pairs $\bigl((T_L,T_R),(S_L,S_R)\bigr)$ where
    \begin{enumerate}[(i)]
        \item $(T_L,T_R)$ is a pair of twin stalactic tableaux.
        \item $(S_L,S_R)$ is a pair of twin patience-sorting tableau.
        \item $(T_L,T_R)$ and $(S_L,S_R)$ have the same shape.
        \item The $i$-th column of $S_L$ is in bijection with the $\rho_{T_R}(\rho_{T_L}^{-1}(i))$-th column of $S_R$.
    \end{enumerate}
\end{theorem}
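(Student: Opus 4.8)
The plan is to obtain this refined correspondence by combining the two one-sided Robinson--Schensted-like correspondences already established, Theorems~\ref{theorem:robinson_left_stalactic} and~\ref{theorem:robinson_right_stalactic}, using conditions~\textit{(i)}--\textit{(iv)} to reconcile the left- and right-oriented data. \emph{Well-definedness:} for $\word \in \N^*$, condition~\textit{(i)} is Proposition~\ref{prop:pmst_pair_twin}, condition~\textit{(ii)} is Proposition~\ref{prop:qmst_pair_twin}, and condition~\textit{(iii)} is Lemma~\ref{lemma:pxst_qxst_same_shape} applied on each side; for condition~\textit{(iv)}, Lemma~\ref{lemma:pxst_qxst_same_shape} shows that the $i$-th column of $\qlst{\word}$ records the positions in $\word$ of the letter $x = \rho_{\plst{\word}}^{-1}(i)$ heading the $i$-th column of $\plst{\word}$, and likewise the $j$-th column of $\qrst{\word}$ records the positions of the letter heading the $j$-th column of $\prst{\word}$; taking $j = \rho_{\prst{\word}}(x) = \rho_{\prst{\word}}(\rho_{\plst{\word}}^{-1}(i))$ makes these two columns consist of the same set of positions, which is condition~\textit{(iv)}. \emph{Injectivity:} the composite of our map with the projection $\bigl((T_L,T_R),(S_L,S_R)\bigr) \mapsto (T_L,S_L)$ is exactly the map $\word \mapsto (\plst{\word},\qlst{\word})$ of Theorem~\ref{theorem:robinson_left_stalactic}, which is injective, so our map is injective.

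\emph{Surjectivity:} given $\bigl((T_L,T_R),(S_L,S_R)\bigr)$ satisfying~\textit{(i)}--\textit{(iv)}, apply Theorem~\ref{theorem:robinson_left_stalactic} to $(T_L,S_L)$ — legitimate by~\textit{(i)} and~\textit{(iii)} — to obtain the unique $\word$ with $\plst{\word}=T_L$ and $\qlst{\word}=S_L$, and apply Theorem~\ref{theorem:robinson_right_stalactic} to $(T_R,S_R)$ — legitimate by~\textit{(i)} and~\textit{(iii)} — to obtain the unique $\word'$ with $\prst{\word'}=T_R$ and $\qrst{\word'}=S_R$; it then suffices to prove $\word=\word'$. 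Unwinding the two reconstructions (as in the proofs of Theorems~\ref{theorem:robinson_left_stalactic} and~\ref{theorem:robinson_right_stalactic}) and using that each of $T_L$, $T_R$ has the same shape as its patience-sorting partner, position $k$ of $\word$ carries the letter heading the column of $T_L$ with the same index as the column of $S_L$ that contains the entry $k$, and position $k$ of $\word'$ carries the letter heading the column of $T_R$ with the same index as the column of $S_R$ that contains $k$; moreover, as shown inside those proofs, the columns of a stalactic tableau and of its patience-sorting partner occur in the same left-to-right order. Thus, fixing a letter $x$ and putting $i = \rho_{T_L}(x)$, the $x$-data of $T_L$ occupies its $i$-th column (matched to the $i$-th column of $S_L$) and the $x$-data of $T_R$ occupies its $\rho_{T_R}(x)$-th column (matched to the $\rho_{T_R}(x)$-th column of $S_R$), with $\rho_{T_R}(x) = \rho_{T_R}(\rho_{T_L}^{-1}(i))$; by condition~\textit{(iv)} these two columns contain the same set of positions, so $\word$ and $\word'$ agree at every position and hence are equal. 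Therefore $\pmst{\word}=(T_L,T_R)$ and $\qmst{\word}=(S_L,S_R)$, and the map is surjective.

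\emph{Main obstacle:} the only step carrying real content is surjectivity, where the key point is that condition~\textit{(iv)} is exactly what is needed to force the word reconstructed from the left half to reproduce the right half as well; everything else reduces to the two earlier theorems, once one is careful about the matching between the columns of a stalactic tableau (ordered by $\rho$) and those of its patience-sorting partner (ordered by topmost entry), which was handled in their proofs.
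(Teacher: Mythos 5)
Your proposal is correct and follows essentially the same route as the paper: well-definedness via Propositions~\ref{prop:pmst_pair_twin}, \ref{prop:qmst_pair_twin} and Lemma~\ref{lemma:pxst_qxst_same_shape}, injectivity from one of the one-sided correspondences, and surjectivity by reconstructing words from $(T_L,S_L)$ and $(T_R,S_R)$ separately and using condition~\textit{(iv)} to identify the position sets of each letter and conclude the two words coincide. The only difference is cosmetic: the paper shows $\plst{\vord}=T_L$ and $\qlst{\vord}=S_L$ and invokes uniqueness, while you compare the two reconstructed words position by position.
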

\begin{proof}
    By Propositions~\ref{prop:pmst_pair_twin} and \ref{prop:qmst_pair_twin}, we have that the output of the map satisfies the first two conditions. Moreover, we can see that they also satisfy the third condition as, by Lemma~\ref{lemma:pxst_qxst_same_shape}, $T_L$ and $S_L$ have the same shape and $T_R$ and $S_R$ have the same shape. Again by Lemma~\ref{lemma:pxst_qxst_same_shape}, we have that the $i$-th column of $S_L$ (resp. $S_R$) gives the positions of the letter $\rho_{T_L}^{-1}(i)$ (resp. $\rho_{T_R}^{-1}(i)$) in $\word$, and hence the $i$-th column of $S_L$ is in bijection with the $\rho_{T_R}(\rho_{T_L}^{-1}(i))$-th column of $S_R$.
    Thus, the map is well-defined.

    The map is injective due to the stalactic Robinson--Schensted correspondences (Theorems~\ref{theorem:robinson_left_stalactic} and \ref{theorem:robinson_right_stalactic}), as from either of the pairs $(\plst{\word},\qlst{\word})$ or $(\prst{\word},\qrst{\word})$ we can reconstruct $\word$.

    For surjectivity, let $\bigl((T_L,T_R),(S_L,S_R)\bigr)$ satisfy conditions \textit{(i)--(iv)}. Using the stalactic Robinson--Schensted correspondences, we have that there exist unique $\uord,\vord \in \N^*$ such that $\bigl(\plst{\uord},\qlst{\uord}\bigr) = (T_L,S_L)$ and $\bigl(\prst{\vord},\qrst{\vord}\bigr) = (T_R,S_R)$. Clearly, by condition \textit{(i)}, we have $\cont{\uord} = \cont{\vord}$. Suppose $a = \overrightarrow{\rho_\uord}^{-1}(i)$ for some $1 \leq i \leq |\supp{\uord}|$. Then, by Lemma~\ref{lemma:pxst_qxst_same_shape}, the $i$-th column of $S_L$ is the unique column of $S_L$ which gives the positions of $a$ in $\uord$. Moreover, by condition \textit{(iv)}, the $i$-th column of $S_L$ is also the unique column of $S_L$ which gives the positions of $\overleftarrow{\rho_\vord}^{-1}(\rho_{T_R}(\rho_{T_L}^{-1}(i))) = \rho_{T_L}^{-1}(i) = a$ in $\vord$. As such, $\overrightarrow{\rho_\vord}$ is completely determined by $(T_L,S_L)$, which implies $\plst{\vord} = T_L$, and, by Lemma~\ref{lemma:pxst_qxst_same_shape}, $\qlst{\vord} = S_L$. Thus, $\uord = \vord$ and $\bigl(\pmst{\uord},\qmst{\uord}\bigr) = \bigl((T_L,T_R),(S_L,S_R)\bigr)$.
\end{proof}


\subsubsection{The meet-taiga correspondence} \label{subsubsection:The_meet_taiga_correspondence}

\begin{definition} \label{definition:mtgpandqsymbols}
    For a word $\word \in \N^*$, the \emph{meet-taiga $\psymb$-symbol} of $\word$ is the pair $\pmtg{\word}:=(\pltg{\word}, \prtg{\word})$ of BSTMs, while the \emph{meet-taiga $\qsymb$-symbol} of $\word$ is the pair $\qmtg{\word}:=(\qltg{\word}, \qrtg{\word})$ of (respectively) increasing and decreasing binary trees.
\end{definition}

\begin{example} \label{example:meet_taiga_p_and_q_symbols}
The meet-taiga $\psymb$-symbol of $451423412$ is
\begin{equation*}
    \left(\begin{tikzpicture}[tinybst,baseline=-8mm]
        \node {$4^3$}
        child { node {$1^2$} 
            child[missing]
            child { node {$2^2$}
                child[missing]
                child { node {$3^1$} }
            }
        }
        child[missing]
        child { node {$5^1$} };
    \end{tikzpicture}\;,\;
    \begin{tikzpicture}[tinybst, baseline=-6mm]
        \node {$2^2$}
        child { node {$1^2$} }
        child { node {$4^3$}
            child { node {$3^1$} }
            child { node {$5^1$} }
        };
    \end{tikzpicture}\right),
\end{equation*}
and its meet-taiga $\qsymb$-symbol is
\begin{equation*}
    \left(\begin{tikzpicture}[microbst,baseline=-8mm]
        \node {$1,4,7$}
        child { node {$3,8$} 
            child[missing]
            child { node {$5,9$}
                child[missing]
                child { node {$6$} }
            }
        }
        child[missing]
        child { node {$2$} };
    \end{tikzpicture}\;,\;
    \begin{tikzpicture}[microbst, baseline=-6mm]
        \node {$5,9$}
        child { node {$3,8$} }
        child { node {$1,4,7$}
            child { node {$6$} }
            child { node {$2$} }
        };
    \end{tikzpicture}\right).
\end{equation*}
\end{example}

\begin{proposition} \label{MtgClassesPsymbolCorr}
    Let $\uord,\vord \in \N^*$. Then $\uord \equiv_\mtg \vord$ if and only if $\pmtg{\uord} = \pmtg{\vord}$.
\end{proposition}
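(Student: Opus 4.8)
The plan is to mirror the proof of Proposition~\ref{prop:pmst_pair_plst_prst}, exploiting the fact that the meet-taiga congruence is by definition the meet — that is, the intersection — of the right-taiga and left-taiga congruences. First I would observe that, for any $\uord,\vord \in \N^*$, we have $\uord \equiv_\mtg \vord$ if and only if $\uord \equiv_\rtg \vord$ and $\uord \equiv_\ltg \vord$.

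Next I would invoke the defining property of the right-taiga and left-taiga monoids, which, exactly as for the stalactic monoids, is that $\uord \equiv_\rtg \vord$ if and only if $\prtg{\uord} = \prtg{\vord}$, and $\uord \equiv_\ltg \vord$ if and only if $\pltg{\uord} = \pltg{\vord}$. Combining these two equivalences, $\uord \equiv_\mtg \vord$ holds precisely when $\pltg{\uord} = \pltg{\vord}$ and $\prtg{\uord} = \prtg{\vord}$ both hold; by Definition~\ref{definition:mtgpandqsymbols}, since $\pmtg{\word} = (\pltg{\word},\prtg{\word})$, this conjunction is exactly the statement $\pmtg{\uord} = \pmtg{\vord}$, which completes both directions simultaneously.

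The argument is essentially a chain of definitional equivalences, so there is no genuine obstacle; the only point deserving explicit mention is that the right-taiga and left-taiga congruences are characterised by equality of the associated BSTMs computed via Algorithm~\textsf{TgLI}, which is part of how $\rtg$ and $\ltg$ were set up in Subsection~\ref{subsection:the_stalactic_and_taiga_monoids}. No appeal to the presentation $\pres{\N}{\cR_\mtg}$ of Proposition~\ref{prop:MeetTaigaPresentation} is needed for this statement.
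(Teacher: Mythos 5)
Your proposal is correct and follows essentially the same argument as the paper: both unfold the definition of $\equiv_\mtg$ as the meet of $\equiv_\ltg$ and $\equiv_\rtg$, use the fact that these congruences are characterised by equality of the corresponding BSTMs, and then apply Definition~\ref{definition:mtgpandqsymbols}. Nothing further is needed.
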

\begin{proof}
    By definition, $\uord \equiv_\mtg \vord$ if and only if $\uord \equiv_\ltg \vord$ and $\uord \equiv_\rtg \vord$. This is equivalent to $\uord$ and $\vord$ having the same left-taiga and right-taiga $\psymb$-symbol, that is, $\pltg{\uord} = \pltg{\vord}$ and $\prtg{\uord} = \prtg{\vord}$. In turn, by the definition of the \hyperref[definition:mtgpandqsymbols]{meet-taiga $\psymb$-symbol}, this is equivalent to $\pmtg{\uord} = \pmtg{\vord}$.
\end{proof}

\begin{definition} \label{definition:TwinBTMCondition}
    Let $T_L,T_R$ be BTMs, with the same number of nodes. We say $(T_L,T_R)$ is a \emph{pair of twin binary trees with multiplicities} (pair of twin BTMs), if for all $i$,
    \begin{enumerate}[(i)]
        \item the $i$-th node of $T_L$ and the $i$-th node of $T_R$ have the same multiplicities.
        \item if the $i$-th node of $T_L$ has multiplicity 1 and has a left (resp. right) child then the $i$-th node of $T_R$ does not have a left (resp. right) child.
    \end{enumerate}
\end{definition}

Notice that condition (ii) is equivalent to saying that if $r_i$ has multiplicity 1 then if $r_i$ has a left (resp. right) child then $l_i$ does not have a left (resp. right) child.

\begin{definition} \label{definition:TwinBSTMCondition}
    Let $T_L,T_R$ be BSTMs. We say $(T_L,T_R)$ is a \emph{pair of twin binary search trees with multiplicities} (pair of twin BSTMs) if $\cont{T_R} = \cont{T_L}$ and the shape of $(T_L,T_R)$ is a pair of twin BTMs.
\end{definition}

The $\psymb$-symbol given in Example~\ref{example:meet_taiga_p_and_q_symbols} is a pair of twin BSTMs. In fact, we have the following:

\begin{proposition} \label{prop:pmtg_pair_twin}
For any $\uord \in \N^*$, the $\psymb_\mtg$-symbol $(\pltg{\uord},\prtg{\uord})$ of $\uord$ is a pair of twin BSTMs.
\end{proposition}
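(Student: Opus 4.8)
The plan is to show directly that the pair $(\pltg{\uord},\prtg{\uord})$ satisfies the two conditions of Definition~\ref{definition:TwinBSTMCondition}: first that $\cont{T_R}=\cont{T_L}$, and second that the underlying shape $(\pltg{\uord},\prtg{\uord})$ (forgetting the letters) is a pair of twin BTMs in the sense of Definition~\ref{definition:TwinBTMCondition}. The content condition is immediate, since both $\pltg{\uord}$ and $\prtg{\uord}$ are built from the same word $\uord$ by inserting exactly the letters of $\uord$ with the correct multiplicities; each node labelled $x$ has multiplicity $|\uord|_x$ in both trees. So the real work is the shape condition.

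For the shape condition, I would first record the key structural fact about $\pltg{\uord}$ and $\prtg{\uord}$: by the construction via Algorithm~\hyperref[alg:TaigaLeafInsertion]{\textsf{TgLI}} (and the observation in Lemma~\ref{lemma:pxtg_qxtg_same_shape} together with the sylvester Robinson--Schensted correspondence), the underlying binary tree shape of $\prtg{\uord}$ is determined by the positions of the \emph{last} occurrences of the letters of $\uord$, while that of $\pltg{\uord}$ is determined by the positions of the \emph{first} occurrences. More precisely, writing $\supp{\uord}=\{x_1<\dots<x_k\}$, the tree $\prtg{\uord}$ (as a binary search tree, forgetting multiplicities) is $\mathrm{decr}$ applied to the word of last-occurrence positions, and $\pltg{\uord}$ is $\mathrm{incr}$ applied to the word of first-occurrence positions; in both cases the node labelled $x_i$ is the $i$-th node in inorder. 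Condition (i) of Definition~\ref{definition:TwinBTMCondition} then follows since the $i$-th node of each tree is labelled $x_i$ and has multiplicity $|\uord|_{x_i}$ in both. For condition (ii), I would argue as follows: suppose the $i$-th node of $\pltg{\uord}$, i.e. the node labelled $x_i$, has multiplicity $1$ (so $x_i$ is a simple letter, occurring exactly once in $\uord$, say at position $p$) and has a left child in $\pltg{\uord}$. Having a left child in an increasing binary tree built from first-occurrence positions means there is some letter $x_j<x_i$ whose first occurrence in $\uord$ lies to the left of $p$, with no letter strictly between $x_j$ and $x_i$ in value having an earlier first occurrence — concretely, there is a letter $x_j<x_i$ occurring before the unique occurrence of $x_i$. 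But since $x_i$ occurs only once, \emph{every} occurrence of such an $x_j$ (in particular its last occurrence) lies to the left of the unique occurrence of $x_i$; this forces, in the decreasing binary tree $\prtg{\uord}$ built from last-occurrence positions, that $x_i$ has no left child. The right-child case is symmetric, using that $x_i$ occurs only once so its unique occurrence is simultaneously its first and last occurrence, and comparing with first occurrences of larger letters for $\pltg{\uord}$ versus last occurrences for $\prtg{\uord}$.

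I would make the last paragraph precise by fixing, for each letter $y\in\supp{\uord}$, the numbers $f(y)$ and $\ell(y)$ denoting the positions of the first and last occurrence of $y$ in $\uord$, noting $f(y)\le\ell(y)$ with equality exactly when $y$ is simple, and then characterising "has a left child" in each tree purely in terms of these numbers via the recursive structure of $\mathrm{incr}$/$\mathrm{decr}$: the node labelled $x_i$ has a left child in $\pltg{\uord}$ iff among the letters $x_j<x_i$ there is one with $f(x_j)<f(x_i)$, and similarly with $\ell$ for $\prtg{\uord}$ and "$>$" in place of "$<$" for a right child. From $f(x_i)=\ell(x_i)$ (simplicity of $x_i$) and $f(x_j)\le\ell(x_j)$ one reads off the implications directly. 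The main obstacle, and the step needing the most care, is pinning down the precise "has a left/right child" criterion in terms of first/last occurrence positions — essentially re-deriving the local structure of increasing/decreasing binary trees — so that the parity/simplicity argument goes through cleanly; once that criterion is in hand, the conclusion is a short comparison. I would lean on Lemma~\ref{lemma:pxtg_qxtg_same_shape} and the already-established connection with the sylvester correspondence to avoid re-proving these structural facts from scratch.
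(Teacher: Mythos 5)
Your overall strategy --- reduce to the claim that a simple node with a left (resp.\ right) child in $\pltg{\uord}$ has no left (resp.\ right) child in $\prtg{\uord}$, and prove it by comparing first- and last-occurrence positions --- is exactly the route the paper takes, and your treatment of the content condition and of condition (i) of the twin-BTM definition is fine. However, the step you yourself flag as the delicate one, the ``has a left child'' criterion, is stated with the inequality reversed, and this is a genuine gap. In $\pltg{\uord}$ (built by reading $\uord$ left-to-right with leaf insertion) a node is a \emph{descendant} of another precisely when it is inserted \emph{later}, so the node labelled $x_i$ has a left child if and only if its predecessor $y$ in $\supp{\uord}$ satisfies $f(y) > f(x_i)$, not $f(x_j) < f(x_i)$ as you write. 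Concretely, for $\uord = 21$ the node $2$ has a left child even though no smaller letter first-occurs before it, while for $\uord = 12$ the node $2$ has no left child even though $1$ first-occurs before it. Likewise your intermediate claim that ``every occurrence of such an $x_j$ lies to the left of the unique occurrence of $x_i$'' does not follow from $f(x_j) < f(x_i)$ even when $x_i$ is simple: take $\uord = 121$, $x_i = 2$, $x_j = 1$.

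The error is not merely cosmetic, because the inequalities only chain in the correct orientation. With your criterion, knowing $f(x_j) < f(x_i) = \ell(x_i)$ together with $f(x_j) \le \ell(x_j)$ gives no information about the order of $\ell(x_j)$ and $\ell(x_i)$, so the promised ``read off the implications directly'' step fails. With the corrected criterion the argument closes at once: if $x_i$ is simple and has a left child in $\pltg{\uord}$, its predecessor $y$ satisfies $\ell(y) \ge f(y) > f(x_i) = \ell(x_i)$, so reading $\uord$ right-to-left $y$ is inserted before $x_i$; since no support letter lies strictly between $y$ and $x_i$, the node $x_i$ lands in the right subtree of $y$ in $\prtg{\uord}$ with no smaller letter below it, hence has no left child there. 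This is precisely the paper's proof, phrased there via the rightmost descendant of the left child of $x_i$, which is this same predecessor $y$. So your proposal is repairable and essentially the intended argument, but as written the central criterion and hence the final implication are backwards.
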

\begin{proof}
    As $\pltg{\uord}$ and $\prtg{\uord}$ are both computed by the same word $\uord$, $\cont{\pltg{\uord}} = \cont{\prtg{\uord}}$. Moreover, suppose $a$ is a simple variable in $\uord$ and that $a$ has a left child in $\pltg{\uord}$. Let $x \in \supp{\uord}$ be the rightmost descendant of the left child of $a$ in $\pltg{\uord}$, then $z \leq x < a$ for all $z < a$.
    
    As $x$ is a descendant of $a$, we have that the leftmost appearance of $x$ appears to the right of $a$ in $\uord$ and as $a$ is simple, the rightmost appearance of $x$ appears to the right of $a$ in $\uord$. Therefore, $a$ is a right descendant of $x$ in $\prtg{\uord}$ as there does not exist a $z$ such that $x < z < a$. 
    
    Thus, $a$ does not have a left child in $\prtg{\uord}$, as if $z$ is a descendant of $x$ and $z < a$, then $z < x$ and hence $z$ is a left descendant of $x$ and therefore not a left descendant of $a$. The proof for right children is similar.
\end{proof}

\begin{definition} \label{definition:TwinBinaryTreesOverSetsCondition}
    Let $(S_L,S_R)$ be a pair of (respectively) increasing and decreasing binary trees over sets, with the same number of nodes. We say $(S_L,S_R)$ are a \emph{pair of twin binary trees over sets} (pair of twin BTSs) if, for all $i$,
    \begin{enumerate}[(i)]
        \item the $i$-th node of $S_L$ has the same label as the $i$-th node of $S_R$.
        \item if the $i$-th node of $S_L$ is labelled by a set of cardinality 1 and has a left (resp. right) child, then the $i$-th node of $S_R$ does not have a left (resp. right) child.
    \end{enumerate}
\end{definition}

The $\qsymb$-symbol given in Example~\ref{example:meet_taiga_p_and_q_symbols} is a pair of twin BTSs. In fact, we have the following:

\begin{proposition} \label{prop:qmtg_pair_twin}
For any $\word \in \N^*$, the meet-taiga $\qsymb$-symbol $(\qltg{\word},\qrtg{\word})$ of $\word$ is a pair of twin BTSs.
\end{proposition}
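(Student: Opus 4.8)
The plan is to mirror the argument used for the meet-stalactic $\qsymb$-symbol in Proposition~\ref{prop:qmst_pair_twin}, relying on the structural comparison of Lemma~\ref{lemma:pxtg_qxtg_same_shape} together with the twin-BSTM property of the $\psymb$-symbol established in Proposition~\ref{prop:pmtg_pair_twin}. The proposition to be proved concerns the pair $(S_L,S_R) = (\qltg{\word},\qrtg{\word})$, which is by construction a pair of (respectively) increasing and decreasing binary trees over sets, so it remains to verify conditions (i) and (ii) of Definition~\ref{definition:TwinBinaryTreesOverSetsCondition}.

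First I would fix $\word$ and write $\supp{\word} = \{x_1 < \dots < x_k\}$, noting that since $\pltg{\word}$ and $\prtg{\word}$ are binary search trees with multiplicities, the $i$-th node in the inorder traversal of each is precisely the node whose letter is $x_i$, so both trees have $k$ nodes. By Lemma~\ref{lemma:pxtg_qxtg_same_shape}, $\qltg{\word}$ (resp. $\qrtg{\word}$) has the same underlying binary tree shape as $\pltg{\word}$ (resp. $\prtg{\word}$), and the $i$-th node of $\qltg{\word}$ (resp. $\qrtg{\word}$) is labelled by the set of positions in $\word$ at which $x_i$ occurs. Since that set depends only on $\word$ and $x_i$, the $i$-th nodes of $\qltg{\word}$ and $\qrtg{\word}$ carry the same label; this gives condition (i) of Definition~\ref{definition:TwinBinaryTreesOverSetsCondition}, and incidentally confirms that the two trees have the same number of nodes.

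For condition (ii), I would observe that the cardinality of the set labelling the $i$-th node of $\qltg{\word}$ equals $|\word|_{x_i}$, which is the multiplicity of the $i$-th node of $\pltg{\word}$; in particular it equals $1$ exactly when the $i$-th node of $\pltg{\word}$ has multiplicity $1$. Moreover, since $\qltg{\word}$ shares the shape of $\pltg{\word}$ (and $\qrtg{\word}$ that of $\prtg{\word}$), the $i$-th node of $\qltg{\word}$ (resp. $\qrtg{\word}$) has a left (resp. right) child if and only if the $i$-th node of $\pltg{\word}$ (resp. $\prtg{\word}$) does. Hence condition (ii) for $(\qltg{\word},\qrtg{\word})$ is exactly condition (ii) of Definition~\ref{definition:TwinBTMCondition} applied to the shape of $(\pltg{\word},\prtg{\word})$, which holds because $(\pltg{\word},\prtg{\word})$ is a pair of twin BSTMs by Proposition~\ref{prop:pmtg_pair_twin} (and a pair of twin BSTMs has, by Definition~\ref{definition:TwinBSTMCondition}, shape a pair of twin BTMs).

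The argument is essentially bookkeeping, and the one point I would take care over is the consistent identification of the inorder indexing across all four trees, i.e. that ``the $i$-th node'' refers to the node associated with $x_i$ simultaneously in $\pltg{\word}$, $\prtg{\word}$, $\qltg{\word}$ and $\qrtg{\word}$; this is precisely what Lemma~\ref{lemma:pxtg_qxtg_same_shape} provides, so it should be invoked explicitly rather than left implicit. I do not expect any obstacle beyond this.
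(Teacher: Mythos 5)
Your proposal is correct and follows the same route as the paper: condition (i) comes from Lemma~\ref{lemma:pxtg_qxtg_same_shape} identifying the label of the $i$-th node of each $\qsymb$-tree with the set of positions of $x_i$ in $\word$, and condition (ii) is transferred, via the shared shapes and the equality of multiplicities with label cardinalities, to the twin-BTM property of $(\pltg{\word},\prtg{\word})$ from Proposition~\ref{prop:pmtg_pair_twin}. You merely spell out in more detail the step the paper compresses into ``the second property then follows from Proposition~\ref{prop:pmtg_pair_twin} and Lemma~\ref{lemma:pxtg_qxtg_same_shape}.''
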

\begin{proof}
    Let $\supp{\word} = \{x_1 < \dots < x_k\}$. For each $x_i \in \supp{\word}$, by Lemma~\ref{lemma:pxtg_qxtg_same_shape}, 
    the $i$-th node of $\qltg{\word}$ and the $i$-th node of $\qrtg{\word}$ each contain all the positions of where $x_i$ appears in $\word$, when read left-to-right. Thus, the $i$-th nodes of $S_L$ and $S_R$ share the same label. The second property then follows from Proposition~\ref{prop:pmtg_pair_twin} and Lemma~\ref{lemma:pxtg_qxtg_same_shape}.
\end{proof}

\begin{theorem}
    The map $\word \mapsto (\pmtg{\word}, \qmtg{\word})$ is a bijection between the elements of $\N^*$ and the set formed by the pairs $\bigl((T_L,T_R),(S_L,S_R)\bigr)$ where
    \begin{enumerate}[(i)]
        \item $(T_L,T_R)$ is a pair of twin BSTMs.
        \item $(S_L,S_R)$ is a pair of twin BTSs such that the union of the sets labelling $S_L$, and therefore $S_R$, is the interval $[m]$, where $m$ is the sum of the multiplicities of $T_L$ (or $T_R$).
        \item $(T_L,T_R)$ and $(S_L,S_R)$ have the same underlying pair of binary trees shape.
        \item the multiplicity of the $i$-th node of $T_L$ (resp. $T_R$) is the cardinality of the set labelling the $i$-th node of $S_L$ (resp. $S_R$).
    \end{enumerate}
\end{theorem}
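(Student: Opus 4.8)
The plan is to follow the same strategy used above for the meet-stalactic correspondence, replacing the stalactic Robinson--Schensted correspondences (Theorems~\ref{theorem:robinson_left_stalactic} and~\ref{theorem:robinson_right_stalactic}) by the taiga ones (Theorems~\ref{theorem:robinson_left_taiga} and~\ref{theorem:robinson_right_taiga}), Lemma~\ref{lemma:pxst_qxst_same_shape} by Lemma~\ref{lemma:pxtg_qxtg_same_shape}, and the twin-tableaux results by Propositions~\ref{prop:pmtg_pair_twin} and~\ref{prop:qmtg_pair_twin}.

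First I would check well-definedness. Proposition~\ref{prop:pmtg_pair_twin} gives condition~\textit{(i)}, and Proposition~\ref{prop:qmtg_pair_twin} gives that $(S_L,S_R)$ is a pair of twin BTSs; the remaining part of~\textit{(ii)} --- that the union of the sets labelling $\qltg{\word}$ (hence $\qrtg{\word}$) is $[m]$ with $m$ the sum of multiplicities of $\pltg{\word}$, equivalently $m=|\word|$ --- is exactly part~\textit{(ii)} of Theorems~\ref{theorem:robinson_left_taiga} and~\ref{theorem:robinson_right_taiga}. Conditions~\textit{(iii)} and~\textit{(iv)} then follow from Lemma~\ref{lemma:pxtg_qxtg_same_shape}: $\pltg{\word}$ and $\qltg{\word}$ (resp. $\prtg{\word}$ and $\qrtg{\word}$) share the same underlying binary tree, and the multiplicity of the $i$-th node of the BSTM equals the cardinality of the label of the $i$-th node of the BTS. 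Injectivity is then immediate: if two words have the same image they in particular share the pair $(\prtg{\word},\qrtg{\word})$, so Theorem~\ref{theorem:robinson_right_taiga} forces them to be equal.

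The substantive part is surjectivity. Given $\bigl((T_L,T_R),(S_L,S_R)\bigr)$ satisfying \textit{(i)}--\textit{(iv)}, I would first note that, using~\textit{(iii)}, these four conditions restrict to precisely the four hypotheses of Theorem~\ref{theorem:robinson_left_taiga} for the pair $(T_L,S_L)$ and of Theorem~\ref{theorem:robinson_right_taiga} for the pair $(T_R,S_R)$. Hence there are unique $\uord,\vord\in\N^*$ with $(\pltg{\uord},\qltg{\uord})=(T_L,S_L)$ and $(\prtg{\vord},\qrtg{\vord})=(T_R,S_R)$, and it remains to show $\uord=\vord$. Since $(T_L,T_R)$ is a pair of twin BSTMs, $\cont{\uord}=\cont{T_L}=\cont{T_R}=\cont{\vord}$; write $\supp{\uord}=\supp{\vord}=\{x_1<\dots<x_k\}$. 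By Lemma~\ref{lemma:pxtg_qxtg_same_shape} the positions of $x_i$ in $\uord$ form the label of the $i$-th node of $\qltg{\uord}=S_L$, and the positions of $x_i$ in $\vord$ form the label of the $i$-th node of $\qrtg{\vord}=S_R$; but condition~\textit{(i)} in the definition of a pair of twin BTSs makes the $i$-th nodes of $S_L$ and $S_R$ carry the same label. So each $x_i$ occupies the same positions in $\uord$ and in $\vord$, whence $\uord=\vord$. Writing $\word=\uord=\vord$ gives $\pmtg{\word}=(T_L,T_R)$ and $\qmtg{\word}=(S_L,S_R)$, so the map is surjective.

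The step I expect to demand the most care is the bookkeeping around the inorder index $i$: one needs to confirm that the ``$i$-th node'' appearing in conditions~\textit{(i)}--\textit{(iv)} of the theorem, in Definitions~\ref{definition:TwinBSTMCondition} and~\ref{definition:TwinBinaryTreesOverSetsCondition}, and in Lemma~\ref{lemma:pxtg_qxtg_same_shape} all refer to the same node once the underlying shapes are identified, and that the interval requirement ``union of labels $=[m]$'' in~\textit{(ii)} is exactly what is needed to apply the taiga correspondences. Apart from that, the argument is a faithful transcription of the meet-stalactic proof and uses no new ideas.
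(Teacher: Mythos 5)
Your proposal is correct and follows essentially the same route as the paper: well-definedness via Propositions~\ref{prop:pmtg_pair_twin} and~\ref{prop:qmtg_pair_twin} together with Lemma~\ref{lemma:pxtg_qxtg_same_shape}, injectivity from the one-sided taiga correspondences, and surjectivity by producing $\uord$ and $\vord$ from $(T_L,S_L)$ and $(T_R,S_R)$ and using the equality of labels of corresponding nodes of $S_L$ and $S_R$ to force $\uord=\vord$. Your phrasing of the last step (each $x_i$ occupies the same set of positions in both words) is a slightly more direct way to conclude than the paper's, but it is the same argument.
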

\begin{proof}
    Let $\supp{\word} = \{x_1<\dots<x_k\}$. By Propositions~\ref{prop:pmtg_pair_twin} and \ref{prop:qmtg_pair_twin}, we have that the output of the map satisfies the first two conditions. By Lemma~\ref{lemma:pxtg_qxtg_same_shape}, the third and fourth conditions are satisfied, as $T_L$ (resp. $T_R$) and $S_L$ (resp. $S_R$) have the same shape, and furthermore, the $i$-th node of $S_L$ (resp. $S_R$) gives the positions of the letter $x_i$ in $\word$, and hence the multiplicity of the $i$-th node of $T_L$ (resp. $T_R$) is the cardinality of the set labelling the $i$-th node of $S_L$  (resp. $S_R$).
    Thus, the map is well-defined.

    The map is injective due to the taiga Robinson--Schensted correspondences 
    (Theorems~\ref{theorem:robinson_left_taiga} and \ref{theorem:robinson_right_taiga}), 
    as from either of the pairs $(\pltg{\word},\qltg{\word})$ or $(\prtg{\word},\qrtg{\word})$ we can reconstruct $\word$.

    For surjectivity, let $\bigl((T_L,T_R),(S_L,S_R)\bigr)$ satisfy conditions \textit{(i)--(iv)}. Using the taiga Robinson--Schensted correspondences, there exist unique $\uord,\vord \in \N^*$ such that $\bigl(\pltg{\uord},\qltg{\uord}\bigr) = (T_L,S_L)$ and $\bigl(\prtg{\vord},\qrtg{\vord}\bigr) = (T_R,S_R)$. 
    Clearly, by condition \textit{(i)}, we have $\cont{\uord} = \cont{\vord}$. 
    Let $\supp{\uord} = \supp{\vord} = \{x_1 < \dots < x_k\}$. Then, by Lemma~\ref{lemma:pxst_qxst_same_shape}, 
    for all $i \in [k]$, the label of the $i$-th node of $S_L$ gives the positions of $x_i$ in $\uord$. By condition \textit{(ii)}, the $i$-th node of $S_L$ has the same label as the $i$-th node of $S_R$, thus the label of the $i$-th node of $S_R$ gives the positions of $x_i$ in $\uord$. As such, $\prtg{\uord} = T_R$ and $\qrtg{\uord} = S_R$. Therefore, $\uord = \vord$ and $\bigl(\pmtg{\uord},\qmtg{\uord}\bigr) = \bigl((T_L,T_R),(S_L,S_R)\bigr)$.
\end{proof}


\subsection{Extraction algorithms} \label{subsection:Extraction_algorithms}

In the previous subsection, we have shown how to obtain a word from its meet-stalactic or meet-taiga $\psymb$ and $\qsymb$-symbols. We now show how to obtain words only from the $\psymb$-symbols, without the need to compute the $\qsymb$-symbols. We give a deterministic algorithm for the meet-stalactic case and a non-deterministic one for the meet-taiga case.

\subsubsection{The meet-stalactic extraction algorithm} \label{subsubsection:The_meet_stalactic_extraction_algorithm}

Algorithm~\hyperref[ExtractMeetStalactic]{\textup{\textsf{EmSt}}} takes a pair of twin stalactic tableaux $(T_L,T_R)$ and outputs a word in the $\equiv_\mst$-class corresponding to $(T_L,T_R)$.

\begin{algorithm}[H] \label{ExtractMeetStalactic}
        \DontPrintSemicolon
        \KwIn{A pair of twin stalactic tableaux $(T_L,T_R)$.}
        \KwOut{A word in the $\equiv_\mst$-class corresponding to $(T_L,T_R)$.}
        \BlankLine
        let $\uord := \varepsilon$; \\
        \While{$T_L \neq \perp$}{
        let $a$ be the label of the leftmost column of $T_L$; \\
        let $m_a$ be the length of the column labelled $a$; \\
        remove the leftmost column of $T_L$; \\
        \eIf{$m_a >1$}{
        set $\uord := \uord a^{m_a-1}$;}{
        \While{$a \in \supp{T_R}$}{
        let $b$ be the label of the leftmost column of $T_R$; \\
        set $\uord := \uord b$; \\
        remove the leftmost column of $T_R$;}}
        }
        let $\word$ be the reading of the top row of $T_R$ from left-to-right;  \\
        \Return the word $\uord \word$.
        \caption{\textit{Extract Meet-Stalactic} (\textsf{EmSt}).}
\end{algorithm}

\begin{proposition} \label{ExtractTwinToMst}
    For any pair of twin stalactic tableaux $(T_L, T_R)$ as input, Algorithm~\hyperref[ExtractMeetStalactic]{\textup{\textsf{EmSt}}} computes a word belonging to the $\equiv_\mst$-equivalence class encoded by $(T_L, T_R )$.
\end{proposition}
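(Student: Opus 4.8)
\emph{Proof proposal.} The plan is to show that the word $w' = \uord\word$ returned by Algorithm~\textsf{EmSt} satisfies $\cont{w'} = \cont{T_L} = \cont{T_R}$, $\overrightarrow{\rho_{w'}} = \rho_{T_L}$ and $\overleftarrow{\rho_{w'}} = \rho_{T_R}$. Granting this, Lemma~\ref{lemma:column_reading_stalactic_tableau} gives $\reading{\plst{w'}} = \reading{T_L}$ and $\reading{\prst{w'}} = \reading{T_R}$, and since applying the relevant insertion algorithm to the column reading of a stalactic tableau returns that tableau, we get $\plst{w'} = T_L$ and $\prst{w'} = T_R$; by Proposition~\ref{prop:pmst_pair_plst_prst}, $w'$ then lies in the $\equiv_\mst$-class encoded by $(T_L,T_R)$. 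Termination is immediate, since each outer-loop iteration removes a column of the finite tableau $T_L$ and each inner loop removes columns of the finite tableau $T_R$.

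To set up, I would write $a_1,\dots,a_r$ for the labels of the columns of $T_L$ read left-to-right and $c_1,\dots,c_r$ for those of $T_R$; since $\cont{T_L}=\cont{T_R}$, each letter $x$ labels a column of the same height $h(x)$ in both tableaux, and $(a_i)$ and $(c_j)$ are rearrangements of one another. Let $s_1,\dots,s_t$ be the simple letters (those with $h(s_i)=1$), indexed by the order of their columns in $T_L$; applying the twin condition of Definition~\ref{definition:TwinStalacticTableauxCondition} to pairs of simple columns shows they appear in the same order in $T_R$, so $\rho_{T_R}(s_1)<\dots<\rho_{T_R}(s_t)$. The heart of the argument is a bookkeeping claim, to be proved by induction on $k$: during the run of the algorithm, the columns of $T_R$ deleted while the outer loop processes $s_k$ are precisely those originally in positions $\rho_{T_R}(s_{k-1})+1,\dots,\rho_{T_R}(s_k)$ of $T_R$ (with $\rho_{T_R}(s_0):=0$), appended to $\uord$ in that left-to-right order, so the block of $w'$ emitted at that iteration ends with the letter $s_k$. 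The twin condition is then invoked two further times: it forces every letter labelling a flushed column other than $s_k$ to be non-simple with $\rho_{T_L}$-value less than $\rho_{T_L}(s_k)$ (so the outer loop has already emitted the block $x^{h(x)-1}$ for such a letter $x$), and, conversely, it prevents the $T_R$-column of a non-simple letter $a_i$ from being flushed at any iteration $\leq i$. These observations locate the first and last occurrence of each letter in $w'$, and immediately yield $\cont{w'}=\cont{T_L}$, since a letter $x$ receives $h(x)-1$ copies from the outer loop (none when $x$ is simple) plus exactly one more copy, either flushed from $T_R$ or read off the residual top row of $T_R$ at the end.

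For the first occurrences: the block of $w'$ emitted at outer-loop iteration $i$ contains the first occurrence of $a_i$ — as the leading letter of $a_i^{h(a_i)-1}$ when $a_i$ is non-simple, and as the trailing flushed letter $s_k$ when $a_i=s_k$ is simple (the other letters of that flush block being repeats of already-seen non-simple letters) — and every other letter in that block is a repeat; hence first occurrences appear in $w'$ in the order $a_1,\dots,a_r$, i.e.\ $\overrightarrow{\rho_{w'}}=\rho_{T_L}$. Dually, the flush blocks appear in $\uord$ in the order $s_1,\dots,s_t$; within the block for $s_k$ the flushed columns are read in $T_R$-order $c_{\rho_{T_R}(s_{k-1})+1},\dots,c_{\rho_{T_R}(s_k)}$; each flushed column carries the last occurrence of its label (for a non-simple label $x$ the extra $h(x)-1$ copies were emitted earlier and none occur afterwards; a simple label occurs only here); and the residual top row of $T_R$ read at the end carries the last occurrences of the remaining, non-flushed letters, again in $T_R$-order. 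Concatenating, last occurrences appear in the order $c_1,\dots,c_r$, i.e.\ $\overleftarrow{\rho_{w'}}=\rho_{T_R}$, completing the argument.

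The step I expect to be the main obstacle is the bookkeeping claim together with the repeated careful appeals to the twin condition: the reasoning is elementary, but it must be organised precisely so that the asserted positions of first and last occurrences hold for every letter, and it is exactly here that the hypothesis that $(T_L,T_R)$ is a pair of \emph{twin} stalactic tableaux — not an arbitrary pair — is used.
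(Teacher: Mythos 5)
Your proposal is correct and follows essentially the same route as the paper's proof: both reduce the claim to showing $\overrightarrow{\rho_{w'}}=\rho_{T_L}$ and $\overleftarrow{\rho_{w'}}=\rho_{T_R}$, both decompose the output into the blocks delimited by the simple columns, and both invoke the twin condition at the same two points (to order the simple columns consistently and to guarantee that every column flushed from $T_R$ has already been processed in $T_L$). Your bookkeeping claim about which positions of $T_R$ are flushed at each simple column is just a slightly more explicit phrasing of the paper's description of $\uord$ as $\prod_i \bigl(\prod_j b_{i,j}^{|T_L|_{b_{i,j}}-1}\bigr)\sord_i$.
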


\begin{proof}
    The key to this proof is to show that the order of first and last occurrences of letters in the output of \hyperref[ExtractMeetStalactic]{\textsf{EmSt}} is given, respectively, by the row readings of $T_L$ and $T_R$.

    Notice that, when computing \hyperref[ExtractMeetStalactic]{\textsf{EmSt}}, if one encounters a simple column, then it follows from the definition of \hyperref[definition:TwinStalacticTableauxCondition]{pairs of twin stalactic tableaux} that the columns to be removed from $T_R$, in the simple column subroutine, have already been removed from $T_L$. As such, $\overrightarrow{\rho_{\uord \word}} = \rho_{T_L}$. 

    Recall that $T_L$ has a simple column labelled by $a$ if and only if $T_R$ does as well, hence they have the same simple columns. If $T_L$ and $T_R$ have no simple columns, then $\supp{\word} = \supp{\uord}$, hence $\overleftarrow{\rho_{\uord \word}} = \overleftarrow{\rho_{\word}}$. Furthermore, $\word$ is the row reading of $T_R$, hence $\overleftarrow{\rho_{\uord \word}} = \rho_{T_R}$.

    Suppose $T_L$ has $n$ simple columns, that is, 
    \[\mathrm{col}(T_L) = \vord_1 a_1 \vord_2 a_2 \cdots a_n \vord_{n+1},\]
    where $\vord_i = \prod_{j=1}^{\alpha_i} b_{i,j}^{|T_L|_{b_{i,j}}}$, with $a_1, \dots, a_n, b_{i,j} \in \N$ all distinct, $\alpha_i \in \N$. Informally, $a_1, \dots, a_n$ correspond to the simple columns of $T_L$ and $\vord_i$ corresponds to the block of non-simple columns between $a_{i-1}$ and $a_i$ (with the exception of $\vord_1$ and $\vord_{n+1}$, which are before $a_1$ and after $a_n$, respectively). For each $\vord_i$, $\alpha_i$ is its number of columns and $b_{i,j}$ is the label of the cells of its $j$-th column (counting from left-to-right). 

    By the definition of \hyperref[definition:TwinStalacticTableauxCondition]{pairs of twin stalactic tableaux}, the top row, read from left-to-right, of $T_R$ is of the form $\rord_1 a_1 \rord_2 a_2 \cdots a_n \rord_{n+1}$, where $\supp{\rord_i} \subseteq \supp{\vord_1 \cdots \vord_i}$. Informally, the $i$-th non-simple column block of $T_R$ has columns from the blocks of non-simple columns of $T_L$ of index less than or equal to $i$ (but not necessarily from every single one of those blocks). Thus, the columns of each block of non-simple columns of $T_L$ are distributed throughout the blocks of non-simple columns of $T_R$ of higher or equal index.
    
    Notice that, by the algorithm, the word $\word$ is equal to $\rord_{n+1}$. Furthermore, the word $\uord$ is of the form
    \[
        \prod_{i=1}^{n+1} \left(\prod_{j=1}^{\alpha_i} b_{i,j}^{|T_L|_{b_{i,j}}-1}\right) \sord_i 
    \]
    where $\sord_i = \rord_i a_i$ for $1 \leq i \leq n$ and $\sord_{n+1} = \varepsilon$. In other words, $\uord$ is obtained by reading, from left-to-right, each non-simple column block of $T_L$ while excluding the last cell of each column, then reading the top row of the corresponding non-simple column block of $T_R$ and then reading the following simple column. As each $\supp{\vord_i} \subseteq \supp{\rord_i \cdots \rord_{n+1}}$, we have that $\overleftarrow{\rho_{\uord \word}} = \rho_{T_R}$.

    The result follows from the fact that $\overrightarrow{\rho_{\uord \word}} = \rho_{\plst{\uord \word}}$ and $\overleftarrow{\rho_{\uord \word}} = \rho_{\prst{\uord \word}}$. 
    As such, we have that $\plst{\uord \word} = T_L$ and $\prst{\uord \word} = T_R$, and hence $\pmst{\uord \word} = (T_L,T_R)$.
\end{proof}

As a consequence of Propositions~\ref{prop:pmst_pair_plst_prst}, \ref{prop:pmst_pair_twin} and \ref{ExtractTwinToMst}, we have the following:

\begin{corollary} \label{mstClasstoTwin}
    For any $n,m \geq 0$, there is a bijection between the set of $\mst$-equivalence classes of words of length $m$ over $[n]^*$ and pairs of twin stalactic tableaux with $m$ blocks labelled by letters from $[n]$.
\end{corollary}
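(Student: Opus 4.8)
The plan is to assemble the corollary from the three preceding results about the meet-stalactic $\psymb$-symbol. First I would define the map $\Phi$ sending the $\equiv_\mst$-class $[\word]_\mst$ of a word $\word \in \N^*$ to the pair $\pmst{\word} = (\plst{\word},\prst{\word})$. By Proposition~\ref{prop:pmst_pair_plst_prst}, $\uord \equiv_\mst \vord$ if and only if $\pmst{\uord} = \pmst{\vord}$, so $\Phi$ is simultaneously well defined on $\equiv_\mst$-classes and injective. By Proposition~\ref{prop:pmst_pair_twin}, every value $\pmst{\word}$ is a pair of twin stalactic tableaux, so $\Phi$ lands in the set of such pairs.

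For surjectivity I would invoke Proposition~\ref{ExtractTwinToMst}: given an arbitrary pair of twin stalactic tableaux $(T_L,T_R)$, Algorithm~\hyperref[ExtractMeetStalactic]{\textup{\textsf{EmSt}}} produces a word $\word$ with $\pmst{\word} = (T_L,T_R)$, whence $\Phi([\word]_\mst) = (T_L,T_R)$. Combining the two paragraphs, $\Phi$ is a bijection between the set of all $\equiv_\mst$-classes of words in $\N^*$ and the set of all pairs of twin stalactic tableaux.

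It then remains to track the two numerical parameters to obtain the restricted statement. The stalactic left- and right-insertion algorithms build $\plst{\word}$ and $\prst{\word}$ by inserting the letters of $\word$ one by one without ever deleting a cell, so $\cont{\plst{\word}} = \cont{\prst{\word}} = \cont{\word}$; hence a word of length $m$ over $[n]$ maps under $\Phi$ to a pair of twin stalactic tableaux with $m$ cells (the $m$ blocks of the statement), every column of which is labelled by a letter of $[n]$. Conversely, if $(T_L,T_R)$ is such a pair, then the word $\word$ returned by \hyperref[ExtractMeetStalactic]{\textsf{EmSt}} satisfies $\cont{\word} = \cont{T_L}$ (this is part of Proposition~\ref{ExtractTwinToMst}), so $|\word| = m$ and $\supp{\word} \subseteq [n]$. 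Restricting $\Phi$ accordingly yields the claimed bijection, with the degenerate cases $m = 0$ or $n = 0$ handled by the empty word and the empty tableaux.

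There is no real obstacle here: the substance is entirely contained in Propositions~\ref{prop:pmst_pair_plst_prst}, \ref{prop:pmst_pair_twin} and \ref{ExtractTwinToMst}, and the corollary is essentially their packaging. The only point needing care is the final bookkeeping step — verifying that the correspondence respects word length and alphabet rank — which reduces to the single observation that both stalactic insertion and the extraction algorithm preserve content.
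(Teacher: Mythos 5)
Your proof is correct and follows exactly the route the paper intends: the paper states this corollary as an immediate consequence of Propositions~\ref{prop:pmst_pair_plst_prst}, \ref{prop:pmst_pair_twin} and \ref{ExtractTwinToMst}, which is precisely the assembly you carry out (well-definedness and injectivity from the first, codomain from the second, surjectivity from the extraction algorithm, plus the content-preservation bookkeeping). Nothing further is needed.
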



\subsubsection{The meet-taiga extraction algorithm} \label{subsubsection:The_meet_taiga_extraction_algorithm}

Algorithm \hyperref[ExtractMeetTaiga]{\textsf{EmTg}} takes a pair of taiga trees $(T_L,T_R)$ and outputs a word in the $\equiv_\mtg$-class corresponding to $(T_L,T_R)$.

\begin{algorithm}[H] \label{ExtractMeetTaiga}
        \DontPrintSemicolon
        \KwIn{A pair of twin BSTMs $(T_L,T_R)$.}
        \KwOut{A word in the $\equiv_\mtg$-class corresponding to $(T_L,T_R)$.}
        \BlankLine
        let $\uord := \varepsilon$; \\
        set $(U_L,U_R) = (T_L,T_R)$; \\
        \While{$(U_L,U_R) \neq (\perp,\perp)$}{
        \eIf{all leaves in $U_R$ share letters with nodes in $U_L$}
        {choose a letter $a$ that labels the root node of a tree in $U_L$ such that $a$ has multiplicity greater than $1$ or $a$ labels a leaf in $U_R$; \\
        let $m_a$ be the multiplicity of $a$; \\
        \eIf{$m_a > 1$}{
        set $\uord := \uord a^{m_a-1}$;}
        {set $\uord := \uord a$; \\
        remove the node labelled $a$ from $U_R$;
        }
        remove the node labelled $a$ from $U_L$;}
        {let $a$ be the letter labelling a leaf in $U_R$ such that $a$ does not label any node in $U_L$; \\
        set $\uord := \uord a$; \\
        remove the node labelled $a$ from $U_R$;}
        }
        \Return the word $\uord$.
        \caption{\textit{Extract Meet-Taiga} (\textsf{EmTg}).}
\end{algorithm}

Notice that, in contrast with \hyperref[ExtractMeetStalactic]{\textsf{EmSt}}, \hyperref[ExtractMeetTaiga]{\textsf{EmTg}} is a non-deterministic algorithm, since there may be several choices for $a$ in steps 5 and 14.

\begin{proposition} \label{ExtractTwinToMtg}
    For any pair of twin BSTMs $(T_L, T_R)$ as input, Algorithm \hyperref[ExtractMeetTaiga]{\textup{\textsf{EmTg}}} computes a word belonging to the $\equiv_\mtg$-equivalence class encoded by $(T_L, T_R )$.
\end{proposition}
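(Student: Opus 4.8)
The plan is to reduce the statement to one about the $\psymb_\mtg$-symbol of the output word and then analyse that word directly. By Proposition~\ref{MtgClassesPsymbolCorr}, the assertion that the output $\uord$ of \hyperref[ExtractMeetTaiga]{\textsf{EmTg}} lies in the $\equiv_\mtg$-class encoded by $(T_L,T_R)$ is equivalent to $\pltg{\uord}=T_L$ and $\prtg{\uord}=T_R$. So there are two tasks: (1) show the run always terminates at $(\perp,\perp)$, in particular that one of the non-deterministic steps 5 and 14 always offers a legal choice; and (2), granting that, read off enough of the structure of $\uord$ to recognise the two trees.

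For (2), the first observation is that a letter $a$ is written to $\uord$ for the first time exactly in the iteration in which the node labelled $a$ is deleted from $U_L$ (as the root of one of the trees of the forest $U_L$), and for the last time exactly in the iteration in which it is deleted from $U_R$ (as a leaf of $U_R$); inspecting the three kinds of move also shows each $a\in\supp{T_L}$ is written exactly as many times as the multiplicity of its node, so $\cont{\uord}=\cont{T_L}=\cont{T_R}$. Now, since deleting roots from $U_L$ can only reach a node after all of its $T_L$-ancestors have been deleted, the order of first occurrences of letters in $\uord$ is a linear extension of the ancestor order of (the binary search tree underlying) $T_L$; dually, since deleting leaves from $U_R$ can only reach a node after all of its $T_R$-descendants, the reverse of the order of last occurrences in $\uord$ is a linear extension of the ancestor order of $T_R$. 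I would then invoke the elementary fact that feeding a sequence of distinct keys through binary search tree insertion in any order refining the ancestor order of a target tree (each key after all of its ancestors) rebuilds exactly that tree; applied to the subsequence of first occurrences this gives $\pltg{\uord}=T_L$ (the repeated occurrences only fix the multiplicities, which already agree by the content computation), and applied to the reverse word $\uord^R$ — using $\prtg{\word}=\pltg{\word^R}$, which is immediate from the definitions — it gives $\prtg{\uord}=T_R$.

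The main obstacle is (1): proving the algorithm never stalls. The plan is to carry along the invariant that $(U_L,U_R)$ is a \emph{compatible pair}: $U_L$ is a forest of BSTMs, $U_R$ is a BSTM, $\supp{U_L}\subseteq\supp{U_R}$, every letter of $\supp{U_R}\setminus\supp{U_L}$ (the letters already written $m-1$ times, which must therefore have had multiplicity $\ge 2$) still carries that multiplicity in $U_R$, the letters common to both carry matching multiplicities, and — crucially, and inherited from the hypothesis that $(T_L,T_R)$ is a pair of twin BSTMs (Definitions~\ref{definition:TwinBSTMCondition} and~\ref{definition:TwinBTMCondition}) — the twin child-condition still relates corresponding nodes of $U_L$ and $U_R$. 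One then checks that each of the three moves preserves this invariant, so that (since every move deletes a node from $U_L$ or from $U_R$) the run terminates, necessarily at $(\perp,\perp)$; and that the invariant guarantees a legal move whenever $(U_L,U_R)\ne(\perp,\perp)$: no letter sits in $U_L$ alone, so $U_R\ne\perp$ and hence has a leaf $\ell$; if $\ell$ is one of the pending letters we are in the else-branch and take it, and otherwise every leaf of $U_R$ lies in $U_L$, we are in the if-branch, and the twin child-condition forces some root of the forest $U_L$ to be either non-simple or itself a leaf of $U_R$ — the forest version of the fact that the root of a binary search tree is always a leaf of its twin (if that root had two children the twin condition kills both of the twin's children; if it had only one it is an extreme value and the twin's copy is a leaf for order reasons). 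Verifying that the twin child-condition genuinely survives the deletions, and that it really does pin down such a root of $U_L$, is where the substantive work lies.
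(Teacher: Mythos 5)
Your step (2) — recovering $\pltg{\uord}=T_L$ and $\prtg{\uord}=T_R$ once termination is granted — is correct and takes a genuinely different route from the paper. The paper argues by contradiction on the least depth at which $T_L$ and $\pltg{\uord}$ could disagree, comparing $\overrightarrow{\rho_\uord}$ of the two candidate labels of a disagreeing child (and dually for $T_R$ with $\overleftarrow{\rho_\uord}$). You instead note that $U_L$ only ever loses roots and $U_R$ only ever loses leaves, so the first occurrences in $\uord$ appear in a linear extension of the ancestor order of $T_L$ and the last occurrences, read right-to-left, in a linear extension of the ancestor order of $T_R$; the standard fact that inserting distinct keys into a binary search tree along any linear extension of the ancestor order of a target tree rebuilds that tree, together with the content count (each letter is written exactly its multiplicity many times, first at its $U_L$-removal and last at its $U_R$-removal, which checks out against all three branches of the algorithm), then finishes it. This is cleaner than the paper's argument and I would accept it.

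The gap is in step (1), and it is exactly the point you defer: the claim that when every leaf of $U_R$ shares a letter with $U_L$ and every root of the forest $U_L$ is simple, some root of $U_L$ must be a leaf of $U_R$. The single-tree heuristic you offer does not extend to the forest situation, for two reasons. First, $\supp{U_L}$ is in general a proper subset of $\supp{U_R}$ (a letter of multiplicity $m_a>1$ leaves $U_L$ long before it leaves $U_R$), so a simple root of $U_L$ that is a leaf of its own tree gets no help from the twin condition yet may sit above $U_R$-nodes labelled by letters no longer present in $U_L$, and hence fail to be a leaf of $U_R$. Second, ``only one child, hence an extreme value, hence a leaf of the twin'' fails for the middle roots $s_2,\dots,s_{k-1}$ of the forest: such an $s_i$ is extreme only within the interval of letters of its own tree of $U_L$, and nothing prevents it from having, say, a right child in $U_R$ labelled by a letter from a later tree or by a letter already deleted from $U_L$. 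The paper closes precisely this hole with an extremal argument: assuming all roots $s_1<\cdots<s_k$ of $U_L$ are simple and none is a leaf of $U_R$, it takes the least $i$ for which $s_i$ has a left subtree in $U_R$ and the greatest letter $a$ of that subtree, shows $a$ is not $s_{i-1}$ and that $s_{i-1}<a<s_i$ forces $a$ to be a right descendant of $s_{i-1}$ or a left descendant of $s_i$ in $U_L$, and derives a contradiction with the twin condition (which does transfer from $(T_L,T_R)$ to $(U_L,U_R)$, since root-removal preserves the subtrees below surviving nodes of $U_L$ and leaf-removal cannot create children in $U_R$). Without this, or an equivalent argument, the proof that the algorithm never stalls — and hence that $\uord$ is even well defined with the right content — is missing.
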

\begin{proof}
    First, we show that \hyperref[ExtractMeetTaiga]{\textsf{EmTg}} always terminates. It is clear that, in each step of the algorithm, a node is removed from $U_L$ or $U_R$. Thus, it suffices to show that while running \hyperref[ExtractMeetTaiga]{\textsf{EmTg}}, if all leaves in $U_R$ share letters with nodes in $U_L$, and all root nodes of trees in $U_L$ are simple, then at least one root node in $U_L$ shares a letter with a leaf in $U_R$.
    
    For contradiction, suppose that all the root nodes of the trees in $U_L$ are simple, and no root node in $U_L$ shares a letter with a leaf in $U_R$. Let $\{s_1 < \cdots < s_k\}$ be the letters of the root nodes of the trees in $U_L$. It follows from the definition of \hyperref[definition:TwinBSTMCondition]{pairs of twin BSTMs} that the node labelled $s_i$ cannot have left (resp. right) subtrees in both $U_L$ and $U_R$. Therefore, the node labelled $s_1$ cannot have a left child in $U_R$, since the letter labelling this child must be less than $s_1$ and, in $U_L$, all nodes with letters less than $s_1$ must be in the left subtree of the root node labelled $s_1$. By symmetric reasoning, one can show that $s_k$ does not have a right child in $U_R$.
    
    Let $s_i$ be the least letter such that the node labelled $s_i$ has a left subtree in $U_R$, and let $a$ be the greatest letter labelling a node of said subtree. Therefore, the node labelled $a$ does not have a right child, hence, by hypothesis, it is not the node labelled $s_{i-1}$. Thus we have $s_{i-1} < a < s_i$ and, as such, in $U_L$, the node labelled $a$ must be a right descendant of $s_{i-1}$ or a left descendant of $s_i$. By the definition of \hyperref[definition:TwinBTMCondition]{pairs of twin BTMs}, this contradicts our hypothesis. As such, we can conclude that at least one root node in $U_L$ shares a letter with a leaf in $U_R$.
    
    Now, we show that the meet-taiga $\psymb$-symbol of the output $\uord$ of \hyperref[ExtractMeetTaiga]{\textsf{EmTg}} coincides with its input. Clearly, $T_L$, $T_R$, $\pltg{\uord}$ and $\prtg{\uord}$ all share the same content. Furthermore, the root node of $T_L$ has the same label as the root node of $\pltg{\uord}$, which is the first letter of $\uord$, and therefore read from the root node of $T_L$ by the algorithm. Similarly, the root node of $T_R$ has the same label as the root node of $\prtg{\uord}$, which is the last letter of $\uord$, and therefore read from the last leaf of $U_R$ by the algorithm, that is, the root node of $T_R$.
    
    Suppose, in order to obtain a contradiction, that $T_L$ and $\pltg{\uord}$ are different, and that they are equal up to depth $k \geq 1$. As such, they share their binary tree shape (without multiplicities) up to depth $k+1$, since $T_L$ and $\pltg{\uord}$ have the same inorder reading. Hence, there must be a node of depth $k$ common to both $T_L$ and $\pltg{\uord}$ such that at least one of its children is labelled differently in each tree. Assume, without loss of generality, that said node is labelled $i$ in $T_L$ and labelled $j$ in $\pltg{\uord}$. 
    Then, by the definition of a BSTM, there must be a descendant of node labelled $i$ (resp. $j$) in $T_L$ (resp. $\pltg{\uord}$) labelled by $j$ (resp. $i$). On one hand, we can deduce that $\overrightarrow{\rho_\uord}(i) < \overrightarrow{\rho_\uord}(j)$, since \hyperref[ExtractMeetTaiga]{\textsf{EmTg}} will always read first any ancestor of the node labelled $j$. On the other hand, we can deduce that $\overrightarrow{\rho_\uord}(i) > \overrightarrow{\rho_\uord}(j)$, since the insertion of $i$ as the label of a descendant of the node labelled $j$ can only be done after reading $j$. We obtain a contradiction, hence $T_L = \pltg{\uord}$.
    
    The case of $T_R$ and $\prtg{\uord}$ is similarly proven. By the same reasoning as before, assuming $T_R$ and $\prtg{\uord}$ are different, then there exists a common node to both trees, with at least one differently labelled child, $i$ in $T_R$ and $j$ in $\prtg{\uord}$. On one hand, we can conclude that $\overleftarrow{\rho_\uord}(i) > \overleftarrow{\rho_\uord}(j)$, since \hyperref[ExtractMeetTaiga]{\textsf{EmTg}} will output a letter if and only if it labels a node in $U_R$, and it will only delete the node labelled $i$ when it is a leaf in $U_R$, so it will delete the node labelled $j$ first. On the other hand, we can conclude that $\overleftarrow{\rho_\uord}(i) < \overleftarrow{\rho_\uord}(j)$, since by \hyperref[alg:RightStalacticLeftInsertion]{\textsf{rStLI}}, we read the word $\uord$ from right-to-left, and insert $i$ as a descendant of the node labelled $j$ only after reading $j$. We obtain a contradiction, hence $T_R = \prtg{\uord}$.
\end{proof}

As a consequence of Propositions~\ref{MtgClassesPsymbolCorr}, \ref{prop:pmtg_pair_twin} and \ref{ExtractTwinToMtg}, we have the following:
\begin{corollary}
    For any $n,m \geq 0$, there is a bijection between the set of $\mtg$-equivalence classes of words of length $m$ over $[n]^*$ and pairs of twin BSTMs, labelled by letters from $[n]$ and whose sums of multiplicities is $m$.    
\end{corollary}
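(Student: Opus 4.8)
The plan is to show that the meet-taiga $\psymb$-symbol map induces the desired bijection. Concretely, I would consider the map that sends an $\mtg$-equivalence class $[\uord]_\mtg$ of words of length $m$ over $[n]^*$ to the pair $\pmtg{\uord} = (\pltg{\uord}, \prtg{\uord})$, and verify in turn that it is well-defined, injective, lands in the claimed codomain, and is surjective.

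Well-definedness and injectivity on classes are precisely the content of Proposition~\ref{MtgClassesPsymbolCorr}: for $\uord, \vord \in \N^*$ one has $\uord \equiv_\mtg \vord$ if and only if $\pmtg{\uord} = \pmtg{\vord}$. To see that the image lies in the stated set, note first that $\pmtg{\uord}$ is a pair of twin BSTMs by Proposition~\ref{prop:pmtg_pair_twin}; moreover, since $\pltg{\uord}$ and $\prtg{\uord}$ are built by Algorithm~\textsf{TgLI}, which upon reading each letter of $\uord$ either creates one new node of multiplicity $1$ or increments a single multiplicity by $1$, the sum of the multiplicities of $\pltg{\uord}$ (equivalently of $\prtg{\uord}$, by the twin BTM condition) equals $|\uord| = m$, while the set of letters labelling the nodes is exactly $\supp{\uord} \subseteq [n]$.

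For surjectivity, given any pair of twin BSTMs $(T_L, T_R)$ whose nodes are labelled by letters from $[n]$ and whose sum of multiplicities is $m$, Proposition~\ref{ExtractTwinToMtg} provides a word $\uord$ produced by Algorithm~\textsf{EmTg} with $\pmtg{\uord} = (T_L, T_R)$; by the multiplicity count of the previous paragraph this word has length $m$ and support contained in $[n]$, so $[\uord]_\mtg$ is a preimage lying in the correct domain. Together with injectivity this yields the bijection. I do not expect a genuine obstacle here: all the nontrivial work --- in particular the surjectivity argument, which hinges on correctness and termination of the non-deterministic algorithm~\textsf{EmTg} --- has already been carried out in Proposition~\ref{ExtractTwinToMtg}, and what remains is to assemble the cited statements and record the (routine) observation that \textsf{TgLI} preserves total multiplicity and support. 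The only point requiring care is matching the domain and codomain of the asserted bijection exactly, i.e.\ tracking the length $m$ and the rank $n$ through the $\psymb$-symbol.
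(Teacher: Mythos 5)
Your proposal is correct and follows exactly the route the paper takes: the corollary is stated as an immediate consequence of Propositions~\ref{MtgClassesPsymbolCorr}, \ref{prop:pmtg_pair_twin} and \ref{ExtractTwinToMtg}, which you invoke for well-definedness/injectivity, the codomain, and surjectivity respectively. The additional bookkeeping you record (that \textsf{TgLI} preserves total multiplicity and support, so length $m$ and rank $n$ match up) is the same routine observation the paper leaves implicit.
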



\subsection{Iterative insertion algorithms} \label{subsection:Iterative_insertion_algorithms}

We now introduce iterative versions of our insertion algorithms which allow us to compute a pair of twin stalactic tableaux from a word, while reading it only in one direction. Thus, we can easily compute the concatenation of two words under the meet-stalactic and meet-taiga congruences. Furthermore, we can compute the product of two pairs of twin stalactic tableaux (resp. pairs of twin BSTMs) $(T_L,T_R)$ and $(T'_L,T'_R)$ by applying \hyperref[ExtractMeetStalactic]{\textsf{EmSt}} (resp. \hyperref[ExtractMeetTaiga]{\textsf{EmTg}}) to the second pair and inserting the letters of the resulting word, from left-to-right, into the first pair.

\subsubsection{The meet-stalactic iterative insertion algorithm} \label{subsubsection:The_meet_stalactic_iterative_insertion_algorithm}

Algorithm \hyperref[alg:RightStalacticRightInsertion]{\textsf{rStRI}} allows one to insert a letter into the top row of a stalactic tableau:

\begin{algorithm}[H] \label{alg:RightStalacticRightInsertion}
        \DontPrintSemicolon
        \KwIn{A stalactic tableau $T$, a letter $a \in \N$.}
        \KwOut{A stalactic tableau $T \leftarrow a$.}
        \BlankLine
        \eIf{$T$ has a column whose cells are labelled by $a$}{remove the column labelled by $a$ from $T$, add a new $a$-labelled cell to the top of the column, and reattach the column to the right of the tableau;}{attach an $a$-labelled cell to the right of $T$;}
        \Return the resulting tableau $T \leftarrow a$.
        \caption{\textit{Right-Stalactic Right-Insertion} (\textsf{rStRI}).}
\end{algorithm}
We can define a \textit{Left-Stalactic Left-Insertion} (\hyperref[alg:RightStalacticRightInsertion]{\textsf{lStLI}}) algorithm in an analogous way. 

As before in Subsubsection~\ref{subsubsection:composition_diagrams_stalactic_tableaux_and_patience_sorting_tableaux}, using Algorithm \hyperref[alg:RightStalacticRightInsertion]{\textsf{rStRI}} (resp. \hyperref[alg:RightStalacticRightInsertion]{\textsf{lStLI}}), one can compute a unique stalactic tableau from a word $\word \in \N^*$: Starting from the empty tableau, read $\word$ from left-to-right (resp. right-to-left) and insert its letters one-by-one into the tableau. The resulting tableau is denoted by $\psymb^\rightarrow_\rst(\word)$ (resp. $\psymb^\leftarrow_\lst(\word)$).

\begin{lemma} \label{lemma:alternate_insertion_algorithm_stalactic}
    Let $\word \in \N^*$. Then, $\prst{\word} = \psymb^\rightarrow_\rst(\word)$ and $\plst{\word} = \psymb^\leftarrow_\lst(\word)$.
\end{lemma}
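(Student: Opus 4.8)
The plan is to show that both insertion algorithms $\textsf{rStRI}$ and $\textsf{rStLI}$ produce, when applied iteratively to a word, a stalactic tableau whose columns are labelled by exactly the support of $\word$, with each column having height equal to the multiplicity of its label, and with the columns ordered according to $\overleftarrow{\rho_\word}$. Since a stalactic tableau is completely determined by its content together with the left-to-right order of its columns (equivalently, by its column reading, as noted after Lemma~\ref{lemma:column_reading_stalactic_tableau}), this suffices to conclude $\prst{\word} = \psymb^\rightarrow_\rst(\word)$. The case $\plst{\word} = \psymb^\leftarrow_\lst(\word)$ then follows by the obvious left-right symmetry of the definitions.

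First I would argue by induction on $|\word|$ that $\psymb^\rightarrow_\rst(\word)$ has content $\cont{\word}$ and that the order of its columns, read left-to-right, is given by $\overleftarrow{\rho_\word}$ — that is, the columns appear in the order of the \emph{last} occurrences of the letters of $\word$ as we read $\word$ from left to right. The base case $\word = \varepsilon$ is trivial. For the inductive step, write $\word = \word' a$; by induction $\psymb^\rightarrow_\rst(\word')$ has the claimed description with respect to $\word'$, and inserting $a$ via $\textsf{rStRI}$ either (i) finds no $a$-column and appends a new simple $a$-column on the right, or (ii) detaches the existing $a$-column, adds a cell, and reattaches it on the right. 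In case (i), $a \notin \supp{\word'}$ so the last occurrence of $a$ in $\word$ is strictly to the right of every other letter's last occurrence, matching the column becoming rightmost; in case (ii), the multiplicity of $a$ increases by one and $a$'s last occurrence moves to the end, so again $a$'s column should become rightmost while the relative order of the others is unchanged. In both cases the content is updated correctly. This matches precisely the behaviour of $\overleftarrow{\rho_{\word' a}}$ versus $\overleftarrow{\rho_{\word'}}$.

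On the other side, Lemma~\ref{lemma:column_reading_stalactic_tableau} already tells us that $\prst{\word}$ has content $\cont{\word}$ and columns ordered by $\overleftarrow{\rho_\word}$. Comparing the two descriptions, $\psymb^\rightarrow_\rst(\word)$ and $\prst{\word}$ have identical content and identical left-to-right column orders, hence are the same stalactic tableau. For $\plst{\word} = \psymb^\leftarrow_\lst(\word)$, one observes that $\textsf{lStLI}$ is obtained from $\textsf{rStRI}$ by swapping ``left'' and ``right'', reading $\word$ right-to-left instead, and that $\plst{}$ is the left-handed analogue of $\prst{}$ governed by $\overrightarrow{\rho_\word}$ in place of $\overleftarrow{\rho_\word}$; the same inductive argument applies verbatim. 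I do not expect any genuine obstacle here — the only mild care needed is to phrase the induction in terms of the column reading (or equivalently the pair $(\cont{\word}, \overleftarrow{\rho_\word})$) rather than trying to track the two tableaux cell-by-cell, since $\textsf{rStLI}$ builds columns from the \emph{bottom up} and $\textsf{rStRI}$ builds them from the \emph{top down}, yet both yield the same final tableau because a stalactic column is just a power of a single letter.
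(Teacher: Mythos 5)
Your proof is correct and follows essentially the same route as the paper's: an induction on $|\word|$ that peels off the last letter $a$ of $\word = \word' a$ and observes that the $a$-column becomes the rightmost column (with height incremented) while the remaining columns are unchanged, exactly matching one step of \textsf{rStRI}. The only cosmetic difference is that you match the two tableaux through the closed-form description (content together with the column order $\overleftarrow{\rho_\word}$, via Lemma~\ref{lemma:column_reading_stalactic_tableau}) rather than comparing the recursions directly, which is equally valid.
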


\begin{proof}
    We prove the first equality, as the proof for the second equality is analogous. The proof is done by induction on the length of $\word$. If $\word$ is the empty word, then the result is trivially satisfied. 
    
    Suppose $\word = \uord a$, for some $a \in \N$ and $\uord \in \N^*$. Recall that $\prst{\word}$ is obtained by reading $\word$ from right-to-left and inserting its letters using Algorithm \hyperref[alg:RightStalacticLeftInsertion]{\textsf{rStLI}}, and as such, its rightmost column is labelled by $a$ and of height $|\word|_a = |\prst{\uord}|_a + 1$, and the remaining columns form the tableau $(\prst{\uord})_{\neq a}$. By induction, we have that $\prst{\uord} = \psymb^\rightarrow_\rst(\uord)$. As such, $\prst{\word}$ is obtained in the same way as $\psymb^\rightarrow_\rst(\word)$.
\end{proof}

As such, we can compute the $\psymb$-symbol of a meet-stalactic class by reading a word in only one direction and applying Algorithms \hyperref[alg:RightStalacticLeftInsertion]{\textsf{rStLI}} and \hyperref[alg:RightStalacticRightInsertion]{\textsf{rStRI}} (or Algorithms \hyperref[alg:RightStalacticLeftInsertion]{\textsf{lStRI}} and \hyperref[alg:RightStalacticRightInsertion]{\textsf{lStLI}}) at the same time:

\begin{corollary} \label{corollary:mst_root_insertion}
    For any $\word \in \N^*$, we have that 
    \[
        \pmst{\word} = (\psymb^\leftarrow_\lst(\word), \prst{\word}) = (\plst{\word}, \psymb^\rightarrow_\rst(\word)).
    \]
\end{corollary}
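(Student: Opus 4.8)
The plan is to derive this corollary directly from Lemma~\ref{lemma:alternate_insertion_algorithm_stalactic} together with Definition~\ref{definition:mstpandqsymbols}; no new argument is needed. First I would recall that, by the definition of the meet-stalactic $\psymb$-symbol, $\pmst{\word} = (\plst{\word}, \prst{\word})$ for every $\word \in \N^*$. Then Lemma~\ref{lemma:alternate_insertion_algorithm_stalactic} supplies the two identities $\plst{\word} = \psymb^\leftarrow_\lst(\word)$ and $\prst{\word} = \psymb^\rightarrow_\rst(\word)$. Substituting the first identity into the first coordinate of $\pmst{\word}$ while leaving the second coordinate as $\prst{\word}$, and conversely substituting the second identity into the second coordinate while leaving the first as $\plst{\word}$, immediately gives
\[
    \pmst{\word} = (\psymb^\leftarrow_\lst(\word), \prst{\word}) = (\plst{\word}, \psymb^\rightarrow_\rst(\word)).
\]

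The point worth emphasising in the write-up — and the reason the corollary is stated at all — is that $\psymb^\leftarrow_\lst$ and $\prst{}$ (respectively $\plst{}$ and $\psymb^\rightarrow_\rst$) are built by reading $\word$ in \emph{opposite} directions in Subsubsection~\ref{subsubsection:composition_diagrams_stalactic_tableaux_and_patience_sorting_tableaux} versus Subsubsection~\ref{subsubsection:The_meet_stalactic_iterative_insertion_algorithm}, so that by the lemma one may instead produce both tableaux of the $\psymb$-symbol in a single left-to-right (or single right-to-left) pass, applying \hyperref[alg:RightStalacticLeftInsertion]{\textsf{rStLI}} and \hyperref[alg:RightStalacticRightInsertion]{\textsf{rStRI}} (or \hyperref[alg:RightStalacticLeftInsertion]{\textsf{lStRI}} and \hyperref[alg:RightStalacticRightInsertion]{\textsf{lStLI}}) simultaneously. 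I would phrase the proof as a one-line deduction citing Definition~\ref{definition:mstpandqsymbols} and Lemma~\ref{lemma:alternate_insertion_algorithm_stalactic}, perhaps preceded by the sentence above to make the significance explicit.

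There is essentially no obstacle: the substantive work — the induction on $|\word|$ showing that inserting a letter affects only its own column, hence that the ``bottom-of-column'' and ``top-of-column'' insertion procedures agree — has already been carried out in the proof of Lemma~\ref{lemma:alternate_insertion_algorithm_stalactic}. The only care required is purely notational, namely matching the left/right orientation conventions of the two insertion algorithms so the claimed equalities of pairs type-check.
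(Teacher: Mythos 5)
Your proposal is correct and matches the paper exactly: the corollary is stated there without proof, being an immediate substitution of the two identities of Lemma~\ref{lemma:alternate_insertion_algorithm_stalactic} into Definition~\ref{definition:mstpandqsymbols}, which is precisely your one-line deduction. One tiny point in your explanatory remark: a single left-to-right pass runs \textsf{lStRI} (for $\plst{\word}$) together with \textsf{rStRI} (for $\psymb^\rightarrow_\rst(\word)$), while a right-to-left pass runs \textsf{rStLI} together with \textsf{lStLI}; your pairing of algorithm names with reading directions (echoing an infelicity in the paper's own phrasing) is mismatched, though this does not affect the validity of the proof itself.
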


We define the \emph{left-to-right iterative meet-stalactic $\psymb$-symbol} of a word $\word \in \N^*$ as the pair $\pmst{\word} = (\plst{\word}, \psymb^\rightarrow_\rst(\word))$, obtained by reading $\word$ from left-to-right and iteratively inserting its letters into $(\perp,\perp)$, using Algorithms \hyperref[alg:RightStalacticLeftInsertion]{\textsf{lStRI}} and \hyperref[alg:RightStalacticRightInsertion]{\textsf{lStLI}} for $\plst{\word}$ and $\psymb^\rightarrow_\rst(\word)$, respectively. The \emph{left-to-right iterative meet-stalactic $\qsymb$-symbol} of $\word$ is the pair $\qmst{\word} = (\qlst{\word}, \qsymb^\rightarrow_\rst(\word))$ of the same shape as $\pmst{\word}$, where each cell is labelled by the position in $\word$ of the letter in its corresponding cell in $\pmst{\word}$. In other words, each cell in $\qmst{\word}$ is labelled by the step in which its corresponding cell in $\pmst{\word}$ was created, when applying the left-to-right iterative insertion algorithm.

The correctness of the iterative insertion algorithm follows from Corollary~\ref{corollary:mst_root_insertion}. Thus, we obtain an insertion algorithm in line with the usual Robinson--Schensted correspondence algorithms. Furthermore, notice that we can also define a right-to-left version of the iterative insertion algorithm. From these, we also obtain iterative versions of insertion algorithms for the left and right-stalactic $\psymb$ and $\qsymb$-symbols.

\subsubsection{The meet-taiga iterative insertion algorithm} \label{subsubsection:The_meet_taiga_iterative_insertion_algorithm}

Algorithm \hyperref[alg:TaigaRootInsertion]{\textsf{TgRI}} allows one to obtain a new BSTM from another one, with the root node labelled by the letter of our choice:

\begin{algorithm}[ht] \label{alg:TaigaRootInsertion}
        \DontPrintSemicolon
        \KwIn{A BSTM $T$, a letter $a \in \N$.}
        \KwOut{A BSTM $a \downarrow T$.}
        \BlankLine
        let $T_{<a}$ (resp. $T_{>a}$) be the tree of all nodes of $T$ with letter $<a$ (resp. $>a$), such that a node $x$ is a descendent of a node $y$ in $T_{<a}$ (resp. $T_{>a}$ only if $x$ is a descendant of $y$ in $T$; \\
        let $a \downarrow T$ be the tree with root node labelled $a$ with multiplicity $|T|_a + 1$, with left subtree $T_{<a}$ and right subtree $T_{>a}$; \\
        \Return the resulting tree $a \downarrow T$.
        \caption{\textit{Taiga Root Insertion} (\textsf{TgRI}).}
\end{algorithm}

As before in Subsubsection~\ref{subsubsection:binary_trees_with_multiplicities_and_binary_search_trees_with_multiplicities}, using Algorithm \hyperref[alg:TaigaRootInsertion]{\textsf{TgRI}}, one can compute a unique BSTM from a word $\word \in \N^*$: Starting from the empty tree, read $\word$ from left-to-right (resp. right-to-left) and insert its letters one-by-one into the tree. The resulting BSTM is denoted by $\psymb^\rightarrow_\rtg(\word)$ (resp. $\psymb^\leftarrow_\ltg(\word)$). The proof of \cite[Lemma~4.18]{giraudo_baxter} can be easily adapted to show the following:

\begin{lemma} \label{lemma:alternate_insertion_algorithm_taiga}
    Let $\word \in \N^*$. Then $\psymb^\rightarrow_\rtg(\word) = \prtg{\word}$ and $\psymb^\leftarrow_\ltg(\word) = \pltg{\word}$.
\end{lemma}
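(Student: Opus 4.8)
The plan is to prove the first equality $\psymb^\rightarrow_\rtg(\word) = \prtg{\word}$ by induction on $|\word|$, the second equality $\psymb^\leftarrow_\ltg(\word) = \pltg{\word}$ then following by the left--right mirror symmetry relating Algorithms \hyperref[alg:TaigaLeafInsertion]{\textsf{TgLI}} and \hyperref[alg:TaigaRootInsertion]{\textsf{TgRI}} to their left-handed counterparts (exactly as in the stalactic case of Lemma~\ref{lemma:alternate_insertion_algorithm_stalactic}). The base case $\word = \varepsilon$ is trivial. For the inductive step I would write $\word = \uord a$ with $a \in \N$; since $\psymb^\rightarrow_\rtg$ reads its input from left to right, $a$ is the last letter inserted, so $\psymb^\rightarrow_\rtg(\uord a) = a \downarrow \psymb^\rightarrow_\rtg(\uord)$, which by the induction hypothesis equals $a \downarrow \prtg{\uord}$. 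Thus the whole lemma reduces to the single identity $a \downarrow \prtg{\uord} = \prtg{\uord a}$.

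To unpack the right-hand side, note that $\prtg{\uord a}$ is computed by reading $\uord a$ from right to left, so the first letter leaf-inserted is $a$, creating the one-node tree $a^1$, after which the letters of $\uord$ are leaf-inserted, still from right to left. Since Algorithm \hyperref[alg:TaigaLeafInsertion]{\textsf{TgLI}} never alters the root of a tree --- it only increments the root's multiplicity or descends into a subtree --- the tree $\prtg{\uord a}$ has root $a$ with multiplicity $1 + |\uord|_a$, its left subtree is obtained by leaf-inserting, from right to left, precisely the letters of $\uord$ that are less than $a$, and its right subtree likewise for the letters greater than $a$. Writing $L = \{z \in \N : z < a\}$ and $G = \{z \in \N : z > a\}$, this means the left and right subtrees of $\prtg{\uord a}$ are $\prtg{\uord[L]}$ and $\prtg{\uord[G]}$. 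On the other hand, by the definition of Algorithm \hyperref[alg:TaigaRootInsertion]{\textsf{TgRI}}, the tree $a \downarrow \prtg{\uord}$ has root $a$ with multiplicity $|\prtg{\uord}|_a + 1 = |\uord|_a + 1$, left subtree $(\prtg{\uord})_{<a}$, and right subtree $(\prtg{\uord})_{>a}$. Hence the identity reduces to the two claims $(\prtg{\uord})_{<a} = \prtg{\uord[L]}$ and $(\prtg{\uord})_{>a} = \prtg{\uord[G]}$.

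These claims express the standard phenomenon that in a BSTM built by successive leaf insertions, the nodes whose labels lie below a fixed threshold induce a subtree that coincides with the tree obtained by leaf-inserting just those labels in the same relative order: whenever a label $z < a$ is leaf-inserted, it turns left at every node with label $\geq a$ and otherwise moves among the already-present nodes with label $< a$ exactly as pure leaf insertion of the below-threshold labels would dictate, so the below-threshold part of the tree evolves independently of the rest. I would make this precise with a secondary induction on $|\uord|$; multiplicities cause no trouble, since $|\uord[L]|_z = |\uord|_z$ for every $z \in L$ and \hyperref[alg:TaigaLeafInsertion]{\textsf{TgLI}} treats repeated letters identically regardless of which above-threshold letters are present. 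This is the portion of the argument taken over, with only cosmetic changes, from the proof of \cite[Lemma~4.18]{giraudo_baxter}, and it is the step demanding the most care: one must track not merely the ancestor relation among the below-threshold nodes but also the left/right side on which each is attached, keeping in mind that in $\prtg{\uord}$ two such nodes may be separated by nodes with label $\geq a$.
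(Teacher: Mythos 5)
Your proposal is correct and follows essentially the same route as the paper, which simply remarks that the proof of \cite[Lemma~4.18]{giraudo_baxter} adapts to this setting; your write-up is precisely that adaptation, spelled out as an induction reducing to $a \downarrow \prtg{\uord} = \prtg{\uord a}$ together with the split property $(\prtg{\uord})_{<a} = \prtg{\uord[L]}$, with multiplicities handled as you indicate.
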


As such, we can compute the $\psymb$-symbol of a meet-taiga class by reading a word in only one direction and applying Algorithms \hyperref[alg:TaigaLeafInsertion]{\textsf{TgLI}} and \hyperref[alg:TaigaRootInsertion]{\textsf{TgRI}} at the same time:

\begin{corollary} \label{corollary:mtg_root_insertion}
    For any $\word \in \N^*$, we have that 
    \[
        \pmtg{\word} = (\psymb^\leftarrow_\ltg(\word), \prtg{\word}) = (\pltg{\word}, \psymb^\rightarrow_\rtg(\word)).
    \]
\end{corollary}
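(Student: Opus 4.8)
The plan is to obtain the corollary directly from Lemma~\ref{lemma:alternate_insertion_algorithm_taiga} together with Definition~\ref{definition:mtgpandqsymbols}. Recall that the meet-taiga $\psymb$-symbol is defined as $\pmtg{\word} = (\pltg{\word}, \prtg{\word})$, so the only thing to verify is that each of the two pairs displayed in the statement coincides with this one. Since no structural work about trees is needed beyond what the lemma already provides, the argument is essentially a substitution.

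First I would use the second assertion of Lemma~\ref{lemma:alternate_insertion_algorithm_taiga}, namely $\psymb^\leftarrow_\ltg(\word) = \pltg{\word}$, to rewrite $(\psymb^\leftarrow_\ltg(\word), \prtg{\word})$ as $(\pltg{\word}, \prtg{\word})$, which is exactly $\pmtg{\word}$. Symmetrically, the first assertion $\psymb^\rightarrow_\rtg(\word) = \prtg{\word}$ rewrites $(\pltg{\word}, \psymb^\rightarrow_\rtg(\word))$ as $(\pltg{\word}, \prtg{\word}) = \pmtg{\word}$. Chaining both identities through $\pmtg{\word}$ yields the full chain of equalities in the statement, for every $\word \in \N^*$.

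There is no genuine obstacle here: the substantive content is carried entirely by Lemma~\ref{lemma:alternate_insertion_algorithm_taiga}, whose proof is in turn a routine adaptation of \cite[Lemma~4.18]{giraudo_baxter}. The corollary merely repackages that lemma so that, by applying Algorithms \hyperref[alg:TaigaLeafInsertion]{\textsf{TgLI}} and \hyperref[alg:TaigaRootInsertion]{\textsf{TgRI}} simultaneously, one may read $\word$ in a single direction to build both components of the meet-taiga $\psymb$-symbol, rather than reading it left-to-right for $\pltg{\word}$ and right-to-left for $\prtg{\word}$. Accordingly I would keep the proof to a single sentence invoking Definition~\ref{definition:mtgpandqsymbols} and Lemma~\ref{lemma:alternate_insertion_algorithm_taiga}.
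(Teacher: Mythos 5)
Your proposal is correct and matches the paper's intent exactly: the corollary is stated as an immediate consequence of Lemma~\ref{lemma:alternate_insertion_algorithm_taiga} and Definition~\ref{definition:mtgpandqsymbols}, with no further argument needed. The substitution of $\psymb^\leftarrow_\ltg(\word) = \pltg{\word}$ and $\psymb^\rightarrow_\rtg(\word) = \prtg{\word}$ into $\pmtg{\word} = (\pltg{\word}, \prtg{\word})$ is precisely the whole content.
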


We define the \emph{left-to-right iterative meet-taiga $\psymb$-symbol} of a word $\word \in \N^*$ as the pair $\pmtg{\word} = (\pltg{\word}, \psymb^\rightarrow_\rtg(\word))$, obtained by reading $\word$ from left-to-right and iteratively inserting its letters into $(\perp,\perp)$, using Algorithms \hyperref[alg:TaigaLeafInsertion]{\textsf{TgLI}} and \hyperref[alg:TaigaRootInsertion]{\textsf{TgRI}} for $\pltg{\word}$ and $\psymb^\rightarrow_\rtg(\word)$, respectively. The \emph{left-to-right iterative meet-taiga $\qsymb$-symbol} of $\word$ is the pair $\qmtg{\word} = (\qltg{\word}, \qsymb^\rightarrow_\rtg(\word))$ of the same shape as $\pmtg{\word}$, where each node is labelled by the positions in $\word$ of the letter in its corresponding node in $\pmtg{\word}$. In other words, each node in $\qmtg{\word}$ is labelled by the steps in which its corresponding node in $\pmtg{\word}$ was created or had its multiplicity incremented, when applying the left-to-right iterative insertion algorithm.

The correctness of the iterative insertion algorithm follows from Corollary~\ref{corollary:mtg_root_insertion}. Thus, as in the previous Subsubsection, we obtain an insertion algorithm in line with the usual Robinson--Schensted correspondence algorithms. We can also define a right-to-left version of the iterative insertion algorithm. From these, we also obtain iterative versions of insertion algorithms for the left and right-taiga $\psymb$ and $\qsymb$-symbols.


\section{Counting in the stalactic and taiga monoids} \label{section:Counting_in_the_stalactic_and_taiga_monoids}

Throughout this section, for any permutation $\sigma$ of $[n]$, we denote by $\sigma_i$ the image of $i$ by $\sigma$, to simplify the notation.


\subsection{Sizes of classes of words under stalactic and taiga congruences} \label{subsection:Size_of_equivalence_classes_in_stalactic_and_taiga_monoids}

We now obtain formulas for the sizes of classes of words under the stalactic and taiga congruences. The meet-stalactic and meet-taiga cases are treated separately from the other cases, due to the complexity of the arguments we use.


\subsubsection{The left, right, and join cases} \label{subsubsection:The_left_right_and_join_cases}

We first give non-recursive formulas for the left, right and join cases.

\begin{proposition}
Let $\word \in \N^*$. Let $\supp{\word} = \{x_1,\dots,x_k\}$ be such that $\overrightarrow{\rho_{\word}}(x_i) < \overrightarrow{\rho_{\word}}(x_{i+1})$ for all $1 \leq i < k$. Then, there are
\[ \frac{|\word|!}{\prod_{i=1}^k \left((|\word|_{x_i}-1)! \sum_{j=i}^k |\word|_{x_j}\right)}\]
words over $\N$ in the $\equiv_\lst$-class of $\word$.
\end{proposition}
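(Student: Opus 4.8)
The plan is to count the class directly, via the characterisation recalled in Subsection~\ref{subsection:the_stalactic_and_taiga_monoids} that $\uord \equiv_\lst \vord$ if and only if $\cont{\uord} = \cont{\vord}$ and $\overrightarrow{\rho_\uord} = \overrightarrow{\rho_\vord}$. Writing $m_i := |\word|_{x_i}$, $n := |\word|$ and $M_i := \sum_{j=i}^k m_j$, the $\equiv_\lst$-class of $\word$ is exactly the set of words over $\{x_1,\dots,x_k\}$ with $m_i$ occurrences of $x_i$ for each $i$ whose letters first occur, reading left to right, in the order $x_1,x_2,\dots,x_k$. Since this number depends only on $(m_1,\dots,m_k)$, I would denote it $N(m_1,\dots,m_k)$ and aim to show $N(m_1,\dots,m_k) = n!/\prod_{i=1}^k\left((m_i-1)!\,M_i\right)$.

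The key step is a recursion obtained by deleting $x_1$. Any word in the class must begin with $x_1$, since position $1$ is always a first occurrence and the first occurrence of $x_1$ precedes that of every other letter. Removing this leading $x_1$ is a bijection from the class onto the set of words $\vord$ of length $n-1$ with content $(m_1-1,m_2,\dots,m_k)$ in which the first occurrences of $x_2,\dots,x_k$ appear in that order; crucially, there is now no constraint whatsoever on the $m_1-1$ remaining copies of $x_1$. Each such $\vord$ is determined uniquely by its restriction $\vord[\{x_2,\dots,x_k\}]$ — a word counted by $N(m_2,\dots,m_k)$, of length $M_2$ — together with a choice of which $m_1-1$ of the $n-1$ positions of $\vord$ carry an $x_1$. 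This yields
\[
    N(m_1,\dots,m_k) = \binom{n-1}{m_1-1}\,N(m_2,\dots,m_k),
\]
with the empty tuple giving $N = 1$.

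Unrolling this recursion and using $M_1 = n$ and $M_{i+1} = M_i - m_i$ gives $N(m_1,\dots,m_k) = \prod_{i=1}^k \binom{M_i-1}{m_i-1}$. I would then rewrite $\binom{M_i-1}{m_i-1} = (M_i-1)!/\left((m_i-1)!\,M_{i+1}!\right)$ and $(M_i-1)! = M_i!/M_i$, so that the product telescopes:
\[
    \prod_{i=1}^k \binom{M_i-1}{m_i-1} = \frac{1}{\prod_{i=1}^k (m_i-1)!}\cdot\frac{1}{\prod_{i=1}^k M_i}\cdot\prod_{i=1}^k\frac{M_i!}{M_{i+1}!} = \frac{n!}{\prod_{i=1}^k (m_i-1)!\,M_i},
\]
using $M_{k+1} = 0$. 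This is exactly the claimed formula, and the cases $k = 0$ and $k = 1$ check directly.

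I expect the main obstacle to be the bijective argument in the recursion step: one must be careful to verify that, once the forced leading $x_1$ is removed, the other copies of $x_1$ are genuinely unconstrained, so that the choice of the $(x_2,\dots,x_k)$-skeleton and the placement of the $x_1$'s are independent. Everything after that — solving the recursion and simplifying the binomials — is routine.
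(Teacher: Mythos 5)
Your proof is correct. It takes a mildly different route from the paper's: the paper invokes the left-stalactic Robinson--Schensted correspondence (Theorem~\ref{theorem:robinson_left_stalactic}) and counts increasing patience-sorting tableaux of the shape of $\plst{\word}$ by enumerating their column readings, i.e.\ by choosing the position sets $W_1,\dots,W_k$ all at once, with $\min(W_i)$ forced to be the least unused position and the remaining $|\word|_{x_i}-1$ elements free; you instead work directly with words via the elementary characterisation of $\equiv_\lst$ (same content and same $\overrightarrow{\rho}$) and set up a deletion recursion on the forced leading letter. The combinatorial core is identical — in both arguments the first occurrence of each letter is pinned to the least available position and the other occurrences are unconstrained — and indeed both arrive at the same intermediate product $\prod_{i=1}^k\binom{M_i-1}{m_i-1}$ before the telescoping simplification. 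What your version buys is independence from the $\qsymb$-symbol machinery (only the congruence characterisation from Subsection~\ref{subsection:the_stalactic_and_taiga_monoids} is needed) and a clean induction; what the paper's version buys is that the count is exhibited explicitly as the number of recording tableaux of a fixed shape, which fits the Robinson--Schensted framework of the surrounding section. Your shuffle-decomposition step (independence of the placement of the remaining $x_1$'s from the $(x_2,\dots,x_k)$-skeleton) is sound, and the binomial telescoping with $M_{k+1}=0$ checks out.
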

\begin{proof}
    By the left-stalactic Robinson--Schensted correspondence (Theorem~\ref{theorem:robinson_left_stalactic}), the size of the $\equiv_\lst$-class of $\word$ is exactly the number of increasing patience-sorting tableaux of the same shape as $\plst{\word}$ with entries in $[|\word|]$. 
    
    We can obtain the unique column reading of an increasing patience-sorting tableau by choosing $W_1,\dots,W_k \subseteq [|\word|]$ such that $|W_i| = |\word|_{x_i}$ and, for each $1 \leq i < j \leq k$, we have $\min(W_i) < \min(W_j)$ and $W_i \cap W_j = \emptyset$. Hence, we can see that $\min(W_i)$ is the least letter not contained in $\bigcup_{j=1}^{i-1} W_j$ but the remaining letters in $W_i$ can be any letters from $[|\word|]$ not in $\bigcup_{j=1}^{i-1} W_j$. Thus, there are 
    \[ \prod_{i=1}^{k}\begin{pmatrix}
        |\word| - 1 - \sum_{j=1}^{i-1} |\word|_{x_{j}} \\
        |\word|_{x_i}-1
    \end{pmatrix}\]
    possible column readings of increasing patience-sorting tableaux, and therefore, words over $\N$ in the same $\equiv_\lst$-class of $\word$. Furthermore, by expanding the product, we can see that 
    \[ \prod_{i=1}^{k}\begin{pmatrix}
        |\word| - 1 - \sum_{j=1}^{i-1} |\word|_{x_j} \\
        |\word|_{x_i}-1
    \end{pmatrix} = \frac{(|\word|-1)!}{\left(\prod_{i=1}^k(|\word|_{x_i}-1)!\right)\prod_{t=2}^k(|\word| - \sum_{j=1}^{t-1}|\word|_{x_j})}.\]
    Finally, by noting that $|\word| - \sum_{j=1}^{t-1}|\word|_{x_j} = \sum_{j=t}^k|\word|_{x_j}$ and that $|\word| = \sum_{j=1}^k|\word|_{x_j}$, we have the result.
\end{proof}

\begin{proposition}
Let $\word \in \N^*$. Let $\supp{\word} = \{x_1,\dots,x_k\}$ and suppose $\word$ has $m$ simple variables. Then, there are
\[ \frac{|\word|!}{m! \cdot \prod_{i=1}^k|\word|_{x_i}!}\]
words over $\N$ in the same $\equiv_\jst$-class of $\word$.    
\end{proposition}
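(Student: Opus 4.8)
The plan is to read off the $\equiv_\jst$-class of $\word$ directly from Proposition~\ref{prop:jst_characterisation}: a word $\vord$ lies in this class if and only if $\cont{\vord} = \cont{\word}$ (so $\vord$ is a rearrangement of the multiset of letters of $\word$, in particular $|\vord| = |\word|$) and $\ol{\vord} = \ol{\word}$ (so the subword of $\vord$ formed by its simple letters coincides with $\ol{\word}$; note that sharing the content forces $\vord$ and $\word$ to have the same set of simple letters, namely the $x_i$ with $|\word|_{x_i} = 1$). Thus the $\equiv_\jst$-class of $\word$ is exactly the set of rearrangements of the multiset $\{x_1^{|\word|_{x_1}}, \dots, x_k^{|\word|_{x_k}}\}$ whose simple letters occur in the prescribed relative order.

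First I would count all rearrangements of this multiset, which is the multinomial coefficient $|\word|! \big/ \prod_{i=1}^k |\word|_{x_i}!$. Then I would cut this number down by the simple-letter constraint via a symmetry argument. Let $S$ be the set of the $m$ simple letters of $\word$, and let $\mathrm{Sym}(S)$ act on the set of all rearrangements of the multiset by relabelling: given a rearrangement $\vord$ and $\pi \in \mathrm{Sym}(S)$, replace each occurrence of a simple letter $s$ in $\vord$ by $\pi(s)$. This action is well-defined (it fixes both the content and the positions occupied by non-simple letters) and free (the simple letters are pairwise distinct, so only the identity can fix a rearrangement), hence every orbit has size $m!$. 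Within a single orbit the non-simple letters sit in fixed positions and the $m$ simple positions are filled by $S$ in all $m!$ possible orders, so each orbit contains exactly one rearrangement realising the order $\ol{\word}$.

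Consequently the $\equiv_\jst$-class of $\word$, being the set of these distinguished orbit representatives, has size
\[
    \frac{1}{m!}\cdot\frac{|\word|!}{\prod_{i=1}^k |\word|_{x_i}!} = \frac{|\word|!}{m!\cdot\prod_{i=1}^k |\word|_{x_i}!},
\]
as claimed. I do not expect a genuine obstacle here; the only points that need a careful (but routine) check are that the $\mathrm{Sym}(S)$-action stays inside the set of rearrangements of the fixed multiset, that it is free, and that each orbit meets the $\equiv_\jst$-class in precisely one point.
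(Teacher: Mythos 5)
Your proposal is correct and follows essentially the same route as the paper: apply Proposition~\ref{prop:jst_characterisation} to identify the class with the rearrangements of the multiset of letters whose simple letters appear in the order $\ol{\word}$, count all rearrangements by the multinomial coefficient, and divide by $m!$. Your $\mathrm{Sym}(S)$-orbit argument is just a more explicit justification of the division by $m!$ that the paper states tersely.
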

\begin{proof}
    By Proposition~\ref{prop:jst_characterisation}, $\uord \equiv_{\jst} \vord$ if and only if $\cont{\uord} = \cont{\vord}$ and $\ol{\uord} = \ol{\vord}$. Thus, we aim to count the number of rearrangements of $\word$ which preserve the order of the $m$ simple variables, that is, rearrangements which are equal to $\ol{\word}$ when restricted to the simple letters. There are 
    \[\frac{|\word|!}{\prod_{i=1}^k|\word|_{x_i}!}\]
    unique rearrangements of $\word$ and $m!$ rearrangements of $\ol{\word}$. Thus, there are
    \[ \frac{|\word|!}{m! \cdot \prod_{i=1}^k|\word|_{x_i}!}\]
    words over $\N$ in the same $\equiv_\jst$-class of $\word$.
\end{proof}

We prove by different means the following known proposition:
\begin{proposition}[Proposition 5, \cite{priez_binary_trees}]
Let $\word \in \N^*$. Let $\supp{\word} = \{x_1 < \dots < x_k\}$ and for all $1 \leq i \leq k$, let $m_i$ be the sum of the multiplicities of the node labelled $x_i$ and its descendants in $\pltg{\word}$. Then, there are
\[ \frac{|\word|!}{\prod_{i=1}^k(|\word|_{x_i}-1)! \cdot m_i}\]
words over $\N$ in the same $\equiv_\ltg$-class of $\word$.
\end{proposition}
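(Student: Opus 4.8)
The plan is to use the left-taiga Robinson--Schensted correspondence to convert the count into a tree-labelling problem, and then solve the latter by a recursion on the tree. By Theorem~\ref{theorem:robinson_left_taiga}, a word $\vord$ lies in the $\equiv_\ltg$-class of $\word$ exactly when $\pltg{\vord}=\pltg{\word}$, so this class is in bijection with the set of increasing BTSs $S$ whose underlying binary tree shape equals that of $\pltg{\word}$, whose $i$-th node (in inorder) carries a label set $W_i$ of cardinality $c_i := |\word|_{x_i}$, and the union of whose labels is $[N]$, where $N := |\word|$. Such an $S$ is determined by the ordered set partition $(W_1,\dots,W_k)$ of $[N]$, and I would first verify that the defining condition of an increasing BTS (replacing each set by its minimum gives an increasing binary tree) is equivalent to: for every node $v$, the element $\min(W_v)$ is the least among all elements assigned to the subtree rooted at $v$. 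One direction is transitivity of the ancestor relation; the other uses that the subtree of $v$ is disjoint from the label set of every proper ancestor of $v$. Since the number of elements assigned to the subtree of the $i$-th node is $m_i$, the quantity to compute is the number of ordered set partitions $(W_1,\dots,W_k)$ of $[N]$ with $|W_i| = c_i$ such that each $\min(W_v)$ is the minimum of its subtree.

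I would then prove by induction on the number of nodes that this number equals $N!\big/\bigl(\prod_{i=1}^k(c_i-1)!\,m_i\bigr)$. For the inductive step, take the shape with root $r$ of multiplicity $c_r$, left subtree $\tau_L$ with $N_L$ elements and right subtree $\tau_R$ with $N_R = N-c_r-N_L$ elements. The global minimum $1$ is forced into $W_r$; the remaining $c_r-1$ elements of $W_r$ may be chosen arbitrarily (the constraint at $r$ is then automatically met), the other $N-c_r$ elements are split between the two subtrees in $\binom{N-c_r}{N_L}$ ways, and once a subtree's element set is fixed the number of admissible labellings depends only on its shape, being order-isomorphic to the case where the elements form an initial interval of $\N$. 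This yields $f(\tau) = \binom{N-1}{c_r-1}\binom{N-c_r}{N_L}\,f(\tau_L)\,f(\tau_R)$, and a short computation (using $m_r = N$) shows the claimed product formula satisfies this recursion, with base case a single node where both sides are $1$. Combined with the first paragraph, this gives the proposition.

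I expect the main obstacle to be the careful bookkeeping in the reduction: one must confirm that the increasing-BTS condition really is equivalent to the subtree-minimum condition, that nothing couples the left and right subtrees of a node beyond their sharing a partition of the leftover elements, and that the subcount genuinely does not depend on which elements a given subtree receives. An alternative to the recursion, which I would mention, is a direct argument: among all $N!/\prod_i c_i!$ ordered set partitions of the prescribed block sizes, the event ``the minimum of the subtree of $v$ falls in $W_v$'' has conditional probability $c_v/m_v$, and these events over the nodes $v$ are mutually independent, whence the count is $\tfrac{N!}{\prod_i c_i!}\prod_v \tfrac{c_v}{m_v} = \tfrac{N!}{\prod_i (c_i-1)!\,m_i}$; in that approach establishing the independence is the crux.
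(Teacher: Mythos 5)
Your argument is correct and is essentially the paper's proof in a different wrapper: the paper also inducts by splitting at the root (the first letter $x_l$ of $\word$), with exactly the same three factors --- $\binom{|\word|-1}{|\word|_{x_l}-1}$ for the positions of the root letter with the first occurrence forced, $\binom{N-c_r}{N_L}$ for distributing the remaining positions between the letters smaller and larger than the root, and the two recursive counts --- and the same final simplification via $m_{\mathrm{root}}=|\word|$. The only difference is that you justify the bijection through the Robinson--Schensted correspondence (Theorem~\ref{theorem:robinson_left_taiga}) and count increasing BTSs, whereas the paper argues directly on rearrangements of $\word$ using the characterisation of $\equiv_\ltg$-equality; both routes are sound and the bookkeeping is the same.
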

\begin{proof}
    We prove this by induction on the size of the $\supp{\word}$. If $|\supp{\word}| = 1$, then there is a unique word in the $\equiv_\ltg$-class of $\word$, so the formula holds. For induction, suppose the formula holds when $|\supp{\word}| < k$. Now, let $|\supp{\word}| = k$ and $\word = x_l\word'$ for some $l \in [k]$ and $\word' \in \supp{\word}^*$. We define the sets $X = \{ x_1\dots,x_{l-1}\}$ and $Y = \{ x_{l+1},\dots x_k\}$. Now, we may apply the inductive hypothesis to $\word[X]$ and $\word[Y]$, from which we obtain that there are
    \[ \frac{|\word[X]|!}{\prod_{i=1}^{l-1}(|\word|_{x_i}-1)!m_i} \quad \text{and} \quad \frac{|\word[Y]|!}{\prod_{i=l+1}^k(|\word|_{x_i}-1)!m_i}\]
    words over $\N$ in the same $\equiv_\ltg$-class as $\word[X]$ and $\word[Y]$ respectively.
    We aim to show that there are
    \[ \begin{pmatrix}
        |\word|-1 \\
        |\word|_{x_l}-1
    \end{pmatrix}
        \begin{pmatrix}
        |\word[X]| + |\word[Y]| \\
        |\word[X]|
    \end{pmatrix}
    \frac{\word[X]|!|\word[Y]|!}{\prod_{i\neq l}(|\word|_{x_i}-1)!m_i}\]
    words over $\N$ in the same $\equiv_\ltg$-class as $\word$. 
    
    The first factor gives the number of choices for the positions of $x_l$ in a word in $[\word]_\ltg$ as, by the characterisation of equality in $\ltg$, its first letter has to be $x_l$, but there are no restrictions on the remaining occurrences of $x_l$ in the word.  
    The second factor then gives the number of choices for the positions of letters $x_1, \dots, x_{l-1}$ in the word, without distinction between the letters, after choosing the positions of $x_l$. Notice that these choices determine the positions of letters $x_{l+1}, \dots, x_k$ in the word, without distinction between the letters. The final factor gives us the number of possible rearrangements of $\word[X]$ and $\word[Y]$, as given by the induction hypothesis. 
    
    Remark that a word chosen in this way is equal to $\word$ in $\ltg$ since any letter from $X$ can commute with any letter from $Y$ as the word begins with $x_l$. Thus, the above formula counts the number of words of $\N$ in the same $\equiv_\ltg$-class as $\word$. Finally, by noting that $|\word| = m_l$ and that
    \[\begin{pmatrix}
        |\word[X]| + |\word[Y]| \\
        |\word[X]|
    \end{pmatrix}|\word[X]|!|\word[Y]|! = (|\word|-|\word|_{x_l})!,\]
    the result follows.
\end{proof}

The following proposition shows that we are able to obtain the size of a $\equiv_\jtg$-class, by computing the size of multiple $\equiv_\hypo$-classes. By a result of Cain and Malheiro {\cite[Theorem~3]{cm_hypo_crystal}}, these are known to be efficiently computable.

\begin{proposition}
Let $\word \in \N^*$. Let $\supp{\word} = \{x_1 < \dots < x_k\}$ and $A_1,\dots A_l$ be all the intervals of $\supp{\word}$ such that $A_j$ only contains simple letters and $A_j \cup \{a\}$ is not an interval, for any simple letter $a \notin A_j$. Let $h(\word[A_j])$ be the size of the hypoplactic class of $\word[A_j]$.
Then, there are
\[ \frac{|\word|!}{\prod_{i=1}^{k}|\word|_{x_i}!} \prod_{j=1}^{l} \frac{h(\word[A_j])}{|A_j|!}\]
words over $\N$ in the same $\equiv_\jtg$-class of $\word$.
\end{proposition}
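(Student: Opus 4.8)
The plan is to use the characterisation of $\equiv_\jtg$ from Proposition~\ref{prop:jtg_characterisation}: two words $\uord,\vord$ are $\equiv_\jtg$-congruent if and only if $\cont{\uord}=\cont{\vord}$ and $\uord[A_j]\equiv_\hypo\vord[A_j]$ for all $j$. So the words in the $\equiv_\jtg$-class of $\word$ are exactly the rearrangements $\vord$ of $\word$ (i.e.\ words with $\cont{\vord}=\cont{\word}$) whose restriction to each interval block $A_j$ of simple letters lies in the hypoplactic class of $\word[A_j]$. First I would set up the counting as a two-stage choice. A rearrangement of $\word$ is determined by choosing, for each letter $x_i\in\supp{\word}$, the set of positions (among $[|\word|]$) it occupies; the number of rearrangements without any constraint is the multinomial coefficient $\frac{|\word|!}{\prod_{i=1}^k |\word|_{x_i}!}$. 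I would argue that the constraint ``$\vord[A_j]\equiv_\hypo\word[A_j]$ for every $j$'' factors as an independent condition on each block.

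The key step is this factorisation. The blocks $A_1,\dots,A_l$ are pairwise disjoint subsets of $\supp{\word}$, all consisting of simple letters, and the non-simple letters lie outside every $A_j$. Fix a rearrangement $\vord$ of $\word$. Its restriction $\vord[A_j]$ to block $A_j$ is a rearrangement of $\word[A_j]$; since all letters of $A_j$ are simple, $\vord[A_j]$ is in fact a permutation of the $|A_j|$ letters of $A_j$, and there are $|A_j|!$ such permutations, of which exactly $h(\word[A_j])$ lie in the hypoplactic class of $\word[A_j]$. I would make precise that specifying a rearrangement of $\word$ is equivalent to independently specifying: (a) for each block $A_j$, the relative order of the letters of $A_j$ among themselves (a permutation of $A_j$, $|A_j|!$ choices); (b) for each non-simple letter, nothing beyond its multiplicity is in play within a single letter; and (c) the ``interleaving data'' — the assignment of position-sets to the distinct letters, forgetting the internal order within blocks — which is unconstrained. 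Concretely, out of the $\frac{|\word|!}{\prod_i |\word|_{x_i}!}$ rearrangements, grouping them by their interleaving data shows each group has size $\prod_{j=1}^l |A_j|!$ (one factor for each block's internal reordering; non-simple letters contribute nothing since reordering identical copies gives the same word), and within each group exactly $\prod_{j=1}^l h(\word[A_j])$ members satisfy all the hypoplactic constraints — because the constraint on block $A_j$ depends only on the internal order of $A_j$, which varies freely and independently across the group. Hence the count is
\[
    \frac{1}{\prod_{j=1}^l |A_j|!}\cdot\frac{|\word|!}{\prod_{i=1}^k |\word|_{x_i}!}\cdot\prod_{j=1}^l h(\word[A_j])
    = \frac{|\word|!}{\prod_{i=1}^{k}|\word|_{x_i}!}\prod_{j=1}^{l}\frac{h(\word[A_j])}{|A_j|!},
\]
as claimed.

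The main obstacle I anticipate is making the independence argument fully rigorous: one must verify that the ``interleaving data'' (position-sets assigned to distinct letters, modulo internal block order) together with the per-block internal permutations genuinely biject with all rearrangements of $\word$, and that the hypoplactic condition on $\vord[A_j]$ is insensitive to everything except the internal order of block $A_j$. The latter holds because $\hypo$ is compatible with restriction to alphabet intervals \cite[Theorem~5.6]{novelli_hypoplactic} — used already in the proof of Proposition~\ref{prop:jtg_characterisation} — and because $A_j$ being an interval of $\supp{\word}$ that is maximal among intervals of simple letters means $\vord[A_j]$ is exactly the restriction of $\vord$ to a set of letters whose relative order is precisely the data being varied; the non-simple letters and the other blocks contribute no constraint. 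An alternative, perhaps cleaner, route that sidesteps some bookkeeping is to use the $\equiv_\jst$-reduction already exhibited in the proof of Proposition~\ref{prop:jtg_characterisation}, where $\word\equiv_\jst x_l^{|\word|_{x_l}}\cdots x_1^{|\word|_{x_1}}\word[A_1]\cdots\word[A_k]$, and count directly how many rearrangements realise each admissible tuple of hypoplactic class representatives; I would present whichever version turns out to require fewer case distinctions.
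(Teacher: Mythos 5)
Your proposal is correct and follows essentially the same route as the paper: both rest on the characterisation of $\equiv_\jtg$ from Proposition~\ref{prop:jtg_characterisation} together with the observation that the hypoplactic constraint on each block $A_j$ depends only on the internal order of the (simple) letters of $A_j$, which varies freely and independently of everything else. The only difference is cosmetic bookkeeping — the paper picks the position-sets of the blocks sequentially via binomial coefficients and then simplifies, whereas you start from the full multinomial count and divide out the $|A_j|!$ internal orderings; the two computations are algebraically identical.
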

\begin{proof}
    Let $\vord \in [\word]_{\jtg}$. By Proposition~\ref{prop:jtg_characterisation}, $\word \equiv_{\jtg} \vord$ if and only if $\cont{\word} = \cont{\vord}$ and $\word[A_j] \equiv_\hypo \vord[A_j]$ for all $1 \leq j \leq k$.
    We now aim to show that there are 
    \[ \left(\prod_{j=1}^{l}\begin{pmatrix}
        |\word| - \sum_{r=1}^{j-1} |A_{r}| \\
        |A_{j}|
    \end{pmatrix}h(\word[A_j])\right)\frac{(|\word| - \sum_{s=1}^l|A_s|)!}{\prod_{i=1}^{k}|\word|_{x_i}!}\]
    words over $\N$ in the same $\equiv_\jtg$-class as $\word$. 
    
    The first factor gives the product of the number of possible positions of the letters from $A_i$ in any rearrangement of $\word$, without distinction between the letters, by the number of rearrangements of $\word[A_j]$ which are $\equiv_\hypo$-congruent to $\word[A_j]$. The second factor is the number of positions of the remaining letters in any rearrangement of $\word$, as there is no restriction on their positions. Note that if $x \in A_j$ for some $j$, then $|\word|_{x} = 1$ and therefore $|\word|_{x}! = 1$. As such, we do not need to exclude $|\word|_{x}!$ from the product $\prod_{i=1}^{k}|\word|_{x_i}!$ in the final factor. 
    
    Finally, by noting that
     \[ \prod_{j=1}^{l}\begin{pmatrix}
        |\word| - \sum_{r=1}^{j-1} |A_r| \\
        |A_{j}|
    \end{pmatrix} = \frac{|\word|!}{(|\word| - \sum_{s=1}^l|A_s|)! \cdot \prod_{j=1}^l|A_j|!},\]
    the result follows.
\end{proof}


\subsubsection{The meet cases} \label{subsubsection:The_meet_cases}

We now give recursive formulas for the meet cases. In order to compute the size of $\equiv_\mst$ and $\equiv_\mtg$-classes, it is easier to visualize a word of said classes as a linear extension of a partially ordered set (poset for short). Let $P = (I,\leq_P)$ be a poset. Then, a linearly ordered set $Q = (I, \leq_Q)$ is a \emph{linear extension} of $P$ if $i \leq_P j$ implies $i \leq_Q j$, for all $i,j \in I$. 

In the following, we define a class of posets: for $k \in \N$, $X = (X_1,\dots, X_k) \in \N^k$, and a permutation $\tau$ of $[k]$, let $P[X;\tau]$ be the poset built by taking $k$ chains, where the $i$-th chain has $X_i$ elements, and requiring the least (resp. greatest) element of the $i$-th (resp. $\tau_i$-th) chain to be less than the least (resp. greatest) element of the $(i+1)$-th (resp. $\tau_{i+1}$-th) chain. We represent these posets by graphs and refer to their elements as `nodes'. The order relations are represented by arrows, in the sense that an arrow starts in one node and ends in another if the element corresponding to the former node is less than the element corresponding to the latter node.

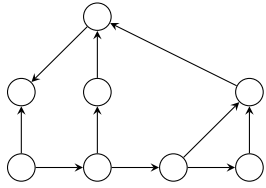
\begin{figure}[h]
    \centering
    \begin{tikzpicture}
        \node[circle,draw] (A) at (0,0){};
        \node[circle,draw] (B) at (0,1){};
        \node[circle,draw] (C) at (1,0){};
        \node[circle,draw] (D) at (1,1){};
        \node[circle,draw] (E) at (1,2){};
        \node[circle,draw] (F) at (2,0){};
        \node[circle,draw] (G) at (3,0){};
        \node[circle,draw] (H) at (3,1){};
        \draw[-stealth] (A) to (B);
        \draw[-stealth] (A) to (C);
        \draw[-stealth] (C) to (D);
        \draw[-stealth] (C) to (F);
        \draw[-stealth] (D) to (E);
        \draw[-stealth] (E) to (B);
        \draw[-stealth] (F) to (G);
        \draw[-stealth] (F) to (H);
        \draw[-stealth] (G) to (H);
        \draw[-stealth] (H) to (E);
        \end{tikzpicture}
    \caption{The poset $P[(2,3,1,2);3421]$ has 4 different linear extensions.}
    \label{fig:example_poset_linear_extensions}
\end{figure}

\begin{proposition} \label{prop:mst_poset_bijection}
    Let $\word \in \N^*$. Then, each word in $[\word]_\mst$ is in bijection with a linear extension of $P[X;\tau]$ where $k = |\supp{\word}|$, $X_i = |\word|_{{\overrightarrow{\rho_\word}}^{-1}(i)}$ and $\tau_i = {\overrightarrow{\rho_\word}}({\overleftarrow{\rho_\word}}^{-1}(i))$, for $1 \leq i \leq k$.
\end{proposition}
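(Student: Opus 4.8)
The plan is to exhibit an explicit bijection between the words in $[\word]_\mst$ and the linear extensions of $P = P[X;\tau]$ with the stated parameters, and to verify it is well-defined in both directions using the characterisation of $\equiv_\mst$ in terms of content, $\overrightarrow{\rho}$ and $\overleftarrow{\rho}$. First I would fix notation: write $\supp{\word} = \{x_1, \dots, x_k\}$ ordered so that $\overrightarrow{\rho_\word}(x_i) = i$, and let $c_i = |\word|_{x_i}$ be the multiplicity of $x_i$, so $X_i = c_i$. The $i$-th chain of $P$ should be thought of as the $c_i$ ``slots'' reserved for the occurrences of $x_i$, listed in left-to-right order of their eventual positions in a word of the class. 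A word $\vord \in [\word]_\mst$ then determines, for each position $p \in [|\word|]$, which letter occupies it, hence a map from the $|\word|$ positions to the disjoint union of the chains; reading positions left-to-right gives a total order on the chain-elements refining the within-chain order, i.e.\ a candidate linear extension.

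**Checking the order relations translate correctly.**

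The main content is checking that this candidate is a linear extension of $P$ \emph{if and only if} $\vord \equiv_\mst \word$. By Proposition~\ref{prop:pmst_pair_plst_prst} and the remark in Subsection~\ref{subsection:the_stalactic_and_taiga_monoids}, $\vord \equiv_\mst \word$ exactly when $\cont{\vord} = \cont{\word}$, $\overrightarrow{\rho_\vord} = \overrightarrow{\rho_\word}$ and $\overleftarrow{\rho_\vord} = \overleftarrow{\rho_\word}$. Content equality is automatic since we only rearrange $\word$. The condition $\overrightarrow{\rho_\vord} = \overrightarrow{\rho_\word}$ says the first occurrences of $x_1, \dots, x_k$ appear in that order in $\vord$, i.e.\ the least element of chain $i$ precedes the least element of chain $i+1$ in the induced total order — exactly the ``least element'' constraints defining $P$. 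The condition $\overleftarrow{\rho_\vord} = \overleftarrow{\rho_\word}$ says the last occurrences appear in the order recorded by $\overleftarrow{\rho_\word}$; since $\tau_i = \overrightarrow{\rho_\word}(\overleftarrow{\rho_\word}^{-1}(i))$ is precisely the chain-index of the letter whose last occurrence is $i$-th, this says the greatest element of chain $\tau_i$ precedes the greatest element of chain $\tau_{i+1}$ — exactly the ``greatest element'' constraints defining $P$. So the induced total order refines $P$ iff both $\rho$-conditions hold iff $\vord \equiv_\mst \word$.

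**Surjectivity and injectivity.**

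Conversely, given a linear extension $Q$ of $P$, I would read off a word $\vord$ by placing, in position $p$, the letter $x_i$ whose chain contains the element ranked $p$-th in $Q$; within a chain, the $j$-th smallest element is declared to be the $j$-th occurrence of $x_i$. This is inverse to the previous construction, so the two maps are mutually inverse bijections between rearrangements of $\word$ (that refine the within-chain orders, which all of them do) and total orders refining the chain structure; restricting to those refining all of $P$ gives the claimed bijection with $[\word]_\mst$ by the equivalence established above. Injectivity is immediate since distinct words give distinct position-to-letter assignments, hence distinct total orders.

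**Anticipated obstacle.**

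I expect the only delicate point to be bookkeeping: making sure the ``$j$-th occurrence corresponds to the $j$-th smallest chain element'' convention is consistent with how $\overleftarrow{\rho}$ is defined (ordering by \emph{last} occurrences read left-to-right, per the Remark following its definition), and that the index $\tau$ is transcribed in the correct direction rather than its inverse. A careful statement of which element of which chain corresponds to the first versus last occurrence of $x_i$, together with the identity $\overleftarrow{\rho_\word}^{-1}(i)$ = ``the letter with $i$-th last occurrence'', resolves this. No hard combinatorics is needed beyond this translation; the recursive enumeration formula for the number of linear extensions of $P[X;\tau]$ is presumably the subject of a following proposition and is not required here.
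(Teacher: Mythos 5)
Your proposal is correct and follows essentially the same route as the paper: both arguments set up the explicit correspondence sending the $j$-th occurrence of the letter with first-occurrence rank $l$ to the $j$-th node of the $l$-th chain, and then verify that the least-element and greatest-element constraints of $P[X;\tau]$ translate exactly into the conditions $\overrightarrow{\rho_\vord}=\overrightarrow{\rho_\word}$ and $\overleftarrow{\rho_\vord}=\overleftarrow{\rho_\word}$ characterising $\equiv_\mst$. Your identification of $\tau_i$ as the chain-index of the letter whose last occurrence is $i$-th is precisely the key bookkeeping step the paper carries out.
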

\begin{proof}
    Let $\uord \in [\word]_\mst$. From $\uord$ we define a chain $P'$, with $|\uord|$ element, by letting the $i$-th least element in $P'$ be the $j$-th node in the $l$-th chain of $P[X;\tau]$ if the $i$-th letter of $\uord$ is the $j$-th appearance of ${\overrightarrow{\rho_\uord}}^{-1}(l)$.
    
    We want to show that $P'$ is a linear extension of $P[X;\tau]$. First, note that every node of $P[X;\tau]$ appears in $P'$ as $\overrightarrow{\rho_\word} = \overrightarrow{\rho_\uord}$ and $\cont{\uord} = \cont{\word}$, therefore $X_l = |\word|_{{\overrightarrow{\rho_\word}}^{-1}(l)} = |\uord|_{{\overrightarrow{\rho_\uord}}^{-1}(l)}$. 
    
    Next, we show that if a node is less than another node in $P[X;\tau]$ then the same happens in $P'$. 
    By the definition of $P'$, it is clear that, in $P'$, the $j$-th node in the $l$-th chain of $P[X;\tau]$ is less than the $(j+1)$-th node in the $l$-th chain, for all $1 \leq l \leq k$ and $1 \leq j < X_l$. 
    
    For each $1 \leq l < k$, in $P'$, the least element in the $l$-th chain of $P[X;\tau]$ is less than the least element in the $(l+1)$-th chain as 
    \[l = {\overrightarrow{\rho_\uord}}({\overrightarrow{\rho_\uord}}^{-1}(l)) < {\overrightarrow{\rho_\uord}}({\overrightarrow{\rho_\uord}}^{-1}(l+1)) = l+1,\] and hence the first occurrence of ${\overrightarrow{\rho_\uord}}^{-1}(l)$ appears to the left of the first occurrence of ${\overrightarrow{\rho_\uord}}^{-1}(l+1)$ in $\uord$.
    
    By its definition, in $P'$, the greatest element of the $\tau_l$-th chain of $P[X;\tau]$ is less than the greatest element of the $\tau_{l+1}$-th chain if the last occurrence of ${\overrightarrow{\rho_\uord}}^{-1}(\tau_l)$ appears to the left of the last occurrence of ${\overrightarrow{\rho_\uord}}^{-1}(\tau_{l+1})$ in $\uord$, that is, if ${\overleftarrow{\rho_\uord}}{\overrightarrow{\rho_\uord}}^{-1}(\tau_l) < {\overleftarrow{\rho_\uord}}{\overrightarrow{\rho_\uord}}^{-1}(\tau_{l+1})$.
    
    As $\overrightarrow{\rho_\word} = \overrightarrow{\rho_\uord}$, $\overleftarrow{\rho_\word} = \overleftarrow{\rho_\uord}$ and $\tau_l = {\overrightarrow{\rho_\word}}({\overleftarrow{\rho_\word}}^{-1}(l)$, we have that ${\overleftarrow{\rho_\uord}}({\overrightarrow{\rho_\uord}}^{-1}(\tau_l)) = l$ for each $1 \leq l \leq k$. 
    Therefore, ${\overleftarrow{\rho_\uord}}{\overrightarrow{\rho_\uord}}^{-1}(\tau_l) < {\overleftarrow{\rho_\uord}}{\overrightarrow{\rho_\uord}}^{-1}(\tau_{l+1})$. 
    
    Now, let $P'$ be a linear extension of $P[X;\tau]$ and define $\uord \in \N^*$ by letting the $i$-th letter of $\uord$ be ${\overrightarrow{\rho_\word}}^{-1}(l)$ if the $i$-th least element of $P'$ is in the $l$-th chain of $P[X;\tau]$. By the definition of $\uord$ and $P[X;\tau]$, $\cont{\uord} = \cont{\word}$ and, by considering the least elements of each chain in $P[X;\tau]$, we have that $\overrightarrow{\rho_\word} = \overrightarrow{\rho_\uord}$.
    To see that $\overleftarrow{\rho_\word} = \overleftarrow{\rho_\uord}$, note that, for any $a,b \in \supp{\uord}$, we have that $\overleftarrow{\rho_\uord}(a) < \overleftarrow{\rho_\uord}(b)$ if and only if the greatest node in the $\overrightarrow{\rho_\word}(a)$-th chain of $P[X,\tau]$ is less than the greatest node in the $\overrightarrow{\rho_\word}(b)$-th chain. This happens if and only if there exist $1 \leq s < t \leq k$ such that $\overrightarrow{\rho_\word}(a) = \tau_s$ and $\overrightarrow{\rho_\word}(b) = \tau_t$, by the definition of $P[X,\tau]$. Furthermore, as $\tau_s = {\overrightarrow{\rho_\word}}({\overleftarrow{\rho_\word}}^{-1}(s))$, we have $a = {\overleftarrow{\rho_\word}}^{-1}(s)$ and, similarly, $b = {\overleftarrow{\rho_\word}}^{-1}(t)$. Therefore,
    \[
        \overleftarrow{\rho_\word}(a) = s < t = \overleftarrow{\rho_\word}(b),
    \]
    and hence $\uord \in [\word]_\mst$.
\end{proof}

To improve the readability of the following theorem, we introduce the following function: let $k,n,l \in \N$, such that $k \geq 2$, and $1 \leq n < l \leq k$. Let $M = (m_1,\dots,m_{l-n}) \in \N_0^{l-n}$ and $\lVert M \rVert_1 := \sum_{i=1}^{l-n} m_i$. We define the function $ f_{k,n,l,M} \colon (\N_0^k,\N_0^{k-1}) \to (\N_0^{k-1},\N_0^{k-2})$, where for $V = (v_1,\dots,v_k) \in \N_0^k$, $B = (b_1,\dots, b_{k-1}) \in \N_0^{k-1}$, the first coordinate of $f_{k,n,l,M}(V,B)$ is given by 
\[ (v_1,\dots,v_{n-1},v_{n+1},\dots,v_{l-1},v_{l}+v_{n}-\lVert M \rVert_1,v_{l+1},\dots,v_k),\]
and the second coordinate is given by 
\[ (b_2 + m_2,\dots,b_{l-1} + m_{l-1},b_{l},\dots,b_k) \]
when $n = 1$, and by 
\[ (b_1, \dots, b_{n-2}, b_{n-1} + 1 + b_n + m_1, b_{n+1} + m_2,\dots,b_{l-1} + m_{l-n}, b_{l},\dots,b_k)\]
when $n > 1$.

Let $V \in \N_0^k$, $B \in \N_0^{k-1}$, and $\sigma$ be a permutation of $[k]$ such that if $\sigma_i > \sigma_j$, then $v_{\sigma_j} \neq 0$, for any $1 \leq i < j \leq k$. Define $G[V;B;\sigma]$ to be the poset built by taking $k$ chains, where each chain has length $v_i + 1$, and requiring the least element in the $i$-th chain to be less than the least element in the $(i+1)$-th chain, and to have a chain of $b_i$ nodes between them. Moreover, we require the greatest element in the $\sigma_i$-th chain to be less than the greatest element in the $\sigma_{i+1}$-th chain.

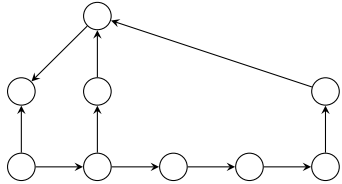
\begin{figure}[ht]
    \centering
    \begin{tikzpicture}
        \node[circle,draw] (A) at (0,0){};
        \node[circle,draw] (B) at (0,1){};
        \node[circle,draw] (C) at (1,0){};
        \node[circle,draw] (D) at (1,1){};
        \node[circle,draw] (E) at (1,2){};
        \node[circle,draw] (F) at (2,0){};
        \node[circle,draw] (G) at (3,0){};
        \node[circle,draw] (H) at (4,0){};
        \node[circle,draw] (I) at (4,1){};
        \draw[-stealth] (A) to (B);
        \draw[-stealth] (A) to (C);
        \draw[-stealth] (C) to (D);
        \draw[-stealth] (C) to (F);
        \draw[-stealth] (D) to (E);
        \draw[-stealth] (E) to (B);
        \draw[-stealth] (F) to (G);
        \draw[-stealth] (G) to (H);
        \draw[-stealth] (I) to (E);
        \draw[-stealth] (H) to (I);
        \end{tikzpicture}
    \caption{The poset $G[(1,2,1);(0,2);321]$.}
    \label{fig:example_poset_linear_extensions3}
\end{figure}

The following theorem follows a similar reasoning to the one given by Pan in \cite{ranpan}. In the following, we define $\cL(P)$ to be the number of linear extensions of the poset $P$ and, for ease of notation, $\begin{pmatrix}
    -1 \\
    0
\end{pmatrix} := 1$. By inserting a node in an edge, we mean that we remove said edge and add two extra edges, one starting in the initial node and ending in the inserted node, and the other starting in the inserted node and ending in the terminal node.

\begin{theorem} \label{theorem:recursive_formula_linear_extensions_mst_poset}
Let $k \geq 2$, $V = (v_1, \dots, v_k) \in \N_0^k$, $B = (b_1, \dots, b_{k-1}) \in \N_0^{k-1}$ and $\sigma$ be a permutation of $[k]$. Then, when $\sigma_1 < \sigma_2$, $\cL(G[V;B;\sigma])$ is equal to
\begin{equation*}
    \sum_{\substack{M\in \N_0^{\sigma_2-\sigma_1} \\ 0 \leq \lVert M \rVert_1 \leq v_{\sigma_1}}} \cL_M \cdot
    \begin{pmatrix}
         v_{\sigma_2} - 1 + v_{\sigma_1} - \lVert M \rVert_1 \\ v_{\sigma_1} - \lVert M \rVert_1
    \end{pmatrix} 
    \prod_{i=1}^{\sigma_2-\sigma_1}
    \begin{pmatrix}
        b_{\sigma_1 - 1 + i} + m_i \\
        m_i
    \end{pmatrix}, 
\end{equation*}
where $\cL_M = 1$ if $k=2$ and $\cL_M = \cL(G[f_{k,\sigma_1,\sigma_2,M}(V,B);\std{\sigma_2\sigma_3\cdots \sigma_k}])$ otherwise, and, when $\sigma_2 < \sigma_1$, is equal to
\begin{equation*}
        \sum_{\substack{M \in \N_0^{\sigma_1-\sigma_2} \\ 0 \leq \lVert M \rVert_1 \leq v_{\sigma_2}-1}} \cL'_M \cdot
        \begin{pmatrix}
            v_{\sigma_1} + v_{\sigma_2} - 1 - \lVert M \rVert_1 \\ 
            v_{\sigma_2} - 1 - \lVert M \rVert_1
        \end{pmatrix}  
            \prod_{i=1}^{\sigma_1-\sigma_2}
        \begin{pmatrix}
            b_{\sigma_2 - 1 + i} + m_i \\
            m_i
        \end{pmatrix}, 
\end{equation*}
where $\cL_M' = 1$ if $k = 2$ and $\cL'_M = \cL(G[f_{k,\sigma_2,\sigma_1,M}(V,B);\std{\sigma_1\sigma_3\cdots \sigma_k}])$ otherwise.
\end{theorem}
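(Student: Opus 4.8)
The plan is to establish both formulas at once by induction on $k$, via a contraction that removes one chain and reduces $G[V;B;\sigma]$ to the $(k-1)$-chain posets appearing on the right-hand side. Recall the shape of $G[V;B;\sigma]$: it is built from $k$ vertical chains, the $i$-th with bottom node $\ell_i$, top node $g_i$, and $v_i-1$ nodes in between, joined along the bottom by the forced order $\ell_1<\ell_2<\cdots<\ell_k$ with a chain of $b_i$ spacer nodes inserted between consecutive $\ell_i,\ell_{i+1}$, and joined along the top by $g_{\sigma_1}<g_{\sigma_2}<\cdots<g_{\sigma_k}$; thus $g_{\sigma_1}$ is the least of all top nodes and $g_{\sigma_2}$ the second least. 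Put $p:=\min(\sigma_1,\sigma_2)$ and $q:=\max(\sigma_1,\sigma_2)$, so that the two displayed cases are $p=\sigma_1$ and $p=\sigma_2$. In either case I would contract chain $p$: its bottom $\ell_p$ is absorbed into the spacer chain between $\ell_{p-1}$ and $\ell_{p+1}$ (explaining the term $b_{p-1}+1+b_p$ in $f$, with the obvious modification when $p=1$), and the remaining $v_p$ nodes of chain $p$ are redistributed into the spacer chains $b_p,\dots,b_{q-1}$ and into chain $q$.

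\textbf{Step 1: how the nodes of chain $p$ sit in a linear extension.} Fix a linear extension $L$ of $G[V;B;\sigma]$ and let $c_1<_P\cdots<_P c_{v_p}=g_p$ be the non-bottom nodes of chain $p$ in chain order. The structural claim is that there is a threshold $t$ with $c_1,\dots,c_t$ lying in $L$ inside the portion of the bottom spine strictly between $\ell_p$ and $\ell_q$ — hence inside one of the spacer chains $b_p,\dots,b_{q-1}$ — while $c_{t+1},\dots,c_{v_p}$ lie alongside chain $q$ (between $\ell_q$ and $g_q$ when $p=\sigma_1$; between $\ell_q$ and $g_{\sigma_2}$, which becomes the new top, when $p=\sigma_2$). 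Since $c_1<_P\cdots<_P c_{v_p}$ is a chain, every spacer chain between $\ell_p,\ell_q$ lies strictly below $\ell_q$ and hence below all of chain $q$ in $L$, and the spacer chains are themselves linearly ordered in $L$, the split must be into an initial and a final segment and the $c_r$ landing in the spine fall into contiguous blocks among $b_p,\dots,b_{q-1}$; recording the block sizes gives $M=(m_1,\dots,m_{q-p})$ with $\lVert M\rVert_1=t$, the $i$-th block being assigned to $b_{p-1+i}$. The admissible range of $t$ is where the two cases genuinely differ: when $p=\sigma_1$ the top node $g_p=g_{\sigma_1}$ may still land in a spacer chain between $\ell_{\sigma_1}$ and $\ell_{\sigma_2}$ — these lie below $\ell_{\sigma_2}\le_P g_{\sigma_2}$, consistent with $g_{\sigma_1}<_P g_{\sigma_2}$ — so $0\le t\le v_{\sigma_1}$; when $p=\sigma_2$ the node $g_p=g_{\sigma_2}$ cannot land in any spacer chain between $\ell_{\sigma_2}$ and $\ell_{\sigma_1}$, since these lie below $\ell_{\sigma_1}<_P g_{\sigma_1}$ and that would force $g_{\sigma_2}<_P g_{\sigma_1}$, contradicting the top order; hence $g_{\sigma_2}$ stays alongside chain $\sigma_1$ and $0\le t\le v_{\sigma_2}-1$. (This also accounts for the hypothesis on $\sigma$: when $\sigma_2<\sigma_1$ one needs $v_{\sigma_2}\ge 1$ for this range to be nonempty.) Proving that there is \emph{no other possibility} for the $c_r$ — in particular no leakage into the interior of a third chain — is the technical heart, and should use crucially that $g_{\sigma_1}$ and $g_{\sigma_2}$ are exactly the two smallest top nodes, so that every $c_r$ lies below every other top node in $L$.

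\textbf{Step 2: contraction, fibres, and summation.} Given $(L,M)$, delete chain $p$ and re-read $\ell_p$ and the redistributed $c_r$ as above; one checks directly that the result is a linear extension $L'$ of $G[f_{k,p,q,M}(V,B);\tau']$, where $\tau'=\std{\sigma_2\sigma_3\cdots\sigma_k}$ if $p=\sigma_1$ and $\tau'=\std{\sigma_1\sigma_3\cdots\sigma_k}$ if $p=\sigma_2$ (the standardisation reindexes the surviving chains), and that $\tau'$ satisfies the hypothesis of the theorem. Conversely, from $L'$ and $M$ one reconstructs each $L$ that contracts to it by choosing how to re-interleave chain $p$'s nodes: $\ell_p$ goes to its forced slot; the $m_i$ dispersed nodes must be shuffled into the old spacer chain of length $b_{p-1+i}$, giving $\prod_{i=1}^{q-p}\binom{b_{p-1+i}+m_i}{m_i}$ choices; and the $v_p-\lVert M\rVert_1$ nodes absorbed into chain $q$ must be shuffled with that chain's interior. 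When $p=\sigma_1$ this interleaving is unconstrained, against the $v_{\sigma_2}-1$ interior nodes of chain $\sigma_2$, giving $\binom{v_{\sigma_2}-1+v_{\sigma_1}-\lVert M\rVert_1}{v_{\sigma_1}-\lVert M\rVert_1}$ choices; when $p=\sigma_2$ the node $g_{\sigma_2}$ is forced topmost, so only $v_{\sigma_2}-\lVert M\rVert_1-1$ nodes interleave with the $v_{\sigma_1}$ nodes of chain $\sigma_1$ with its bottom deleted, giving $\binom{v_{\sigma_1}+v_{\sigma_2}-1-\lVert M\rVert_1}{v_{\sigma_2}-1-\lVert M\rVert_1}$ choices. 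Multiplying these counts and summing over admissible $M$, with the number of possible $L'$ equal to $\cL_M$ by the inductive hypothesis, yields the two displayed formulas. The base case $k=2$ is precisely when $f_{k,p,q,M}(V,B)$ is a single chain, so $\cL_M=1$ and the sums are the claimed closed forms; this is also why the theorem must be phrased for general $B$ and for chains of length $v_i+1$ rather than only for the posets $P[X;\tau]$ of Proposition~\ref{prop:mst_poset_bijection}, since contraction creates nontrivial spacer chains.

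\textbf{On the obstacle.} The routine parts are the bijective bookkeeping in Step 2 and checking that $f_{k,p,q,M}(V,B)$ together with $\tau'$ lands back in the family with the required hypothesis on the permutation. The main obstacle is Step 1: pinning down exactly which positions a node of the contracted chain may occupy in an arbitrary linear extension and showing that this forces the clean initial/final-segment split recorded by $M$, with no leakage into unrelated chains; once that is secure, everything else is counting. The case $\sigma_2<\sigma_1$ is handled by the same argument with $p=\sigma_2$, $q=\sigma_1$; one could alternatively attempt to deduce it from the case $\sigma_1<\sigma_2$ using the order-reversing symmetry of these posets, but running the argument directly is cleaner and avoids tracking how $f$ and the standardisation behave under reversal.
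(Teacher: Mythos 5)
Your proposal is correct and takes essentially the same route as the paper's proof: dissolve the chain indexed by $\min(\sigma_1,\sigma_2)$, record via $M$ how its non-bottom nodes are distributed among the spacer chains and the chain indexed by $\max(\sigma_1,\sigma_2)$ (with its top node forced above $g_{\sigma_1}$ when $\sigma_2<\sigma_1$), and count the interleavings with the stated binomial coefficients while recursing on the resulting $(k-1)$-chain poset. The ``technical heart'' you isolate in Step~1 is in fact immediate: the spine between $\ell_p$ and $\ell_q$ together with chain $q$ is already totally ordered in the poset, so the position of each $c_r$ relative to that chain in any linear extension automatically produces the initial/final-segment split recorded by $M$, and no further argument about other chains is needed.
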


\begin{proof}
    In this proof, we show that we are able to take any poset of the form $G[V;B;\sigma]$ and create a new set of (sometimes smaller) posets in which the sum of the number of linear extensions of the new posets is equal to the number of linear extensions of the original poset.

    We begin with the $\sigma_1 < \sigma_2$ case. To create a new poset, we take the nodes in the $\sigma_1$-th chain, excluding the least node, and insert each of these $v_{\sigma_1}$ nodes into either an edge between the least node of the $\sigma_1$-th chain and the least node of the $\sigma_2$-th chain, or an edge in the $\sigma_2$-th chain. In the case where $v_{\sigma_1} = 0$, we just remove the edge from the only node in the $\sigma_1$-th chain to the greatest node of the $\sigma_2$-th chain.
    
    Let $M = (m_1,\dots, m_{\sigma_2 - \sigma_1})$ be such that $0 \leq \lVert M \rVert_1 \leq v_{\sigma_1}$ where $m_i$ is the number of nodes inserted into the edges between the least nodes of the $(\sigma_1 + i - 1)$-th chain and the $(\sigma_1 + i)$-th chain.  Then, $\lVert M \rVert_1$ nodes are inserted into the bottom edges and $v_{\sigma_1}-\lVert M \rVert_1$ nodes are inserted into the $\sigma_2$-th chain. By the definition of $f_{k,\sigma_1,\sigma_2,M}(V,B)$, the poset obtained from this process is given by
    \[
        G[f_{k,\sigma_1,\sigma_2,M}(V,B);\std{\sigma_2\sigma_3\cdots \sigma_k}],
    \]
    with the exception of the case where $\sigma_1 = 1$, where we obtain a poset of this form with a chain of least nodes before the first chain. As these nodes are the least in the poset and already linearly ordered, we can remove them without changing the number of linear extensions.
    As such, in all cases, we remove the $\sigma_1$-th chain and relabel the remaining chains to start at 1 and end at $k-1$. 
     
    For each $M$, there are 
    $\left(\begin{smallmatrix}
        v_{\sigma_2} -1 + v_{\sigma_1} - \lVert M \rVert_1 \\ v_{\sigma_1} - \lVert M \rVert_1
    \end{smallmatrix}\right)$
    ways to insert the $v_{\sigma_1} -\lVert M \rVert_1$ nodes into the $\sigma_2$-th chain and, for each $1 \leq i \leq \sigma_2 -\sigma_1$, there are 
    $\left(\begin{smallmatrix}
        b_{\sigma_1 - 1 + i} + m_i \\
        m_i
    \end{smallmatrix}\right)$ ways to insert the $m_i$ nodes into the edges between the least nodes of the $(\sigma_1 - 1 + i)$-th chain and $(\sigma_1 + i)$-th chain. In the case where $v_{\sigma_1} = 0$, we only have one possible choice of $M$, so the number of linear extensions of the new poset is the same as that of $G[V;B;\sigma]$.
    
    For the $\sigma_1 > \sigma_2$ case, first notice that $v_{\sigma_2} > 0$ by the definition of $G[V;B;\sigma]$. To create a new poset, we instead take the nodes in the $\sigma_2$-th chain, excluding the least node, move the greatest node of the $\sigma_2$-th chain to the top of the $\sigma_1$-th chain, and then insert each of the remaining $v_{\sigma_2} - 1$ nodes into either an edge between the least nodes of the $\sigma_2$-th chain and the $\sigma_1$-th chain or an edge in the $\sigma_1$-th chain (which has now been extended by a node). 
    
    Let $M = (m_1,\dots, m_{\sigma_1 - \sigma_2})$ be such that $0 \leq \lVert M \rVert_1 \leq v_{\sigma_2}-1$ where $m_i$ is the number of nodes inserted into an edge between the least nodes of the $(\sigma_2 - 1 + i)$-th chain and the $(\sigma_2 + i)$-th chain.  Then, $\lVert M \rVert_1$ nodes are inserted into the bottom edges and $v_{\sigma_2} - 1 -\lVert M \rVert_1$ nodes are inserted into the $\sigma_1$-th chain. Notice that we do not count the greatest node of the $\sigma_2$-th chain moving to the top of the $\sigma_1$-th chain. By the definition of $f_{k,\sigma_2,\sigma_1,M}(V,B)$, the poset obtained from this process is given by
    \[G[f_{k,\sigma_2,\sigma_1,M}(V,B);\std{\sigma_1\sigma_3\cdots \sigma_k}]\]
    with the exception of the case where $\sigma_2 = 1$, whereas in the previous case, we obtain a poset of this form with a chain of least nodes before the first chain, which we can remove without changing the number of linear extensions.
    As such, in all cases, we remove the $\sigma_2$-th chain and relabel the remaining chains to start at 1 and end at $k-1$. 

    Similarly to above, for each $M$, there are 
    $\left(\begin{smallmatrix}
        v_{\sigma_1} + v_{\sigma_2} - 1 - \lVert M \rVert_1 \\ v_{\sigma_1}
    \end{smallmatrix}\right)$ 
    ways to insert the $v_{\sigma_2} -1 - \lVert M \rVert_1$ nodes into the $\sigma_1$-th chain and for $1 \leq i \leq \sigma_1 - \sigma_2$, there are $\left(\begin{smallmatrix}
        b_{\sigma_2 - 1 + i} + m_i \\
        m_i
    \end{smallmatrix}\right)$ 
    ways to insert the $m_i$ nodes into the edges between the least nodes of the $(\sigma_2 - 1 + i)$-th chain and $(\sigma_2 + i)$-th chain.

    The result follows from the fact that each choice of inserting nodes results in a different poset, uniquely characterised by the ordering of specific elements, namely the total ordering of the nodes in the $\sigma_1$-th and $\sigma_2$-th chains and the nodes between the least nodes of these two chains. As such, no double-counting occurs when summing the number of linear extensions of the posets obtained using this method.
\end{proof}

We are able to count the number of linear extensions of a poset of the above form by recursively applying the above theorem until each poset is a chain. The time complexity of such a process is as follows:

\begin{proposition} \label{prop:recursive_formula_complexity}
    The algorithm given in Theorem~\ref{theorem:recursive_formula_linear_extensions_mst_poset} to compute the number of linear extensions of a poset has time complexity $\mathcal{O}(n^{2k-2}k!)$ where $n = \lVert V \rVert_1 + \lVert B \rVert_1 + k$.
\end{proposition}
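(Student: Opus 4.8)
The plan is to unwind the recursion in Theorem~\ref{theorem:recursive_formula_linear_extensions_mst_poset} and bound (i) the cost of a single application of the theorem and (ii) the depth and breadth of the recursion tree, then multiply.

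First I would analyse a single call. Starting from a poset $G[V;B;\sigma]$ with $k$ chains, the recursive formula branches over all tuples $M \in \N_0^{|\sigma_1-\sigma_2|}$ with $0 \leq \lVert M \rVert_1 \leq v_{\sigma_1}$ (or $v_{\sigma_2}-1$). The number of such $M$ is at most the number of compositions of an integer $\leq \lVert V \rVert_1 \leq n$ into at most $k$ parts, which is $\mathcal{O}(n^{k-1})$; crudely, it is also $\mathcal{O}(n^k)$, and since each binomial coefficient in the summand is computable in time polynomial in $n$ and there are $\mathcal{O}(k)$ of them, the non-recursive work at each node of the recursion tree is $\mathcal{O}(n^{k-1} \cdot \mathrm{poly}(n,k))$, which I will absorb into $\mathcal{O}(n^{k})$ or a similar bound after checking the exact exponent wanted.

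Next I would bound the recursion tree itself. Each application of the theorem removes one chain (the $\sigma_1$-th or $\sigma_2$-th), so the parameter $k$ strictly decreases and the recursion has depth exactly $k-1$ (it stops when a single chain remains, whose number of linear extensions is $1$). At depth $j$ the parameter is $k-j$, and the key quantitative point is that the parameter $n = \lVert V \rVert_1 + \lVert B \rVert_1 + k$ is \emph{non-increasing} along the recursion: inspecting $f_{k,\sigma_1,\sigma_2,M}$, the total number of nodes $\lVert V \rVert_1 + \lVert B \rVert_1$ is preserved (nodes are only redistributed among chains and edges, and the $+1$ when $n>1$ accounts for an edge that becomes internal), while $k$ drops by one, so $n$ does not increase. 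Hence every node of the recursion tree has parameter bounded by the original $n$. The branching factor at a node with current chain-count $k' = k-j$ is $\mathcal{O}(n^{k'-1})$, so the total number of leaves is at most $\prod_{j=0}^{k-2} \mathcal{O}(n^{k-1-j}) = \mathcal{O}\!\bigl(n^{(k-1)+(k-2)+\cdots+1}\bigr) = \mathcal{O}(n^{\binom{k}{2}})$; and the total number of nodes is of the same order. Multiplying by the per-node cost gives a first bound, and then I would sharpen the bookkeeping: tracking the permutation argument $\std{\sigma_2\sigma_3\cdots\sigma_k}$ shows there are at most $k!$ distinct permutation-shapes ever appearing, and the per-level work telescopes to give the claimed $\mathcal{O}(n^{2k-2}k!)$ rather than the cruder $n^{\binom{k}{2}}$ bound — the improvement comes from noting that one does not need to bound the full branching product but rather the number of \emph{distinct} subproblems, which are parametrised by a vector in $\N_0^{k-1} \times \N_0^{k-2}$ of total weight $\leq n$ together with a permutation, of which there are $\mathcal{O}(n^{2k-3} k!)$ many, each solved once with $\mathcal{O}(n)$ overhead.

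The main obstacle I anticipate is pinning down the correct exponent: the naive recursion-tree count gives $n^{\Theta(k^2)}$, and reaching the stated $n^{2k-2}$ requires a memoisation/distinct-subproblems argument — one must check that the subproblems generated are drawn from a set of size $\mathcal{O}(n^{2k-2}k!)$ and that the summand arithmetic at each is $\mathcal{O}(n)$ (or polylog) after precomputing factorials. So the real content is: (1) the invariant that $n$ never grows, which I would verify directly from the definition of $f_{k,n,l,M}$; (2) the claim that the state of a subproblem is captured by $(V,B,\sigma)$ with $\lVert V\rVert_1+\lVert B\rVert_1 \leq n$ and $\sigma$ one of at most $k!$ permutations (of decreasing length), giving the stated count of distinct states; and (3) a routine check that evaluating one instance of the displayed formula, given the values of its sub-instances, costs $\mathcal{O}(n)$ arithmetic operations. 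Assembling these yields the bound $\mathcal{O}(n^{2k-2}k!)$.
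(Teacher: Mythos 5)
Your single-call analysis and your observation that $n = \lVert V \rVert_1 + \lVert B \rVert_1 + k$ does not increase under $f_{k,\sigma_1,\sigma_2,M}$ are both correct, and the latter is indeed needed (the paper uses it tacitly). But the two arguments part ways at the count of summands, and that is where everything hinges. You bound the number of admissible $M$ by $\binom{v_{\sigma_1}+\sigma_2-\sigma_1}{\sigma_2-\sigma_1} \le \binom{n-1}{k-1} = \mathcal{O}(n^{k-1})$, which is the honest worst case when $|\sigma_1-\sigma_2|$ can be as large as $k-1$. The paper's proof, by contrast, is a direct induction $T(k) = \mathcal{O}(n)\cdot T(k-1)\cdot\mathcal{O}(k)\cdot\mathcal{O}(n)$ in which that same binomial coefficient is taken to give only $\mathcal{O}(n)$ summands at every level; with that reading the bound $\mathcal{O}(n^{2k-2}k!)$ falls out immediately and no memoisation is needed. (That reading is exact for $k=2$, where the index set of $M$ is one-dimensional, but not in general.) So the paper does not do what you do, and your larger count is what forces you into the recursion-tree difficulty in the first place.

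The genuine gap is in your proposed repair. Having correctly noted that the naive recursion tree has size $n^{\Theta(k^2)}$, you invoke memoisation: the distinct subproblems are parametrised by $(V,B,\sigma)$ of bounded total weight, of which there are $\mathcal{O}(n^{2k-3}k!)$, ``each solved once with $\mathcal{O}(n)$ overhead.'' That last clause is unjustified and is where the argument breaks. Solving one subproblem means evaluating the displayed sum for that subproblem, which by your own count ranges over $\mathcal{O}(n^{k'-1})$ tuples $M$ and costs $\mathcal{O}(k')$ binomial evaluations of cost $\mathcal{O}(n)$ per term, i.e.\ $\mathcal{O}(n^{k'}k')$ per state, not $\mathcal{O}(n)$. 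Multiplying your state count by the true per-state cost gives on the order of $n^{3k-4}k!$, which exceeds the target for $k>2$, so the claimed telescoping does not occur. To close the gap you would need either an edge-counting argument showing that the out-degrees summed over all reachable states are $\mathcal{O}(n^{2k-3}k!)$ (vertex counting alone does not give this, since distinct $M$ generally produce distinct children), or a proof that each individual sum really has only $\mathcal{O}(n)$ nonzero or distinct-valued terms; neither is supplied.
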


\begin{proof}
    We prove this result by induction on the size of $k \geq 2$. First of all, notice that each binomial coefficient in the formulas can be computed in $\mathcal{O}(n)$ operations. Furthermore, per each factor of the sum, we multiply at most $k$ binomial coefficients. Note that $\lVert M \rVert_1 \leq n - k$ in either formula, and $|\sigma_1 - \sigma_2| \leq k-1$. Therefore, there are at most $\left(\begin{smallmatrix}
        n-k+k-1 \\ n-k
    \end{smallmatrix}\right) = \left(\begin{smallmatrix}
        n-1 \\ n-k
    \end{smallmatrix}\right)$ choices for $M$, and thus the sums have $\mathcal{O}(n)$ factors, in the worst-case scenario.

    In the $k=2$ case, we have that $\mathcal{L}_M = \mathcal{L}'_M = 1$. As such, with the previously given information, we have that the algorithm performs
    \[
        \mathcal{O}(n) \cdot \mathcal{O}(2) \cdot \mathcal{O}(n) = \mathcal{O}(n^2)
    \]
    operations.

    As the induction hypothesis, assume that the formula holds up to the case $k=m-1$, for some $m > 2$. As such in the $k=m$ case, we have that $\mathcal{L}_M$ and $\mathcal{L}'_M$ can be computed by performing $\mathcal{O}(n^{2m-4}(m-1)!)$ operations. Thus, the algorithm performs
    \[
        \mathcal{O}(n) \cdot \mathcal{O}(n^{2m-4}(m-1)!) \cdot \mathcal{O}(m) \cdot \mathcal{O}(n) = \mathcal{O}(n^{2m-2} m!)
    \]
    operations. As such, the result follows.
\end{proof}

We can use Proposition~\ref{prop:mst_poset_bijection} and Theorem~\ref{theorem:recursive_formula_linear_extensions_mst_poset} to compute the size of $\equiv_\mst$-classes as for any $X = (X_1,\dots,X_k) \in \N^k$ and any permutation $\tau$ of $[k]$ we have that 
\[P[X;\tau] = G[(X_1-1,\dots,X_k-1);(0,\dots,0);\tau]. \]
The time complexity to perform such a computation is $\mathcal{O}(n^{2k-2}k!)$, by Proposition~\ref{prop:recursive_formula_complexity}, where $n$ is the length of the words in the $\equiv_\mst$-class.

We now look at the case of $\equiv_\mtg$-classes. First, for any binary tree $T$, we define $\Delta(T)$ (resp. $\nabla(T)$) to be the poset $(N, \leq)$, where $N$ is the set of nodes of $T$ and, for all $i,j \in N$,
$i \leq j$ if the $i$-th node is an ancestor (resp. descendant) of the $j$-th node in $T$.

For $k \in \N$, $X = (X_1,\dots, X_k) \in \N^k$, and $(T_L,T_R)$ a pair of twin BTMs, define $Q[X;T_L;T_R]$ to be the poset built by taking $k$ chains of nodes, where the $i$-th chain has $X_i$ elements, and requiring the least (resp. greatest) element of the $i$-th chain to be less than the least (resp. greatest) element of the $j$-th chain if $i \leq j$ in $\Delta(T_L)$ (resp. $\nabla(T_R)$). 

\begin{figure}[h]
    \centering
    \begin{tikzpicture}
        \node[circle,draw] (A) at (0,0){};
        \node[circle,draw] (B) at (0,1){};
        \node[circle,draw] (C) at (1,0){};
        \node[circle,draw] (D) at (1,1){};
        \node[circle,draw] (E) at (1,2){};
        \node[circle,draw] (F) at (2,0){};
        \node[circle,draw] (G) at (3,0){};
        \node[circle,draw] (H) at (3,1){};
        \draw[-stealth] (A) to (B);
        \draw[-stealth] (C) to (D);
        \draw[-stealth] (D) to (E);
        \draw[-stealth] (G) to (H);
        \draw[-stealth,bend right=45] (A) to (F);
        \draw[-stealth] (F) to (C);
        \draw[-stealth] (F) to (G);
        \draw[-stealth] (F) to (H);
        \draw[-stealth] (H) to (E);
        \draw[-stealth] (B) to (E);
        \end{tikzpicture}
    \caption{The poset $Q[(2,3,1,2);T_L;T_R]$, where $(T_L,T_R) = \pmtg{13242142}$, has 36 different linear extensions. 
    }
    \label{fig:example_poset_linear_extensions2}
\end{figure}
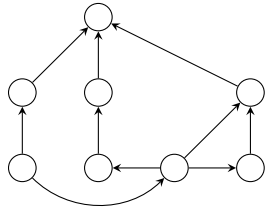

\begin{proposition} \label{prop:mtg_poset_bijection}
    Let $\word \in \N^*$. Then, each word in $[\word]_\mtg$ is in bijection with a linear extension of $Q[X;T_L;T_R]$ where $(T_L,T_R) = \pmtg{\word}$, $\supp{\word} = \{x_1 < \cdots < x_k\}$, and $X \in \N^k$ is such that $X_i = |\word|_{x_i}$.
\end{proposition}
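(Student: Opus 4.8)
The plan is to mirror the proof of Proposition~\ref{prop:mst_poset_bijection}, replacing the role of the first/last occurrence data recorded by $\overrightarrow{\rho}$ and $\overleftarrow{\rho}$ with the ancestor and descendant posets of $T_L$ and $T_R$. By Proposition~\ref{MtgClassesPsymbolCorr}, $\uord \in [\word]_\mtg$ if and only if $\pltg{\uord} = \pltg{\word} = T_L$ and $\prtg{\uord} = \prtg{\word} = T_R$; in particular $\cont{\uord} = \cont{\word}$. Since $(T_L,T_R)$ is a pair of twin BSTMs with $\cont{T_L} = \cont{T_R}$, the two trees have the same inorder reading, so their $i$-th nodes are both the node labelled $x_i$ and each has exactly $k$ nodes; thus $Q[X;T_L;T_R]$ has $k$ chains, with the $i$-th chain corresponding to $x_i$.

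For the forward map, given $\uord \in [\word]_\mtg$ I would let $P'$ be the linear order on the nodes of $Q[X;T_L;T_R]$ whose $i$-th smallest element is the $j$-th node of the $l$-th chain exactly when the $i$-th letter of $\uord$ is the $j$-th occurrence of $x_l$ in $\uord$. Every node occurs in $P'$ because $X_l = |\word|_{x_l} = |\uord|_{x_l}$, and the in-chain relations hold trivially. If the node labelled $x_i$ is an ancestor of the node labelled $x_j$ in $T_L = \pltg{\uord}$, then, since Algorithm~\textsf{TgLI} fixes ancestor relations at creation time and places $x_j$ below $x_i$ only once the node $x_i$ exists, the first occurrence of $x_i$ precedes that of $x_j$ in $\uord$, i.e.\ the least element of the $i$-th chain precedes the least element of the $j$-th chain in $P'$. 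Dually, reading $\uord$ from right to left to build $\prtg{\uord}$, if $x_i$ is a descendant of $x_j$ in $T_R = \prtg{\uord}$ then the node $x_j$ is created before the node $x_i$ in that pass, so the last occurrence of $x_i$ precedes that of $x_j$ in $\uord$, i.e.\ the greatest element of the $i$-th chain precedes that of the $j$-th chain in $P'$. Hence $P'$ is a linear extension of $Q[X;T_L;T_R]$.

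For the inverse map, given a linear extension $P'$ of $Q[X;T_L;T_R]$ I would define $\uord$ by letting its $i$-th letter be $x_l$ whenever the $i$-th smallest element of $P'$ lies in the $l$-th chain; then $\cont{\uord} = \cont{\word}$, and this construction is inverse to the previous one as an operation between words and orders, so it suffices to check $\pltg{\uord} = T_L$ and $\prtg{\uord} = T_R$. Inspecting the least (resp.\ greatest) elements of the chains of $Q[X;T_L;T_R]$ shows that the order of first (resp.\ last) occurrences of letters in $\uord$ is a linear extension of $\Delta(T_L)$ (resp.\ $\nabla(T_R)$). It then remains to invoke the following standard fact, which I would prove by induction on the number of nodes: if $\mathbf{z}\in\N^*$ has $\cont{\mathbf{z}} = \cont{\reading{T_L}}$ and the order of first occurrences of its letters is a linear extension of $\Delta(T_L)$, then $\pltg{\mathbf{z}} = T_L$ — indeed the unique minimal letter of $\Delta(T_L)$ (the root of $T_L$) must be the first letter of $\mathbf{z}$ to appear, its left (resp.\ right) subtree consists exactly of the letters of $\mathbf{z}$ smaller (resp.\ larger) than the root, whose relative first-occurrence orders again extend the ancestor posets of those subtrees, so by induction the subtrees, and hence the underlying binary search tree of $\pltg{\mathbf{z}}$, coincide with those of $T_L$, while the multiplicities are forced by $\cont{\mathbf{z}} = \cont{\reading{T_L}}$. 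Applying this to $\uord$ gives $\pltg{\uord} = T_L$, and the mirror statement (last occurrences, $\nabla(T_R)$, right-to-left insertion) gives $\prtg{\uord} = T_R$, so $\uord\in[\word]_\mtg$ by Proposition~\ref{MtgClassesPsymbolCorr}. The two maps are therefore mutually inverse bijections.

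The delicate step is the standard fact used in the inverse direction, namely that a binary search tree with multiplicities is reconstructed by, and is compatible with, any insertion order of its letters that respects the ancestor relation; everything else is bookkeeping parallel to the meet-stalactic case. A small point to watch there is that Algorithm~\textsf{TgLI} absorbs repeated letters into multiplicities, so the underlying binary search tree of $\pltg{\mathbf{z}}$ depends only on the first-occurrence order of $\mathbf{z}$ — which is precisely why the induction on the ancestor poset closes — and the multiplicities are then pinned down separately by the content.
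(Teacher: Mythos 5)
Your proof is correct and follows essentially the same route as the paper's: the same two mutually inverse maps between words in $[\word]_\mtg$ and linear extensions of $Q[X;T_L;T_R]$, justified via the first/last-occurrence orders of letters and the left-to-right/right-to-left insertion algorithms. The only difference is that you explicitly isolate and prove by induction the reconstruction lemma — that a word whose content matches and whose first-occurrence order linearly extends $\Delta(T_L)$ must satisfy $\pltg{\mathbf{z}} = T_L$, and dually for $\nabla(T_R)$ — a step the paper's proof asserts only implicitly, so this is added detail rather than a deviation.
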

\begin{proof}
    Let $\uord \in [\word]_\mtg$. From $\uord$ we define a chain $Q'$, with $|\uord|$ elements by letting the $i$-th least node of $Q'$ be the $j$-th node in the $l$-th chain of $Q[X;T_L;T_R]$ if the $i$-th letter of $\uord$ is the $j$-th appearance of $x_l$. 
    
    We want to show that $Q'$ is a linear extension of $Q[X;T_L;T_R]$. First, note that every node of $Q[X;T_L;T_R]$ appears in $Q'$ as $\cont{\uord} = \cont{\word}$ and $X_i = |\word|_{x_i}$. Next, we show that if a node is less than another in $Q[X;T_L;T_R]$ then the same happens in $Q'$. By the definition of $Q'$, it is clear that, in $Q'$, the $j$-th node in the $l$-th chain of $Q[X;T_L;T_R]$ is less than the $j+1$-th node in the $l$-th chain, for all $1 \leq l \leq k$ and $1 \leq j < X_l$. 
    
    Suppose that the least node in the $i$-th chain of $Q[X;T_L;T_R]$ is less than the least node in the $j$-th chain, that is, if $i < j$ in $\Delta(T_L)$. 
    We can see that this also holds in $Q'$ as $\pltg{\uord} = \pltg{\word} = T_L$. We can similarly show that if the greatest node in the $i$-th chain of $Q[X;T_L;T_R]$ is less than the greatest node in the $j$-th chain, the same happens in $Q'$. Therefore, $Q'$ is a linear extension of $Q[X;T_L;T_R]$.
    
    Now, let $Q'$ be a linear extension of $Q[X;T_L;T_R]$ and define $\uord \in \N^*$ by letting the $i$-th letter of $\uord$ be $x_l$ if the $i$-th least element of $Q'$ is in the $l$-th chain of $Q[X;T_L;T_R]$. Then, $\uord \in [\word]_\mtg$, as, by its definition and that of $Q[X;T_L;T_R]$, $\cont{\uord} = \cont{\word}$ and, from the least and greatest elements of each chain in $Q[X;T_L;T_R]$, we obtain the shape of $\pmtg{\uord}$, which coincides with that of $(T_L,T_R)$.
\end{proof}

Proposition~\ref{prop:mtg_poset_bijection} allows us to view the problem of computing the size of $\equiv_\mtg$-classes as a problem of counting linear extensions of posets. With the following theorem, we can count these linear extensions by splitting the problem into multiple $\equiv_\mst$ cases and counting linear extensions of these instead. 

We first require the following notation: Let $(T_L,T_R)$ be a pair of twin BSTMs. Then, for a poset $P$ with the same underlying set as $\nabla(T_R)$, let $\nabla(T_R,P)$ be the poset $\nabla(T_R)$ with the extra condition that if $i \leq j$ in $P$ then $i \leq j$ in $\nabla(T_R,P)$. Moreover, for any chain $P' = (I, \leq)$, let $\lambda_{P'} \colon I \to \{1,\dots,|I|\}$ be the function that sends each $x \in I$ to $m$ if $x$ is the $m$-th least element in $P'$.

\begin{theorem} \label{theorem:recursive_formula_linear_extensions_mtg_poset}
    Let $(T_L,T_R)$ be a pair of twin BSTMs with $k$ nodes and $X \in \N^k$ such that $X_i$ is the multiplicity of the $i$-th node of the trees. Then,
    \[ \cL(Q[X;T_L;T_R]) = \sum_{L} \sum_{R} \cL(P[Y_{L};\lambda_L \lambda_R^{-1}])\]
    where the first sum is over all linear extensions $L$ of $\Delta(T_L)$ and the second sum is over all linear extensions $R$ of $\nabla(T_R,p_L)$ where $a < x$ in $p_L$ if and only if $a$ is simple in $(T_L,T_R)$ and $a < x$ in $L$, and $(Y_{L})_i = X_{\lambda_L^{-1}(i)}$.
\end{theorem}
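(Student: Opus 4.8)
The plan is to exhibit an explicit bijection between the linear extensions of $Q := Q[X;T_L;T_R]$ and a disjoint union of sets of linear extensions of posets of the form $P[Y_L;\lambda_L\lambda_R^{-1}]$, indexed by pairs of linear extensions $(L,R)$, and then count both sides. Recall that $Q$ is assembled from $k$ chains, the $i$-th of length $X_i$, where the least (resp.\ greatest) elements of the chains are ordered according to $\Delta(T_L)$ (resp.\ $\nabla(T_R)$). Given a linear extension $\ell$ of $Q$, I would first restrict the total order $\ell$ to the $k$ least elements of the chains: identifying the $i$-th chain with the $i$-th node, this produces a total order $L=L_\ell$ on $\{1,\dots,k\}$ which, because $Q$ already contains the relations of $\Delta(T_L)$ among these elements, is a linear extension of $\Delta(T_L)$. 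Likewise, restricting $\ell$ to the $k$ greatest elements of the chains produces a linear extension $R=R_\ell$ of $\nabla(T_R)$.

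The crux is to show that $R_\ell$ is in fact a linear extension of $\nabla(T_R,p_{L_\ell})$; since $R_\ell$ already refines $\nabla(T_R)$, it suffices to check that $R_\ell$ refines $p_{L_\ell}$, i.e.\ that whenever a node $a$ is simple in $(T_L,T_R)$ and $a<_{L_\ell}x$, then $a<_{R_\ell}x$. As $a$ is simple its chain consists of a single element, which is simultaneously the least and the greatest element of that chain; so $a<_{L_\ell}x$ says that this element precedes, in $\ell$, the least element of the chain of $x$, and since the latter precedes the greatest element of the chain of $x$ in $\ell$, the element of $a$ also precedes the greatest element of the chain of $x$, which is exactly $a<_{R_\ell}x$. (In particular $\nabla(T_R)\cup p_{L_\ell}$ is acyclic, so $\nabla(T_R,p_{L_\ell})$ really is a poset, and the map $\ell\mapsto(L_\ell,R_\ell)$ lands in the index set of the double sum; for those $L$ for which $\nabla(T_R,p_L)$ fails to be a poset, the inner sum is empty and no linear extension of $Q$ induces that $L$.)

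Next I would fix linear extensions $L$ of $\Delta(T_L)$ and $R$ of $\nabla(T_R,p_L)$ and identify the linear extensions $\ell$ of $Q$ with $(L_\ell,R_\ell)=(L,R)$ with the linear extensions of the poset $\widetilde Q$ obtained from $Q$ by additionally forcing the chains' least elements to occur in the order $L$ and their greatest elements in the order $R$. Because $L$ refines $\Delta(T_L)$ and $R$ refines $\nabla(T_R)$, the relations of $\widetilde Q$ reduce to the within-chain orders together with the chain $L$ on the least elements and the chain $R$ on the greatest elements. Relabelling the $k$ chains by $\lambda_L$ — so that the new $i$-th chain is the one whose least element has $L$-rank $i$, hence of length $X_{\lambda_L^{-1}(i)}=(Y_L)_i$ — turns $\widetilde Q$ into $P[Y_L;\tau]$, where a short index computation gives $\tau_i=\lambda_L(\lambda_R^{-1}(i))=(\lambda_L\lambda_R^{-1})(i)$ (the new chain whose greatest element has $R$-rank $i$ is $\lambda_L(\lambda_R^{-1}(i))$). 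Thus the number of such $\ell$ equals $\cL(P[Y_L;\lambda_L\lambda_R^{-1}])$, with the identity holding even in the degenerate case where both counts are $0$.

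Assembling the pieces, the map sending $\ell$ to the triple $(L_\ell,R_\ell,\ell)$, where in the last coordinate $\ell$ is read through the above relabelling as a linear extension of $P[Y_{L_\ell};\lambda_{L_\ell}\lambda_{R_\ell}^{-1}]$, is a bijection from the set of linear extensions of $Q$ onto $\bigsqcup_{L}\bigsqcup_{R}\{\text{linear extensions of }P[Y_L;\lambda_L\lambda_R^{-1}]\}$, where $L$ runs over linear extensions of $\Delta(T_L)$ and $R$ over linear extensions of $\nabla(T_R,p_L)$; counting both sides yields the claimed formula for $\cL(Q[X;T_L;T_R])$. I expect the main obstacle to be the second paragraph: pinning down exactly why the simple-node constraint is the \emph{only} compatibility forced between $L_\ell$ and $R_\ell$, so that the outer index set is precisely the one appearing in the statement, and then (in the third paragraph) verifying that the relabelling identifies $\widetilde Q$ with $P[Y_L;\lambda_L\lambda_R^{-1}]$ on the nose, in particular getting the permutation $\tau=\lambda_L\lambda_R^{-1}$ right; the rest is bookkeeping.
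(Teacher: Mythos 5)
Your proposal is correct and follows essentially the same route as the paper's proof: decompose each linear extension of $Q[X;T_L;T_R]$ into its induced orders $L$ and $R$ on the least and greatest chain elements, observe that singleton chains force $R$ to refine $p_L$, and identify the fibre over a fixed pair $(L,R)$ with the linear extensions of $P[Y_L;\lambda_L\lambda_R^{-1}]$ via the relabelling $\lambda_L$. The only presentational difference is that you package the argument as a single explicit bijection onto a disjoint union, whereas the paper constructs injections in both directions, and your remarks on acyclicity and on the degenerate empty fibres are sound.
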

\begin{proof}
    Throughout this proof, let $Q = Q[X;T_L;T_R]$. 

    First, we show that given linear extensions of $L$ of $\Delta(T_L)$, $R$ of $\nabla(T_R,p_L)$ and $P'$ of $P[Y_{L};\lambda_L \lambda_R^{-1}]$, we are able to construct a unique linear extension $Q'$ of $Q$. We then show that given a linear extension of $Q$ we are able to construct linear extensions $L$ of $\Delta(T_L)$, $R$ of $\nabla(T_R,p_L)$ and $P'$ of $P[Y_{L};\lambda_L \lambda_R^{-1}]$, and we show that this triple is uniquely dependent on $Q'$.
    
    Let $L$ be a linear extension of $\Delta(T_L)$, $R$ be a linear extension of $\nabla(T_R,p_L)$, and $P = P[Y_{L};\lambda_L\lambda_R^{-1}]$. 
    Now, suppose $P'$ is a linear extension of $P$ and define a chain $Q'$ by letting the $i$-th least element of $Q'$ be the $j$-th node in the $l$-th chain of $Q$ if the $i$-th least element of $P'$ is the $j$-th node in the $\lambda_L(l)$-th chain of $P$. We now show that $Q'$ satisfies the conditions of being a linear extension of $Q$.
    
    First, note that the $l$-th chain of $Q$ and the $\lambda_L(l)$-th chain of $P$ are of the same length as $X_l = {(Y_L)}_{\lambda_L(l)}$.
    Thus, $Q$ and $Q'$ have the same underlying set. We now aim to show that if a node is less than another in $Q$ then the same happens in $Q'$. By the definition of $Q'$, in $Q'$, the $j$-th node in the $l$-th chain of $Q$ is less than $(j+1)$-th node in the $l$-th chain.
    
    Suppose that the least node in the $i$-th chain of $Q$ is less than the least node in the $j$-th chain, that is, if $i < j$ in $\Delta(T_L)$.
    By the definition of $Q'$, this happens in $Q'$ if, in $P'$, the least node in the $\lambda_L(i)$-th chain of $P$ is less than the least node in the $\lambda_L(j)$-th chain, that is, if $\lambda_L(i) < \lambda_L(j)$, which is true as $i < j$ in $\Delta(T_L)$ and $L$ is a linear extension of $T_L$.
    
    Next, suppose that the greatest node in the $i$-th chain of $Q$ is less than the greatest node in the $j$-th chain, that is, if $i < j$ in $\nabla(T_R)$. Again, by the definition of $Q'$, this happens in $Q'$ if, in $P'$, the greatest node in the $\lambda_L(i)$-th chain of $P$ is less than the greatest node in the $\lambda_L(j)$-th chain of $P$, that is, by the definition of $P$, if there exist $s < t$ such that $\lambda_L(i) = \lambda_L\lambda_R^{-1}(s)$ and $\lambda_L(j) = \lambda_L\lambda_R^{-1}(t)$. So it suffices to show that $\lambda_R(i) < \lambda_R(j)$, which is true as $i < j$ in $\nabla(T_R)$ and $R$ is a linear extension of $\nabla(T_R,p_L)$, and hence a linear extension of $\nabla(T_R)$.
    Therefore, $Q'$ is a linear extension of $Q$, and as such, this method gives a well-defined function.

    We now show that this function is injective. Given linear extensions $P_1' \neq P_2'$ of $P$, it follows from the definition of $Q'$ that linear extensions $Q_1'$ and $Q_2'$ of $Q$, obtained respectively from $P_1'$ and $P_2'$ by the method given above, must be different. Furthermore, notice that $L$ (resp. $R$) determines the order, in $Q'$, of the least (resp. greatest) nodes of the chains of $Q$. Thus, different choices of $L$ or $R$ result in different linear extensions of $Q$. Hence, this function is injective.
    
    Now, let $Q'$ be a linear extension of $Q$. Let $L$ (resp. $R$) be the chain on $[k]$ where $i < j$ if, in $Q'$, the least (resp. greatest) node in the $i$-th chain of $Q$ is less than the least (resp. greatest) node in the $j$-th chain.
    Clearly, $L$ and $R$ are linear extensions of $\Delta(T_L)$ and $\nabla(T_R)$, respectively. Furthermore, for any chain of $Q$ with only one node, if the node is less than the least node in the $j$-th chain of $Q$, then it is less than the greatest node in the $j$-th chain. Thus, $R$ is a linear extension of $\nabla(T_R,p_L)$ as well. 
    
    Let $P = P[Y_{L};\lambda_L\lambda_R^{-1}]$. Define a chain $P'$ by letting the $i$-th least element of $P'$ be the $j$-th node in the $l$-th chain of $P$ if the $i$-th least element of $Q'$ is the $j$-th node in the $\lambda_L^{-1}(l)$-th chain of $Q$. We now show that $P'$ satisfies the conditions of being a linear extension of $P$. 
    
    First, note that the $l$-th chain of $P$ and the $\lambda^{-1}_L(l)$-th chain of $Q$ are of the same length as $(Y_L)_{l} = X_{\lambda_L^{-1}(l)}$.
    Thus, $P$ and $P'$ have the same underlying set. Furthermore, by the definition of $P'$, in $P'$, the $j$-th node in the $l$-th chain of $P$ is less than the $(j+1)$-th node in the $l$-th chain.
    
    Suppose $i < j$. We now show that, in $P'$, the least node in the $i$-th chain of $P$ is less than the least node in the $j$-th chain. By the definition of $P'$, this happens in $P'$ if, in $Q'$, the least node in the $\lambda_L^{-1}(i)$-th chain of $Q$ is less than the least node in the $\lambda_L^{-1}(j)$-th chain, that is, if $i = \lambda_L\lambda_L^{-1}(i) < \lambda_L\lambda_L^{-1}(j) = j$, which holds. 
    
    Finally, we show that, in $P'$, the greatest element in the $\lambda_L\lambda_R^{-1}(i)$-th chain of $P$ is less than the greatest element in the $\lambda_L\lambda_R^{-1}(j)$-th chain.
    By the definition of $P'$, this happens in $P'$ if, in $Q'$, the greatest node in the $\lambda_L^{-1}\lambda_L\lambda_R^{-1}(i)$-th chain of $Q$ is less than the greatest node in the $\lambda_L^{-1}\lambda_L\lambda_R^{-1}(j)$-th chain, that is, if 
    \[\lambda_R\lambda_L^{-1}\lambda_L\lambda_R^{-1}(i) = i < j = \lambda_R\lambda_L^{-1}\lambda_L\lambda_R^{-1}(j),\]
    which holds. Therefore, $P'$ is a linear extension of $P$, and as such, this method gives a well-defined function.

    We now show that this function is injective. Suppose that, given linear extensions $Q_1' \neq Q_2'$ of $Q$, we obtain, by the method above, the same linear extensions $L$ of $\Delta(T_L)$ and $R$ of $\nabla(T_R,p_L)$. As such, $P_1$ and $P_2$ as given by the method above are equal. Let $P_1'$ and $P_2'$ be linear extensions of $P_1$, obtained by the method above from $Q_1'$ and $Q_2'$, respectively. By the definition of $P'$, it follows that $P_1' \neq P_2'$. Thus, different choices of $Q$ which give the same $L$ and $R$ result in different linear extensions of $P$. Hence, this function is injective.

    As such, we can conclude that the cardinality of the set of linear extensions of $Q$ is the sum of cardinalities of the sets of linear extensions of posets of the form $P[Y_L;\lambda_L\lambda_R^{-1}]$, summing over all linear extensions $L$ of $\Delta(T_L)$ and $R$ of $\nabla(T_R,p_L)$.
\end{proof}

The number of linear extensions of $\Delta(T_L)$ and $\nabla(T_R,p_L)$ depends on the number of nodes of $T_L$ and $T_R$, but not on the sum of multiplicities of the nodes. By \cite{atkinson_linear_extensions_trees}, computing each of these numbers takes $\mathcal{O}(k^2)$ operations. As such, computing $\cL(Q[X;T_L;T_R])$ takes $\mathcal{O}(n^{2k-2}k!k^4)$ operations, where $n$ is the sum of the multiplicities of the nodes. 

As such, from Proposition~\ref{prop:mtg_poset_bijection}, we can conclude that using the algorithm given in Theorem~\ref{theorem:recursive_formula_linear_extensions_mtg_poset} allows us to compute the size of $\equiv_\mtg$-classes in $\mathcal{O}(n^{2k-2}k!k^4)$ operations, where $n$ is the length of the words and $k$ is the size of their support. Notice that, for a fixed support, as the length of a word increases, the behaviour of the time-complexity of counting the linear extensions of the $\equiv_\mtg$-class of the word is the same as that of counting the linear extensions of $\equiv_\mst$-class of the same word.


\subsection{`Hook-length'-like formulas for pairs of twin stalactic tableaux and binary search trees with multiplicities} \label{subsection:Pairs_of_twin_stalactic_tableaux_and_binary_search_trees_with_multiplicities}

For the rest of this section, we focus on `hook-length'-like formulas for the meet-stalactic and meet-taiga $\psymb$-symbols. Given a stalactic tableau, we determine how many distinct stalactic tableaux form a pair of twin stalactic tableaux with it. We then consider the case of BSTMs and find bounds for the corresponding question.

\begin{proposition}
    Let $T_L$ be a stalactic tableau. Let $a_1,\dots,a_k$ be the labels of the simple columns in $T_L$, ordered from left-to-right. For $1 \leq i \leq k$, let $m_i$ be the number of columns to the right of and including the column labelled $a_i$. Then, there are 
    \[\frac{|\supp{T_L}|!}{m_1\cdots m_k}\]
    distinct stalactic tableaux $T_R$ such that $(T_L,T_R)$ is a pair of twin stalactic tableaux.
\end{proposition}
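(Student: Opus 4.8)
The plan is to recast the count as the number of linear extensions of an explicit poset and then evaluate that number by a short induction. First I would note that, since $\cont{T_R} = \cont{T_L}$, any admissible $T_R$ is merely a rearrangement of the columns of $T_L$: the column labelled $x$ has height $|T_L|_x$ in both tableaux and distinct columns carry distinct labels, so $T_R$ is determined by (and determines) the left-to-right order of its columns, and distinct orders give distinct tableaux. In particular the simple columns of $T_R$ are exactly those of $T_L$, namely the columns labelled $a_1, \dots, a_k$. Writing $N := |\supp{T_L}|$ and $p_i := \rho_{T_L}(a_i)$, so that $m_i = N - p_i + 1$, the quantity to compute is therefore the number of orderings of the $N$ columns of $T_L$ that satisfy the twin condition of Definition~\ref{definition:TwinStalacticTableauxCondition}.

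The second step is to translate that condition. Reading Definition~\ref{definition:TwinStalacticTableauxCondition} with $c = a_i$, the requirement ``$\rho_{T_L}(a_i) < \rho_{T_L}(d) \Rightarrow \rho_{T_R}(a_i) < \rho_{T_R}(d)$'', quantified over all columns $d$, says precisely that in $T_R$ the column $a_i$ precedes every column lying strictly to its right in $T_L$; equivalently, $a_i$ is the leftmost in $T_R$ among the $m_i$ columns occupying positions $p_i, \dots, N$ of $T_L$. (This single family of constraints already forces the simple columns to keep their relative order, so nothing is lost.) Consequently, if one defines a strict partial order on the columns of $T_L$ by $c <_P d$ iff $c$ is simple and $\rho_{T_L}(c) < \rho_{T_L}(d)$ --- irreflexivity and transitivity being immediate --- then the admissible tableaux $T_R$ are in bijection with the linear extensions of the resulting poset $P$, and it remains to show $\cL(P) = N!/(m_1\cdots m_k)$.

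For the evaluation I would argue by induction on $N$ (the case $N = 0$ being trivial, and the case $k = 0$ giving the unconstrained count $N!$, which matches the empty product). The first element of any linear extension is a minimal element of $P$, and the minimal elements of $P$ are $a_1$ together with the $p_1 - 1$ non-simple columns to the left of $a_1$ in $T_L$; deleting the corresponding column from $T_L$ yields a smaller stalactic tableau whose poset is exactly $P$ with that element removed, and --- crucially --- since the deleted column sits to the left of (or is the leftmost of) the simple columns, the numbers $m_i$ attached to the surviving simple columns are unchanged. Summing $\cL(P) = \sum_x \cL(P \setminus \{x\})$ over minimal $x$ and applying the inductive hypothesis gives $\cL(P) = (p_1 - 1)\tfrac{(N-1)!}{m_1\cdots m_k} + \tfrac{(N-1)!}{m_2\cdots m_k} = \tfrac{(N-1)!}{m_1\cdots m_k}\bigl((p_1 - 1) + m_1\bigr) = \tfrac{N!}{m_1\cdots m_k}$, using $m_1 = N - p_1 + 1$. (Alternatively, one checks directly that $P$ is a forest ordered upward from its minimal elements --- each column covers at most one column --- with up-set sizes $m_1, \dots, m_k$ at the simple columns and $1$ elsewhere, and invokes the classical hook-length formula for forest posets.) The step needing the most care is the translation in the second paragraph: getting the direction of the inequalities right, checking that the mixed simple/arbitrary twin condition collapses exactly to ``$a_i$ before everything to its right'', and confirming the column-ordering-to-tableau correspondence is a genuine bijection. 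The forest verification and the bookkeeping in the induction --- in particular that deleting a leftmost-minimal column leaves the $m_i$ intact --- are then routine.
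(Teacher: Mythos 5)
Your argument is correct. The translation of the twin condition is right: since $\cont{T_R}=\cont{T_L}$ forces $T_R$ to be a column permutation of $T_L$, the condition reduces exactly to ``each $a_i$ comes first in $T_R$ among the $m_i$ columns at positions $\rho_{T_L}(a_i),\dots,|\supp{T_L}|$ of $T_L$'', the relation $c<_P d$ iff $c$ is simple and $\rho_{T_L}(c)<\rho_{T_L}(d)$ is indeed a strict partial order whose linear extensions are the admissible orderings, and the recursion over minimal elements together with the identity $(p_1-1)+m_1=|\supp{T_L}|$ closes the induction (the bookkeeping that deleting a column weakly to the left of $a_1$ preserves the surviving $m_i$ is the one point that needs the check you give, and it holds). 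The paper proceeds differently: it works directly with the top-row word $\word=\word_1a_1\word_2\cdots a_k\word_{k+1}$ of $T_L$ and builds the top row of $T_R$ by assigning positions to the letters of $\word$ one at a time, each non-simple letter getting a free choice among the remaining unoccupied slots and each simple letter being forced into the least unoccupied slot; the product of the numbers of choices telescopes to $|\supp{T_L}|!/(m_1\cdots m_k)$. That argument is shorter and self-contained, whereas yours buys a conceptual link to the linear-extension machinery the authors develop for counting $\equiv_\mst$-classes (your poset $P$ is essentially the ``top-row part'' of their $P[X;\tau]$ posets), and the observation that $P$ is a dual forest with up-set sizes $m_1,\dots,m_k$ at the simple columns and $1$ elsewhere makes the ``hook-length'' character of the formula explicit rather than emergent from the count.
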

\begin{proof}
    Recall that stalactic tableaux are uniquely determined by their top row and content. Thus, there are $|\supp{T_L}|!$ distinct stalactic tableaux with the same content as $T_L$. By the definition of \hyperref[definition:TwinStalacticTableauxCondition]{pairs of twin stalactic tableaux}, we need to count the number of stalactic tableaux $T_R$ such that for every simple column labelled $c$ and column labelled $d$, if $\rho_{T_L}(c) < \rho_{T_L}(d)$, then $\rho_{T_R}(c) < \rho_{T_R}(d)$. 
    
    Let $\word$ be the reading of the top row of $T_L$. Then, there exist words $\word_i$ over $\supp{T_L}$ such that $\word = \word_1a_1\word_2\cdots \word_k a_k\word_{k+1}$.
    A rearrangement $\uord$ of $\word$ is the reading of the top row of a stalactic tableau $T_R$, where $(T_L,T_R)$ is a pair of twins stalactic tableaux, if and only if $\uord[X_i]$ begins with $a_i$ for each $1 \leq i \leq k$, where $X_i = \supp{a_i \word_{i+1} \cdots \word_k a_k \word_{k+1}}$. 

    We now recursively define any such rearrangement $\uord$. View $\uord$ as an empty $|\word|$-tuple. From $1 \leq i \leq k$, reading $\word_i$ from left-to-right, choose any unoccupied positions in $\uord$ to be the positions of the letters of $\word_i$, then choose the least remaining unoccupied position to be the position of $a_i$. Finally, add the letters of $\word_{k+1}$ to the remaining unoccupied positions of $\uord$.
        
    Note that there are $|\supp{T_L}|+1-i$ choices for the position of the $i$-th letter of $\word$ in $\uord$, unless it is simple, then there is only one option. Hence, as each rearrangement $\uord$ corresponds to a distinct stalactic tableau $T_R$, there are
    \[\frac{|\supp{T_L}|!}{m_1\cdots m_k}\]
    distinct stalactic tableaux $T_R$ such that $(T_L,T_R)$ is a pair of twin stalactic tableaux, where $m_i = |X_i|$.
\end{proof}

Notice that, given a stalactic tableau $T_R$, one can obtain a similar formula for the number of distinct stalactic tableaux $T_L$ such that $(T_L,T_R)$ is a pair of twin stalactic tableaux, by symmetrical reasoning and considering the number of columns to the left of and including the simple columns instead.

\begin{proposition}
    Let $T_L$ be a BSTM with $n$ nodes. Suppose there are $k$ simple nodes in $T_L$ that are not leaves. Let $R(T_L)$ denote the number of distinct BSTMs $T_R$ such that $(T_L,T_R)$ is a pair of twin BSTMs. Then, 
    \[ C_{n-k} \leq R(T_L) \leq C_n \]
    where $C_m = \frac{(2m)!}{(m+1)!m!}$ is the $m$-th Catalan number.
\end{proposition}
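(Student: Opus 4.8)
The plan is to prove the two inequalities separately: the upper bound by a direct count, the lower bound by an explicit injection.

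\emph{Upper bound.} I would first note that a BSTM is completely determined by its underlying binary tree shape together with its content: once the shape on $n$ nodes is fixed, the BST condition forces the $i$-th node of the inorder traversal to carry the $i$-th smallest letter of the support, and then its prescribed multiplicity (this is the uniqueness remark following the definition of BSTMs). Hence the set of BSTMs $T_R$ with $\cont{T_R}=\cont{T_L}$ is in bijection with the set of binary trees on $n$ nodes, which has size $C_n$. Since every $T_R$ that forms a pair of twin BSTMs with $T_L$ satisfies $\cont{T_R}=\cont{T_L}$, we get $R(T_L)\le C_n$.

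\emph{Lower bound.} The first observation is that in any pair of twin BSTMs $(T_L,T_R)$, the node of $T_R$ in the inorder position of a simple non-leaf node of $T_L$ has at most one child: by Definition~\ref{definition:TwinBTMCondition}, if that node of $T_L$ has a left (resp.\ right) child, the corresponding node of $T_R$ has no left (resp.\ right) child, so two children in $T_L$ force a leaf in $T_R$. Let $S$ be the set of the $k$ inorder positions of simple non-leaf nodes of $T_L$; the remaining $n-k$ positions I will call \emph{free}. For $i\in S$ let $\gamma_i\in\{\mathrm{noL},\mathrm{noR},\mathrm{leaf}\}$ be the condition just described (``no left child'', ``no right child'', ``leaf''). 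I would then construct a map $\Psi$ from binary trees on $n-k$ nodes to valid $T_R$'s as follows. Given a binary tree $U$, identify its nodes with the free positions taken in increasing order, and insert the positions of $S$ into $U$ one by one, each as a new leaf in its inorder slot, processing the maximal runs of consecutive positions of $S$ one run at a time, and within a run inserting first the $\mathrm{noL}$-positions from left to right, then the $\mathrm{noR}$-positions from right to left, and finally the $\mathrm{leaf}$-position (if any) last; finally equip the resulting shape with the content of $T_L$. Injectivity of $\Psi$ is clear: the positions of $S$ were added as leaves, so the set of $S$-nodes is closed under taking descendants in $\Psi(U)$, whence deleting these nodes one current leaf at a time restores $U$, as leaf-deletions do not alter the ancestor relations among the remaining nodes. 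This exhibits $C_{n-k}$ distinct valid $T_R$'s.

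The main obstacle is the verification that each $\Psi(U)$ really is valid, i.e.\ that the insertion order never lets an $S$-node acquire a forbidden child, so that $(T_L,\Psi(U))$ satisfies Definition~\ref{definition:TwinBTMCondition}. The key structural input is a fact about runs: reading $\gamma$ along a maximal run of consecutive positions of $S$ always yields a pattern $\mathrm{noL}^{a}\,\mathrm{leaf}^{b}\,\mathrm{noR}^{c}$ with $b\le 1$. This follows from the BST shape of $T_L$: if the $j$-th node of $T_L$ has a right child, then the $(j{+}1)$-st node is the leftmost node of that right subtree and hence has no left child, which both propagates the $\mathrm{noR}$ condition to the right and forbids two consecutive $\mathrm{leaf}$ conditions. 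Granting this, a run-by-run check shows that, under the chosen order, a $\mathrm{noL}$-position can only later acquire a right child, a $\mathrm{noR}$-position only a left child, and the $\mathrm{leaf}$-position none at all (its two inorder neighbours, one a $\mathrm{noL}$- or wall-node and the other a $\mathrm{noR}$- or wall-node, are inserted before it) — so all the conditions hold and $\Psi(U)$ is a valid $T_R$, completing the bound $R(T_L)\ge C_{n-k}$.
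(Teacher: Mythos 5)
Your proof is correct, but your lower bound goes by a genuinely different route than the paper's. The paper works at the level of words: it picks $\word$ with $\pltg{\word}=T_L$, splits off the prefix $\word_1$ consisting of the first occurrences of the non-leaf letters, observes that $\pltg{\word_1\vord}=T_L$ for every rearrangement $\vord$ of the remaining word $\word_2$ (whose support has size $n-k$), and then invokes left-cancellativity of $\rtg$ to conclude that distinct trees $\prtg{\vord}$ yield distinct trees $\prtg{\word_1\vord}$; the taiga Robinson--Schensted correspondence supplies $C_{n-k}$ such trees. You instead build the witnesses directly at the level of shapes: you isolate the $k$ constrained inorder positions, prove the run-structure lemma ($\mathrm{noL}^{a}\,\mathrm{leaf}^{\le 1}\,\mathrm{noR}^{c}$, which follows correctly from the fact that in a binary tree the $j$-th inorder node has a right child if and only if the $(j{+}1)$-st has no left child), and graft these positions as leaves onto an arbitrary $(n-k)$-node tree in an order that respects the constraints, with injectivity recovered by peeling the grafted leaves back off. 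Both arguments are sound. The paper's is shorter and reuses the algebraic machinery already developed (cancellativity and the correspondence), whereas yours is self-contained, purely shape-theoretic, and produces an explicit family of valid trees $T_R$ -- at the cost of the more delicate bookkeeping in the run-by-run insertion analysis, which you would need to spell out fully (in particular, that a slot between two final-inorder-consecutive positions is never used after both are present, so no constrained node acquires a forbidden child from a later run) to turn the sketch into a complete proof.
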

\begin{proof}
    It is a well-known fact that there are exactly $C_n$ binary trees with $n$ nodes, thus $R(T_L) \leq C_n$. We aim to show that $C_{n-k} \leq R(T_L)$. First note that, as there are $k$ nodes which are simple non-leaf nodes, there are $n-k$ nodes which are either leaves or non-simple.

    Let $\word$ be such that $\pltg{\word} = T_L$. Then, let $\word_1$ (resp. $\word_2$) be the word obtained from $\word$ by removing (resp. keeping) all letters except the first occurrence of each letter which does not label a leaf in $T_L$.
    Note that for any rearrangement $\vord$ of $\word_2$, $\pltg{\word_1 \vord} = T_L$ as $\supp{\word_1}$ contains every letter that does not label a leaf in $T_L$. On the other hand, for each pair of rearrangements $\vord$, $\vord'$ of $\word_2$ with $\prtg{\vord} \neq \prtg{\vord'}$, we have that $\prtg{\word_1\vord} \neq \prtg{\word_1\vord'}$, since $\rtg$ is left-cancellative. As such, $R(T_L) \geq C_{n-k}$ as there are $C_{n-k}$ BSTMs with the same labels as $\prtg{\word_2}$, since $|\supp{\word_2}| = n-k$.
\end{proof}


\section{Syntacticity of plactic-like congruence} \label{section:Syntacticity_of_plactic_like_monoids}

Recall that, for each $\N$-labelled combinatorial object which identifies classes of a plactic-like monoid, its \textit{shape} is the unlabelled object, that is, unlabelled by letters of $\N$. For each plactic-like monoid $\mathsf{M}$ with associated $\psymb$-symbol $\psymb_\mathsf{M}$, let $\mathrm{Sh}_\mathsf{M}$ be the function which maps a word $\word \in \N^*$ to the shape of its combinatorial object $\psymb_\mathsf{M}{(\word)}$. We now study the syntacticity of plactic-like congruences with regard to their corresponding shape functions.

\begin{remark}
    We consider the shape functions of BSTMs and pairs of twin BSTMs that give us combinatorial objects still labelled by multiplicities. 
\end{remark}
The following is a generalisation of the usual definition of syntactic congruences, extended to functions from a free monoid to any set: Let $S$ be any set and let $f \colon \N^* \to S$. We say a congruence $\equiv$ of $\N^*$ is the (two-sided) \emph{syntactic congruence} of $f$ if 
\[ \uord \equiv \vord \Leftrightarrow (\forall \rord,\sord \in \N^*, f(\rord \uord \sord) = f(\rord \vord \sord)).\]
This is the coarsest congruence of $\N^*$ which is compatible with $f$. Similarly, a left congruence $\equiv$ is the \emph{left syntactic congruence} of $f$ if
\[ \uord \equiv \vord \Leftrightarrow (\forall \rord \in \N^*, f(\rord \uord) = f(\rord \vord)).\]
This is the coarsest left congruence of $\N^*$ which is compatible with $f$. One defines a \emph{right syntactic congruence} in a parallel way. Remark that, for congruences, left or right syntacticity implies syntacticity.

Lascoux and Schützenberger stated without proof \cite[Theoréme~2.15]{LS1978} that the plactic congruence is the syntactic congruence of the shape function of (semi-standard) Young tableaux. Recently, Abram and Reutenauer have shown a generalisation of this result, namely that the plactic congruence is also the left and right syntactic congruence of said shape function \cite[Theorem~13.3]{abram_reutenauer_stylic_monoid_2021}. On the other hand, Novelli has shown that the hypoplactic congruence is both the left and right syntactic congruence of the shape function of quasi-ribbon tableaux \cite[Subsection~5.4]{novelli_hypoplactic}. We now look at the sylvester, Baxter, stalactic and taiga cases. Notice that, since we are already working with two-sided congruences, we only need to prove the necessary condition of the definition of (left or right) syntacticity.

Recall the right strict insertion algorithm given in \cite[Definition~7]{hivert_sylvester}, which computes a unique right strict binary search tree $\psylv{\word}$ from a word $\word \in \N^*$.

\begin{proposition} \label{SylvesterLeftRightSyntactic}
    The sylvester (resp. \#-sylvester) congruence is both the left and right syntactic congruence of $\mathrm{Sh}_\sylv$ (resp. $\mathrm{Sh}_\sylvh$).
\end{proposition}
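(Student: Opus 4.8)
The plan is to show that if $\uord \not\equiv_\sylv \vord$, then there exist $\rord \in \N^*$ with $\shsylv{\rord\uord} \neq \shsylv{\rord\uord}$ (for left syntacticity) and, symmetrically, $\sord \in \N^*$ with $\shsylv{\uord\sord} \neq \shsylv{\vord\sord}$ (for right syntacticity); since $\equiv_\sylv$ is already a two-sided congruence contained in both syntactic congruences, this forces equality. The \#-sylvester case follows by applying the Schützenberger involution (or by a symmetric argument), since $\cR_\sylvh$ is obtained from $\cR_\sylv$ by the corresponding reversal, so I would only spell out the sylvester case in detail.

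First I would reduce to the case where $\uord$ and $\vord$ have the same content: if $\cont{\uord} \neq \cont{\vord}$, then already $\psylv{\uord}$ and $\psylv{\vord}$ have different numbers of nodes counted with multiplicity — but since sylvester trees are right strict binary search trees (no multiplicities), I should instead observe that differing content forces differing node sets, hence differing shapes, so taking $\rord = \sord = \varepsilon$ suffices. So assume $\cont{\uord} = \cont{\vord}$ but $\psylv{\uord} \neq \psylv{\vord}$, i.e.\ the underlying right strict binary search trees differ as \emph{labelled} trees; since they have the same node set, they must differ in shape as well (two binary search trees on the same finite subset of $\N$ are equal iff they have the same shape). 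The key step is then: pick a letter $c$ strictly larger than every letter appearing in $\uord$ and $\vord$, and consider $\uord c$ versus $\vord c$. Appending a maximal letter $c$ creates a new rightmost node: in $\psylv{\uord c}$, the node $c$ sits at the end of the rightmost branch, and its insertion does not disturb the rest of the tree, so $\shsylv{\uord c}$ is $\shsylv{\uord}$ with one extra node appended along the right spine. More usefully, I would use a \emph{prefix} construction: take $\rord = d$ for a letter $d$ strictly smaller than everything in $\uord,\vord$; then $\psylv{d\uord}$ has $d$ as its root with $\psylv{\uord}$ hanging as the right subtree, so $\shsylv{d\uord} \neq \shsylv{d\vord}$ directly reflects $\shsylv{\uord} \neq \shsylv{\vord}$. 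This already gives left syntacticity once we know shapes differ; combined with a maximal-letter suffix for right syntacticity, we are done.

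The main obstacle is the reduction from "$\psylv{\uord} \neq \psylv{\vord}$ as labelled trees" to "$\shsylv{\uord} \neq \shsylv{\vord}$ after a suitable context." When $\cont{\uord} = \cont{\vord}$ the labelled trees carry the same multiset of labels, and since a binary search tree is determined by its shape together with its node-set (the labels being forced, as recalled in the BSTM discussion), different labelled trees on the same node set do have different shapes — so in fact no context is needed for the content-equal case, and $\rord = \sord = \varepsilon$ already witnesses syntacticity. Thus the real content is just the content-unequal case, handled by $\varepsilon$ as well (different node multisets $\Rightarrow$ different shapes, since shapes here retain enough structure — or, if one insists shapes forget labels entirely, one prepends a common large prefix to separate the trees by their differing sizes). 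I would therefore structure the proof as: (1) reduce to comparing shapes; (2) observe that if $\uord \not\equiv_\sylv \vord$ then $\psylv{\uord} \neq \psylv{\vord}$, and these differ either in node-multiset or, with equal node-multiset, in shape; (3) in either case exhibit the (possibly empty) context; (4) invoke Schützenberger involution compatibility for the \#-case.
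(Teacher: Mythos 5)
There is a genuine gap, and it sits exactly where the paper's proof does all its work: the case $\cont{\uord}\neq\cont{\vord}$. Your claim that ``differing content forces differing node sets, hence differing shapes'' is false: the shape function forgets the labels entirely, so for instance $1\not\equiv_\sylv 2$ but $\shsylv{1}=\shsylv{2}$ (a single node), and more generally any two words of the same length produce right strict binary search trees with the same number of nodes, which may well have the same shape (e.g.\ $\psylv{11}$ and $\psylv{22}$). Your fallback --- prepend a common large prefix ``to separate the trees by their differing sizes'' --- fails for the same reason: different content does not imply different length (e.g.\ $112$ vs.\ $122$), so the trees have equally many nodes and no size argument is available. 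This is not a corner case one can wave at; it is the entire content of the proposition, since (as you correctly observe, and as the paper notes) when $\cont{\uord}=\cont{\vord}$ the labelled tree is determined by its shape and its label multiset, so $\uord\not\equiv_\sylv\vord$ already forces $\shsylv{\uord}\neq\shsylv{\vord}$ and the empty context suffices. The paper's proof instead chooses the context letter carefully: for right syntacticity it appends the \emph{least} letter $a$ with $|\uord|_a\neq|\vord|_a$, so that $a$ becomes the root of both trees and the left subtrees of the roots have different sizes; for left syntacticity it prepends the \emph{greatest} such letter $b$ and argues via the inorder reading that the leaf labelled $b$ must occupy different inorder positions in the two trees. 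Neither of these arguments, nor any substitute for them, appears in your proposal.

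A secondary error: under the right strict leaf-insertion (reading the word right-to-left), \emph{appending} a letter makes it the root and \emph{prepending} makes it a leaf; you have this reversed when you assert that $\psylv{d\uord}$ has $d$ as its root. This happens not to matter where you use it (the content-equal case, where no context is needed), but it signals that the mechanics of the insertion --- which are essential to the content-unequal case --- would need to be rechecked throughout. The reduction of the $\sylvh$ case to the $\sylv$ case via the Schützenberger involution is fine, since the involution is an antiautomorphism and you are proving both left and right syntacticity, so the swap of contexts is harmless.
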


\begin{proof}
    We prove the result for the sylvester monoid only, as the case of the \#-sylvester monoid is analogous. Let $\uord,\vord \in \N^*$ be such that $\uord \not\equiv_\sylv \vord$ and $\shsylv{\uord} = \shsylv{\vord}$. Then, since there is at most one way to label a binary tree with the letters of a word and obtain a right strict binary search tree, we must have $\cont{\uord} \neq \cont{\vord}$. 
    
    Let $a \in \N$ be the least letter such that $|\uord|_a \neq |\vord|_a$. Assume, without loss of generality, that $|\uord|_a > |\vord|_a$. Then, $\psylv{\uord a}$ and $\psylv{\vord a}$ will both have root node labelled $a$. Notice that the number of nodes of the left subtree of the root nodes in $\psylv{\uord a}$ (resp. $\psylv{\vord a}$) is given by $\sum_{c \in [a]} |\uord|_c$ (resp. $\sum_{c \in [a]} |\vord|_c$). By hypothesis, if $c < a$, then $|\uord|_c = |\vord|_c$. As such, the left subtree of the root node in $\psylv{\uord a}$ will have more nodes than the left subtree of the root node in $\psylv{\vord a}$, and thus $\shsylv{\uord a} \neq \shsylv{\vord a}$. Therefore, $\equiv_\sylv$ is the right syntactic congruence of $\mathrm{Sh}_{\sylv}$.

    Now, let $b \in \N$ be the greatest letter such that $|\uord|_b \neq |\vord|_b$. Assuming, in order to obtain a contradiction, that $\psylv{b\uord}$ and $\psylv{b\vord}$ have the same shape then we can conclude that $\psylv{b\uord}$ and $\psylv{b\vord}$ both have a leaf node labelled $b$, which has the same position in the inorder traversals of the trees. Recall that the inorder reading of a right strict binary search tree gives the (unique) weakly increasing reading of the tree, and that the first occurrence of $b$ in the inorder reading corresponds to the first node labelled $b$ in the inorder traversal, which is always the node labelled $b$ of greatest depth. Since $\psylv{b\uord}$ and $\psylv{b\vord}$ have different content, their inorder reading is different, and in particular, the shortest suffix containing all the letters $b$ is of greater length in one reading than in the other. As such, the leaf node labelled $b$ is in a different position in the inorder traversals of the trees, which contradicts our hypothesis, and hence implies that $\shsylv{b\uord} \neq \shsylv{b\vord}$. Therefore, $\equiv_\sylv$ is the left syntactic congruence of $\mathrm{Sh}_{\sylv}$.
\end{proof}

Notice that $\mathrm{Sh}_{\sylv}$ is not the same function as $\mathrm{Sh}_{\sylvh}$: they are both functions from $\N^*$ to the set of binary trees, however, for $\word \in \N^*$, $\mathrm{Sh}_{\sylv}(\word)$ (resp. $\mathrm{Sh}_{\sylvh}(\word)$) is defined as the shape of the right (resp. left) strict binary search tree obtained from $\word$ by the right (resp. left) strict insertion algorithm. For example, 
\[
    \mathrm{Sh}_{\sylvh}(12) = \begin{tikzpicture}[tinybst,baseline=-3mm]
            \node {}
            child[missing]
            child { node {}}
        ;
        \end{tikzpicture}
    \quad \text{and} \quad
    \mathrm{Sh}_{\sylv}(12) = \begin{tikzpicture}[tinybst,baseline=-3mm]
            \node {}
            child { node {}}
            child[missing]
        ;
        \end{tikzpicture}
\]

\begin{proposition} \label{BaxterLeftRightSyntactic}
    The Baxter congruence is both the left and right syntactic congruence of $\mathrm{Sh}_\baxt$.
\end{proposition}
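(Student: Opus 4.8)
The plan is to reduce the statement to Proposition~\ref{SylvesterLeftRightSyntactic}, using two facts. First, the Baxter congruence is by definition the meet — that is, the intersection — of $\equiv_\sylv$ and $\equiv_\sylvh$. Second, the Baxter $\psymb$-symbol of $\word \in \N^*$ is the pair of twin binary search trees whose components are the right strict binary search tree $\psylv{\word}$ and the left strict binary search tree $\psylvh{\word}$, so that $\mathrm{Sh}_\baxt(\word) = \bigl(\mathrm{Sh}_\sylv(\word),\mathrm{Sh}_\sylvh(\word)\bigr)$, the pair of the two underlying unlabelled binary trees. Since $\equiv_\baxt$ is a congruence on whose classes $\mathrm{Sh}_\baxt$ is constant, it is compatible with $\mathrm{Sh}_\baxt$, so, as noted before Proposition~\ref{SylvesterLeftRightSyntactic}, it suffices to prove the necessary condition in the definitions of left and right syntacticity: that two words which are not $\equiv_\baxt$-congruent are separated by $\mathrm{Sh}_\baxt$ after appending (for right syntacticity) or prepending (for left syntacticity) a suitable word.

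For right syntacticity, let $\uord,\vord \in \N^*$ with $\uord \not\equiv_\baxt \vord$. Then $\uord \not\equiv_\sylv \vord$ or $\uord \not\equiv_\sylvh \vord$, since $\equiv_\baxt = \equiv_\sylv \cap \equiv_\sylvh$. If $\uord \not\equiv_\sylv \vord$, Proposition~\ref{SylvesterLeftRightSyntactic} gives, via the right syntacticity of $\mathrm{Sh}_\sylv$, a word $\sord$ with $\mathrm{Sh}_\sylv(\uord\sord) \neq \mathrm{Sh}_\sylv(\vord\sord)$; as $\mathrm{Sh}_\sylv$ is the first coordinate of $\mathrm{Sh}_\baxt$, this yields $\mathrm{Sh}_\baxt(\uord\sord) \neq \mathrm{Sh}_\baxt(\vord\sord)$. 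If instead $\uord \not\equiv_\sylvh \vord$, the same reasoning with the right syntacticity of $\mathrm{Sh}_\sylvh$ produces $\sord$ with $\mathrm{Sh}_\sylvh(\uord\sord) \neq \mathrm{Sh}_\sylvh(\vord\sord)$, hence again $\mathrm{Sh}_\baxt(\uord\sord) \neq \mathrm{Sh}_\baxt(\vord\sord)$. Thus $\equiv_\baxt$ is the right syntactic congruence of $\mathrm{Sh}_\baxt$. Left syntacticity follows by the identical argument, invoking instead the left-syntacticity halves of Proposition~\ref{SylvesterLeftRightSyntactic} to obtain a prefix $\rord$.

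I do not anticipate a real obstacle: the proposition is essentially the formal observation that the (left or right) syntactic congruence of a pair-valued function is the intersection of the syntactic congruences of its coordinate functions, specialised to $\mathrm{Sh}_\baxt = (\mathrm{Sh}_\sylv,\mathrm{Sh}_\sylvh)$ and fed through Proposition~\ref{SylvesterLeftRightSyntactic}. The only point calling for care is to confirm that $\mathrm{Sh}_\baxt$ genuinely retains \emph{both} underlying binary trees of a pair of twin binary search trees, so that equality of Baxter shapes forces equality of the sylvester shapes and of the \#-sylvester shapes; this is immediate from the convention that the shape of a pair of combinatorial objects is the pair of their shapes. In particular, and in contrast with the proof of Proposition~\ref{SylvesterLeftRightSyntactic}, no Baxter-specific insertion argument is required.
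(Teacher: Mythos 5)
Your proof is correct and follows essentially the same route as the paper: reduce to Proposition~\ref{SylvesterLeftRightSyntactic} via $\equiv_\baxt = \equiv_\sylv \cap \equiv_\sylvh$ and the observation that the shape of a pair of twin binary search trees determines the shapes of both components. The paper merely compresses your two cases into a ``without loss of generality'' step.
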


\begin{proof}
    Let $\uord,\vord \in \N^*$ be such that $\uord \not\equiv_\baxt \vord$. Then, $\uord \not\equiv_\sylv \vord$ or $\uord \not\equiv_\sylvh \vord$. Assume, without loss of generality, that $\uord \not\equiv_\sylv \vord$. Since $\equiv_\sylv$ is both the left and right syntactic congruence of $\mathrm{Sh}_{\sylv}$, there exist $\rord,\sord \in \N^*$ such that $\shsylv{\rord \uord} \neq \shsylv{\rord \vord}$ and $\shsylv{\uord \sord} \neq \shsylv{\vord \sord}$. Since the shape of pairs of twin binary search trees is determined by the shape of its left and right binary search trees, we have that $\shbaxt{\rord\uord} \neq \shbaxt{\rord\vord}$ and $\shbaxt{\uord\sord} \neq \shbaxt{\vord\sord}$.
\end{proof}

\begin{proposition} \label{StalacticSyntactic}
    The right-stalactic (resp. left-stalactic) congruence is the left (resp. right) syntactic congruence of $\mathrm{Sh}_\rst$ (resp. $\mathrm{Sh}_\lst$), but not its right (resp. left) syntactic congruence.
\end{proposition}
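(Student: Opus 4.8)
The statement has two halves: (1) the right-stalactic congruence $\equiv_\rst$ is the \emph{left} syntactic congruence of $\mathrm{Sh}_\rst$ (and dually $\equiv_\lst$ is the right syntactic congruence of $\mathrm{Sh}_\lst$); and (2) it is \emph{not} the right syntactic congruence of $\mathrm{Sh}_\rst$ (dually for $\lst$). For (1), since we already know $\equiv_\rst$ is compatible with $\mathrm{Sh}_\rst$ (any congruent words have the same stalactic tableau, hence the same shape, and this holds after left-multiplication too), we only need the converse implication: if $\uord \not\equiv_\rst \vord$, then there exists $\rord \in \N^*$ with $\shrst{\rord\uord} \neq \shrst{\rord\vord}$. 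Recall that $\uord \equiv_\rst \vord$ iff $\cont{\uord} = \cont{\vord}$ and $\overleftarrow{\rho_\uord} = \overleftarrow{\rho_\vord}$. So the plan is to split into two cases. If $\cont{\uord} \neq \cont{\vord}$, pick the least letter $a$ with, say, $|\uord|_a > |\vord|_a$; prepending nothing (i.e. $\rord = \varepsilon$), or a suitable prefix, the column heights differ — in fact the shape of a stalactic tableau records the multiset of column heights, so $\shrst{\uord} \neq \shrst{\vord}$ already, no $\rord$ needed. If $\cont{\uord} = \cont{\vord}$ but $\overleftarrow{\rho_\uord} \neq \overleftarrow{\rho_\vord}$, then all multiplicities agree, so the shapes of $\prst{\uord}$ and $\prst{\vord}$ (as unordered multisets of column lengths) coincide — the point is that the \emph{shape} of a stalactic tableau, being a composition diagram, \emph{does} remember the left-to-right sequence of column heights. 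Thus I must find $\rord$ so that the sequence of column heights differs. Here I would use that $\overleftarrow{\rho_\uord} \neq \overleftarrow{\rho_\vord}$ means there are letters $x,y$ ordered oppositely by last occurrence in $\uord$ versus $\vord$; choosing $\rord$ to be a word that makes $x$ and $y$ the only letters with a certain multiplicity (e.g. prepend enough copies of all \emph{other} letters so that $x$ and $y$ have strictly smaller column heights than everything else, or arrange $|\rord x|_x \neq |\rord y|_y$ in $\rord\uord$ and $\rord\vord$) forces the two shapes to disagree in the positions of the columns of height equal to those of $x$ and $y$. A clean choice: let $z_1, \dots, z_t$ enumerate $\supp{\uord} \setminus \{x,y\}$ and set $\rord = z_1^{N} \cdots z_t^{N} x^{0} y^{0}$ for $N$ large; then in both $\rord\uord$ and $\rord\vord$ the columns labelled $z_i$ have height $> $ those of $x,y$, and moreover one can further pad to make $|\rord\uord|_x \neq |\rord\uord|_y$, so the two columns of distinct, minimal heights sit at positions recording $\overleftarrow{\rho}$, which differ. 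The main obstacle is making this padding argument fully rigorous: I need the heights of the $x$- and $y$-columns to be distinct from each other \emph{and} distinct from all other column heights, simultaneously in $\rord\uord$ and $\rord\vord$, while the relative order of the $x$- and $y$-columns (governed by last occurrences, hence by $\overleftarrow{\rho}$) is preserved from $\uord,\vord$ — this requires checking that prepending a prefix does not change the relative order of last occurrences of letters already in the word, which is clear since last occurrences are at the far right.

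For (2), the negative half, I need a concrete pair $\uord \not\equiv_\rst \vord$ such that $\shrst{\uord\sord} = \shrst{\vord\sord}$ for \emph{every} $\sord \in \N^*$. The natural candidates are words differing only by a permutation of their support that does not change the \emph{multiset} of column heights — for then $\shrst{\uord} = \shrst{\vord}$ as composition diagrams only if the height \emph{sequences} agree, so I actually want $\uord,\vord$ with the same content but $\overleftarrow{\rho_\uord} \neq \overleftarrow{\rho_\vord}$, yet such that right-multiplication can never separate the shapes. Wait — right-multiplication by $\sord$ affects $\overleftarrow{\rho}$ (it changes last occurrences), so this needs care. Try the simplest example: $\uord = 12$, $\vord = 21$. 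Then $\prst{12}$ and $\prst{21}$ are both two simple columns, so $\shrst{12} = \shrst{21}$ (two height-$1$ columns, and since they're both simple the shape is the same composition diagram — a single row of two cells). For any $\sord$, $\uord\sord = 12\sord$ and $\vord\sord = 21\sord$ have the same content, and $\prst{12\sord}$, $\prst{21\sord}$ have the same multiset of column heights; I claim they even have the same height \emph{sequence}, because $\overleftarrow{\rho_{12\sord}} = \overleftarrow{\rho_{21\sord}}$: the last occurrences of $1$ and of $2$ are determined entirely by $\sord$ (if $1,2 \in \supp\sord$) or, if not, by the suffix, and in the degenerate case $\sord = \varepsilon$ the tie is broken by... here I must check the convention for $\overleftarrow{\rho}$ when a letter occurs once — its ``last occurrence'' is its only occurrence, so $\overleftarrow{\rho_{12}}$ orders $1$ before $2$ while $\overleftarrow{\rho_{21}}$ orders $2$ before $1$, giving \emph{different} $\overleftarrow{\rho}$ but the \emph{same shape} because both columns are height $1$ and the shape of a one-row composition diagram with two cells is symmetric. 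So the key observation is: the shape of a stalactic tableau does not distinguish the labels, only the sequence of column heights, and a transposition of two \emph{simple} columns is invisible to the shape. Thus $\uord = 12, \vord = 21$ works: $12 \not\equiv_\rst 21$ (different $\overleftarrow{\rho}$), but for all $\sord$, $\prst{12\sord}$ and $\prst{21\sord}$ have the same sequence of column heights — because swapping the positions of $1$ and $2$ in the word only swaps two columns of $\prst{}$, and as long as both remain height $1$ (which holds iff $1,2$ each occur exactly once in $12\sord$, i.e. $1,2 \notin \supp\sord$) the shape is unchanged; and if $1 \in \supp\sord$ or $2 \in \supp\sord$, then the last occurrence of that letter is inside $\sord$, so $\overleftarrow{\rho_{12\sord}} = \overleftarrow{\rho_{21\sord}}$ outright, hence $\prst{12\sord} = \prst{21\sord}$ (not just same shape). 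This casework closes it. The dual statements for $\lst$ follow by the symmetry swapping left and right (reverse of words / swapping $\overrightarrow\rho$ and $\overleftarrow\rho$), which the excerpt's setup makes routine.

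In summary: the plan is to prove the left-syntacticity of $\equiv_\rst$ for $\mathrm{Sh}_\rst$ by a two-case argument (content differs — separated with empty prefix; $\overleftarrow\rho$ differs — separated by a carefully padded prefix that makes the offending columns have distinct minimal heights), using the characterisation $\uord\equiv_\rst\vord \iff \cont{\uord}=\cont{\vord}\wedge\overleftarrow{\rho_\uord}=\overleftarrow{\rho_\vord}$; and then to refute right-syntacticity via the explicit pair $12,21$, exploiting that the shape function forgets labels and is blind to transpositions of simple columns. I expect the padding construction in the first part to be the main technical obstacle — specifically, guaranteeing that a single prefix simultaneously makes the two ``witness'' columns have heights that are distinct from each other and strictly smaller than every other column height, in both $\rord\uord$ and $\rord\vord$, without disturbing the relative order of their last occurrences.
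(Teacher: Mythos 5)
There is a genuine gap in your first case. You claim that if $\cont{\uord} \neq \cont{\vord}$ then $\shrst{\uord} \neq \shrst{\vord}$ already, ``no $\rord$ needed'', because the shape records the multiset of column heights. But different contents do not force different multisets (or sequences) of column heights: $\uord = 12$ and $\vord = 13$ have different contents, yet $\prst{12}$ and $\prst{13}$ are both a single row of two cells; likewise $1122$ and $3344$. So the empty prefix does not separate such pairs and this case of your argument collapses. The fix is the one the paper uses: first reduce to the subcase $\shrst{\uord}=\shrst{\vord}$ (otherwise $\rord=\varepsilon$ works), then prepend the letter $a$ with $|\uord|_a > |\vord|_a$. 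The insertion lengthens the $a$-column, which has different heights in the two tableaux (or creates a fresh column in exactly one of them), and a one-line check on the height sequences shows $\shrst{a\uord}\neq\shrst{a\vord}$.

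Your second case (equal content, $\overleftarrow{\rho_\uord}\neq\overleftarrow{\rho_\vord}$) can be made to work, but the padding construction you single out as the main technical obstacle is unnecessary: the paper simply prepends one letter $b$ with $\rho_{\prst{\uord}}(b) \neq \rho_{\prst{\vord}}(b)$; since the shapes were equal before the insertion, incrementing the heights of columns sitting at two \emph{different} positions immediately produces different composition diagrams. Your refutation of right syntacticity is correct and in fact simpler than the paper's: you use $12$ versus $21$ where the paper uses $1221$ versus $1122$, and your case analysis on whether $1$ or $2$ occurs in $\sord$ (equal $\overleftarrow{\rho}$ outright when at least one does, a transposition of two simple columns, invisible to the shape, when neither does) is sound. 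The reduction of the $\lst$ statements to the $\rst$ ones by symmetry matches the paper.
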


\begin{proof}
    We prove the result for the right-stalactic monoid only, as the case of the left-stalactic monoid is analogous. Let $\uord,\vord \in \N^*$ be such that $\uord \not\equiv_\rst \vord$ and $\shrst{\uord} = \shrst{\vord}$. Notice that, if $\cont{\uord} \neq \cont{\vord}$, then, for $a \in \N$ such that $|\uord|_a > |\vord|_a$, we have $\shrst{a\uord} \neq \shrst{a\vord}$, since these stalactic tableaux are obtained by adding a new $a$-cell to the column of $a$-cells, which has a different height in $\prst{\uord}$ than in $\prst{\vord}$. On the other hand, if $\cont{\uord} = \cont{\vord}$, then $\rho_{\prst{\uord}} \neq \rho_{\prst{\vord}}$. Choosing $b \in \N$ such that $\rho_{\prst{\uord}}(b) \neq \rho_{\prst{\vord}}(b)$, we have that $\shrst{b\uord} \neq \shrst{b\vord}$, since we are increasing the heights of different columns in the shapes of the stalactic tableaux. Therefore, $\equiv_\rst$ is the left syntactic congruence of $\mathrm{Sh}_{\rst}$.

    On the other hand, consider the words $1221$ and $1122$. Notice that
    \begin{center}  
	$\prst{1221} = \tikz[tableau]\matrix{
		2 \& 1 \\
		2 \& 1 \\
	}; \quad \text{and} \quad
    \prst{1122} = \tikz[tableau]\matrix{
		1 \& 2 \\
		1 \& 2 \\
	};$
    \end{center}
    and in particular, $\shrst{1221} = \shrst{1122}$ and $\cont{1221} = \cont{1122}$. Furthermore, notice that, for any $\uord \in \N^*$, we have that $\shrst{1221\uord} = \shrst{1122\uord}$, since $\prst{1221\uord}$ (resp. $\prst{1122\uord}$) is obtained from $\prst{\uord}$ by attaching to the left of it all columns of $\prst{1221}$ (resp. $\prst{1122}$) labelled by letters which do not occur in $\uord$, followed by attaching the remaining columns to the bottom of their respectively labelled columns in $\prst{\uord}$. Therefore, $\equiv_\rst$ is not the right syntactic congruence of $\mathrm{Sh}_{\rst}$.
\end{proof}

\begin{proposition} \label{MeetStalacticSyntactic}
    The meet-stalactic congruence is neither the left nor the right syntactic congruence of $\mathrm{Sh}_\mst$, but it is still the (two-sided) syntactic congruence of $\mathrm{Sh}_\mst$.
\end{proposition}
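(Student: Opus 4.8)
The plan is to split the statement into the two-sided part, which follows formally from Proposition~\ref{StalacticSyntactic}, and the two negative one-sided parts, each of which I would establish with an explicit pair of witnessing words.

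\emph{Two-sided syntacticity.} The first observation is that $\mathrm{Sh}_\mst$ is by definition the tuple function $\word\mapsto(\shlst{\word},\shrst{\word})$, and that the syntactic congruence of any tuple $(f,g)$ is the meet of the syntactic congruences of $f$ and of $g$, since $(f,g)(\rord\uord\sord)=(f,g)(\rord\vord\sord)$ holds precisely when both coordinates agree. By Proposition~\ref{StalacticSyntactic}, $\equiv_\rst$ and $\equiv_\lst$ are the left and right syntactic congruences of $\mathrm{Sh}_\rst$ and $\mathrm{Sh}_\lst$ respectively; being genuine congruences, and since one-sided syntacticity of a congruence implies (two-sided) syntacticity, they are also the (two-sided) syntactic congruences of $\mathrm{Sh}_\rst$ and $\mathrm{Sh}_\lst$. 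Hence the syntactic congruence of $\mathrm{Sh}_\mst$ is $\equiv_\lst\cap\equiv_\rst=\equiv_\mst$. (One can also argue this directly: the forward implication is immediate since $\equiv_\mst$ is a congruence, and for the converse $\uord\not\equiv_\mst\vord$ forces $\uord\not\equiv_\rst\vord$ or $\uord\not\equiv_\lst\vord$, and then Proposition~\ref{StalacticSyntactic} supplies a word $\rord$ with $\shrst{\rord\uord}\neq\shrst{\rord\vord}$, or a word $\sord$ with $\shlst{\uord\sord}\neq\shlst{\vord\sord}$, which already separates $\mathrm{Sh}_\mst$.) In particular $\equiv_\mst$ is contained in both the left and the right syntactic congruence of $\mathrm{Sh}_\mst$, so to finish it is enough to show that each of these containments is strict.

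\emph{Failure of left syntacticity.} I would take $\uord=1122$ and $\vord=2112$, which have equal content and equal $\overleftarrow{\rho}$ but distinct $\overrightarrow{\rho}$; hence $\uord\equiv_\rst\vord$ while $\uord\not\equiv_\lst\vord$, so $\uord\not\equiv_\mst\vord$. The point to check is that $\mathrm{Sh}_\mst(\rord\uord)=\mathrm{Sh}_\mst(\rord\vord)$ for every $\rord\in\N^*$. The right components agree because $\equiv_\rst$ is a congruence, so $\prst{\rord\uord}=\prst{\rord\vord}$. For the left components one uses that the shape of $\plst{\word}$ is the sequence of column heights $|\word|_x$ listed in the order $\overrightarrow{\rho_\word}$ (Lemma~\ref{lemma:column_reading_stalactic_tableau}): since $\cont{\rord\uord}=\cont{\rord\vord}$, and since $\overrightarrow{\rho_{\rord\uord}}$ and $\overrightarrow{\rho_{\rord\vord}}$ can differ only in the relative order of $1$ and $2$, and then only when neither $1$ nor $2$ lies in $\supp{\rord}$ — in which case $|\rord\uord|_1=|\rord\uord|_2=2$, so the columns labelled $1$ and $2$ are of equal height and exchanging them does not affect the shape — it follows that $\shlst{\rord\uord}=\shlst{\rord\vord}$. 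Thus $\equiv_\mst$ is strictly smaller than the left syntactic congruence of $\mathrm{Sh}_\mst$.

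\emph{Failure of right syntacticity.} This I would obtain by the mirror-image argument, interchanging the roles of $\overrightarrow{\rho}$ and $\overleftarrow{\rho}$ (equivalently, of $\equiv_\lst$ and $\equiv_\rst$): the pair $\uord=2211$, $\vord=2112$ satisfies $\uord\equiv_\lst\vord$ but $\uord\not\equiv_\rst\vord$, so $\uord\not\equiv_\mst\vord$, while $\shlst{\uord\sord}=\shlst{\vord\sord}$ for all $\sord$ because $\equiv_\lst$ is a congruence, and $\shrst{\uord\sord}=\shrst{\vord\sord}$ by the same equal-height argument applied to $\overleftarrow{\rho}$. I expect the main obstacle to be exactly these one-sided computations: one must be sure that prepending (for the left case) or appending (for the right case) an arbitrary word can never pull the columns labelled $1$ and $2$ apart so as to make their heights distinguishable — which is the reason for choosing witnessing words in which every letter occurs exactly twice and which keep equal whichever of $\overrightarrow{\rho}$, $\overleftarrow{\rho}$ is preserved by the multiplication in question.
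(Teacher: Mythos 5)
Your proposal is correct and follows essentially the same route as the paper: the two-sided part reduces to Proposition~\ref{StalacticSyntactic} via the componentwise nature of $\mathrm{Sh}_\mst$, and the one-sided failures are witnessed by length-four words over $\{1,2\}$ with both multiplicities equal to $2$ (your pair $1122$, $2112$ for the left case is the paper's, and $2211$, $2112$ is a mirror of its pair $1122$, $1221$ for the right case). Your explicit equal-column-height justification for why the shapes cannot be separated is a slightly more self-contained version of the paper's appeal to the reasoning in the preceding proof, but the argument is the same.
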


\begin{proof}
    Consider the words $1122$, $1221$ and $2112$, whose pairs of twin stalactic tableaux are, respectively,
    \begin{center}  
	$\left(\tikz[tableau]\matrix{
		1 \& 2 \\
		1 \& 2 \\
	}; , \tikz[tableau]\matrix{
		1 \& 2 \\
		1 \& 2 \\
	};\right), \quad
    \left(\tikz[tableau]\matrix{
		1 \& 2 \\
		1 \& 2 \\
	}; , \tikz[tableau]\matrix{
		2 \& 1 \\
		2 \& 1 \\
	};\right), \quad \text{and} \quad
    \left(\tikz[tableau]\matrix{
		2 \& 1 \\
		2 \& 1 \\
	}; , \tikz[tableau]\matrix{
		1 \& 2 \\
		1 \& 2 \\
	};\right)$.
    \end{center}
    Notice that these pairs of twin stalactic tableaux all have the same content and shape, and that $\plst{1122} = \plst{1221}$ and $\prst{1122} = \prst{2112}$. By the reasoning given in the previous proof, we have that, for any $\uord \in \N^*$, we have that $\shrst{1122\uord} = \shrst{1221\uord}$ and, by parallel reasoning, $\shlst{\uord 1122} = \shlst{\uord 2112}$. Furthermore, since $\mst$ is a congruence, we also have $\plst{1122 \uord} = \plst{1221 \uord}$ and $\prst{\uord 1122} = \prst{\uord 2112}$. As such, we have that $\shmst{1122\uord} = \shmst{1221\uord}$ and $\shmst{\uord 1122} = \shmst{\uord 2112}$. Therefore, $\equiv_\mst$ is neither the left nor the right syntactic congruence of $\mathrm{Sh}_\mst$.

    Let $\uord,\vord \in \N^*$ be such that $\uord \not\equiv_\mst \vord$. Then, $\uord \not\equiv_\rst \vord$ or $\uord \not\equiv_\lst \vord$. Assume, without loss of generality, that $\uord \not\equiv_\rst \vord$. Since $\equiv_\rst$ is the left syntactic congruence of $\mathrm{Sh}_{\rst}$, there exists $\word \in \N^*$ such that $\shrst{\word\uord} \neq \shrst{\word\vord}$. Since the shape of pairs of twin stalactic tableaux is determined by the shape of its (left and right) twin stalactic tableaux, we have that $\shmst{\word\uord} \neq \shmst{\word\vord}$. Similarly, one can show that if $\uord \not\equiv_\lst \vord$, then there exists $\word \in \N^*$ such that $\shmst{\uord\word} \neq \shmst{\vord\word}$. Therefore, $\equiv_\mst$ is the syntactic congruence of $\mathrm{Sh}_\mst$.
\end{proof}

\begin{proposition} \label{TaigaSyntactic}
    The right-taiga (resp. left-taiga) congruence is both the left and right syntactic congruence of $\mathrm{Sh}_\rtg$ (resp. $\mathrm{Sh}_\ltg$).
\end{proposition}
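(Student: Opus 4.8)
The plan is to mirror the structure of the proof of Proposition~\ref{SylvesterLeftRightSyntactic} for the sylvester case, since the taiga congruence is generated by the sylvester relations together with the (right-)stalactic relations. We work with the right-taiga case only, the left-taiga case being analogous via the word-reversal (or Schützenberger involution) symmetry noted earlier. Since $\equiv_\rtg$ is already a two-sided congruence, it suffices to verify the forward implication of syntacticity: given $\uord \not\equiv_\rtg \vord$ with $\shrtg{\uord} = \shrtg{\vord}$, we must exhibit a context distinguishing them, separately on the left and on the right.

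First I would observe that if $\shrtg{\uord} = \shrtg{\vord}$ then, by the remark preceding Proposition~\ref{StalacticSyntactic}'s style of argument (namely that a BTM shape together with content determines a BSTM uniquely, and here the shape includes multiplicities), the only way $\uord$ and $\vord$ can fail to be $\equiv_\rtg$-congruent while having equal shape is $\cont{\uord} \neq \cont{\vord}$: if the contents agreed, the BSTMs $\prtg{\uord}$ and $\prtg{\vord}$ would have the same shape-with-multiplicities and the same node set, hence be identical, forcing $\uord \equiv_\rtg \vord$. So assume $\cont{\uord} \neq \cont{\vord}$. For the \textbf{right} syntactic direction, pick $a \in \N$ least with $|\uord|_a \neq |\vord|_a$, say $|\uord|_a > |\vord|_a$. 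Then $\prtg{a\uord}$ and $\prtg{a\vord}$ (note: right-taiga reads right-to-left, so prefixing $a$ makes it the last-read letter, landing at the root region) both have their root on the letter $a$; the left subtree of this root collects exactly the nodes with letters in $[a-1]$, and by minimality of $a$ these agree in number for $\uord$ and $\vord$, so the left-subtree shapes of $\prtg{a\uord}$ and $\prtg{a\vord}$ are forced to differ precisely because the multiplicity recorded at the root node differs ($|\uord|_a \neq |\vord|_a$ after prefixing $a$, these become $|\uord|_a+1$ versus $|\vord|_a+1$). Hence $\shrtg{a\uord} \neq \shrtg{a\vord}$, which handles one side. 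Actually, to be careful: prefixing $a$ in the \emph{right} reading corresponds to inserting $a$ first-from-the-right, i.e.\ at the very bottom of the right-to-left pass; I would instead use the same trick as in the sylvester proof with $\uord a$ versus $\vord a$ (appending, so $a$ is read first) to make $a$ the root of $\prtg{\uord a}$, and then the multiplicity at that root node is $|\uord|_a+1 \neq |\vord|_a+1$, giving distinct shapes-with-multiplicities immediately; this is cleaner.

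For the \textbf{left} syntactic direction, take $b \in \N$ greatest with $|\uord|_b \neq |\vord|_b$ and consider $b\uord$ versus $b\vord$. The inorder reading of a BSTM is its unique weakly increasing reading (with multiplicities), so since $\cont{b\uord} \neq \cont{b\vord}$ the multiset of letters—hence the inorder readings—differ, and in particular the multiplicity attached to the \emph{last} node of the inorder traversal, whose letter is the largest letter present, will differ (this is where maximality of $b$ and the fact that prefixing $b$ cannot decrease its count is used; the node of largest letter records a multiplicity that is $|\uord|_b$ or $|\uord|_b+1$ etc., and these disagree with the $\vord$ side). Since the shape function records multiplicities, $\shrtg{b\uord} \neq \shrtg{b\vord}$. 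The main obstacle I anticipate is getting the bookkeeping exactly right about \emph{which} letter's multiplicity ends up at a canonical position (root versus last-inorder-node) after prefixing or appending, and making sure that when contents differ the minimal/maximal disagreeing letter really does land somewhere that the shape-with-multiplicities sees; once that is pinned down, both directions reduce to the observation that equal shape-with-multiplicities plus equal content forces equal tree, exactly as in the sylvester argument but with the extra multiplicity data doing the separating work in place of subtree sizes. One should also double-check the degenerate case where $b$ (resp.\ $a$) does not occur in $\uord$ or $\vord$ at all, but then prefixing/appending it creates a node with multiplicity $1$ on one side whose position or presence differs, which is even easier.
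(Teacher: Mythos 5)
Your right-syntactic direction is correct: appending the least disagreeing letter $a$ makes it the root of $\prtg{\uord a}$ and $\prtg{\vord a}$ with multiplicities $|\uord|_a+1 \neq |\vord|_a+1$, and since the shape function retains multiplicities this separates the shapes. But your left-syntactic direction has a genuine gap. You prepend the greatest disagreeing letter $b$ and argue that the multiplicity of the \emph{last} node in the inorder traversal distinguishes the trees, on the grounds that this node carries the largest letter present; that is false in general, since $\uord$ and $\vord$ may contain letters larger than $b$ (with equal counts), and then the last inorder node is not the $b$-node at all. The argument as written does not pin down any node of $\prtg{b\uord}$ and $\prtg{b\vord}$ whose multiplicity is forced to differ.

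The missing idea is to push your opening observation one step further. You correctly note that a BTM shape plus a letter set admits a unique BSTM labelling, but you only use this to conclude $\cont{\uord}\neq\cont{\vord}$. The same fact gives the stronger conclusion $\supp{\uord}\neq\supp{\vord}$: if the supports agreed, the unique labelling of the common shape by the common support would force $\prtg{\uord}=\prtg{\vord}$. This is exactly what the paper does, and it collapses both directions to a node count: pick any $a$ with $|\uord|_a\neq 0$ and $|\vord|_a=0$; then $\prtg{\vord a}$ and $\prtg{a\vord}$ each have one more node than $\prtg{\vord}$, while $\prtg{\uord a}$ and $\prtg{a\uord}$ have the same number of nodes as $\prtg{\uord}$, so the shapes differ on both sides with no case analysis. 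Your left-hand argument can in fact be repaired — one can show that, given equal shapes, the greatest count-disagreeing letter necessarily lies in exactly one of the two supports — but that repair runs through the support observation anyway, so you should state and use it directly.
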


\begin{proof}
    We prove the result for the right-taiga monoid only, as the case of the left-taiga monoid is analogous. Let $\uord,\vord \in \N^*$ be such that $\uord \not\equiv_\rtg \vord$ and $\shrtg{\uord} = \shrtg{\vord}$. Then, by the observation given after \cite[Definition~8]{priez_binary_trees}, we must have $\supp{\uord} \neq \supp{\vord}$.

    Let $a \in \N$ be any letter such that $|\uord|_a \neq 0$ and $|\vord|_a = 0$. Then, $\prtg{\uord a}$ and $\prtg{\vord a}$ will have different shapes, since $\prtg{\vord a}$ will have one more node than $\prtg{\uord a}$. For the same reason, $\prtg{a\uord}$ and $\prtg{a\vord}$ will have different shapes. Therefore, $\equiv_\rtg$ is both the right and left syntactic congruence of $\mathrm{Sh}_{\rtg}$.
\end{proof}

As with the case of $\mathrm{Sh}_{\sylv}$ and $\mathrm{Sh}_{\sylvh}$, notice that $\mathrm{Sh}_{\rtg}$ is not the same function as $\mathrm{Sh}_{\ltg}$: they are both functions from $\N^*$ to the set of BTMs, however, for $\word \in \N^*$, $\mathrm{Sh}_{\rtg}(\word)$ (resp. $\mathrm{Sh}_{\ltg}(\word)$) is defined as the shape of the BSTM obtained from $\word$ by reading it from right-to-left (resp. left-to-right) and inserting letters using Algorithm \hyperref[alg:TaigaLeafInsertion]{\textsf{TgLI}}. 

\begin{proposition} \label{MeetTaigaSyntactic}
    The meet-taiga congruence is both the left and right syntactic congruence of $\mathrm{Sh}_\mtg$.
\end{proposition}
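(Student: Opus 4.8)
The strategy is to mimic the proof of Proposition~\ref{MeetStalacticSyntactic} but simplified, since the obstruction that appears in the meet-stalactic case (the words $1122$, $1221$, $2112$ sharing $\mathrm{Sh}_\rst$ and $\mathrm{Sh}_\lst$ values after one-sided concatenation) does not arise here. We only need to prove the necessary condition of two-sided syntacticity: given $\uord,\vord \in \N^*$ with $\uord \not\equiv_\mtg \vord$, we must exhibit $\rord,\sord \in \N^*$ with $\shmtg{\rord\uord\sord} \neq \shmtg{\rord\vord\sord}$. In fact I would prove the stronger statement that $\equiv_\mtg$ is both the left and the right syntactic congruence of $\mathrm{Sh}_\mtg$.

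First I would observe that $\uord \not\equiv_\mtg \vord$ means $\uord \not\equiv_\rtg \vord$ or $\uord \not\equiv_\ltg \vord$; by symmetry (using compatibility of $\mtg$ with word reversal, Proposition~\ref{prop:mtg_compatible}, and the analogous behaviour of $\mathrm{Sh}_\mtg$ under reversal) assume $\uord \not\equiv_\rtg \vord$. By Proposition~\ref{TaigaSyntactic}, $\equiv_\rtg$ is both the left and right syntactic congruence of $\mathrm{Sh}_\rtg$, so there exist $\rord,\sord \in \N^*$ with $\shrtg{\rord\uord} \neq \shrtg{\rord\vord}$ and with $\shrtg{\uord\sord} \neq \shrtg{\vord\sord}$. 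The key point is that the shape of a pair of twin BSTMs is determined by the shapes of its two component BSTMs (by Definitions~\ref{definition:TwinBTMCondition} and \ref{definition:TwinBSTMCondition}): the underlying pair of binary trees shape is literally the pair of the two underlying binary tree shapes. Since $\shmtg{\word}$ records precisely the pair $(\mathrm{Sh}_\ltg(\word), \mathrm{Sh}_\rtg(\word))$ of underlying BTM shapes, having different $\mathrm{Sh}_\rtg$-values forces different $\mathrm{Sh}_\mtg$-values. Hence $\shmtg{\rord\uord} \neq \shmtg{\rord\vord}$ and $\shmtg{\uord\sord} \neq \shmtg{\vord\sord}$. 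The symmetric argument handles the case $\uord \not\equiv_\ltg \vord$ using the left-taiga half. This establishes that $\equiv_\mtg$ is the left syntactic congruence (choosing the first witness) and the right syntactic congruence (choosing the second), and a fortiori the two-sided syntactic congruence.

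I do not anticipate a serious obstacle: the only thing to be careful about is that the reversal symmetry genuinely swaps $\rtg$ and $\ltg$ and likewise swaps $\mathrm{Sh}_\rtg$ with $\mathrm{Sh}_\ltg$ — this is exactly the content of the compatibility-with-word-reversal statements and the right/left-oriented definitions of the taiga insertions, so it is routine. The other minor point is to note explicitly that $\mathrm{Sh}_\mtg(\word)$, as a shape of a pair of twin BSTMs, retains the multiplicity labels (per the Remark preceding Proposition~\ref{SylvesterLeftRightSyntactic}), so "shape of the pair" really does decompose as the pair of shapes of the two BSTMs, and a difference in either coordinate is visible. With that in hand the proof is a two-line reduction to Proposition~\ref{TaigaSyntactic}, parallel to how Proposition~\ref{BaxterLeftRightSyntactic} reduces to Proposition~\ref{SylvesterLeftRightSyntactic}.
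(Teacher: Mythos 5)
Your proposal is correct and takes essentially the same route as the paper, which simply states that the proof is analogous to that of Proposition~\ref{BaxterLeftRightSyntactic}: reduce to the one-sided syntacticity of $\equiv_\rtg$ and $\equiv_\ltg$ from Proposition~\ref{TaigaSyntactic} and use the fact that the shape of a pair of twin BSTMs is the pair of shapes of its components. The detour through word-reversal symmetry is unnecessary (both taiga congruences are already both left and right syntactic, so each case can be handled directly), but this does not affect correctness.
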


\begin{proof}
    The proof is analogous to the proof of Proposition~\ref{BaxterLeftRightSyntactic}.
\end{proof}

In Table \ref{table:syntacticity}, we give a summary of the results on syntacticity of plactic-like congruences.

\begin{table}[h]
    \caption{Syntacticity of plactic-like congruences with regards to shape functions.}
    \label{table:syntacticity}
    \begin{tabular}{c|cccc}
        \toprule
        \textbf{Congruence} & \textbf{Syntactity} & \textbf{Left Syn.} & \textbf{Right Syn.} & \textbf{Result}\\
        \midrule
        $\plac$ & \ding{51} & \ding{51} & \ding{51} & \cite[Theorem~13.3]{abram_reutenauer_stylic_monoid_2021}\\
        $\hypo$ & \ding{51} & \ding{51} & \ding{51} & \cite[Subsection~5.4]{novelli_hypoplactic}\\
        $\sylv$ & \ding{51} & \ding{51} & \ding{51} & Proposition~\ref{SylvesterLeftRightSyntactic}\\
        $\sylvh$ & \ding{51} & \ding{51} & \ding{51} & Proposition~\ref{SylvesterLeftRightSyntactic}\\
        $\baxt$ & \ding{51} & \ding{51} & \ding{51} & Proposition~\ref{BaxterLeftRightSyntactic}\\
        $\lst$ & \ding{51} & \ding{55} & \ding{51} & Proposition~\ref{StalacticSyntactic}\\
        $\rst$ & \ding{51} & \ding{51} & \ding{55} & Proposition~\ref{StalacticSyntactic}\\
        $\mst$ & \ding{51} & \ding{55} & \ding{55} & Proposition~\ref{MeetStalacticSyntactic}\\
        $\ltg$ & \ding{51} & \ding{51} & \ding{51} & Proposition~\ref{TaigaSyntactic}\\
        $\rtg$ & \ding{51} & \ding{51} & \ding{51} & Proposition~\ref{TaigaSyntactic}\\
        $\mtg$ & \ding{51} & \ding{51} & \ding{51} & Proposition~\ref{MeetTaigaSyntactic}\\
        \bottomrule
    \end{tabular}
    \end{table}
    

\section{Equational theories of the meet and join-stalactic and meet and join-taiga monoids} \label{section:Equational_theories_of_meet_and_join_stalactic_and_taiga_monoids}

We now characterise the identities satisfied by the monoids we defined. For a general background on universal algebra, see \cite{Bergman_universal_algebra,bs_universal_algebra}. The following background is given in the context of monoids.

An \emph{identity} over an alphabet of variables $\X$ is a formal equality $\uord \approx \vord$, where $\uord$ and $\vord$ are words over $\X$, said to be \emph{non-trivial} if $\uord \neq \vord$. A variable $x$ \emph{occurs} in $\uord \approx \vord$ if $x$ occurs in $\uord$ or $\vord$, and $\uord \approx \vord$ is said to be \emph{balanced} if $\uord$ and $\vord$ have the same content. Since two words with the same content must have the same length, we say the \emph{length} of a balanced identity is the length of its left or right-hand side. Two identities are equivalent if one can be obtained from the other by renaming variables or swapping both sides of the identities.

A monoid $M$ \emph{satisfies} the identity $\uord \approx \vord$ if for every morphism $\psi\colon \X^* \to M$, referred to as an \emph{evaluation}, we have $\psi(\uord) = \psi(\vord)$. The \emph{identity checking problem} of a monoid $M$ is the combinatorial decision problem $\textsc{Check-Id}{(M)}$ of deciding whether an identity is satisfied or not by $M$. The input of this problem is simply the identity, hence its time complexity is measured in terms of the size of the identity, that is, the sum of the lengths of each side of the formal equality. 

Given a class of monoids $\mathbb{K}$, the set of all identities simultaneously satisfied by all monoids in $\mathbb{K}$ is called its \emph{equational theory}. On the other hand, given a set of identities $\Sigma$, the class of all monoids that satisfy all identities in $\Sigma$ is called its \emph{variety}. By Birkhoff's $HSP$-theorem, a class of monoids is a variety if and only if is closed under taking homomorphic images, submonoids and direct products. A \emph{subvariety} is a subclass of a variety which is itself a variety. We say a variety is \emph{generated} by a monoid $M$ if it is the least variety containing $M$, and denote it by $\mathbb{V}(M)$. 

A congruence $\equiv$ on a monoid $M$ is \emph{fully invariant} if $a \mathrel{\equiv} b$ implies $f(a) \mathrel{\equiv} f(b)$, for every $a,b \in M$ and every endomorphism $f$ of $M$. Equational theories can be viewed as fully invariant congruences on $\X^*$ (see, for example, \cite[II\S{14}]{bs_universal_algebra}). The \emph{free object} of rank $n$ of a variety $\mathbb{V}$ is the quotient of the free monoid over an $n$-letter alphabet by the equational theory of $\mathbb{V}$. The equational theory of the varietal join (resp. meet) of varieties $\mathbb{V}_1$ and $\mathbb{V}_2$ is the meet (resp. join) of the equational theories of $\mathbb{V}_1$ and $\mathbb{V}_2$.

An identity $\uord \approx \vord$ is a \emph{consequence} of a set of identities $\Sigma$ if it is in the equational theory generated by $\Sigma$. An \emph{equational basis} $\cB$ of a variety $\mathbb{V}$ is a subset of its equational theory such that each identity of the equational theory is a consequence of $\cB$. An equational theory is \emph{finitely based} if it admits a finite equational basis. The \emph{axiomatic rank} of $\mathbb{V}$ is the least natural number such that $\mathbb{V}$ admits a basis where the number of distinct variables occurring in each identity of the basis does not exceed said number. Notice that if a variety is finitely based, then it has finite axiomatic rank.

The identities satisfied by $\rst$ have been independently studied by Cain \textit{et al.} \cite{cain_johnson_kambites_malheiro_representations_2022} and Han and Zhang \cite{han2021preprint}:

\begin{theorem}[{\cite[Corollary~4.6]{cain_johnson_kambites_malheiro_representations_2022},\cite[Lemma~2.1~and~Theorem~2.3]{han2021preprint}}] \label{theorem:rst_identity_characterisation}
The identities satisfied by $\rst$ are precisely the balanced identities of the form $\uord \approx \vord$, where $\overleftarrow{\rho_\uord} = \overleftarrow{\rho_\vord}$.
\end{theorem}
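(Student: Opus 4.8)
The plan is to prove the sharper statement that $\rst$ satisfies an identity $\uord\approx\vord$, over the set $\{x_1,\dots,x_m\}$ of variables occurring in it, if and only if $\uord\equiv_\rst\vord$ when $\uord,\vord$ are read as words over $\{x_1,\dots,x_m\}$. Since, by the characterisation of $\equiv_\rst$ recalled in Subsection~\ref{subsection:the_stalactic_and_taiga_monoids}, $\uord\equiv_\rst\vord$ means exactly that $\cont{\uord}=\cont{\vord}$ (equivalently, the identity is balanced) and $\overleftarrow{\rho_\uord}=\overleftarrow{\rho_\vord}$, this equivalence is precisely the assertion of the theorem. There will be two implications: a short ``necessity'' half using one cleverly chosen evaluation, and a ``sufficiency'' half asserting that $\equiv_\rst$ is stable under substituting words for letters.

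\textbf{Necessity.} If $\rst$ satisfies $\uord\approx\vord$, I would evaluate $x_i\mapsto[a_i]_\rst$ for distinct letters $a_1,\dots,a_m\in\N$. Because multiplication in $\rst=\N^*/{\equiv_\rst}$ is induced by concatenation, this sends $\uord$ and $\vord$ to $[\uord']_\rst$ and $[\vord']_\rst$, where $\uord',\vord'\in\N^*$ are obtained by the letter-for-letter substitution $x_i\mapsto a_i$. Hence $\uord'\equiv_\rst\vord'$, so $\cont{\uord'}=\cont{\vord'}$ and $\overleftarrow{\rho_{\uord'}}=\overleftarrow{\rho_{\vord'}}$; as $x_i\mapsto a_i$ is an alphabet bijection that does not disturb the positions or number of occurrences, this transfers back to: $\uord\approx\vord$ is balanced and $\overleftarrow{\rho_\uord}=\overleftarrow{\rho_\vord}$.

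\textbf{Sufficiency.} Suppose $\uord\approx\vord$ is balanced with $\overleftarrow{\rho_\uord}=\overleftarrow{\rho_\vord}$ (so in particular $\supp{\uord}=\supp{\vord}$), and let $\psi$ be an arbitrary evaluation, written $\psi(x_i)=[w_i]_\rst$ with $w_i\in\N^*$; let $\phi\colon\{x_1,\dots,x_m\}^*\to\N^*$ be the induced homomorphism. I must show $\phi(\uord)\equiv_\rst\phi(\vord)$; equality of contents is immediate from balancedness, so the task is $\overleftarrow{\rho_{\phi(\uord)}}=\overleftarrow{\rho_{\phi(\vord)}}$. The key lemma I would prove is: for a letter $a\in\N$ occurring in $\phi(\uord)$, if $S_a$ is the set of variables $x_i\in\supp{\uord}$ with $a\in\supp{w_i}$ and $\hat x_a\in S_a$ is the one whose last occurrence in $\uord$ is rightmost (i.e.\ $\hat x_a=\overleftarrow{\rho_\uord}^{-1}(\max\{\overleftarrow{\rho_\uord}(x):x\in S_a\})$), then the last occurrence of $a$ in $\phi(\uord)$ lies inside the copy of $w_{\hat x_a}$ coming from the last occurrence of $\hat x_a$ in $\uord$, at the position of the last $a$ inside $w_{\hat x_a}$ --- because any block of $\phi(\uord)$ containing $a$ is a copy of some $w_{x_i}$ with $x_i\in S_a$, and such a block ends no later than the rightmost copy of $w_{\hat x_a}$, which does contain $a$. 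Granting this, comparing the last occurrences of two letters $a,b$ in $\phi(\uord)$ splits into two cases: if $\hat x_a\neq\hat x_b$ the two last occurrences lie in distinct blocks ordered exactly as the last occurrences of $\hat x_a,\hat x_b$ in $\uord$, so the comparison is dictated by $\overleftarrow{\rho_\uord}$; if $\hat x_a=\hat x_b=x^*$ both lie in the same block $w_{x^*}$ and the comparison is dictated by $\overleftarrow{\rho_{w_{x^*}}}$. In either case the answer depends only on $\overleftarrow{\rho_\uord}$ and the fixed data $\bigl(\supp{w_i},\overleftarrow{\rho_{w_i}}\bigr)_{1\le i\le m}$; since $\supp{\uord}=\supp{\vord}$ and $\overleftarrow{\rho_\uord}=\overleftarrow{\rho_\vord}$ make $S_a$, $\hat x_a$ and all these comparisons agree for $\vord$, we get $\overleftarrow{\rho_{\phi(\uord)}}=\overleftarrow{\rho_{\phi(\vord)}}$, hence $\psi(\uord)=\psi(\vord)$.

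\textbf{Main obstacle.} I expect the crux to be the sufficiency direction, specifically the key lemma locating the last occurrence of a letter in a substituted word together with the ensuing case split. The delicate point is that one letter $a\in\N$ may occur in several of the substituted words $w_i$, so a naive blockwise argument fails; isolating the variable $\hat x_a$ with the rightmost last occurrence in $\uord$ is the device that makes the reasoning uniform, and making the case analysis ($\hat x_a=\hat x_b$ or not) fully rigorous --- including degenerate situations such as $\phi(\uord)$ being empty --- is where the care must go.
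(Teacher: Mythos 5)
Your proof is correct. Note that the paper does not actually prove this theorem --- it is imported verbatim from the cited works of Cain--Johnson--Kambites--Malheiro and Han--Zhang --- so there is no in-paper argument to compare against; what you have written is a self-contained direct proof of the cited result. Both halves check out: the necessity direction via the evaluation $x_i\mapsto[a_i]_\rst$ at distinct generators is the standard move, and your sufficiency argument is sound. The key lemma is right: every block of $\phi(\uord)$ containing $a$ arises from an occurrence of some $x_i\in S_a$, each such occurrence sits at a position no later than the last occurrence of $\hat x_a$ (by maximality of $\overleftarrow{\rho_\uord}(\hat x_a)$ over $S_a$), and that final block genuinely contains $a$, so the last $a$ of $\phi(\uord)$ lands at the last $a$ of that copy of $w_{\hat x_a}$. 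The resulting case split ($\hat x_a\neq\hat x_b$ governed by $\overleftarrow{\rho_\uord}$; $\hat x_a=\hat x_b=x^*$ governed by $\overleftarrow{\rho_{w_{x^*}}}$) shows $\overleftarrow{\rho_{\phi(\uord)}}$ is a function of $\supp{\uord}$, $\overleftarrow{\rho_\uord}$ and the fixed words $w_i$ only, which is exactly what is needed. For the record, the published proofs take a different route --- Cain \emph{et al.}\ deduce the characterisation from a faithful representation realising $\vrst$ as the join of the commutative and right-regular-band varieties, while Han--Zhang work directly from the basis $xyx\approx yxx$ --- whereas your argument is the elementary ``track the last occurrence through a substitution'' computation; it is arguably the most transparent of the three, and it also yields Proposition~\ref{prop:stalactic_free_object} as an immediate by-product.
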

By symmetric reasoning, we have that the identities satisfied by $\lst$ are precisely the balanced identities of the form $\uord \approx \vord$, where $\overrightarrow{\rho_\uord} = \overrightarrow{\rho_\vord}$.

\begin{theorem}[{\cite[Corollary~4.6]{cain_johnson_kambites_malheiro_representations_2022},\cite[Theorem~2.3]{han2021preprint}}] \label{theorem:rst_identity_basis}
The variety generated by $\rst$ admits a finite basis, consisting of the identity
    \[ xyx \approx yxx. \]
\end{theorem}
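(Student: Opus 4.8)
The plan is to combine the description of the identities of $\rst$ from Theorem~\ref{theorem:rst_identity_characterisation} with a normal-form argument. First I would check that $\rst$ itself satisfies $xyx \approx yxx$: the identity is balanced, and $\overleftarrow{\rho_{xyx}}$ and $\overleftarrow{\rho_{yxx}}$ both send $y \mapsto 1$ and $x \mapsto 2$, so it belongs to the equational theory of $\vrst$ by Theorem~\ref{theorem:rst_identity_characterisation}. Since an equational theory is closed under taking consequences, what remains is to prove that every identity satisfied by $\rst$ is a consequence of $xyx \approx yxx$.

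The crucial step is the following normalisation claim. For any word $\pord$, write $\supp{\pord} = \{x_1, \dots, x_k\}$ with the indexing chosen so that $\overleftarrow{\rho_\pord}(x_i) = i$, that is, $x_1, \dots, x_k$ appear in the order of their last occurrences when $\pord$ is read left to right. Then the identity $\pord \approx x_1^{|\pord|_{x_1}} \cdots x_k^{|\pord|_{x_k}}$ is a consequence of $xyx \approx yxx$. I would prove this by induction on $k$, the case $k \le 1$ being trivial. For the inductive step, note first that substituting $x \mapsto a$ and $y \mapsto \qord$ into $xyx \approx yxx$ yields the consequence $a\qord a \approx \qord a a$, valid for any word $\qord$ and variable $a$. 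Take $a$ to be the last letter of $\pord$; then $a = x_k$, since its last occurrence is the final position of $\pord$. Writing $m = |\pord|_a$ and $\pord = \pord_0 a \pord_1 a \cdots \pord_{m-1} a$ with each $\pord_i$ free of $a$, successive applications of $a\qord a \approx \qord a a$ — rewriting first the suffix $a\pord_{m-1}a$, then $a\pord_{m-2}\pord_{m-1}a$, and so on — gather all occurrences of $a$ at the end, giving $\pord \approx \pord_0\pord_1\cdots\pord_{m-1}\, a^m$. The prefix $\pord_0\cdots\pord_{m-1}$ uses only $x_1, \dots, x_{k-1}$, and deleting the copies of $a$ does not change the relative order of the last occurrences of the other variables, so the induction hypothesis rewrites it to $x_1^{|\pord|_{x_1}}\cdots x_{k-1}^{|\pord|_{x_{k-1}}}$; appending $a^m = x_k^{|\pord|_{x_k}}$ completes the claim.

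Granting the claim, the theorem follows: if $\uord \approx \vord$ is an identity satisfied by $\rst$, then by Theorem~\ref{theorem:rst_identity_characterisation} it is balanced and $\overleftarrow{\rho_\uord} = \overleftarrow{\rho_\vord}$, so $\uord$ and $\vord$ have the same content and the same last-occurrence ordering of their variables, hence the same normal form; composing the two instances of the claim shows $\uord \approx \vord$ is a consequence of $xyx \approx yxx$. Together with the first paragraph, this establishes that $\{xyx \approx yxx\}$ is an equational basis of $\vrst$.

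The step I expect to require the most care is the gathering argument inside the claim: one must keep track, at each stage, of exactly which factor $a\qord a$ occurs in the current word, so that every rewrite is a genuine application of the identity within a context. The remaining ingredients — closure of the consequence relation under substitution and under placing identities in a context, and the invariance of last-occurrence orderings under erasing a variable — are routine.
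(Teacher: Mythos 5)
Your argument is correct: the deduction $a\qord a \approx \qord aa$ from $xyx \approx yxx$, the gathering of the last-occurring variable at the right end, and the induction on the size of the support together rewrite every word to the normal form $x_1^{m_1}\cdots x_k^{m_k}$ ordered by last occurrences, which by Theorem~\ref{theorem:rst_identity_characterisation} is exactly what identifies $\equiv$-classes of the equational theory of $\rst$. The paper itself gives no proof of this statement (it is imported from the cited references, where essentially this same normal-form argument is used), so there is nothing to contrast with; the one point worth making explicit in a write-up is the observation you already flag, namely that erasing the occurrences of $a$ preserves the relative order of the last occurrences of the remaining variables, which is what licenses the inductive call on the prefix $\pord_0\cdots\pord_{m-1}$.
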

By symmetric reasoning, we have that the variety generated by $\lst$ admits a finite equational basis, consisting of the identity
\[ xyx \approx xxy. \]
From these results, and the fact that identities satisfied by either $\vlst$ or $\vrst$ must be balanced, it follows that both varieties have axiomatic rank $2$.

The following hasn't been stated in the literature, but it is an immediate consequence of the previous result:

\begin{proposition} 
    \label{prop:stalactic_free_object}
    The left-stalactic (resp. right-stalactic) monoid of rank $n$ is the free object of rank $n$ of $\vlst$ (resp. $\vrst$), for countable $n$.
\end{proposition}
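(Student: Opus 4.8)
The plan is to unwind the definition of ``free object of rank $n$'' and then observe that the congruence it produces is literally $\equiv_\lst$; the $\vrst$ case follows by the symmetric argument (or by applying word reversal, which carries $\equiv_\lst$ to $\equiv_\rst$). I will describe the $\vlst$ case.

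First I would recall two standard facts already implicit in the background at the start of Section~\ref{section:Equational_theories_of_meet_and_join_stalactic_and_taiga_monoids}: since $\vlst = \mathbb{V}(\lst)$, its equational theory coincides with the set of identities satisfied by $\lst$ (identities are preserved under $\mathbf{H}$, $\mathbf{S}$, $\mathbf{P}$, so the identities of $\mathbb{V}(\lst)$ are exactly those of $\lst$); and, by definition, the free object of rank $n$ of $\vlst$ is $[n]^*/\theta$, where, identifying the $n$-letter alphabet of variables with $[n]$, the congruence $\theta$ on $[n]^*$ is given by $\uord \mathrel{\theta} \vord$ if and only if $\lst$ satisfies the identity $\uord \approx \vord$.

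Next I would invoke the symmetric form of Theorem~\ref{theorem:rst_identity_characterisation}, stated immediately after it: $\lst$ satisfies $\uord \approx \vord$ precisely when the identity is balanced and $\overrightarrow{\rho_\uord} = \overrightarrow{\rho_\vord}$, that is, precisely when $\cont{\uord} = \cont{\vord}$ and $\overrightarrow{\rho_\uord} = \overrightarrow{\rho_\vord}$. Comparing this with the characterisation of $\equiv_\lst$ recalled in Subsection~\ref{subsection:the_stalactic_and_taiga_monoids} — namely that $\uord \equiv_\lst \vord$ if and only if $\cont{\uord} = \cont{\vord}$ and $\overrightarrow{\rho_\uord} = \overrightarrow{\rho_\vord}$ — shows that $\theta$ and the restriction of $\equiv_\lst$ to $[n]^*$ are the same congruence. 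Hence the free object of rank $n$ of $\vlst$ is $[n]^*/{\equiv_\lst}$, which is by definition the left-stalactic monoid of rank $n$; the hypothesis that $n$ be countable serves only to stay within the setup where $\X$ is a countable alphabet of variables.

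There is essentially no obstacle: the proof is a direct identification of two congruences on $[n]^*$, both described by exactly the same pair of conditions (equal content, and equal order of first occurrences). The only point meriting a word of care is the passage between ``free object of $\vlst$'' and ``congruence cut out by the identities of $\lst$'', which rests on the standard equality between the equational theory of $\mathbb{V}(M)$ and the set of identities of $M$; everything else is bookkeeping with the two cited characterisations.
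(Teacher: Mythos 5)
Your proof is correct, but it takes a different route from the paper's. The paper works at the level of generating relations: it recalls that $\equiv_\lst$ is the (ordinary) congruence on $\N^*$ generated by the pairs $(a\word a, aa\word)$, observes that the equational theory of $\lst$ is the fully invariant congruence generated by $(xyx,xxy)$ (using the finite basis from the symmetric form of Theorem~\ref{theorem:rst_identity_basis}), notes that full invariance means this congruence is generated, as an ordinary congruence, by all substitution instances of $(xyx,xxy)$ — i.e.\ by the same relations as $\equiv_\lst$ — and then handles finite rank by restriction to alphabet intervals. You instead bypass the basis entirely and compare the two explicit combinatorial descriptions: the characterisation of the identities of $\lst$ (balanced and $\overrightarrow{\rho_\uord}=\overrightarrow{\rho_\vord}$, the symmetric form of Theorem~\ref{theorem:rst_identity_characterisation}) against the characterisation of $\equiv_\lst$ from Subsection~\ref{subsection:the_stalactic_and_taiga_monoids}, which are word-for-word the same condition, so the two congruences on $[n]^*$ coincide for every countable $n$ at once. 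Your version is shorter, treats finite and infinite rank uniformly, and avoids the (mildly delicate) passage between a fully invariant congruence and the ordinary congruence generated by its substitution instances; the paper's version has the conceptual merit of exhibiting the defining relations of the monoid as literally an equational basis for the variety. Both arguments rest on the same standard fact that the equational theory of $\mathbb{V}(M)$ equals the set of identities of $M$, which you correctly flag as the only non-bookkeeping step.
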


\begin{proof}
    We prove the case of $\lst$, since the case of $\rst$ is analogous. The left-stalactic congruence can also be defined as the congruence on $\N^*$ generated by the relations $(a \word a, a a \word)$, for all $a \in \N$ and $\word \in \N^*$ \cite[Section~3.7]{hnt_stalactic}. On the other hand, the equational theory of $\lst$ can be viewed as a congruence on $\X^*$ generated by the relations $(xyx,xxy)$, for all $x,y \in \X$. Since it is fully invariant, it is also generated by the same relations as the left-stalactic congruence, hence they are the same. As such, $\lst$ is isomorphic to the free object of countably infinite rank of $\vlst$. Since $\lst$ is compatible with restriction to alphabet intervals, the result also holds for finite rank $n$.
\end{proof}

As a consequence, the left-stalactic and right-stalactic congruences can be viewed as equational theories of $\vlst$ and $\vrst$. As such, the meet-stalactic congruence, given as the meet of $\equiv_\lst$ and $\equiv_\rst$, can also be viewed as the equational theory of the varietal join $\vrst \vee \vlst$. Hence, the variety generated by $\mst$, denoted by $\vmst$, is equal to $\vrst \vee \vlst$, and we have the following:

\begin{corollary}
\label{corollary:meet_stalactic_free_object}
The meet-stalactic monoid of rank $n$ is the free object of rank $n$ of $\vmst$, for countable $n$.
\end{corollary}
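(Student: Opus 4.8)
The plan is to reduce everything to the identification of equational theories with fully invariant congruences on $\N^*$, mirroring the proof of Proposition~\ref{prop:stalactic_free_object} and the discussion immediately preceding the corollary. First I would recall that the proof of Proposition~\ref{prop:stalactic_free_object} shows that $\rst$ (resp. $\lst$) of countably infinite rank is the free object of that rank of $\vrst$ (resp. $\vlst$), that is, the quotient of $\N^*$ (with its generators identified with a countable alphabet of variables) by the equational theory of $\vrst$ (resp. $\vlst$). Consequently, the equational theory of $\vrst$ (resp. $\vlst$), viewed as a fully invariant congruence on $\N^*$, is exactly $\equiv_\rst$ (resp. $\equiv_\lst$).

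Next I would invoke the fact, recalled at the start of this section, that the equational theory of the varietal join $\vrst \vee \vlst$ is the meet of the equational theories of $\vrst$ and $\vlst$; since the meet of congruences is their intersection, this is $\equiv_\rst \wedge \equiv_\lst$, which is by definition the meet-stalactic congruence $\equiv_\mst$. As noted in the paragraph preceding the corollary, $\vmst = \vrst \vee \vlst$, so the equational theory of $\vmst$, as a congruence on $\N^*$, coincides with $\equiv_\mst$. By the definition of the free object of rank $n$ of a variety as the quotient of the free monoid over an $n$-letter alphabet by the variety's equational theory, it then follows that the free object of rank $n$ of $\vmst$ is $[n]^*/{\equiv_\mst}$, which is precisely the meet-stalactic monoid of rank $n$. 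For countably infinite $n$ this is $\mst$ itself; for finite $n$ one descends from the countably-infinite-rank statement using that $\mst$ is compatible with restriction to alphabet intervals (the remark following Proposition~\ref{prop:mst_compatible}), exactly as in the proof of Proposition~\ref{prop:stalactic_free_object}.

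The only point requiring genuine care — hence the main obstacle — is making the identification of the equational theory of $\vmst$ with a concrete congruence on $\N^*$ rigorous: one must justify that restricting attention to identities in finitely many variables is harmless, i.e. that the restriction of $\equiv_\mst$ to $[n]^*$ is the rank-$n$ meet-stalactic congruence, which is where compatibility with restriction to alphabet intervals enters. Everything else is routine bookkeeping with the standard correspondence between varieties, equational theories, and fully invariant congruences on a free monoid.
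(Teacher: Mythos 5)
Your proposal is correct and follows essentially the same route as the paper: the paper derives this corollary directly from the paragraph preceding it, which identifies $\equiv_\lst$ and $\equiv_\rst$ with the equational theories of $\vlst$ and $\vrst$ (via Proposition~\ref{prop:stalactic_free_object}), notes that the equational theory of the varietal join $\vrst \vee \vlst = \vmst$ is the meet of these congruences, i.e.\ $\equiv_\mst$, and descends to finite rank via compatibility with restriction to alphabet intervals. Your added care about restricting to finitely many variables is exactly the point the paper handles the same way in the proof of Proposition~\ref{prop:stalactic_free_object}.
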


Therefore, the identities satisfied by the meet-stalactic monoid are exactly those identities that are simultaneously satisfied by $\rst$ and $\lst$:

\begin{corollary}
\label{corollary:mst_identities}
For $\uord,\vord \in \X^*$, the identity $\uord \approx \vord$ is satisfied by $\mst$ if and only if it is balanced, $\overrightarrow{\rho_\uord} = \overrightarrow{\rho_\vord}$ and $\overleftarrow{\rho_{\uord}} = \overleftarrow{\rho_{\vord}}$.
\end{corollary}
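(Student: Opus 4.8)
The plan is to reduce the statement directly to the characterisations of the identities of $\rst$ and $\lst$ already on hand. The crucial structural fact, recorded just above in the discussion preceding Corollary~\ref{corollary:meet_stalactic_free_object}, is that $\vmst = \vrst \vee \vlst$; equivalently, the meet-stalactic congruence, viewed as a fully invariant congruence on $\X^*$, is the equational theory of this varietal join. By the correspondence between varietal joins and meets of equational theories stated in the preliminaries of this section, an identity $\uord \approx \vord$ is satisfied by $\mst$ if and only if it lies in the intersection of the equational theories of $\vrst$ and $\vlst$, that is, if and only if it is satisfied by \emph{both} $\rst$ and $\lst$.

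From there I would simply intersect the two known descriptions. By Theorem~\ref{theorem:rst_identity_characterisation}, the identities satisfied by $\rst$ are exactly the balanced identities $\uord \approx \vord$ with $\overleftarrow{\rho_\uord} = \overleftarrow{\rho_\vord}$; by the symmetric statement noted immediately after that theorem, the identities satisfied by $\lst$ are exactly the balanced identities with $\overrightarrow{\rho_\uord} = \overrightarrow{\rho_\vord}$. Conjoining these conditions gives precisely: $\uord \approx \vord$ is balanced, $\overrightarrow{\rho_\uord} = \overrightarrow{\rho_\vord}$, and $\overleftarrow{\rho_\uord} = \overleftarrow{\rho_\vord}$, which is the claimed characterisation (the balancedness requirement occurs in both descriptions and so is not doubled). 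Alternatively, and equivalently, one can argue directly from Corollary~\ref{corollary:meet_stalactic_free_object}: since the meet-stalactic monoid of rank $n$ is the free object of rank $n$ of $\vmst$, an identity over an $n$-variable alphabet holds in $\mst$ if and only if its two sides are $\equiv_\mst$-congruent as words, and the characterisation of $\equiv_\mst$ recorded in Section~\ref{subsubsection:the_meet_stalactic_and_join_stalactic_monoids} yields exactly the same three conditions.

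Since every ingredient is already established, there is no genuine obstacle here; the statement is a corollary in the strict sense. The only point that warrants a moment's attention is ensuring the two translations are applied correctly — from ``variety generated by $\mst$'' to ``identities of $\mst$'', and from ``varietal join'' to ``meet of equational theories'' (hence set-theoretic intersection of the satisfied identities) — but both are standard and are exactly what the earlier parts of this section supply. I would therefore write the proof as a short paragraph citing Corollary~\ref{corollary:meet_stalactic_free_object}, Theorem~\ref{theorem:rst_identity_characterisation}, and its symmetric counterpart.
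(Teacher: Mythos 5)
Your proposal is correct and follows essentially the same route as the paper: the corollary is obtained exactly by observing that the identities of $\mst$ are those satisfied simultaneously by $\rst$ and $\lst$ (via $\vmst = \vrst \vee \vlst$ and the identification of the meet-stalactic congruence with the equational theory of the varietal join), and then intersecting Theorem~\ref{theorem:rst_identity_characterisation} with its left-stalactic counterpart. Nothing further is needed.
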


In other words, the identities satisfied by $\mst$ are those identities where the left and right-hand sides have the same content, and, when reading them from left-to-right, the order of the first and last occurrences of the variables is the same on both sides. Clearly, this condition is verifiable in polynomial time:

\begin{corollary}
\label{corollary:check_id_mst}
The decision problem $\textsc{Check-Id}(\mst)$ belongs to the complexity class $\mathsf{P}$.
\end{corollary}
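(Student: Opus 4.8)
The plan is to show that, given an identity $\uord \approx \vord$ over an alphabet $\X$ of variables, one can decide in polynomial time whether the three conditions of Corollary~\ref{corollary:mst_identities} hold: that $\uord \approx \vord$ is balanced, that $\overrightarrow{\rho_\uord} = \overrightarrow{\rho_\vord}$, and that $\overleftarrow{\rho_\uord} = \overleftarrow{\rho_\vord}$. The input is the pair of words $(\uord,\vord)$, so the size of the instance is $|\uord| + |\vord|$, and I will argue each check runs in time polynomial (in fact linear or near-linear) in this quantity.

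First I would check that $\uord$ and $\vord$ are balanced: scan both words once, building the content functions $\cont{\uord}$ and $\cont{\vord}$ as, say, a hash map or sorted list of variable--multiplicity pairs, then compare. This takes time $\mathcal{O}\bigl((|\uord| + |\vord|)\log(|\uord| + |\vord|)\bigr)$, or linear time with hashing. If the words are not balanced, reject. Next, assuming they are balanced (so in particular $\supp{\uord} = \supp{\vord}$), I would compute $\overrightarrow{\rho_\uord}$ by a single left-to-right pass over $\uord$, recording for each variable the index of its first occurrence, and then reading off the induced ordering of $\supp{\uord}$; do the same for $\vord$, and compare the two orderings. Repeat the analogous procedure for $\overleftarrow{\rho_\uord}$ and $\overleftarrow{\rho_\vord}$, now recording the index of the \emph{last} occurrence of each variable during the left-to-right pass. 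Each of these is again a linear scan followed by a comparison of two sequences of length $|\supp{\uord}| \leq |\uord|$. If all three checks pass, accept; otherwise reject. Correctness of this algorithm is immediate from Corollary~\ref{corollary:mst_identities}, which characterises exactly the identities satisfied by $\mst$.

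Since each of the finitely many steps runs in time polynomial in $|\uord| + |\vord|$, the whole procedure is polynomial, and hence $\textsc{Check-Id}(\mst)$ lies in $\mathsf{P}$. There is essentially no obstacle here: the only mild subtlety is being explicit that the data structures needed (computing first/last occurrence positions and comparing the resulting orderings) are standard and run in polynomial time, so that the appeal to Corollary~\ref{corollary:mst_identities} genuinely yields a polynomial-time decision procedure rather than merely a decidable one. In the write-up I would state the algorithm as the three bullet checks above, note that each is polynomial, and conclude by invoking the characterisation.
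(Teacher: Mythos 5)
Your proposal is correct and matches the paper's approach: the paper derives this corollary directly from Corollary~\ref{corollary:mst_identities}, simply noting that the characterisation (balanced content plus equality of the first- and last-occurrence orderings) is verifiable in polynomial time. You have merely made the obvious linear-scan algorithm explicit, which is fine.
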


From the presentation of $\mst$ given in Proposition~\ref{prop:MeetStalacticPresentation}, we obtain the following:

\begin{corollary}
\label{corollary:mst_finite_basis}
$\vmst$ admits a finite equational basis $\mathcal{B}_{\mst}$, consisting of the identities
    \begin{align}
        xzxyty \approx xzyxty, \label{id22}\\
        xzxytx \approx xzyxtx. \label{id31}
    \end{align}
\end{corollary}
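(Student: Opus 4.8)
The plan is to show that the set $\mathcal{B}_\mst$ of identities \eqref{id22} and \eqref{id31} generates the equational theory of $\vmst$. By Corollary~\ref{corollary:meet_stalactic_free_object}, the meet-stalactic monoid of countably infinite rank is the free object of $\vmst$; taking the variable alphabet to be $\N$, the equational theory of $\vmst$, regarded as a fully invariant congruence on $\N^*$, is therefore exactly $\equiv_\mst$. Thus it suffices to prove that the fully invariant congruence $\theta$ on $\N^*$ generated by $\mathcal{B}_\mst$ equals $\equiv_\mst$; recall that $\theta$ is the ordinary congruence generated by all substitution instances of \eqref{id22} and \eqref{id31} (the images of both sides under an endomorphism of $\N^*$).

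First I would establish the inclusion $\theta \subseteq {\equiv_\mst}$. For this it is enough to verify that $\mst$ satisfies \eqref{id22} and \eqref{id31}, since the set of identities satisfied by $\mst$ is a fully invariant congruence containing $\mathcal{B}_\mst$. By the characterisation of the identities of $\mst$ in Corollary~\ref{corollary:mst_identities}, this reduces to checking that, in each of \eqref{id22} and \eqref{id31}, the two sides are balanced and that, reading left-to-right, the order of first occurrences and the order of last occurrences of the variables agree on both sides --- an immediate computation.

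For the reverse inclusion ${\equiv_\mst} \subseteq \theta$, I would use the presentation $\pres{\N}{\cR_\mst}$ of $\mst$ from Proposition~\ref{prop:MeetStalacticPresentation} and observe that each defining relation of $\cR_\mst$ is a substitution instance of one of the two identities. Explicitly, a relation $(b \uord ba \vord b,\, b \uord ab \vord b)$ of the first family is obtained from \eqref{id31}, namely $xzxytx \approx xzyxtx$, via the substitution $x \mapsto b$, $y \mapsto a$, $z \mapsto \uord$, $t \mapsto \vord$; and a relation $(a \uord ab \vord b,\, a \uord ba \vord b)$ of the second family is obtained from \eqref{id22}, namely $xzxyty \approx xzyxty$, via the substitution $x \mapsto a$, $y \mapsto b$, $z \mapsto \uord$, $t \mapsto \vord$. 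Hence $\cR_\mst \subseteq \theta$, and since $\theta$ is a congruence while $\equiv_\mst$ is the least congruence containing $\cR_\mst$, we conclude ${\equiv_\mst} \subseteq \theta$. Together the two inclusions give $\theta = {\equiv_\mst}$, so $\mathcal{B}_\mst$ is an equational basis of $\vmst$, which is thus finitely based.

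The step requiring the most care is the matching of the two families of $\cR_\mst$ with substitution instances of \eqref{id22} and \eqref{id31}: one has to check that the (possibly empty) context words $\uord,\vord \in \N^*$ in the defining relations are recovered by substituting the single context variables $z$ and $t$ by arbitrary words, and that the positions of the repeated variables $x,y$ in the two identities reproduce precisely those two families. Everything else is routine bookkeeping.
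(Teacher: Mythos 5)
Your proof is correct and follows essentially the same route as the paper: verify via Corollary~\ref{corollary:mst_identities} that $\mst$ satisfies \eqref{id22} and \eqref{id31}, and observe that every defining relation in $\cR_\mst$ from Proposition~\ref{prop:MeetStalacticPresentation} is a substitution instance of one of the two identities, so that by Corollary~\ref{corollary:meet_stalactic_free_object} the fully invariant congruence they generate coincides with $\equiv_\mst$. The paper's own proof is just a terser version of this argument, and your explicit substitutions are the correct ones.
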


\begin{proof}
    Since $\equiv_\mst$ is a fully invariant congruence, we can see that the relations given in Proposition~\ref{prop:MeetStalacticPresentation} can be deduced from the identities \eqref{id22} and \eqref{id31}. The result follows.
\end{proof}

The identity \eqref{id31} can be deduced from other non-equivalent identities satisfied by $\mst$, such as, for example, the identity $xyxzx \approx xxyzx$. Such is not the case with the identity \eqref{id22}, or any equivalent one, which must be in any equational basis for $\vmst$ where only up to four different variables occur in each identity. To show this, we need the following lemma:

\begin{lemma}
\label{lemma:mst_shortest}
The shortest non-trivial identity, with n variables, satisfied by $\mst$, is of length $n+2$.
\end{lemma}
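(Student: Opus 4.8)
The plan is to prove both bounds separately: that $\mst$ satisfies a non-trivial identity of length $n+2$ with $n$ variables, and that no shorter non-trivial identity exists. Recall from Corollary~\ref{corollary:mst_identities} that $\uord \approx \vord$ is satisfied by $\mst$ exactly when it is balanced, $\overrightarrow{\rho_\uord} = \overrightarrow{\rho_\vord}$, and $\overleftarrow{\rho_\uord} = \overleftarrow{\rho_\vord}$; so throughout we work purely with this combinatorial criterion on words over $\X$.

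For the upper bound, I would exhibit an explicit identity. Generalising the pattern of \eqref{id22}, take variables $x_1, \dots, x_n$ and consider
\[
    x_1 x_2 \cdots x_{n-2}\, x_{n-1} x_{n-2} x_n\, x_{n-1} \quad\text{and}\quad x_1 x_2 \cdots x_{n-2}\, x_n x_{n-2} x_{n-1}\, x_{n-1},
\]
or some similar word of length $n+2$ in which exactly two variables ($x_{n-2}$ and $x_{n-1}$) occur twice and all others once, arranged so that the first and last occurrences of every variable come in the same order on both sides while the words themselves differ. One then checks directly that the content agrees, $\overrightarrow{\rho}$ agrees (the first occurrences appear left-to-right as $x_1, \dots, x_n$ on both sides), and $\overleftarrow{\rho}$ agrees (the last occurrences also appear in the same order), so by Corollary~\ref{corollary:mst_identities} the identity holds in $\mst$; and it is non-trivial since the two words are distinct. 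This gives an identity of length $n+2$, hence the shortest is at most $n+2$.

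For the lower bound — which I expect to be the \textbf{main obstacle} — I would argue that any balanced identity $\uord \approx \vord$ with exactly $n$ distinct variables and length $\le n+1$ must be trivial. If the length is exactly $n$, then every variable occurs exactly once on each side, so $\uord$ and $\vord$ are permutations of the same $n$-letter set; the condition $\overrightarrow{\rho_\uord} = \overrightarrow{\rho_\vord}$ already forces the order of (first = only) occurrences to coincide, hence $\uord = \vord$. If the length is $n+1$, exactly one variable, say $y$, occurs twice and all others once. Write $\uord = \uord_1 y \uord_2 y \uord_3$ and $\vord = \vord_1 y \vord_2 y \vord_3$ where $\uord_1\uord_2\uord_3$ and $\vord_1\vord_2\vord_3$ are the words obtained by deleting both copies of $y$; each of these is a permutation of the remaining $n-1$ variables, each occurring once. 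The key point is that for the single repeated variable $y$, the position of its first occurrence is pinned down by $\overrightarrow{\rho}$ relative to the other variables, and the position of its last occurrence is pinned down by $\overleftarrow{\rho}$; meanwhile every non-repeated variable $z \ne y$ has its unique occurrence forced into the same relative position by either $\overrightarrow{\rho}$ or $\overleftarrow{\rho}$. Carefully formalising this, I would show the relative order of all occurrences (the two copies of $y$ together with the single copies of the other variables) is completely determined, so $\uord = \vord$. The delicate case is checking that nothing goes wrong when the two copies of $y$ are adjacent or when $\uord_2 = \varepsilon$; these should reduce to the length-$n$ argument after noting that $\overrightarrow{\rho}$ and $\overleftarrow{\rho}$ jointly determine the full word when at most one variable repeats.

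Combining the two directions: a non-trivial identity with $n$ variables has length $\ge n+2$, and one of length exactly $n+2$ exists, so the shortest has length $n+2$. I would then remark that this lemma is precisely what is needed to conclude (in the surrounding discussion) that \eqref{id22} cannot be derived from identities in fewer variables, since \eqref{id22} itself has $4$ variables and length $6 = 4+2$, so it is a shortest identity in $4$ variables and no identity with at most four variables of smaller length is available to generate it.
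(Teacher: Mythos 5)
Your overall structure (a lower bound showing length $\ge n+2$ plus an explicit witness of length $n+2$) matches the paper's, but your lower-bound argument takes a genuinely different route. The paper factors $\uord = \word x \uord'$, $\vord = \word y \vord'$ at the first position where the two sides differ and uses balancedness together with $\overrightarrow{\rho_\uord}=\overrightarrow{\rho_\vord}$ and $\overleftarrow{\rho_\uord}=\overleftarrow{\rho_\vord}$ to force two extra letter occurrences; you instead show that a word in which at most one variable repeats is completely reconstructible from its content, $\overrightarrow{\rho}$ and $\overleftarrow{\rho}$, so any identity of length $\le n+1$ satisfying Corollary~\ref{corollary:mst_identities} is trivial. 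Your argument is sound and completable (the "delicate cases" you flag are not actually problematic: the two occurrences of the repeated variable $y$ are each positioned relative to every simple letter by $\overrightarrow{\rho}$ and $\overleftarrow{\rho}$ respectively, and the simple letters are mutually ordered by either map, so the word is determined outright), and it is arguably a cleaner conceptual statement; the paper's version is more local and self-contained.

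The concrete flaw is your witness for the upper bound. For
\[
x_1 \cdots x_{n-2}\, x_{n-1}\, x_{n-2}\, x_n\, x_{n-1} \quad\text{and}\quad x_1 \cdots x_{n-2}\, x_n\, x_{n-2}\, x_{n-1}\, x_{n-1},
\]
the first occurrences end in the order $x_{n-1}, x_n$ on the left but $x_n, x_{n-1}$ on the right (and the last-occurrence orders disagree as well), so by Corollary~\ref{corollary:mst_identities} this is \emph{not} an identity of $\mst$. Your hedge ("or some similar word\dots") describes the right target, and such words do exist: the paper uses $x y x a_1 \cdots a_{n-2} x \approx x x y a_1 \cdots a_{n-2} x$, and a two-repeated-variable version in the spirit of \eqref{id22}, such as $x\, a_1 \cdots a_{n-2}\, y x y \approx x\, a_1 \cdots a_{n-2}\, x y y$, also works. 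You should replace the displayed pair with one of these and verify the three conditions explicitly; with that repair the proof is complete.
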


\begin{proof}
    Let $\uord \approx \vord$ be a non-trivial identity, with $|\supp{\uord \approx \vord}|=n$, satisfied by $\mst$. Since it is a non-trivial identity, there exist $x,y \in \X$ and $\word,\uord',\vord' \in \X^*$ such that $\uord = \word x \uord'$ and $\vord=\word y \vord'$. Notice that, by Corollary~\ref{corollary:mst_identities}, $\uord \approx \vord$ is a balanced identity, therefore $x$ occurs in $\vord'$ and $y$ occurs in $\uord'$. Furthermore, since $\overrightarrow{\rho_\uord} = \overrightarrow{\rho_\vord}$, then $x$ occurs in $\word$ before the first occurrence of $y$, or $y$ occurs in $\word$ before the first occurrence of $x$. Similarly, since $\overleftarrow{\rho_{\uord}} = \overleftarrow{\rho_{\vord}}$, then $x$ occurs in $\uord'$ after the last occurrence of $y$, or $y$ occurs in $\vord'$ after the last occurrence of $x$. Thus, we can conclude that the length of $\uord \approx \vord$ is at least $n+2$.
    
    On the other hand, by Corollary~\ref{corollary:mst_identities}, for variables $x,y,a_1, \dots, a_{n-2} \in \X$, the identity
    \[ x y x a_1 \cdots a_{n-2} x \approx x x y a_1 \cdots a_{n-2} x, \]
    of length $n+2$, is satisfied by $\mst$. The result follows.
\end{proof}

\begin{proposition}
\label{proposition:id22_not_consequence}
    The identity \eqref{id22} is not a consequence of the set of non-trivial identities, satisfied by $\mst$, over an alphabet with four variables, excluding \eqref{id22} itself and equivalent identities. Therefore, any equational basis for $\vmst$ with only identities over an alphabet with four variables must contain the identity \eqref{id22}, or an equivalent identity.
\end{proposition}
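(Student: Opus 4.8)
The plan is a proof by contradiction resting on the observation that the class of $xzxyty$ is extremely small. First I would determine $[xzxyty]_\mst$: by Corollary~\ref{corollary:mst_identities} a word equal to $xzxyty$ in $\mst$ is a rearrangement of $xxyyzt$ whose order of first occurrences, read left to right, is $x,z,y,t$, and whose order of last occurrences is $z,x,t,y$. Chasing these constraints forces the first letter to be $x$, the second to be $z$, the unique $t$ into position five and the final $y$ into position six, leaving the second $x$ and the first $y$ for positions three and four in either order; hence $[xzxyty]_\mst=\{xzxyty,\,xzyxty\}$, a two-element set. Now suppose \eqref{id22} were a consequence of the set $\Sigma$ of non-trivial four-variable identities satisfied by $\mst$ with \eqref{id22} and its equivalents removed. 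Every identity of $\Sigma$ holds in $\mst$, so the fully invariant congruence it generates is contained in the equational theory of $\mst$; therefore any rewriting chain witnessing $xzxyty\approx xzyxty$ stays inside $[xzxyty]_\mst$ and, that set having only two elements, contracts to one elementary step: there are $\pord\approx\qord\in\Sigma$, an evaluation $\psi\colon\X^*\to\X^*$, and words $\alpha,\beta$ with $xzxyty=\alpha\,\psi(\pord)\,\beta$ and $xzyxty=\alpha\,\psi(\qord)\,\beta$ (after possibly swapping the two sides of $\pord\approx\qord$).

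Next I would locate the context. Since $xzxyty$ and $xzyxty$ agree outside positions $3$ and $4$, and $\psi(\pord),\psi(\qord)$ have equal length (all $\mst$-identities are balanced, by Corollary~\ref{corollary:mst_identities}), the factor $\psi(\pord)$ fills a block of positions $[i+1,j]$ with $i\le 2$ and $j\ge 4$, giving only nine candidate blocks. For each block, $\psi(\pord)\approx\psi(\qord)$ is the induced pair of factors of $xzxyty$ and $xzyxty$; being an instance of an identity valid in $\mst$, it is itself an $\mst$-identity and so must satisfy the conditions of Corollary~\ref{corollary:mst_identities}. Comparing the orders of first and of last occurrences eliminates the eight proper blocks, leaving only $\alpha=\beta=\varepsilon$, $\psi(\pord)=xzxyty$ and $\psi(\qord)=xzyxty$.

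Then I would analyse $\psi$. Deleting from $\pord$ and $\qord$ each variable that $\psi$ sends to the empty word yields a non-trivial $\mst$-identity $\pord^{*}\approx\qord^{*}$ on $n^{*}\ge 2$ variables, with $\psi$ nonempty on each of them, so $n^{*}+2\le|\pord^{*}|\le|\psi(\pord^{*})|=6$ by Lemma~\ref{lemma:mst_shortest}. The cases $|\pord^{*}|\in\{4,5\}$ are impossible: for length $5$ exactly one variable of $\pord^{*}$ is expanded into a contiguous two-letter factor, forcing either a letter of $xzxyty$ to occur three times or the singly-occurring letters $z$ and $t$ to become adjacent, neither of which holds; for length $4$ the identity $\pord^{*}\approx\qord^{*}$ would be one of the two-variable length-$4$ $\mst$-identities, but no factorisation of a two-variable length-$4$ word over nonempty images produces $xzxyty$. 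Hence $|\pord^{*}|=6$; then $\psi$ acts letter-to-letter on the variables of $\pord^{*}$, and since $xzxyty$ uses four distinct letters we get $n^{*}=4$ and $\psi$ a bijective renaming, so $\pord^{*}\approx\qord^{*}$ is equivalent to \eqref{id22}. As $\pord$ has at most four variables, no variable was deleted, so $\pord=\pord^{*}$ and $\pord\approx\qord$ itself is equivalent to \eqref{id22} --- contradicting $\pord\approx\qord\in\Sigma$.

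The final assertion follows at once: an equational basis of $\vmst$ built from four-variable identities lies inside the equational theory of $\vmst$ and generates \eqref{id22}, so if it contained no equivalent of \eqref{id22} it would be contained in $\Sigma$, making \eqref{id22} a consequence of $\Sigma$, which we have just excluded. The main obstacle is the last step: one must simultaneously control the context (handled in the second paragraph) and the evaluation $\psi$, which may blow a single variable up into a multi-letter factor, identify distinct variables, or delete variables, all while respecting the ceiling of four variables; the two points that genuinely require computation are the determination of $[xzxyty]_\mst$ and the exclusion of the $|\pord^{*}|\in\{4,5\}$ cases.
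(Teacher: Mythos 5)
Your proof is correct and follows essentially the same strategy as the paper's: reduce to a single elementary rewriting step applied to $xzxyty$, rule out any proper context, and show the substitution must be a bijective renaming, so that the identity used is \eqref{id22} itself (or an equivalent). The only differences are bookkeeping — the paper dispatches the context elimination in one line via Lemma~\ref{lemma:mst_shortest} (no proper factor of $xzxyty$ is a side of a non-trivial $\mst$-identity) and argues rigidity of the substitution from the distinctness of the length-two factors of $xzxyty$, where you enumerate the nine candidate blocks and split on the length of the reduced identity — but the method is the same.
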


\begin{proof}
    Let $\mathcal{S}$ be the set of all non-trivial identities, satisfied by $\mst$, over an alphabet with four variables. By definition, \eqref{id22} is a consequence of $\mathcal{S}$. As such, there exists a non-trivial identity $\uord \approx \vord$ in $\mathcal{S}$, and a substitution $\psi$, such that
    \[
    xzxyty = \word_1 \psi(\uord) \word_2,
	\]
	where $\word_1, \word_2$ are words over the four-letter alphabet, and $\psi(\uord) \neq \psi(\vord)$. We can assume, without loss of generality, that $\psi$ does not map any variable to the empty word. By Lemma~\ref{lemma:mst_shortest}, no proper factor of $xzxyty$ is the left-hand side of a non-trivial identity satisfied by $\mst$, hence $\word_1$ and $\word_2$ are the empty word, that is, $xzxyty = \psi(\uord)$.
	
	Since $\cont{xzxyty}=\bigl(\begin{smallmatrix} x & y & z & t \\ 2 & 2 & 1 & 1 \end{smallmatrix}\bigr)$ and, by Lemma~\ref{lemma:mst_shortest}, there is a lower bound on the length of identities satisfied by $\mst$, we can conclude that, up to renaming of variables, $x$ and $y$ occur exactly twice, and $z$ and $t$ can each occur at most once in $\uord \approx \vord$. Notice that, since all factors of length $2$ of $xzxyty$ are distinct, then $\psi(x)$ and $\psi(y)$ must be single variables, in particular, up to renaming, $x$ and $y$. Furthermore, since $zt$ is not a factor of $xzxyty$, then $\psi(z)$ and $\psi(t)$ must also be single variables, in particular, up to renaming, $z$ and $t$. As such, the length of $\uord$ is the same as that of $\psi(\uord)$, which implies that exactly four distinct variables occur in $\uord \approx \vord$. But, since we are considering only substitutions which do not map variables to the empty word, this implies that $\psi$ is only a renaming of variables. Notice that $xzxyty$ is the left-hand side of a non-trivial identity satisfied by $\mst$ if and only if the right-hand side is $xzyxty$, by Corollary~\ref{corollary:mst_identities}. Hence $\uord \approx \vord$ is equivalent to \eqref{id22}.
\end{proof}

As a consequence, we have the following:

\begin{corollary}
\label{corollary:mst_axiomatic_rank}
The axiomatic rank of $\mst$ is $4$.
\end{corollary}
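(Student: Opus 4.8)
The plan is to pin the axiomatic rank of $\mst$ between $4$ and $4$ using results already in hand. First I would establish the upper bound: by Corollary~\ref{corollary:mst_finite_basis} the set $\mathcal{B}_{\mst}$ consisting of the identities \eqref{id22} and \eqref{id31} is an equational basis for $\vmst$, and each of these two identities involves exactly four distinct variables, so the axiomatic rank of $\mst$ is at most $4$. (That the axiomatic rank is finite is automatic from finite basedness, as noted in the preliminaries, so there is nothing further to check here.)

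For the lower bound I would argue by contradiction. Suppose $\vmst$ admitted an equational basis $\mathcal{B}$ in which every identity involves at most three distinct variables. Every such identity may be regarded as an identity over a fixed four-letter alphabet (simply not using the fourth letter), so $\mathcal{B}$ is in particular an equational basis consisting only of identities over an alphabet with four variables. By Proposition~\ref{proposition:id22_not_consequence}, such a basis must contain the identity \eqref{id22} or an identity equivalent to it; but \eqref{id22}, and any equivalent identity, involves four distinct variables, contradicting the assumption on $\mathcal{B}$. Hence no equational basis for $\vmst$ with at most three variables per identity exists, so the axiomatic rank of $\mst$ is at least $4$.

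Combining the two bounds yields that the axiomatic rank of $\mst$ equals $4$, which is the claim. The substantive content is entirely absorbed into Proposition~\ref{proposition:id22_not_consequence} (and, behind it, Lemma~\ref{lemma:mst_shortest}); the only point in this corollary that deserves a moment's care is the reduction in the previous paragraph, namely that a basis whose identities use at most three variables falls under the hypothesis "only identities over an alphabet with four variables" of Proposition~\ref{proposition:id22_not_consequence}. With that (standard) reading, which is consistent with the definition of axiomatic rank recalled earlier, the corollary follows at once, so I do not anticipate any real obstacle.
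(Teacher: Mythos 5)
Your proof is correct and is exactly the argument the paper intends: the upper bound from the four-variable basis $\mathcal{B}_{\mst}$ of Corollary~\ref{corollary:mst_finite_basis}, and the lower bound from Proposition~\ref{proposition:id22_not_consequence} via the observation that a basis using at most three variables per identity is in particular a basis over a four-variable alphabet yet cannot contain \eqref{id22} or an equivalent identity. The paper leaves this deduction implicit ("As a consequence..."), and your write-up supplies precisely the missing details, including the correct reading of "identities over an alphabet with four variables".
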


Recall that the meet-taiga monoid $\mtg$ is given by the meet of the left and right-taiga congruences. Therefore, in parallel with the meet-stalactic case, we can show that the variety generated by $\mtg$ is the varietal join of the varieties generated, respectively, by $\ltg$ and $\rtg$. By \cite[Corollary~5.8]{cain_johnson_kambites_malheiro_representations_2022} and \cite[Theorem~2.3]{han2021preprint}, $\ltg$ (resp. $\rtg$) generates the same variety as $\lst$ (resp. $\rst$). Hence, we have that:

\begin{corollary}
\label{corollary:mst_mtg_same_variety}
    $\mtg$ generates the same variety as $\mst$.
\end{corollary}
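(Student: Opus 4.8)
The plan is to prove the stronger equality $\vmtg = \vltg \vee \vrtg$ and then to substitute the known one-sided identifications. By definition $\equiv_{\mtg}$ is the meet of $\equiv_\ltg$ and $\equiv_\rtg$ in the congruence lattice of $\N^*$, i.e. their intersection, and I would exploit only this fact together with the closure of varieties under homomorphic images, submonoids and direct products. I would deliberately \emph{not} imitate the meet-stalactic argument that runs through free objects: the relations in $\cR_{\sylv}$ and $\cR_{\sylvh}$ (hence those in $\cR_{\rtg}$ and $\cR_{\ltg}$) carry order constraints that a renaming of letters can violate, so $\equiv_\rtg$ and $\equiv_\ltg$ are not fully invariant on $\N^*$ and $\rtg,\ltg$ are not the free objects of their varieties; a purely variety-theoretic argument sidesteps this.

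First I would establish $\vmtg \subseteq \vltg \vee \vrtg$. Consider the homomorphism $\delta\colon \N^* \to \ltg \times \rtg$ given by $\word \mapsto ([\word]_\ltg,[\word]_\rtg)$. Its kernel is $\{(\uord,\vord) : \uord \equiv_\ltg \vord \text{ and } \uord \equiv_\rtg \vord\} = \equiv_\ltg \cap \equiv_\rtg = \equiv_{\mtg}$, so $\mtg$ embeds into $\ltg \times \rtg$. Since $\ltg \in \vltg$ and $\rtg \in \vrtg$, both lie in $\vltg \vee \vrtg$; as this variety is closed under finite direct products, $\ltg \times \rtg \in \vltg \vee \vrtg$, and as it is closed under submonoids, $\mtg \in \vltg \vee \vrtg$. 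Hence $\vmtg \subseteq \vltg \vee \vrtg$.

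Next I would prove the reverse inclusion. Since $\equiv_{\mtg} \subseteq \equiv_\ltg$ and $\equiv_{\mtg} \subseteq \equiv_\rtg$, the natural projections $\mtg \twoheadrightarrow \ltg$ and $\mtg \twoheadrightarrow \rtg$ are well-defined surjective homomorphisms, so $\ltg$ and $\rtg$ are homomorphic images of $\mtg$ and thus belong to $\vmtg$; therefore $\vltg \subseteq \vmtg$, $\vrtg \subseteq \vmtg$, and so $\vltg \vee \vrtg \subseteq \vmtg$. Combined with the previous step, $\vmtg = \vltg \vee \vrtg$. Finally, invoking \cite[Corollary~5.8]{cain_johnson_kambites_malheiro_representations_2022} and \cite[Theorem~2.3]{han2021preprint}, which give $\vltg = \vlst$ and $\vrtg = \vrst$, together with the equality $\vmst = \vrst \vee \vlst$ established above, I conclude $\vmtg = \vltg \vee \vrtg = \vlst \vee \vrst = \vmst$.

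The computation is short and I do not anticipate a genuine obstacle; the only non-routine ingredient is the cited equality of varieties for the one-sided taiga and stalactic monoids. For a fully self-contained treatment one would have to reprove $\vrtg = \vrst$ (and symmetrically $\vltg = \vlst$): the inclusion $\vrtg \subseteq \vrst$ is immediate, since $\equiv_\rst \subseteq \equiv_\rtg$ makes $\rtg$ a quotient of $\rst$, whereas the reverse inclusion amounts to showing that $\rtg$ satisfies no identity outside the balanced ones with $\overleftarrow{\rho_\uord} = \overleftarrow{\rho_\vord}$ of Theorem~\ref{theorem:rst_identity_characterisation} — precisely the content of the cited results, which I would simply invoke.
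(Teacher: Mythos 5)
Your proof is correct. The paper disposes of the key step with a single sentence --- ``in parallel with the meet-stalactic case, we can show that $\vmtg = \vltg \vee \vrtg$'' --- and then cites the same two references you do for $\vltg = \vlst$ and $\vrtg = \vrst$, so the overall skeleton is identical. Where you genuinely diverge is in how you justify $\vmtg = \vltg \vee \vrtg$: the meet-stalactic argument in the paper runs through the identification of $\equiv_\lst$ and $\equiv_\rst$ with fully invariant congruences (equivalently, through $\lst$ and $\rst$ being free objects of their varieties), and you correctly observe that this does \emph{not} transfer verbatim to the taiga setting --- $\equiv_\rtg$ is not fully invariant (e.g.\ $312 \equiv_\rtg 132$, yet applying the endomorphism swapping $1$ and $2$ yields $321 \not\equiv_\rtg 231$), and $\rtg$, being a proper quotient of $\rst$, cannot be the free object of $\vrtg = \vrst$. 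Your replacement --- $\mtg$ embeds in $\ltg \times \rtg$ because $\equiv_\mtg$ is the intersection of the two congruences, while $\ltg$ and $\rtg$ are homomorphic images of $\mtg$, giving both inclusions between $\vmtg$ and $\vltg \vee \vrtg$ --- is the standard subdirect-decomposition argument; it is more elementary, needs no freeness, and is arguably what the paper's ``in parallel'' ought to mean if read charitably. So your version buys rigour at the point where the paper is at its most elliptical, at the cost of no additional machinery.
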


Since the equational theory of $\mst$ is a strict subset of the equational theory of $\baxt$, given in \cite[Theorem~4.3]{cain_malheiro_ribeiro_sylvester_baxter_2023} and \cite[Lemma~3.3]{han2021preprint}, we have the following:

\begin{corollary}
\label{corollary:vmst_strictly_contained_vbaxt}
$\vmst$ is strictly contained in $\vbaxt$.
\end{corollary}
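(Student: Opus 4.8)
The plan is to translate the statement into the language of equational theories. Recall that for monoids $M_1,M_2$ one has $\mathbb{V}(M_1)\subseteq\mathbb{V}(M_2)$ precisely when every identity satisfied by $M_2$ is satisfied by $M_1$, and that two monoids generating the same variety satisfy the same identities. Hence it suffices to prove two things: every identity satisfied by $\baxt$ is satisfied by $\mst$, and there is an identity satisfied by $\mst$ but not by $\baxt$.

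For the inclusion I would argue structurally rather than unpack the explicit description of the identities of $\baxt$. Since $\equiv_\baxt=\equiv_\sylv\cap\equiv_\sylvh$, both $\sylv$ and $\sylvh$ are quotients of $\baxt$; and since $\cR_\rtg=\cR_\sylv\cup\cR_\rst$ and $\cR_\ltg=\cR_\sylvh\cup\cR_\lst$, the monoids $\rtg$ and $\ltg$ are quotients of $\sylv$ and $\sylvh$ respectively, hence of $\baxt$. As varieties are closed under homomorphic images, $\vrtg\subseteq\vbaxt$ and $\vltg\subseteq\vbaxt$. By the results of \cite{cain_johnson_kambites_malheiro_representations_2022,han2021preprint} cited above, $\vrtg=\vrst$ and $\vltg=\vlst$, while $\vmst=\vrst\vee\vlst$ by Corollary~\ref{corollary:meet_stalactic_free_object} and the discussion preceding it. Therefore $\vmst=\vrst\vee\vlst=\vrtg\vee\vltg\subseteq\vbaxt$. (Equivalently, one could compare Corollary~\ref{corollary:mst_identities} with the description of the identities of $\baxt$ in \cite[Theorem~4.3]{cain_malheiro_ribeiro_sylvester_baxter_2023}, checking that the conditions defining Baxter identities force balancedness together with the agreement on the two sides of the orders of first occurrences and of last occurrences of the variables; but the quotient argument avoids this bookkeeping.)

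For strictness, consider the identity $xxyy\approx xyxy$. It is balanced, and on both sides the variables occur for the first time, and for the last time, in the order $x,y$; hence it is satisfied by $\mst$ by Corollary~\ref{corollary:mst_identities}. It is not satisfied by $\baxt$: under the evaluation $x\mapsto 1$, $y\mapsto 2$ a direct computation of the right strict binary search trees (Definition~7 of \cite{hivert_sylvester}) gives distinct trees, so $1122\not\equiv_\sylv 1212$ and therefore $1122\not\equiv_\baxt 1212$. Thus $\vmst\neq\vbaxt$, and together with the inclusion above this yields $\vmst\subsetneq\vbaxt$. There is no substantial obstacle here: the only points requiring care are the direction of the variety/equational-theory correspondence and, in the structural argument, keeping straight which monoid is a quotient of which, and the strictness witness is concrete and immediate to verify.
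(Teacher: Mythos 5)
Your proof is correct, and it reaches the conclusion by a genuinely different route from the paper. The paper's justification is a one-line comparison of the two explicit equational theories: the identities of $\mst$ from Corollary~\ref{corollary:mst_identities} against the known characterisation of the identities of $\baxt$ in \cite[Theorem~4.3]{cain_malheiro_ribeiro_sylvester_baxter_2023}, from which one reads off that every Baxter identity is an $\mst$-identity and that the containment is strict. (Note that the containment that actually yields $\vmst \subsetneq \vbaxt$ is that the set of identities satisfied by $\baxt$ is strictly contained in the set satisfied by $\mst$; this is precisely the direction you are careful to use, and your attention to it is warranted, since the paper's own sentence states the containment the other way round.) You instead obtain the inclusion $\vmst \subseteq \vbaxt$ structurally: $\sylv$ and $\sylvh$ are quotients of $\baxt$, the monoids $\rtg$ and $\ltg$ are quotients of these, $\vrtg = \vrst$ and $\vltg = \vlst$ by the cited results, and $\vmst = \vrst \vee \vlst$, so closure of varieties under homomorphic images and joins gives the inclusion without unpacking the Baxter identities at all. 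Your strictness witness $xxyy \approx xyxy$ is valid: it is balanced with matching orders of first and last occurrences, hence holds in $\mst$ by Corollary~\ref{corollary:mst_identities}, while $1122 \not\equiv_\sylv 1212$ (the right strict binary search trees differ), so it fails in $\baxt$. The trade-off is that your argument leans on the equality of the taiga and stalactic varieties but avoids the combinatorial description of the Baxter equational theory, whereas the paper's comparison is shorter once that description is taken as known.
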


On the other hand, by \cite[Corollary~4.6]{cain_johnson_kambites_malheiro_representations_2022}, $\vrst$ is the varietal join of the variety $\mathbb{COM}$ of all commutative monoids and the variety $\mathbb{RRB}$ of all right regular bands (restricted to monoids). By a parallel argument, $\vlst$ is the varietal join of $\mathbb{COM}$ and the variety $\mathbb{LRB}$ of all left regular bands (restricted to monoids). Therefore, since $\vmst = \vrst \vee \vlst$, and the varietal join of $\mathbb{LRB}$ and $\mathbb{RRB}$ is the variety $\mathbb{RB}$ of all regular bands (restricted to monoids), we can conclude that:

\begin{corollary}
\label{corollary:vmst_varietal_join}
    $\vmst$ is the varietal join of $\mathbb{COM}$ and $\mathbb{RB}$.
\end{corollary}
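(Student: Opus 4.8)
The plan is to assemble the corollary from the ingredients already laid out in the paragraph preceding its statement, so the argument is short and essentially lattice-theoretic. First I would recall that $\vmst = \vrst \vee \vlst$, which follows from the fact (used just before Corollary~\ref{corollary:meet_stalactic_free_object}) that the left- and right-stalactic congruences, being fully invariant, coincide with the equational theories of $\vlst$ and $\vrst$, so that the meet-stalactic congruence, being their meet, is the equational theory of the varietal join $\vrst \vee \vlst$. Next I would quote \cite[Corollary~4.6]{cain_johnson_kambites_malheiro_representations_2022} to write $\vrst = \mathbb{COM} \vee \mathbb{RRB}$, and its left–right dual to write $\vlst = \mathbb{COM} \vee \mathbb{LRB}$.

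The computation is then carried out purely in the lattice of (monoid) varieties, using that $\vee$ is associative, commutative and idempotent:
\[
\vmst = \vrst \vee \vlst = (\mathbb{COM} \vee \mathbb{RRB}) \vee (\mathbb{COM} \vee \mathbb{LRB}) = \mathbb{COM} \vee (\mathbb{RRB} \vee \mathbb{LRB}).
\]
Substituting the last ingredient, $\mathbb{RRB} \vee \mathbb{LRB} = \mathbb{RB}$, yields $\vmst = \mathbb{COM} \vee \mathbb{RB}$, which is the claim.

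The only genuine step — and hence the main obstacle — is the identity $\mathbb{RRB} \vee \mathbb{LRB} = \mathbb{RB}$ (restricted to monoids). The inclusion $\supseteq$ is the nontrivial half. I would justify it either by citing the classical description of the lattice of band varieties (Gerhard, Fennemore), in which the variety of regular bands is precisely the join of its two covers $\mathbb{LRB}$ and $\mathbb{RRB}$; or, more self-containedly, on the equational level: for band words $\mathbf{u},\mathbf{v}$, the identity $\mathbf{u} \approx \mathbf{v}$ holds in $\mathbb{LRB}$ iff $\mathbf{u},\mathbf{v}$ have the same set of variables and the same order of first occurrences, it holds in $\mathbb{RRB}$ iff they have the same set of variables and the same order of last occurrences, and it holds in $\mathbb{RB}$ iff both conditions hold simultaneously; hence an identity valid in both $\mathbb{LRB}$ and $\mathbb{RRB}$ is valid in $\mathbb{RB}$, giving $\mathbb{RB} \subseteq \mathbb{RRB} \vee \mathbb{LRB}$ (the reverse inclusion $\mathbb{RRB},\mathbb{LRB} \subseteq \mathbb{RB}$ being immediate from the defining identities $xyx \approx yx$ and $xyx \approx xy$ against $xzxyx \approx xzyx$). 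The passage between the semigroup and monoid settings here is harmless, since adjoining an identity element does not affect the relevant portion of the lattice of band varieties. With this lemma in hand, the corollary follows by the displayed chain of equalities.
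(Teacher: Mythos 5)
Your proposal is correct and follows essentially the same route as the paper: both deduce the result from $\vmst = \vrst \vee \vlst$, the cited decomposition $\vrst = \mathbb{COM} \vee \mathbb{RRB}$ together with its left--right dual, and the fact that $\mathbb{LRB} \vee \mathbb{RRB} = \mathbb{RB}$. The only difference is that you supply a justification (via the equational characterisations of identities in $\mathbb{LRB}$, $\mathbb{RRB}$ and $\mathbb{RB}$) for the last join, which the paper simply asserts as known.
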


Now, we consider the case of the join-stalactic monoid. We write $\vjst$ to denote the variety generated by $\jst$. As before in the meet-stalactic case, the join-stalactic congruence can be viewed as the equational theory of the varietal meet $\vlst \wedge \vrst$, hence $\vlst \wedge \vrst$ is equal to $\vjst$, and we have the following:

\begin{corollary}
\label{corollary:join_stalactic_free_object}
    The join-stalactic monoid of rank $n$ is the free object of rank $n$ of $\vjst$, for countable $n$.
\end{corollary}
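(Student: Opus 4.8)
The plan is to mirror exactly the argument that was used in the meet-stalactic case (leading to Corollary~\ref{corollary:meet_stalactic_free_object}), but now with the roles of ``meet'' and ``join'' swapped. The key observation, already recorded in the paragraph immediately preceding the statement, is that the left-stalactic and right-stalactic congruences can each be regarded as the equational theory of $\vlst$ and $\vrst$ respectively (this is exactly Proposition~\ref{prop:stalactic_free_object} together with the remark that these congruences are fully invariant). Since taking the congruence join of $\equiv_\lst$ and $\equiv_\rst$ corresponds, on the level of varieties, to taking the varietal \emph{meet} (recall the standard duality, stated in Section~\ref{section:Equational_theories_of_meet_and_join_stalactic_and_taiga_monoids}, that the equational theory of a varietal meet is the join of the equational theories), the join-stalactic congruence $\equiv_\jst$ is precisely the equational theory of $\vlst \wedge \vrst$.

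First I would spell out that $\equiv_\jst$ is a fully invariant congruence on $\X^*$: it is the join of $\equiv_\lst$ and $\equiv_\rst$, and a join of fully invariant congruences is fully invariant. Hence it is itself the equational theory of some variety, and by the identification above that variety is $\vlst \wedge \vrst$. Next I would argue that $\vlst \wedge \vrst = \vjst$: any monoid satisfying all identities of both $\vlst$ and $\vrst$ satisfies exactly the identities in $\equiv_\jst$, so the free object of this varietal meet on $n$ generators is $\X^*/{\equiv_\jst}$ restricted to an $n$-letter alphabet, which is by definition the rank-$n$ join-stalactic monoid. Finally, for finite $n$ one invokes compatibility of $\jst$ with restriction to alphabet intervals (Proposition~\ref{prop:jst_compatible}), exactly as in the proof of Proposition~\ref{prop:stalactic_free_object}, to pass from countably infinite rank down to finite rank $n$.

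The only mild subtlety — and the step I would be most careful about — is making sure the duality ``congruence join $\leftrightarrow$ varietal meet'' is invoked in the correct direction; it is easy to transpose it by mistake. Concretely: $\equiv_\lst \subseteq \equiv_\jst$ and $\equiv_\rst \subseteq \equiv_\jst$ as equational theories (fewer identities below the join in the lattice of congruences would be the wrong picture; the join is the \emph{larger} congruence, hence contains \emph{more} identities, hence corresponds to a \emph{smaller} variety, namely the meet). So the join-stalactic monoid, having the largest of the three stalactic equational theories, generates the smallest variety, $\vlst \wedge \vrst$. Once this bookkeeping is pinned down the corollary follows immediately, since everything else is a verbatim repeat of the meet-stalactic reasoning, and there is no genuine obstacle.

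\begin{proof}
    The join-stalactic congruence $\equiv_\jst$ is the congruence join of $\equiv_\lst$ and $\equiv_\rst$, and is therefore a fully invariant congruence on $\N^*$, being the join of two fully invariant congruences. By Proposition~\ref{prop:stalactic_free_object} (and the discussion following it), the left-stalactic and right-stalactic congruences coincide with the equational theories of $\vlst$ and $\vrst$, respectively. Since the equational theory of a varietal meet is the join of the equational theories of its factors, $\equiv_\jst$ is the equational theory of $\vlst \wedge \vrst$. Hence $\vlst \wedge \vrst = \vjst$, and the free object of countably infinite rank of $\vjst$ is $\N^*/{\equiv_\jst} = \jst$. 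Finally, since $\jst$ is compatible with restriction to alphabet intervals (Proposition~\ref{prop:jst_compatible}), the result also holds for finite rank $n$.
\end{proof}
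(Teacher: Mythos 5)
Your argument is correct and is essentially the paper's own: identify $\equiv_\jst$ (a fully invariant congruence, being the join of the fully invariant congruences $\equiv_\lst$ and $\equiv_\rst$) with the equational theory of the varietal meet $\vlst \wedge \vrst$, conclude $\vlst \wedge \vrst = \vjst$, and descend to finite rank via compatibility with restriction to alphabet intervals. Your explicit care about the direction of the duality (congruence join $\leftrightarrow$ varietal meet) matches the paper's stated convention and is applied correctly.
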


This stands in contrast with the hypoplactic monoid: the hypoplactic congruence is the join of the sylvester and \#-sylvester congruences, however, the hypoplactic monoid does not generate the varietal meet of the varieties generated by the sylvester and \#-sylvester monoids, respectively. 

From Proposition~\ref{prop:jst_characterisation}, we also obtain the following:

\begin{corollary}
\label{corollary:jst_equational_theory}
The equational theory of $\jst$ is the set of balanced identities $\uord \approx \vord$ over the alphabet of variables $\X$ such that $\ol{\uord} = \ol{\vord}$.
\end{corollary}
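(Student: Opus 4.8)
The plan is to use the fact, established in Corollary~\ref{corollary:join_stalactic_free_object}, that $\jst$ of countably infinite rank is the free object of $\vjst$, so that its equational theory coincides with the defining congruence $\equiv_\jst$ viewed as a fully invariant congruence on $\X^*$. More precisely, an identity $\uord \approx \vord$ over $\X$ is satisfied by $\jst$ if and only if it is satisfied by the free object of $\vjst$ of rank $|\X|$, which — since that free object is the join-stalactic monoid on $|\X|$ generators — amounts to asking that $\uord$ and $\vord$ be $\equiv_\jst$-congruent as words over the alphabet $\X$. Thus the characterisation of the equational theory reduces directly to the characterisation of $\equiv_\jst$ given in Proposition~\ref{prop:jst_characterisation}.

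First I would invoke Corollary~\ref{corollary:join_stalactic_free_object} to reduce ``$\jst$ satisfies $\uord \approx \vord$'' to ``$\uord \equiv_\jst \vord$ in $\X^*$''. One direction is immediate: if $\uord \equiv_\jst \vord$, then any evaluation $\psi\colon \X^* \to M$ with $M \in \vjst$ (in particular $M = \jst$) respects the congruence, so the identity holds. For the converse, if $\jst$ satisfies $\uord \approx \vord$, apply the evaluation given by the identity map $\X^* \to \jst$ (viewing $\X$ as the generating set of the join-stalactic monoid of rank $|\X|$), which forces $\uord \equiv_\jst \vord$. Then apply Proposition~\ref{prop:jst_characterisation}: $\uord \equiv_\jst \vord$ iff $\cont{\uord} = \cont{\vord}$ and $\ol{\uord} = \ol{\vord}$. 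The condition $\cont{\uord} = \cont{\vord}$ is exactly balancedness, and $\ol{\uord} = \ol{\vord}$ is the remaining condition in the statement, so the two descriptions coincide.

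The main subtlety — though it is routine once set up — is the bookkeeping around finite versus countably infinite rank and the passage between evaluations into $M$ for arbitrary $M \in \vjst$ and the single ``tautological'' evaluation into the relatively free object. This is handled cleanly by the remark, recorded earlier in the excerpt, that equational theories are precisely fully invariant congruences on $\X^*$ and that the free object of $\vjst$ is $\X^*$ modulo that theory; combined with Corollary~\ref{corollary:join_stalactic_free_object} identifying this quotient with $\jst$, the identification of the equational theory with $\equiv_\jst$ is automatic. I do not anticipate any genuine obstacle: all the work has been front-loaded into Proposition~\ref{prop:jst_characterisation} and Corollary~\ref{corollary:join_stalactic_free_object}, and the present statement is essentially their conjunction, rephrased in the language of identities.
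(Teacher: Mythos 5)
Your proposal is correct and follows exactly the route the paper intends: the corollary is stated as an immediate consequence of Proposition~\ref{prop:jst_characterisation} together with the identification of $\jst$ as the free object of $\vjst$ (Corollary~\ref{corollary:join_stalactic_free_object}), which is precisely the reduction you carry out.
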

    
It follows that checking if an identity holds in $\jst$ can be done in polynomial time:

\begin{corollary}
\label{corollary:check_id_jst}
The decision problem $\textsc{Check-Id}(\jst)$ belongs to the complexity class $\mathsf{P}$.
\end{corollary}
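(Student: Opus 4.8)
The plan is to reduce the decision problem directly to the characterisation of the equational theory of $\jst$ obtained in Corollary~\ref{corollary:jst_equational_theory}: an identity $\uord \approx \vord$ is satisfied by $\jst$ if and only if it is balanced and $\ol{\uord} = \ol{\vord}$. So, given an identity $\uord \approx \vord$ over $\X$ as input, the algorithm I would describe proceeds in two phases. First, compute $\cont{\uord}$ and $\cont{\vord}$ by a single left-to-right pass over each word, maintaining a counter $|\word|_x$ for each variable $x$ that occurs; then compare the two content functions. If they differ, reject (the identity is not balanced, hence not satisfied). This phase runs in time polynomial in the size of the identity (linear in the length, times the cost of maintaining the counters).

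Second, assuming the identity is balanced, the set of simple variables — those $x$ with $|\uord|_x = |\vord|_x = 1$ — is already determined by the content data just computed. Form $\ol{\uord}$ and $\ol{\vord}$ by deleting from $\uord$ and $\vord$ every letter that is not simple; this is again a single pass over each word. Compare $\ol{\uord}$ and $\ol{\vord}$ letter by letter: if they are equal, accept; otherwise reject. By Corollary~\ref{corollary:jst_equational_theory}, the algorithm accepts exactly when $\uord \approx \vord$ is satisfied by $\jst$, and every step runs in polynomial time in the size of the input identity. Hence $\textsc{Check-Id}(\jst) \in \mathsf{P}$.

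There is no real obstacle here: the entire content of the statement is packaged in Corollary~\ref{corollary:jst_equational_theory}, and the only thing to check is that the two conditions appearing there — being balanced, and agreement of the restrictions to simple letters — are each testable in polynomial time, which is immediate since both amount to a bounded number of linear scans and comparisons of the two sides. If a more explicit bound is wanted, one can note that with the input of total length $N$ the whole procedure is $\mathcal{O}(N^2)$ (or better with suitable data structures), but for membership in $\mathsf{P}$ the crude bound suffices.
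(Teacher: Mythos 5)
Your proposal is correct and matches the paper's (essentially implicit) argument: the paper simply observes that the characterisation in Corollary~\ref{corollary:jst_equational_theory} — balancedness plus $\ol{\uord} = \ol{\vord}$ — is verifiable in polynomial time, which is exactly the two-phase scan you describe.
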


Recall that $\equiv_\lst$ and $\equiv_\rst$ are generated, respectively, by the relations $(a \word a, a a \word)$ and $(a \word a, \word a a)$, for all $a \in \N$ and $\word \in \N^*$. As such, $\equiv_\jst$ is generated by these relations, from which we obtain the following:

\begin{corollary}
\label{corollary:jst_finite_basis}
$\vjst$ admits a finite equational basis $\mathcal{B}_{\jst}$, consisting of the identities
    \begin{align}
        xxy \approx xyx \approx yxx. 
    \end{align}
\end{corollary}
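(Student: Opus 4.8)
The plan is to argue exactly as in the proof of Proposition~\ref{prop:stalactic_free_object}, but now for the join rather than for each factor separately. Recall from \cite[Section~3.7]{hnt_stalactic} that the left-stalactic congruence $\equiv_\lst$ on $\N^*$ is generated by the relations $(a\word a, aa\word)$ for all $a \in \N$, $\word \in \N^*$, and by symmetric reasoning the right-stalactic congruence $\equiv_\rst$ is generated by the relations $(a\word a, \word aa)$. Since $\equiv_\jst = \equiv_\lst \vee \equiv_\rst$ is the congruence join, it is the congruence on $\N^*$ generated by the union of these two families, that is, by all pairs $(a\word a, aa\word)$ and $(a\word a, \word aa)$ with $a \in \N$, $\word \in \N^*$.

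First I would record that, by Corollary~\ref{corollary:join_stalactic_free_object}, the equational theory of $\vjst$ is precisely $\equiv_\jst$ viewed as a fully invariant congruence on $\X^*$. Thus $\mathcal{B}_{\jst} := \{xxy \approx xyx,\ xyx \approx yxx\}$ is certainly contained in the equational theory of $\vjst$: indeed, $xxy$, $xyx$, $yxx$ all have the same content and the same restriction to simple letters (both sides contain $y$ as the unique simple letter, in the unique position up to the stated equalities), so by Corollary~\ref{corollary:jst_equational_theory} these identities hold in $\jst$. It then remains to prove that every identity in the equational theory of $\vjst$ is a consequence of $\mathcal{B}_{\jst}$.

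The key step is the following: the equational theory of $\vjst$, being a fully invariant congruence on $\X^*$, is generated as a fully invariant congruence by any set of identities whose fully invariant closure it is; and the fully invariant closure of $\mathcal{B}_{\jst}$ on $\X^*$ consists of all pairs obtainable by substitution and by multiplication on both sides, which are exactly the pairs $(a\word a, aa\word)$ and $(a\word a, \word aa)$ for $a$ a word, $\word$ a word — in particular it contains all such pairs with $a$ a single variable. Hence the fully invariant congruence generated by $\mathcal{B}_{\jst}$ on $\X^*$ is generated, as a congruence, by exactly the same relations $(a\word a, aa\word)$, $(a\word a, \word aa)$ that generate $\equiv_\jst$. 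Therefore the two fully invariant congruences coincide, so $\mathcal{B}_{\jst}$ is an equational basis for $\vjst$. Finitely many identities appear, so $\vjst$ is finitely based.

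The main obstacle — really the only subtlety — is justifying that the fully invariant congruence generated by the two-variable identities $xxy \approx xyx \approx yxx$ already contains all the relations $(a\word a, aa\word)$ with $\word$ an arbitrary word of $\N^*$ (equivalently, $\X^*$), not merely those of bounded length. This is handled exactly as in \cite[Proof of Proposition~\ref{prop:stalactic_free_object}]{} (i.e.\ the argument already used for $\lst$): substituting $x \mapsto a$, $y \mapsto \word$ into $xxy \approx xyx$ and into $xyx \approx yxx$ yields $aa\word \approx a\word a$ and $a\word a \approx \word aa$ for \emph{any} word $\word$, since a substitution may send a variable to an arbitrary word; fully invariant congruences are closed under such substitutions, so these relations lie in the fully invariant closure of $\mathcal{B}_{\jst}$. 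Conversely, each generating relation of $\equiv_\jst$ is visibly a substitution instance of an identity in $\mathcal{B}_{\jst}$, so the fully invariant closure of $\mathcal{B}_{\jst}$ is contained in $\equiv_\jst$. Combining the two inclusions gives equality, completing the proof.
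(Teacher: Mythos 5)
Your proposal is correct and follows essentially the same route as the paper: the paper obtains the corollary by observing that $\equiv_\jst$ is the congruence generated by the relations $(a\word a, aa\word)$ and $(a\word a, \word aa)$, which are exactly the substitution instances of $xxy \approx xyx \approx yxx$, and that $\equiv_\jst$ is the (fully invariant) equational theory of $\vjst$ by Corollary~\ref{corollary:join_stalactic_free_object}. You have merely spelled out the two inclusions that the paper leaves implicit; the only cosmetic quibble is your phrase ``consists of \emph{exactly} the pairs $(a\word a, aa\word)$ and $(a\word a,\word aa)$'', since the fully invariant closure also contains everything generated from these by transitivity and compatibility, but your final paragraph handles both inclusions correctly so nothing is lost.
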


Since the only identities satisfied by $\jst$ where only one variable occurs are trivial, we can conclude the following:

\begin{corollary}
\label{corollary:jst_axiomatic_rank}
The axiomatic rank of $\jst$ is $2$.
\end{corollary}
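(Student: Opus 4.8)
The plan is to combine the explicit finite basis from Corollary~\ref{corollary:jst_finite_basis} with a short argument ruling out a one-variable basis. Since every identity in $\mathcal{B}_{\jst}$ involves only the two variables $x$ and $y$, the variety $\vjst$ already admits a basis in which each identity has at most two distinct variables, so the axiomatic rank is at most $2$. It remains to show the axiomatic rank is not $1$, i.e.\ that no set of identities each involving at most one variable can serve as an equational basis for $\vjst$.

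First I would invoke Corollary~\ref{corollary:jst_equational_theory}: every identity satisfied by $\jst$ is balanced. Hence, if $\uord \approx \vord$ is satisfied by $\jst$ and only one variable $x$ occurs in it, then $\cont{\uord} = \cont{\vord}$ forces $\uord = x^k = \vord$ for some $k \geq 0$, so the identity is trivial. Consequently, any set $\Sigma$ of identities satisfied by $\jst$ in which each identity uses at most one variable consists entirely of trivial identities.

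Next I would observe that the equational theory generated by a set of trivial identities is just the trivial equational theory $\{\uord \approx \uord : \uord \in \X^*\}$: a trivial identity imposes no constraint under any evaluation, so it is a consequence only of trivial identities. On the other hand, $\jst$ satisfies the non-trivial identity $xxy \approx xyx$ (which lies in $\mathcal{B}_{\jst}$), so the equational theory of $\vjst$ strictly contains the trivial one. Therefore no such $\Sigma$ can be an equational basis for $\vjst$, and the axiomatic rank is strictly greater than $1$. Together with the upper bound, this yields axiomatic rank exactly $2$.

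The argument is essentially immediate once the balancedness of all $\jst$-identities (Corollary~\ref{corollary:jst_equational_theory}) is in hand, so there is no genuine obstacle; the only point deserving a moment's care is the claim that a set of trivial identities generates only trivial identities, which follows directly from the definition of \emph{consequence} but is worth stating explicitly rather than leaving implicit.
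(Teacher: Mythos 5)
Your proof is correct and follows essentially the same route as the paper: the upper bound comes from the two-variable basis $\mathcal{B}_{\jst}$, and the lower bound from the observation that, since all identities satisfied by $\jst$ are balanced, any one-variable identity it satisfies is trivial and hence cannot generate the non-trivial identity $xxy \approx xyx$. Your explicit remark that trivial identities have only trivial consequences just spells out what the paper leaves implicit.
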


While $\jst$ satisfies the identities $xxy \approx xyx \approx yxx$, it is not commutative. As such, it is natural to ask which other monoids in $\vjst$ are non-commutative. 

\begin{proposition}
\label{prop:vjst_no_proper_over-commutative_varieties}
$\vjst$ is the unique cover of $\mathbb{COM}$ in the lattice of all varieties of monoids. 
\end{proposition}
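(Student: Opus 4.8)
The plan is to reduce the statement to a short argument about equational theories, using the explicit description of $\mathrm{Eq}(\jst)$ from Corollary~\ref{corollary:jst_equational_theory} and nothing about finite bases. First I would record that $\vjst$ is a genuine proper extension of $\mathbb{COM}$: every identity in the equational theory of $\jst$ is balanced, so $\mathrm{Eq}(\jst) \subseteq \mathrm{Eq}(\mathbb{COM})$ and hence $\mathbb{COM} \subseteq \vjst$; and the inclusion is strict because $\jst$ is not commutative (for instance $12 \not\equiv_\jst 21$, as $\ol{12} \neq \ol{21}$), so $\jst \in \vjst \setminus \mathbb{COM}$.

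The core of the proof is the claim that \emph{every variety $\mathbb{V}$ of monoids with $\mathbb{COM} \subsetneq \mathbb{V}$ satisfies $\vjst \subseteq \mathbb{V}$}; equivalently, $\mathrm{Eq}(\mathbb{V}) \subseteq \mathrm{Eq}(\jst)$. So I would take an arbitrary identity $\uord \approx \vord$ satisfied by $\mathbb{V}$ and show it is balanced with $\ol{\uord} = \ol{\vord}$, which by Corollary~\ref{corollary:jst_equational_theory} places it in $\mathrm{Eq}(\jst)$. Balancedness is immediate: $\mathbb{COM} \subseteq \mathbb{V}$ forces $\mathbb{COM}$ to satisfy $\uord \approx \vord$, and commutative monoids satisfy only balanced identities (evaluate in the free commutative monoid, where a word maps to its content). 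For $\ol{\uord} = \ol{\vord}$, suppose not; since $\uord$ and $\vord$ are balanced they have the same simple letters, and $\ol{\uord}$, $\ol{\vord}$ are two distinct orderings of that set, so there are simple letters $s,t$ appearing in one relative order in $\uord$ and in the opposite relative order in $\vord$. Applying the endomorphism of $\X^*$ fixing $s$ and $t$ and sending every other variable to the empty word, and using that satisfaction of identities is closed under substitution (composing an evaluation with a substitution endomorphism is again an evaluation), $\mathbb{V}$ would satisfy $st \approx ts$, hence $xy \approx yx$, so $\mathbb{V} \subseteq \mathbb{COM}$ --- contradicting $\mathbb{COM} \subsetneq \mathbb{V}$.

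Finally I would assemble the pieces. The claim shows that no variety lies strictly between $\mathbb{COM}$ and $\vjst$ (any such $\mathbb{V}$ would have to contain $\vjst$), so $\vjst$ is a cover of $\mathbb{COM}$; and if $\mathbb{W}$ is any cover of $\mathbb{COM}$, then $\mathbb{COM} \subsetneq \mathbb{W}$ gives $\vjst \subseteq \mathbb{W}$ by the claim, while $\mathbb{COM} \subsetneq \vjst \subseteq \mathbb{W}$ together with $\mathbb{W}$ being a cover forces $\mathbb{W} = \vjst$. I do not anticipate a real obstacle; the only delicate points are extracting the pair of oppositely ordered simple letters from $\ol{\uord} \neq \ol{\vord}$ and checking that the substitution step is legitimate, both of which are routine. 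Note in particular that the argument uses only the description of the equational theory of $\jst$, not its presentation or its finite equational basis.
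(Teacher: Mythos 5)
Your proof is correct. The combinatorial core is identical to the paper's: both arguments take a balanced identity $\uord \approx \vord$ with $\ol{\uord} \neq \ol{\vord}$, extract two simple variables occurring in opposite relative orders on the two sides, and kill all other variables by a substitution to deduce $xy \approx yx$. Where you genuinely differ is in the packaging. The paper's proof only establishes directly that $\vjst$ admits no proper over-commutative subvariety other than $\mathbb{COM}$, and then appeals to an external lattice-theoretic result (Remark~3.10 of Gusev--Lee--Vernikov) to upgrade this to ``unique cover in the lattice of \emph{all} varieties of monoids''. You instead prove the stronger and cleaner statement that $\vjst$ is the \emph{least} variety strictly containing $\mathbb{COM}$: for any variety $\mathbb{V}$ with $\mathbb{COM} \subsetneq \mathbb{V}$, every identity of $\mathbb{V}$ is balanced (because $\mathbb{COM} \subseteq \mathbb{V}$) and has equal restrictions to simple variables (because otherwise the substitution argument would force $\mathbb{V} \subseteq \mathbb{COM}$), hence lies in the equational theory of $\jst$ by Corollary~\ref{corollary:jst_equational_theory}. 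This yields existence and uniqueness of the cover in one stroke, with no external citation, at essentially no extra cost; it is a genuine (if modest) improvement in self-containedness over the published argument.
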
 

\begin{proof}
Recall that any over-commutative variety satisfies only balanced identities. Let $\uord \approx \vord$ be a balanced identity not satisfied by $\jst$. Then, $\ol{\uord} \neq \ol{\vord}$, that is, there exist $x,y \in \X$ such that $|\uord|_x = |\uord|_y = 1$ and, without loss of generality, $x$ occurs before $y$ in $\uord$ but after it in $\vord$. Consider the substitution $\phi$ that keeps $x$ and $y$, and maps all other variables to the empty word. Then, $xy = \phi(\uord) \approx \phi(\vord) = yx$ can be deduced from $\uord \approx \vord$. Thus, any monoid in $\vjst$ that does not generate it must be commutative. As such, $\vjst$ admits no proper over-commutative subvarieties (other than $\mathbb{COM}$), hence it is the unique cover of $\mathbb{COM}$ in the lattice of all varieties of monoids, by \cite[Remark~3.10]{gusev_lee_vernikov_lattice_variety_monoids}.
\end{proof}

Thus, any non-commutative monoid in $\vjst$ generates the variety. From this, it is immediate that $\vjst$ is generated by $\jtg$, a non-commutative homomorphic image of $\jst$:

\begin{corollary}
\label{corollary:jst_jtg_same_variety}
    $\jtg$ generates the same variety as $\jst$.
\end{corollary}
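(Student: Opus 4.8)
The plan is to invoke Proposition~\ref{prop:vjst_no_proper_over-commutative_varieties}, which says that $\vjst$ is the unique cover of $\mathbb{COM}$ in the lattice of varieties of monoids; equivalently, $\vjst$ has no proper subvariety other than (sub)varieties of $\mathbb{COM}$. The immediate consequence, recorded just before the corollary, is that any non-commutative monoid lying in $\vjst$ must in fact generate $\vjst$: if $M \in \vjst$ and $\mathbb{V}(M) \subsetneq \vjst$, then $\mathbb{V}(M) \subseteq \mathbb{COM}$, so $M$ is commutative. So it suffices to verify two things about $\jtg$: first, that $\jtg \in \vjst$, and second, that $\jtg$ is not commutative.

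For the first point, I would argue that $\jtg$ is a homomorphic image of $\jst$, hence lies in $\vjst$ since varieties are closed under homomorphic images. Concretely, the join-stalactic congruence $\equiv_\jst$ is contained in the join-taiga congruence $\equiv_\jtg$: indeed $\equiv_\jst = {\equiv_\lst} \vee {\equiv_\rst}$ and $\equiv_\jtg = {\equiv_\ltg} \vee {\equiv_\rtg}$, and since $\cR_\lst \subseteq \cR_\ltg$ and $\cR_\rst \subseteq \cR_\rtg$ (recall $\cR_\rtg = \cR_\sylv \cup \cR_\rst$ and $\cR_\ltg = \cR_\sylvh \cup \cR_\lst$), every defining relation of $\equiv_\jst$ is a defining relation of $\equiv_\jtg$. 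Thus the natural surjection $\N^*/{\equiv_\jst} \twoheadrightarrow \N^*/{\equiv_\jtg}$ is a well-defined monoid morphism, exhibiting $\jtg$ as a quotient of $\jst$. (Alternatively, one could simply observe that $\jst$ satisfies the basis $\mathcal{B}_\jst = \{xxy \approx xyx \approx yxx\}$ of Corollary~\ref{corollary:jst_finite_basis}, and that $\jtg$ satisfies these same identities, since the relations $(a\word a, aa\word)$ and $(a\word a, \word aa)$ hold in $\jtg$ — being instances of $\cR_\lst \cup \cR_\rst \subseteq \cR_\jtg$ — so $\jtg$ lies in the variety defined by $\mathcal{B}_\jst$, which is $\vjst$.)

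For the second point, I would exhibit a word pair witnessing non-commutativity of $\jtg$: the letters $1$ and $2$ do not commute in $\jtg$, since by Proposition~\ref{prop:jtg_characterisation} the words $12$ and $21$ have the same content but, taking the interval $A_1 = \{1,2\}$ of simple letters, $12$ and $21$ are not $\equiv_\hypo$-congruent (equivalently $\ol{12} = 12 \neq 21 = \ol{21}$, and $12 \not\equiv_\jst 21$ already by Proposition~\ref{prop:jst_characterisation}, which suffices since $\equiv_\jtg$ restricted to $\{1,2\}^*$ agrees with $\equiv_\jst$ there). Hence $\jtg$ is non-commutative, and combining with the first point, $\mathbb{V}(\jtg) = \vjst$.

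There is essentially no obstacle here: the whole content has been front-loaded into Proposition~\ref{prop:vjst_no_proper_over-commutative_varieties}. The only mild care needed is to be explicit about why $\jtg \in \vjst$ — i.e. to spell out the containment of congruences or the shared identity basis — and to pick a clean two-element example showing $\jtg$ is non-abelian; both are routine.
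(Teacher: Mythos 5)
Your proposal is correct and follows exactly the paper's route: the paper likewise deduces the corollary from Proposition~\ref{prop:vjst_no_proper_over-commutative_varieties} by observing that $\jtg$ is a non-commutative homomorphic image of $\jst$. You merely make explicit the two routine checks (the containment $\cR_\jst \subseteq \cR_\jtg$ giving the surjection $\jst \twoheadrightarrow \jtg$, and $12 \not\equiv_\jtg 21$ witnessing non-commutativity) that the paper leaves implicit.
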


On the other hand, since the equational theory of $\hypo$, given in \cite[Theorem~4.1]{cain_malheiro_ribeiro_hypoplactic_2022}, is a strict subset of the equational theory of $\jst$, we have that:

\begin{corollary}
\label{corollary:vjst_strictly_contained_vhypo}
$\vjst$ is strictly contained in $\vhypo$.
\end{corollary}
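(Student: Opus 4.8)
The plan is to use the standard correspondence between varieties and equational theories: a variety is determined by its equational theory, and, writing $\mathrm{Eq}(M)$ for the equational theory of a monoid $M$, one has $\mathbb{V}(M_1) \subseteq \mathbb{V}(M_2)$ if and only if $\mathrm{Eq}(M_2) \subseteq \mathrm{Eq}(M_1)$. So it suffices to show $\mathrm{Eq}(\hypo) \subsetneq \mathrm{Eq}(\jst)$. By Corollary~\ref{corollary:jst_equational_theory}, $\mathrm{Eq}(\jst)$ is exactly the set of balanced identities $\uord \approx \vord$ with $\ol{\uord} = \ol{\vord}$, while $\mathrm{Eq}(\hypo)$ is described by \cite[Theorem~4.1]{cain_malheiro_ribeiro_hypoplactic_2022}.

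First I would prove the inclusion $\mathrm{Eq}(\hypo) \subseteq \mathrm{Eq}(\jst)$. This can be extracted from the characterisation in \cite[Theorem~4.1]{cain_malheiro_ribeiro_hypoplactic_2022}, or argued directly as follows. If $\hypo$ satisfies $\uord \approx \vord$, then for each variable $x$ the evaluation sending $x$ to a fixed generator and every other variable to the empty word, together with the fact that $\equiv_\hypo$ preserves content, forces $|\uord|_x = |\vord|_x$; hence the identity is balanced and the simple variables of $\uord$ and $\vord$ coincide. If moreover $\ol{\uord} \neq \ol{\vord}$, then two simple variables $x,y$ occur in opposite relative order on the two sides, and the evaluation sending $x \mapsto [1]_\hypo$, $y \mapsto [2]_\hypo$ and all other variables to the empty word would force $[12]_\hypo = [21]_\hypo$, which is false. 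Thus every identity of $\hypo$ is balanced with $\ol{\uord} = \ol{\vord}$, i.e.\ lies in $\mathrm{Eq}(\jst)$.

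Next I would show the inclusion is strict by producing an identity of $\jst$ failing in $\hypo$. The identity $xxy \approx xyx$ belongs to the basis $\mathcal{B}_{\jst}$ of Corollary~\ref{corollary:jst_finite_basis} (equivalently, it satisfies the criterion of Corollary~\ref{corollary:jst_equational_theory}, since both sides are balanced with $\ol{xxy} = \ol{xyx} = y$), so it lies in $\mathrm{Eq}(\jst)$. On the other hand, the evaluation $x \mapsto [1]_\hypo$, $y \mapsto [2]_\hypo$ yields the words $112$ and $121$, which form singleton hypoplactic classes, and are in particular distinct in $\hypo$; hence $xxy \approx xyx \notin \mathrm{Eq}(\hypo)$. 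Combining the two parts gives $\mathrm{Eq}(\hypo) \subsetneq \mathrm{Eq}(\jst)$, and therefore $\vjst \subsetneq \vhypo$.

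There is no serious obstacle here: the substantive content is already packaged in the cited descriptions of the two equational theories. The only points demanding a little care are orienting the variety/equational-theory correspondence correctly and checking that the witnessing identity genuinely fails in $\hypo$ — for which one should confirm that $112$ and $121$ are hypoplactically inequivalent, either by inspecting the defining relations of $\equiv_\hypo$ directly or by using compatibility with standardisation (their standardisations $123$ and $132$ have different descent sets).
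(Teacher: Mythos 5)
Your proof is correct and takes essentially the same route as the paper, which derives this corollary in one line from the observation that the equational theory of $\hypo$ (as characterised in the cited reference) is a strict subset of that of $\jst$; you simply fill in the variety/equational-theory correspondence, the inclusion, and an explicit witness for strictness. One small slip: $[121]_\hypo = \{121,211\}$ is not a singleton class, but this does not matter since all you need is $112 \not\equiv_\hypo 121$, which your standardisation/descent-set check correctly establishes.
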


\section*{Acknowledgements}
The authors would like to thank Alan Cain and António Malheiro for their suggestions, in particular to study the results in Section~\ref{section:Syntacticity_of_plactic_like_monoids}, and many helpful comments.

\bibliographystyle{plain}
\bibliography{Meet_and_join_plactic_like_monoids.bib}
\end{document}